\numberwithin{equation}{section}
\newcommand{\qed}{\hfill \ensuremath{\Box}}
\def\XXint#1#2#3{{\setbox0=\hbox{$#1{#2#3}{\int}$}
\vcenter{\hbox{$#2#3$}}\kern-.5\wd0}}
\newcommand{\tr}[2]{\textrm{tr}_{#1} \, {#2}}
\newcommand{\ve}{\varepsilon}
\newcommand{\kod}{\textnormal{kod}}
\newcommand{\ofs}{\omega_{\textrm{FS}}}
\newcommand{\oddt}{\frac{d}{dt}}
\newcommand{\dbar}{\overline{\partial}}
\newcommand{\ddt}[1]{\frac{\partial #1}{\partial t}}
\newcommand{\Ric}{\textrm{Ric}}
\newcommand{\ov}[1]{\overline{#1}}
\newcommand{\Tmax}{\ensuremath{T_{\textrm{max}}}}
\newcommand{\oke}{\omega_{\textrm{KE}}}
\newcommand{\Mab}{\textrm{Mab}}
\newcommand{\PSH}{\textrm{PSH}(M, \omega_0)}
\newcommand{\KES}{\omega_{S}}
\newcommand{\KEE}{\omega_{E}}
\newcommand{\oflat}{\omega_{\textrm{flat}}}
\newcommand{\gflat}{g_{\textrm{flat}}}
\newcommand{\toke}{\tilde{\omega}_{\textrm{KE}}}
\newcommand{\ddbar}{\frac{\sqrt{-1}}{2\pi} \partial\dbar}
\newcommand{\Xcan}{X_{\textrm{can}}}
\begin{document}
\newtheorem{theorem}{Theorem}[section]
\newtheorem{remark}[theorem]{Remark}
\newtheorem{proposition}[theorem]{Proposition}
\newtheorem{lemma}[theorem]{Lemma}
\newtheorem{example}[theorem]{Example}
~

\newtheorem{definition}[theorem]{Definition}
\newtheorem{conjecture}[theorem]{Conjecture}
\newtheorem{corollary}[theorem]{Corollary}
\newenvironment{proof}[1][Proof]{\begin{trivlist}
\item[\hskip \labelsep {\bfseries #1}]}{\end{trivlist}}
~


\centerline{\bf \Large Lecture notes on the K\"ahler-Ricci flow\footnote{The first-named author was supported in part by an NSF CAREER grant
  DMS-08-47524  and the second-named author by the NSF grants DMS-08-48193 and DMS-11-05373.  Both authors were also supported in part by Sloan Research Fellowships.  This work was carried out while the second-named author was a member of the mathematics department at UC San Diego.}}

\bigskip
\bigskip

\centerline{\large \bf Jian Song$^{*}$ and
Ben Weinkove$^\dagger$}

\bigskip


\tableofcontents

\pagebreak

\subsection*{Introduction}
\addcontentsline{toc}{subsection}{Introduction}

The Ricci flow, first introduced by Hamilton \cite{H1} three decades ago, is the equation
\begin{equation} \label{hamiltonrf}
\ddt{} g_{ij} = - 2 R_{ij},
\end{equation}
evolving a Riemannian metric by its Ricci curvature.    It now occupies a central position as one of the key tools of geometry.   It was used in \cite{H1,H2} to classify 3-manifolds with positive Ricci curvature and $4$-manifolds with positive curvature operator.  Hamilton later introduced the notion of \emph{Ricci flow with surgery} \cite{H3} and laid out an ambitious program to prove the Poincar\'e and Geometrization conjectures.    In a spectacular demonstration of the power of the Ricci flow, Perelman \cite{P1, P2, P3} developed new techniques which enabled him to complete  Hamilton's program and settle these celebrated conjectures (see also \cite{CZ, KL, MT1, MT2}).  More recently, the Ricci flow was used to prove the Brendle-Schoen Differentiable Sphere Theorem \cite{BS1} and other geometric classification results \cite{BW,NW}.

In addition to these successes has been the development of the K\"ahler-Ricci flow.  If the Ricci flow starts from a K\"ahler metric on a complex manifold, the evolving metrics will remain K\"ahler, and the resulting PDE is called the K\"ahler-Ricci flow.  Cao \cite{Cao} used this flow, together with parabolic versions of the estimates of Yau \cite{Y1} and Aubin \cite{A}, to reprove the existence of K\"ahler-Einstein metrics on manifolds with negative and zero first Chern class.  Since then, the study of the K\"ahler-Ricci flow has developed into a vast field in its own right.  There have been several different avenues of research involving this flow, including: existence of K\"ahler-Einstein metrics on manifolds with positive first Chern class and  notions of algebraic stability \cite{B2, C2, CW, Do,  MS, P4, PSS, PSSW1, PSSW3, PS0, PS, PS2, R, SeT, Sz,  T1, TZhu, To1,   Y3, Zhu};  the classification of K\"ahler manifolds with positive curvature in both the compact and non-compact cases \cite{B1, Cao2, CZ0, CT, CST, ChT, Gu, M,N, PSSW2}; and extensions of the flow to non-K\"ahler settings \cite{G,ST}.  (These lists of references are far from exhaustive).  In these notes we will not even manage to touch on these areas.

Our main goal is to give an introduction to the K\"ahler-Ricci flow.  In the last two sections of the notes, we will also discuss some results related to the  \emph{analytic minimal model program} of the first-named author and Tian \cite{SoT1, SoT2, SoT3, TICM, T2}.
The field has been developing at a fast pace in the last several years, and we mention briefly now some of the ideas.


Ultimately, the goal is to see whether the K\"ahler-Ricci flow will give a geometric classification of algebraic varieties.  In the case of real 3-manifolds, the work of Perelman and Hamilton shows that the Ricci flow with surgery, starting at any Riemannian metric, can be used to break up the manifold into pieces, each of which has a particular geometric structure.  We can ask the same question for the K\"ahler-Ricci flow on a projective algebraic variety:  starting with any K\"ahler metric, will the K\"ahler-Ricci flow `with surgery' break up the variety into simpler pieces, each equipped with some canonical geometric structure?

A process of `simplifying' algebraic varieties through surgeries already exists and is known as the Minimal Model Program.  In the case of complex dimension two, the idea is relatively simple.  Start with a variety and find `$(-1)$-curves' - these are special holomorphic spheres embedded in the variety - and remove them using an algebraic procedure known as `blowing down'.  It can be shown that after a finite number of these algebraic surgeries, the final variety either has a `ruled' structure, or has \emph{nef canonical bundle}, a condition that can be interpreted as being `nonpositively curved' in some appropriate sense.  This last type of variety is known as a `minimal model'.  In higher dimensions, a similar, though more complicated, process also exists.
It turns out that there are many different ways to arrive at the minimal model by algebraic procedures such as blow-downs.  However, in \cite{BCHM} Birkar-Cascini-Hacon-McKernan introduced the notion of the \emph{Minimal Model Program with scaling} (or MMP with scaling), which, ignoring some technical assumptions, takes a variety with a `polarization' and describes a particular sequence of algebraic operations which take it to a minimal model or a ruled surface (or its higher dimensional analogue).  This process seems to be closely related to the K\"ahler-Ricci flow, with the polarization corresponding to a choice of initial K\"ahler metric.

Starting in 2007, Song-Tian \cite{SoT1, SoT2, SoT3} and Tian \cite{T2} proposed the analytic MMP using the K\"ahler-Ricci flow with a series of conjectures, and showed \cite{SoT3} that, in a weak sense, the flow can be continued through singularities related to the MMP with scaling. In the case of  complex dimension two, it was shown by the authors  \cite{SW2} that the algebraic procedure of `blowing down' a holomorphic sphere corresponds to a geometric `canonical surgical contraction' for the K\"ahler-Ricci flow.

 Moreover, the minimal model is endowed with an analytic structure.  Eyssidieux-Guedj-Zeriahi \cite{EGZ2} generalized an estimate of Kolodziej \cite{Kol1} (see also the work of Zhang \cite{Zha1}) to  construct singular K\"ahler-Einstein metrics  on minimal models of general type.  In the case of smooth minimal models,   convergence of the K\"ahler-Ricci flow to this metric was already known by the work of Tsuji in the 1980's \cite{Ts1}, results which were clarified and extended by Tian-Zhang  \cite{ TZha}.  On Iitaka fibrations, the K\"ahler-Ricci flow was shown by Song-Tian to converge to a `generalized K\"ahler-Einstein metric'  \cite{SoT1, SoT2}.

 These are very recent developments in a field which we expect is only just beginning.
    In these lecture notes we have decided to focus on describing  the main tools and techniques which are now well-established, rather than give expositions of the most recent advances.  In particular, we do not  in any serious way address `surgery' for the K\"ahler-Ricci flow and we only give a sketchy outline of the Minimal Model Program and its relation to the K\"ahler-Ricci flow.   On the other hand, we have taken the opportunity to include two new results in these notes:  a detailed description of collapsing along the K\"ahler-Ricci flow in the case of a product elliptic surface (Section \ref{sectpes}) and a description of the K\"ahler-Ricci flow on K\"ahler surfaces (Section \ref{sectlast}), extending our previous work on algebraic surfaces \cite{SW2}.

     We have aimed these notes at the non-expert and have tried to make them as self-contained and complete as possible.  We do not expect the reader to be either a geometric analyst or an algebraic geometer.  We  assume only a basic knowledge of complex manifolds.
  We hope that these notes will provide enough background material  for the non-expert reader to go on to begin research in this area.

We give now a brief outline of the contents of these notes.  In Section \ref{sectpre}, we give some preliminaries and background material on K\"ahler manifolds and curvature, describe some analytic tools such as the maximum principle, and provide some definitions and results from algebraic geometry.  Readers may wish to skip this section at first and refer back to it if necessary.
In Section \ref{sectgen}, we describe a number of well-known basic analytic results for the K\"ahler-Ricci flow.  Many of these results have their origin in the work of Calabi, Yau, Cheng,  Aubin  and others \cite{A, C1, Cao, ChY, Y1, Y2}.   We include a more recent argument, due to Phong-\v{S}e\v{s}um-Sturm \cite{PSS}, for the `Calabi third-order' estimate in the setting of the K\"ahler-Ricci flow.

 In Section \ref{maximal} we prove one of the basic results for the K\"ahler-Ricci flow:  the flow admits a smooth solution as long as the class of the metric remains K\"ahler.  The result in this generality is due to Tian-Zhang \cite{TZha}, extending earlier results of Cao and Tsuji \cite{Cao, Ts1, Ts2}.   In Section \ref{sectn0}, we give an exposition of  Cao's work \cite{Cao} - the first paper on the K\"ahler-Ricci flow.    Namely, we describe the behavior of the flow on manifolds with negative or zero first Chern class.  We include in this section the crucial $C^0$ estimate of Yau \cite{Y1}.  We give a different proof of convergence in this case, following Phong-Sturm \cite{PS}. In Section \ref{secttsuji}, we consider the K\"ahler-Ricci flow on manifolds with nef and big canonical bundle.  This was first studied by Tsuji \cite{Ts1} and demonstrates how one can study the singular behavior of the K\"ahler-Ricci flow.

 In Section \ref{sectpes}, we address the case of \emph{collapsing} along the K\"ahler-Ricci flow with the example of a product of an elliptic curve and a curve of higher genus.  In Section \ref{sectfinite}, we describe some basic results in the case where a singularity for the flow occurs at a finite time, including the recent result of Zhang \cite{Zha2} on the behavior of the scalar curvature.  We also describe without proof some of the results of \cite{SSW, SW2}.

In Section \ref{sectlast}, we discuss the K\"ahler-Ricci flow and the Minimal Model Program.  We give a brief sketch of some of the ideas of the MMP and how the K\"ahler-Ricci flow relates to it.  We also describe some results from \cite{SW2} and extend them to the case of K\"ahler surfaces.

The authors would like to mention that these notes arose from lectures given at the conference \emph{Analytic aspects of complex algebraic geometry}, held at the Centre International de Rencontres Math\'ematiques in Luminy, in February 2011.  The authors would like to thank  S. Boucksom, P. Eyssidieux and V. Guedj for  organizing this wonderful conference.
Additional thanks go to V. Guedj for his encouragement to write these notes.

We would also like to express our gratitude to the following people for providing helpful suggestions and corrections: Huai-Dong Cao, John Lott, Gang Tian, Valentino Tosatti and Zhenlei Zhang.
Finally the authors thank their former PhD advisor D.H. Phong for his valuable advice, encouragement and support over the last several years.  In addition, his teaching and ideas have had a strong influence on the style and point of view of these notes.

\pagebreak

\section{Preliminaries} \label{sectpre}

In this section we describe some definitions and results which will be used throughout the text.

\subsection{K\"ahler manifolds} \label{sectkahler}

Let $M$ be a compact complex manifold of complex dimension $n$.   We will often work in a holomorphic coordinate chart $U$ with coordinates $(z^1, \ldots, z^n)$ and write a tensor in terms of its components in such a coordinate system.  We refer the reader to \cite{GH, KM}  for an introduction to complex manifolds etc.

  A \emph{Hermitian metric} on $M$ is a smooth tensor $g=g_{i\ov{j}}$ such that $(g_{i \ov{j}})$ is a positive definite Hermitian matrix at each point of $M$.  Associated to $g$ is a $(1,1)$-form $\omega$ given by
\begin{equation}
\omega = \frac{\sqrt{-1}}{2\pi} g_{i \ov{j}} dz^i \wedge d\ov{z^{j}},
\end{equation}
where here and henceforth we are summing over repeated indices from $1$ to $n$.  If $d \omega=0$ then we say that $g$ is a \emph{K\"ahler} metric and that $\omega$ is the \emph{K\"ahler form} associated to $g$.   Henceforth, whenever, for example, $g(t),\hat{g},g_0, \ldots$ are K\"ahler metrics we will use the obvious notation $\omega(t),\hat{\omega},\omega_0, \ldots$  for the associated K\"ahler forms, and vice versa.  Abusing terminology slightly, we will often refer to a K\"ahler form  $\omega$ as a K\"ahler metric.

The K\"ahler condition $d\omega=0$ is equivalent to:
\begin{equation} \label{kahlercondition}
\partial_k g_{i \ov{j}} = \partial_i g_{k \ov{j}}, \quad \textrm{for all } i,j,k,
\end{equation}
where we are writing $\partial_i = \partial/\partial z^i$.  The condition (\ref{kahlercondition}) is independent of choice of holomorphic coordinate system.

For examples of K\"ahler manifolds, consider complex projective space $\mathbb{P}^n = (\mathbb{C}^{n+1}\setminus \{ 0 \} )/ \sim$ where $(z_0, \ldots, z_n) \sim (z'_0, \ldots, z'_n)$ if there exists $\lambda \in \mathbb{C}^*$ with $z_i = \lambda z'_i$ for all $i$.  We denote by $[Z_0, \ldots, Z_n] \in \mathbb{P}^n$ the equivalence class of $(Z_0, \ldots, Z_n) \in \mathbb{C}^{n+1} \setminus \{ 0 \}$.  Define the \emph{Fubini-Study metric} $\ofs$ by
\begin{equation}
\ofs = \ddbar \log (|Z_0|^2 + \cdots + |Z_n|^2).
\end{equation}
Note that although $|Z_0|^2 + \cdots + |Z_n|^2$ is not a well-defined function on $\mathbb{P}^n$, $\ofs$ is a well-defined $(1,1)$-form.  We leave it as an exercise for the reader to check that $\ofs$ is K\"ahler.  Moreover, since the restriction of a K\"ahler metric to a complex submanifold is K\"ahler, we can produce a large class of K\"ahler manifolds by considering complex submanifolds of $\mathbb{P}^n$.  These are known as \emph{smooth projective varieties}.

Let $X=X^i \partial_i$ and $Y = Y^{\ov{i}} \partial_{\ov{i}}$ be $T^{1,0}$ and $T^{0,1}$ vector fields respectively, and let $a = a_i dz^i$ and $b=b_{\ov{i}}d\ov{z^i}$ be $(1,0)$ and $(0,1)$ forms respectively.  By definition this means that if $(\tilde{z}^1, \ldots, \tilde{z}^n)$ is another  holomorphic coordinate system then on their overlap,
\begin{equation}
\tilde{X}^j = X^i \frac{\partial \tilde{z}^j}{\partial z^i}, \ \  \tilde{Y}^{\ov{j}} = Y^{\ov{i}} \ov{ \frac{\partial \tilde{z}^j}{\partial z^i}}, \ \ \tilde{a}_j = a_i \frac{\partial z^i}{\partial \tilde{z}^j}, \  \ \tilde{b}_{\ov{j}} = b_{\ov{i}} \ov{\frac{\partial z^{i}}{\partial \tilde{z}^j}}.
\end{equation}

Associated to a K\"ahler metric $g$ are \emph{covariant derivatives} $\nabla_k$ and $\nabla_{\ov{k}}$ which act on the tensors $X, Y, a, b$ in the following way:
\begin{align}
\nabla_k X^i & =  \partial_k X^i + \Gamma^i_{jk} X^j, \ \nabla_{\ov{k}} X^i = \partial_{\ov{k}} X^i, \ \nabla_k Y^{\ov{i}} = \partial_k Y^{\ov{i}}, \ \nabla_{\ov{k}} Y^{\ov{i}} = \partial_{\ov{k}} Y^{\ov{i}} + \ov{\Gamma^i_{jk}} Y^{\ov{j}}, \\
\nabla_k a_i & = \partial_k a_i - \Gamma^j_{ik} a_j, \ \nabla_{\ov{k}} a_i = \partial_{\ov{k}} a_i, \ \nabla_k b_{\ov{i}} = \partial_{k} b_{\ov{i}}, \ \nabla_{\ov{k}} b_{\ov{i}} = \partial_{\ov{k}} b_{\ov{i}} - \ov{\Gamma^j_{ik}} b_{\ov{j}},
\end{align}
where $\Gamma^i_{jk}$ are the \emph{Christoffel symbols} given by
\begin{equation}
\Gamma^i_{jk} = g^{\ov{\ell} i} \partial_{j} g_{k \ov{\ell}},
\end{equation}
for $(g^{\ov{\ell} i})$ the inverse of the matrix $(g_{i \ov{\ell}})$.  Observe that $\Gamma^i_{jk} = \Gamma^i_{kj}$ from (\ref{kahlercondition}).  The Christoffel symbols are not the components of a tensor,
 but  $\nabla_k X^i,\nabla_{\ov{k}} X^i, \ldots $ do define tensors, as the reader can verify.  Also, if $g$ and $\hat{g}$ are K\"ahler metrics with Christoffel symbols $\Gamma^i_{jk}$ and $\hat{\Gamma}^i_{jk}$ then the difference $\Gamma^i_{jk} - \hat{\Gamma}^i_{jk}$ is a tensor.

We extend covariant derivatives to act naturally on  any type of tensor.  For example, if $W$ is a tensor with components $W_{k}^{i \ov{j}}$ then define
\begin{equation}
\nabla_m W_{k}^{i \ov{j}} = \partial_m W_k^{i \ov{j}} + \Gamma^i_{\ell m} W^{\ell \ov{j}}_k - \Gamma^{\ell}_{km} W^{i \ov{j}}_{\ell}, \ \ \nabla_{\ov{m}} W_k^{i \ov{j}} = \partial_{\ov{m}} W_k^{i \ov{j}} + \ov{\Gamma^j_{\ell m}} W_k^{i \ov{\ell}}.
\end{equation}
Note also that the Christoffel symbols are chosen so that  $\nabla_k g_{i \ov{j}}=0$.

The metric $g$ defines a pointwise norm $| \cdot |_g$ on any tensor.  For example, with $X, Y, a, b$ as above, we define
\begin{equation}
| X|_g^2 = g_{i \ov{j}} X^i \ov{X^{j}}, \quad |Y|_g^2 = g_{i \ov{j}} Y^{\ov{j}}  \, \ov{ Y^{\ov{i}}}, \quad |a|_g^2 = g^{\ov{j}i} a_i \ov{a_j}, \quad |b|^2_g = g^{\ov{j} i} b_{\ov{j}} \ov{b_{\ov{i}}}.
\end{equation}
This is  extended to any type of tensor.  For example, if $W$ is a tensor with components $W^{i\ov{j}}_{k}$ then define $|W|^2_g = g^{\ov{\ell} k} g_{i \ov{j}} g_{p \ov{q}} W^{i\ov{q}}_{k} \ov{W^{j \ov{p}}_{\ell}}$.

Finally, note that a K\"ahler metric $g$ defines a Riemannian metric $g_{\mathbb{R}}$.  In local coordinates, write $z^i = x^i + \sqrt{-1} y^i$, so that $\partial_{z^i} = \frac{1}{2} (\partial_{x^i} - \sqrt{-1} \partial_{y^i})$ and $\partial_{\ov{z^i}} = \frac{1}{2} (\partial_{x^i} + \sqrt{-1} \partial_{y^i})$.  Then
\begin{equation} \label{gR}
g_{\mathbb{R}} (\partial_{x^i}, \partial_{x^j})  = 2 \textrm{Re}( g_{i \ov{j}}) = g_{\mathbb{R}} (\partial_{y^i}, \partial_{y^j}), \quad g_{\mathbb{R}} (\partial_{x^i}, \partial_{y^j})  = 2 \textrm{Im}( g_{i \ov{j}}).
\end{equation}
We will typically write $g$ instead of $g_{\mathbb{R}}$.


\subsection{Normal coordinates}

The following proposition is very useful in computations.

\begin{proposition} \label{propnormal}  Let $g$ be a K\"ahler metric on $M$ and let $S= S_{i\ov{j}}$ be a tensor which is Hermitian (that is $\ov{S_{i \ov{j}}} = S_{\ov{j} i}$.)  Then at each point $p$ on $M$ there exists a holomorphic coordinate system centered at $p$ such that,
\begin{equation}
g_{i\ov{j}}(p) = \delta_{ij}, \quad S_{i \ov{j}}(p) = \lambda_i \delta_{i j}, \quad \partial_k g_{i \ov{j}}(p) =0, \quad \forall \, i,j,k=1, \ldots, n,
\end{equation}
for some $\lambda_1, \ldots, \lambda_n \in \mathbb{R}$, where $\delta_{ij}$ is the Kronecker delta.
\end{proposition}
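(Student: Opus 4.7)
My plan is to achieve the three conditions in succession by three successive holomorphic changes of coordinates, each refining the previous one without spoiling the earlier normalizations.

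First, starting from any holomorphic coordinates $(z^1,\ldots,z^n)$ centered at $p$, I would perform a $\mathbb{C}$-linear change $z \mapsto Az$. Since $(g_{i\ov j}(p))$ is a positive-definite Hermitian matrix, standard linear algebra produces $A\in\mathrm{GL}(n,\mathbb{C})$ with $A^* g(p) A = I$, so in the new coordinates $g_{i\ov j}(p)=\delta_{ij}$. This step uses only that $g$ is Hermitian positive-definite at $p$.

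Second, with $g(p)=I$ in place, I would apply a further unitary linear change $z\mapsto Uz$, where $U$ is chosen so that $U^* S(p) U = \mathrm{diag}(\lambda_1,\ldots,\lambda_n)$; this is possible by the spectral theorem since $S(p)$ is Hermitian. Because $U$ is unitary, it preserves the normalization $g_{i\ov j}(p)=\delta_{ij}$ from the first step, while now $S_{i\ov j}(p)=\lambda_i \delta_{ij}$ with each $\lambda_i\in\mathbb{R}$.

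Third, to kill the first derivatives of $g$ at $p$, I would use a quadratic change
\begin{equation}
w^i = z^i + \tfrac{1}{2}\, a^i_{jk}\, z^j z^k,
\end{equation}
with $a^i_{jk}=a^i_{kj}$ to be determined. Since the Jacobian at $p$ is the identity, both $g_{i\ov j}(p)$ and the $(1,1)$-tensor $S_{i\ov j}(p)$ are unchanged by such a transformation. A direct computation of $\partial_m \tilde g_{i\ov j}(0)$, using $\tilde g_{i\ov j}= g_{k\ov l}\,(\partial z^k/\partial w^i)(\partial\ov{z^l}/\partial\ov{w^j})$ and the fact that antiholomorphic second derivatives of $z$ in $w$ vanish, shows that
\begin{equation}
\partial_m \tilde g_{i\ov j}(0) = \partial_m g_{i\ov j}(0) - a^j_{mi},
\end{equation}
so choosing $a^j_{mi}:=\partial_m g_{i\ov j}(0)$ kills the first derivatives.

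The main subtlety is that this choice of $a^j_{mi}$ is only legitimate if it is symmetric in the lower indices $m,i$, i.e.\ if $\partial_m g_{i\ov j}(0) = \partial_i g_{m\ov j}(0)$. This is precisely the K\"ahler condition \eqref{kahlercondition}, so the construction closes exactly because $g$ is K\"ahler; without it, the third step would fail. Once the symmetry is verified, the three coordinate changes compose to give the desired normal coordinates.
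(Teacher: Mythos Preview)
Your proof is correct and follows essentially the same route as the paper: a linear change to simultaneously normalize $g(p)$ and diagonalize $S(p)$, followed by a quadratic change to kill $\partial_k g_{i\ov{j}}(p)$. The only cosmetic difference is that the paper writes the quadratic change directly in terms of the Christoffel symbols, $z^i=\tilde z^i-\tfrac12\Gamma^i_{jk}(0)\tilde z^j\tilde z^k$, whereas you introduce undetermined coefficients $a^i_{jk}$ and solve for them; your explicit remark that the required symmetry $a^j_{mi}=a^j_{im}$ is exactly the K\"ahler condition \eqref{kahlercondition} is a nice touch that the paper leaves implicit in the symmetry of $\Gamma^i_{jk}$.
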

\begin{proof}
It is an exercise in linear algebra to check that we can find a coordinate system $(z^1, \ldots, z^n)$ centered at $p$ (so that $p \mapsto 0$) satisfying the first two conditions: $g$ is the identity at $p$ and $S$ is diagonal at $p$.  Indeed this amounts to the fact that a Hermitian matrix can be diagonalized by a unitary transformation.

For the last condition we make a change of coordinates.  Define coordinates  $(\tilde{z}^1, \ldots, \tilde{z}^n)$ in a neighborhood of $p$ by
\begin{equation}
z^i = \tilde{z}^i - \frac{1}{2}\Gamma^i_{jk}(0) \tilde{z}^j \tilde{z}^k, \quad \textrm{for } i=1, \ldots, n.
\end{equation}
Writing $\tilde{g}_{i \ov{j}}$, $\tilde{S}_{i\ov{j}}$ for the components of $g$, $S$ with respect to $(\tilde{z}^1, \ldots, \tilde{z}^n)$ we see that $\tilde{g}_{i\ov{j}}(0) = g_{i \ov{j}}(0)$ and $\tilde{S}_{i \ov{j}} (0) = S_{i\ov{j}}(0)$ since $\partial z^i/\partial \tilde{z}^j (0) = \delta_{ij}$.  It remains to check that the first derivatives of $\tilde{g}_{i \ov{j}}$ vanish at $0$.
Compute at 0,
\begin{align} \nonumber
\frac{\partial}{\partial \tilde{z}^k} \tilde{g}_{i \ov{j}}  & = \frac{\partial}{\partial \tilde{z}^k} \left( \frac{\partial z^a}{\partial \tilde{z}^i} \ov{ \frac{\partial z^b}{\partial \tilde{z}^j}} g_{a \ov{b}} \right) \\ \nonumber
& =  \frac{\partial^2 z^a}{\partial \tilde{z}^k \partial \tilde{z}^i} \ov{ \frac{\partial z^b}{\partial \tilde{z}^j}} g_{a \ov{b}} + \frac{\partial z^a}{\partial \tilde{z}^i} \ov{ \frac{\partial z^b}{\partial \tilde{z}^j}} \frac{\partial z^m}{\partial \tilde{z}^k} \frac{\partial}{\partial z^m} g_{a \ov{b}}   \\
& = - \Gamma^j_{ik}  + \frac{\partial}{\partial z^k} g_{i \ov{j}} =0,
\end{align}
as required. \qed
\end{proof}

We call a holomorphic coordinate system centered at $p$  satisfying $g_{i \ov{j}}(p) = \delta_{ij}$ and $\partial_k g_{i \ov{j}}(p) =0$ a \emph{normal coordinate system} for $g$ centered at $p$.  It implies in particular that the Christoffel symbols of $g$ vanish at $p$.  Proposition \ref{propnormal} states that we can find a normal coordinate system for $g$ at any point $p$, and that moreover we can simultaneously diagonalize any other Hermitian tensor (such as another K\"ahler metric) at that point.

\subsection{Curvature} \label{sectcurv}

Define the \emph{curvature tensor} of the K\"ahler  metric $g$ to be the tensor
\begin{equation}
R^{\  m}_{i \ \, \, k \ov{\ell}} = - \partial_{\ov{\ell}} \Gamma^{m}_{ik}.
\end{equation}
The reader can verify that this does indeed define a tensor on $M$.  We often lower the second index using the metric $g$ and define
\begin{equation}
R_{i \ov{j} k \ov{\ell}} = g_{m \ov{j}} R^{\  m}_{i \ \, \, k \ov{\ell}},
\end{equation}
an object which we also refer to as the curvature tensor.   In addition, we can lower or raise any index of curvature using the metric $g$.  For example, $R_{i \ov{j}}^{\ \ \, \ov{q} p} := g^{\ov{q}k} g^{\ov{\ell} p} R_{i \ov{j}k \ov{\ell}}$.

Using the formula for the Christoffel symbols and (\ref{kahlercondition}), calculate
\begin{equation} \label{formulacurvature}
R_{i \ov{j} k \ov{\ell}} = - \partial_ i \partial_{\ov{j}} g_{k \ov{\ell}} + g^{\ov{q}p} (\partial_i g_{k \ov{q}})(\partial_{\ov{j}} g_{p \ov{\ell}}).
\end{equation}

The curvature tensor has a number of symmetries:

\begin{proposition} \label{Rsymmetry}
We have
\begin{enumerate}
\item[(i)] $\displaystyle{\ov{R_{i \ov{j} k \ov{\ell}}} = R_{j \ov{i} \ell \ov{k}}}$.
\item[(ii)] $\displaystyle{R_{i \ov{j} k \ov{\ell}} = R_{k \ov{j} i \ov{\ell}} = R_{i \ov{\ell} k \ov{j}}}$.
\item[(iii)] $\displaystyle{\nabla_m R_{i \ov{j} k \ov{\ell}} = \nabla_i R_{m \ov{j} k \ov{\ell}}}$.
\end{enumerate}
\end{proposition}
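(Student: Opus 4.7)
The plan is to read everything off the explicit formula (\ref{formulacurvature}) for $R_{i\ov{j}k\ov{\ell}}$, using the K\"ahler condition (\ref{kahlercondition}) and its complex conjugate, together with Proposition \ref{propnormal} for part (iii).

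For part (i), I would simply conjugate the right-hand side of (\ref{formulacurvature}): since $g$ is Hermitian, $\ov{g_{k\ov{\ell}}} = g_{\ell\ov{k}}$ and $\ov{g^{\ov{q}p}} = g^{\ov{p}q}$, while conjugation turns $\partial_i \partial_{\ov{j}}$ into $\partial_{\ov{i}}\partial_j$. Relabeling the dummy indices $p,q$ then produces exactly $R_{j\ov{i}\ell\ov{k}}$.

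For part (ii), I would exchange $i \leftrightarrow k$ in (\ref{formulacurvature}) and compare. Differentiating (\ref{kahlercondition}) in $\ov{j}$ gives $\partial_i \partial_{\ov{j}} g_{k\ov{\ell}} = \partial_k \partial_{\ov{j}} g_{i\ov{\ell}}$, which handles the first term; for the quadratic term, (\ref{kahlercondition}) directly gives $\partial_i g_{k\ov{q}} = \partial_k g_{i\ov{q}}$, so the two factors together are symmetric in $i,k$. This proves $R_{i\ov{j}k\ov{\ell}} = R_{k\ov{j}i\ov{\ell}}$. For the second equality, one either combines this with part (i), or repeats the argument using the conjugate K\"ahler condition $\partial_{\ov{j}} g_{k\ov{\ell}} = \partial_{\ov{\ell}} g_{k\ov{j}}$ on both $\partial_i \partial_{\ov{j}} g_{k\ov{\ell}}$ and the factor $\partial_{\ov{j}} g_{p\ov{\ell}}$.

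For part (iii), the clean move is to choose, via Proposition \ref{propnormal}, a normal coordinate system for $g$ centered at an arbitrary point $p$. Both sides of the identity are tensors, so it suffices to verify it at $p$. At $p$ all Christoffel symbols vanish, so $\nabla_m$ acting on any tensor reduces to $\partial_m$ at $p$; moreover $\partial_k g_{i\ov{j}}(p) = 0$ forces $\partial_m g^{\ov{q}p}(p) = 0$ as well, so every term produced by $\partial_m$ hitting the quadratic expression $g^{\ov{q}p}(\partial_i g_{k\ov{q}})(\partial_{\ov{j}} g_{p\ov{\ell}})$ carries an undifferentiated first derivative of $g$ as a factor and hence vanishes at $p$. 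What remains is
\[
\nabla_m R_{i\ov{j}k\ov{\ell}}(p) \;=\; -\,\partial_m \partial_i \partial_{\ov{j}} g_{k\ov{\ell}}(p),
\]
which is manifestly symmetric in $m$ and $i$, proving (iii) at $p$ and hence everywhere.

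The main obstacle is essentially bookkeeping — making sure the quadratic-in-first-derivatives term in (\ref{formulacurvature}) is handled correctly under each operation. For (ii) one has to invoke the K\"ahler condition on both of its factors; for (iii) the appeal to normal coordinates is what makes the argument painless, since the alternative would be a direct but tedious accounting of Christoffel-symbol corrections from $\nabla_m$.
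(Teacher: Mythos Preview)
Your proposal is correct and follows essentially the same approach as the paper: parts (i) and (ii) are read off the explicit formula (\ref{formulacurvature}) together with the K\"ahler condition, and part (iii) is checked in normal coordinates where $\nabla_m R_{i\ov{j}k\ov{\ell}}$ reduces to $-\partial_m\partial_i\partial_{\ov{j}} g_{k\ov{\ell}}$. Your write-up simply spells out in more detail why the quadratic term drops out at $p$, which the paper leaves implicit.
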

\begin{proof}
(i) and (ii) follow immediately from the formula (\ref{formulacurvature}) together with the K\"ahler condition (\ref{kahlercondition}).  For (iii) we compute at a point $p$ in normal coordinates for $g$,
\begin{equation}
\nabla_m R_{i \ov{j} k \ov{\ell}} = - \partial_m \partial_i \partial_{\ov{j}} g_{k \ov{\ell}} = - \partial_i \partial_m \partial_{\ov{j}} g_{k \ov{\ell}} = \nabla_i R_{m \ov{j} k \ov{\ell}},
\end{equation}
as required.  \qed
\end{proof}

Parts (ii) and (iii) of Proposition \ref{Rsymmetry} are often referred to as the \emph{first and second Bianchi identities},  respectively.  Define the \emph{Ricci curvature} of $g$ to be the tensor
\begin{equation}
R_{i \ov{j}} : = g^{\ov{\ell} k} R_{i \ov{j} k \ov{\ell}} = g^{\ov{\ell} k} R_{k \ov{\ell} i \ov{j}} = R^{\ \, k}_{k \ \,  i \ov{j}},
\end{equation}
and the \emph{scalar curvature} $R= g^{\ov{j} i} R_{i \ov{j}}$ to be the trace of the Ricci curvature.   For K\"ahler manifolds, the Ricci curvature takes on a simple form:

\begin{proposition} \label{Rcformula} We have
\begin{equation}
R_{i \ov{j}}  = - \partial_i \partial_{\ov{j}} \log \det g.
\end{equation}
\end{proposition}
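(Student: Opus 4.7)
The plan is to unwind the definitions and reduce the statement to the standard identity $\partial_i \log\det g = g^{\bar{\ell} m}\partial_i g_{m\bar{\ell}}$ applied to a suitably contracted Christoffel symbol. The key observation is that, by the symmetries of the curvature tensor on a K\"ahler manifold, the Ricci tensor can be written as a single trace of the curvature $R^{\ m}_{i\ k\bar{\ell}} = -\partial_{\bar{\ell}} \Gamma^m_{ik}$, and that the relevant trace of the Christoffel symbols has a clean closed form.

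First I would rewrite
\begin{equation*}
R_{i\bar{j}} = g^{\bar{\ell} k} R_{i\bar{j} k \bar{\ell}} = g^{\bar{\ell} k} R_{k\bar{\ell} i\bar{j}} = R^{\ \, k}_{k\ i \bar{j}},
\end{equation*}
where the middle equality uses the first Bianchi-type symmetry from Proposition \ref{Rsymmetry}(ii). By the definition of curvature this gives $R_{i\bar{j}} = -\partial_{\bar{j}} \Gamma^{k}_{ki}$, so the problem is reduced to computing the contracted Christoffel symbol $\Gamma^k_{ki}$.

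Next I would evaluate $\Gamma^k_{ki} = g^{\bar{\ell} k}\partial_k g_{i\bar{\ell}}$ and invoke the K\"ahler condition (\ref{kahlercondition}) to swap the two lower indices of the derivative, yielding $\Gamma^k_{ki} = g^{\bar{\ell} k}\partial_i g_{k\bar{\ell}}$. This is exactly the expression that appears in the logarithmic derivative of the determinant: using the standard linear-algebra identity $\partial_i \det(g_{k\bar{\ell}}) = (\det g)\, g^{\bar{\ell} k}\partial_i g_{k\bar{\ell}}$ (proved by expanding the determinant and differentiating, or from $d \log\det A = \mathrm{tr}(A^{-1}\, dA)$), one obtains $\Gamma^k_{ki} = \partial_i \log\det g$.

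Substituting this back gives $R_{i\bar{j}} = -\partial_{\bar{j}} \partial_i \log\det g$, which is the desired formula. There is no real obstacle here beyond bookkeeping: the only nontrivial input is the K\"ahler symmetry $\partial_k g_{i\bar{\ell}} = \partial_i g_{k\bar{\ell}}$, without which the contracted Christoffel symbol would not assemble into a logarithmic derivative of the determinant. It is exactly this feature that makes the Ricci tensor on a K\"ahler manifold globally $\partial\bar{\partial}$-exact, in contrast to the general Hermitian case.
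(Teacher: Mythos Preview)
Your proof is correct and follows essentially the same route as the paper: both reduce $R_{i\bar{j}}$ to $-\partial_{\bar{j}}\Gamma^k_{ki}$, use the K\"ahler symmetry (equivalently $\Gamma^k_{ki}=\Gamma^k_{ik}$) to write the contracted Christoffel symbol as $g^{\bar{\ell}k}\partial_i g_{k\bar{\ell}}$, and then recognize this as $\partial_i\log\det g$ via Cramer's rule. The only difference is that you spell out the Bianchi symmetry and the K\"ahler swap explicitly, whereas the paper absorbs these into the definition of $R_{i\bar{j}}$ and the symmetry $\Gamma^i_{jk}=\Gamma^i_{kj}$.
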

\begin{proof}
First, recall the well-known formula for the derivative of the determinant of a Hermitian matrix.  Let $A = (A_{i \ov{j}})$ be an invertible Hermitian matrix with inverse $(A^{\ov{j} i})$.  If the entries of $A$ depend on a variable $s$ then an application of Cramer's rule shows that
\begin{equation}
\frac{d}{ds} \det A = A^{\ov{j} i}  \left( \frac{d}{ds} A_{i \ov{j}} \right) \det A.
\end{equation}
Using this, calculate
\begin{equation}
R_{i \ov{j}} = - \partial_{\ov{j}} \Gamma^k_{ki} = - \partial_{\ov{j}} (g^{\ov{q} k} \partial_i g_{k \ov{q}}) = - \partial_{\ov{j}} \partial_{i} \log \det g,
\end{equation}
which gives the desired formula.  \qed
\end{proof}

Associated to the tensor $R_{i\ov{j}}$ is a $(1,1)$ form $\Ric(\omega)$ given by
\begin{equation}
\Ric(\omega) = \frac{\sqrt{-1}}{2\pi} R_{i \ov{j}} dz^i \wedge d\ov{z^j}.
\end{equation}
Proposition \ref{Rcformula} implies that $\Ric(\omega)$ is closed.

We end this subsection by showing that the curvature tensor arises when commuting covariant derivatives $\nabla_k$ and $\nabla_{\ov{\ell}}$.  Indeed, the curvature tensor is often defined by this property.

\begin{proposition} \label{commformulae} Let $X= X^i \partial_i$, $Y=Y^{\ov{i}} \partial_{\ov{i}}$ be $T^{1,0}$ and $T^{0,1}$ vector fields respectively, and let $a=a_idz^i$ and $b=b_{\ov{i}} d\ov{z^i}$ be
$(1,0)$ and $(0,1)$ forms respectively.  Then
\begin{align}
[ \nabla_k, \nabla_{\ov{\ell}} ] X^m & = R^{\ m}_{i \ \, \, k \ov{\ell}} X^i \\
[ \nabla_k, \nabla_{\ov{\ell}} ] Y^{\ov{m}} & = - R^{\ov{m}}_{\ \, \, \ov{j} k \ov{\ell}} Y^{\ov{j}} \\
[ \nabla_k, \nabla_{\ov{\ell}} ] a_i & = - R^{\ m}_{i \ \, \, k \ov{\ell}} a_m \\
[ \nabla_k, \nabla_{\ov{\ell}} ] b_{\ov{j}} & = R^{\ov{m}}_{\ \, \ov{j} k \ov{\ell}} b_{\ov{m}},
\end{align}
where we are writing $[ \nabla_k, \nabla_{\ov{\ell}} ] = \nabla_k \nabla_{\ov{\ell}} - \nabla_{\ov{\ell}} \nabla_k$.
\end{proposition}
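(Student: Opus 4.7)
The plan is to verify the four commutator formulas by direct computation at an arbitrary point $p$, choosing convenient coordinates to eliminate bookkeeping. Since each identity is a tensor equation, it suffices to establish it in one coordinate system at each point.

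First, I would fix a point $p \in M$ and, using Proposition \ref{propnormal}, choose a normal holomorphic coordinate system centered at $p$, so that $g_{i\ov j}(p)=\delta_{ij}$ and $\partial_k g_{i\ov j}(p)=0$. In particular the Christoffel symbols $\Gamma^i_{jk}$ vanish at $p$, though their derivatives generally do not. The key structural feature I would invoke is that in K\"ahler geometry the only non-zero Christoffel symbols are $\Gamma^i_{jk}$ (all holomorphic indices) and its conjugate $\ov{\Gamma^i_{jk}}$ (all antiholomorphic indices); mixed-type Christoffels vanish identically. This is exactly why $\nabla_{\ov\ell}X^m=\partial_{\ov\ell}X^m$ and $\nabla_k b_{\ov j}=\partial_k b_{\ov j}$, as recorded in the formulas for covariant differentiation in Section \ref{sectkahler}.

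For the first identity I would compute, in the chosen coordinates and evaluated at $p$,
\begin{equation*}
\nabla_{\ov\ell}\nabla_k X^m = \partial_{\ov\ell}\bigl(\partial_k X^m+\Gamma^m_{jk}X^j\bigr) = \partial_{\ov\ell}\partial_k X^m + (\partial_{\ov\ell}\Gamma^m_{jk})\,X^j,
\end{equation*}
where all $\Gamma$'s that are not differentiated drop out at $p$, and the mixed Christoffels that would appear from differentiating the indices $k$ and $m$ of the $(1,1)$-tensor $\nabla_k X^m$ vanish by the K\"ahler property. Similarly,
\begin{equation*}
\nabla_k\nabla_{\ov\ell} X^m = \partial_k\partial_{\ov\ell}X^m
\end{equation*}
at $p$, since $\nabla_{\ov\ell}X^m=\partial_{\ov\ell}X^m$ and the only surviving Christoffel correction, namely $\Gamma^m_{jk}\partial_{\ov\ell}X^j$, vanishes at $p$. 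Subtracting gives
\begin{equation*}
[\nabla_k,\nabla_{\ov\ell}]X^m = -(\partial_{\ov\ell}\Gamma^m_{ik})X^i = R^{\ m}_{i\ \,k\ov\ell}X^i
\end{equation*}
at $p$ by the very definition of the curvature tensor. Since $p$ was arbitrary and both sides are tensors, the identity holds everywhere.

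The remaining three identities I would handle by the same recipe: in normal coordinates at $p$, expand both orderings of the covariant derivatives, discard all undifferentiated $\Gamma$'s, and collect the resulting $\partial_{\ov\ell}\Gamma$ (or $\partial_k\ov{\Gamma}$) terms. For $Y^{\ov m}$ and $b_{\ov j}$ one uses $\nabla_{\ov k}Y^{\ov i}=\partial_{\ov k}Y^{\ov i}+\ov{\Gamma^i_{jk}}Y^{\ov j}$ and $\nabla_{\ov k}b_{\ov j}=\partial_{\ov k}b_{\ov j}-\ov{\Gamma^i_{jk}}b_{\ov i}$, yielding the conjugated Christoffel derivative, which by Proposition \ref{Rsymmetry}(i) equals $R^{\ov m}_{\ \,\ov jk\ov\ell}$ (up to the appropriate sign determined by whether the index is upper or lower); for the $(1,0)$-form $a_i$ the computation mirrors the $X^m$ case but with $a_m$ appearing with a minus sign because $\nabla_k a_i$ carries a $-\Gamma^j_{ik}a_j$ rather than $+\Gamma^i_{jk}X^j$.

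There is no substantial obstacle; the only thing one has to watch carefully is the sign and the placement of indices, together with the observation that mixed-type Christoffel symbols vanish in K\"ahler geometry so that no additional terms enter. Once the first computation is set up cleanly, the other three are essentially transcriptions with the appropriate index/raising-lowering conventions.
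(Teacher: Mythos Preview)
Your proposal is correct and follows essentially the same approach as the paper: fix a point, pass to normal coordinates, and compute the commutator directly using that the Christoffel symbols vanish at that point while their $\partial_{\ov\ell}$-derivatives give the curvature by definition. The paper carries out the first identity in one line and leaves the remaining three as exercises, which is exactly what you do (with a bit more commentary on why no mixed-type Christoffel terms arise).
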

\begin{proof}
We prove the first and leave the other three as exercises.  Compute at a point $p$ in a normal coordinate system for $g$,
\begin{align}
[ \nabla_k, \nabla_{\ov{\ell}} ] X^m & = \partial_k \partial_{\ov{\ell}} X^m - \partial_{\ov{\ell}} (\partial_k X^m + \Gamma^m_{ki} X^i)
 =  - (\partial_{\ov{\ell}}  \Gamma^m_{ik}) X^i = R^{\ m}_{i \ \, \, k \ov{\ell}} X^i,
\end{align}
as required.  \qed
\end{proof}

Note that the commutation formulae of Proposition \ref{commformulae} can naturally be extended to tensors of any type.  Finally we remark that, when acting on any tensor, we have $[\nabla_i, \nabla_j] =  0 = [ \nabla_{\ov{i}}, \nabla_{\ov{j}}]$, as the reader can  verify.

\subsection{The maximum principle} \label{sectmax}

There are various notions of `maximum principle'.  In the setting of the Ricci flow, Hamilton introduced his \emph{maximum principle for tensors} \cite{CLN, H1, H3} which has been exploited in quite sophisticated ways to investigate the positivity of curvature tensors along the flow (see for example \cite{B1, BW, BS1,H2,M, NW}).    For our purposes however, we need only a simple version of the maximum principle.

 We begin with an elementary lemma.  As above, $(M, \omega)$ will be a compact K\"ahler manifold.

\begin{proposition} \label{sdt}
Let $f$ be a smooth real-valued function on $M$ which achieves its maximum (minimum) at a point $x_0$ in $M$.  Then at $x_0$,
\begin{equation} \label{sdte}
d f =0 \quad \textrm{and} \quad \sqrt{-1} \partial \overline{\partial} f  \le 0 \ (\ge 0).
\end{equation}
\end{proposition}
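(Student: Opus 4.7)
The plan is to treat the two assertions separately and reduce the whole thing to the maximum case, since replacing $f$ by $-f$ immediately yields the minimum case from the maximum case. The first assertion is essentially calculus, and the second will be reduced to a one-variable real Hessian test on a holomorphic disk.

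First, since $f$ is smooth, $M$ has no boundary, and $x_0$ is an interior extremum, the usual critical-point argument in any real coordinate chart around $x_0$ gives $df|_{x_0} = 0$. Nothing specific to the K\"ahler structure is needed here.

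For the $(1,1)$-form inequality, I would fix a holomorphic coordinate chart $(z^1,\dots,z^n)$ centered at $x_0$ and set $A_{i\ov{j}} := \dd{i}\dbr{j} f(x_0)$, which is Hermitian because $f$ is real-valued. By definition, $\sqrt{-1}\,\partial \dbar f \le 0$ at $x_0$ means $A_{i\ov j} v^i\,\ov{v^j} \le 0$ for every $v \in \mathbb{C}^n$. Given such a $v$, my approach is to test the inequality along the holomorphic disk $t \mapsto (tv^1,\dots,tv^n)$ for $t \in \mathbb{C}$ small, and work with the single real-valued smooth function
\begin{equation*}
g(t) := f(tv^1,\dots,tv^n),
\end{equation*}
which has a local maximum at $t=0$.

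The main (and only nontrivial) step is the dictionary between the complex Hessian of $f$ at $x_0$ and the real Hessian of $g$ at $0$. Writing $t = s + \sqrt{-1}\,u$, the real Hessian of $g$ at $0$ is negative semi-definite, so in particular its trace satisfies $(\partial_s^2 + \partial_u^2)\,g(0) \le 0$. I would then combine the standard identity $\partial_s^2 + \partial_u^2 = 4\,\dd{t}\dbr{t}$ with the chain rule
\begin{equation*}
\dd{t}\dbr{t}\, g(0) \;=\; v^i\,\ov{v^j}\,\dd{i}\dbr{j}f(x_0) \;=\; A_{i\ov j}\,v^i\,\ov{v^j},
\end{equation*}
to conclude $A_{i\ov j}\,v^i\,\ov{v^j} \le 0$. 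There is no serious obstacle in this proof; the one thing to keep straight is the factor of $4$ relating the real Laplacian on $\mathbb{C}$ to $\dd{t}\dbr{t}$, and the fact that on a smooth function of two real variables the second derivative test applies directly.
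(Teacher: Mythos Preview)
Your proof is correct and follows essentially the same idea as the paper's: the paper simply remarks that the result ``is a simple consequence of the fact from calculus that a smooth function has nonpositive Hessian (and hence nonpositive complex Hessian) and zero first derivative at its maximum,'' without further detail. Your restriction-to-a-holomorphic-disk argument is a clean way of making the parenthetical implication explicit, but it is the same underlying mechanism rather than a genuinely different route.
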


Here, if $\alpha= \sqrt{-1} a_{i \ov{j}} dz^i \wedge d\overline{z^j}$ is a real $(1,1)$-form, we write $\alpha \le 0 \ (\ge 0)$ to mean that the Hermitian matrix $(a_{i \ov{j}})$ is nonpositive (nonnegative). Proposition \ref{sdt} is a simple consequence of the fact  from calculus that a smooth function has nonpositive Hessian (and hence nonpositive complex Hessian) and zero first derivative at its maximum.

Next we introduce the Laplace operator $\Delta$ on functions.  Define
\begin{equation}
\Delta f = g^{\ov{j}i} \partial_i \partial_{\ov{j}} f
\end{equation}
for a function $f$.

 In these lecture notes, we will often make use of the trace notation `$\textrm{tr}$'.  If $\alpha= \frac{\sqrt{-1}}{2\pi} a_{i \ov{j}} dz^i \wedge d\overline{z^j}$ is a real $(1,1)$-form then we write
 \begin{equation}
 \tr{\omega}{\alpha} = g^{\ov{j} i} a_{i \ov{j}} = \frac{n \, \omega^{n-1} \wedge \alpha}{\omega^n}.
 \end{equation}
 In this notation, we can write $\Delta f = \tr{\omega}{\ddbar f}$.

It follows immediately from this definition that Proposition \ref{sdt} still holds if we replace $\sqrt{-1}\partial \ov{\partial} f \le 0 \ (\ge 0)$ in (\ref{sdte}) by $\Delta f  \le 0 \ (\ge 0)$.

For the \emph{parabolic maximum principle} (which we still call the \emph{maximum principle}) we
introduce a time parameter $t$.  The following proposition will be used many times in these lecture notes.

\begin{proposition} \label{pmp} Fix $T>0$.
Let $f=f(x,t)$ be a smooth function on $M \times [0,T]$.  If $f$ achieves its maximum (minimum) at $(x_0, t_0) \in M \times [0,T ]$ then either $t_0=0$ or at $(x_0, t_0)$,
\begin{equation} \label{pmpe}
\frac{\partial f}{\partial t} \ge 0 \ (\le 0) \quad \textrm{and} \quad d f =0 \quad \textrm{and} \quad \sqrt{-1} \partial \overline{\partial} f  \le 0 \ (\ge 0).
\end{equation}
\end{proposition}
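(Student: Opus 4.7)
The plan is to reduce the maximum principle on $M\times [0,T]$ to the already-established elliptic version (Proposition \ref{sdt}) plus a one-variable calculus argument in the time direction. I will only treat the maximum case, since the minimum case follows by applying the maximum case to $-f$.

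First I would fix the spatial slice. Assume that $t_0>0$ and consider the function $x \mapsto f(x,t_0)$ on $M$. By hypothesis this function attains its maximum at $x_0$, so Proposition \ref{sdt} applies directly and yields $df=0$ and $\sqrt{-1}\partial\overline{\partial} f \le 0$ at $(x_0,t_0)$. Note that $d$ and $\sqrt{-1}\partial\overline{\partial}$ here refer to spatial differentiation, so there is no issue with the time variable.

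Next I would handle the time derivative by restricting to the time slice. Define $\varphi: [0,T] \to \mathbb{R}$ by $\varphi(t) = f(x_0,t)$. Since $f(x,t)\le f(x_0,t_0)$ for all $(x,t) \in M\times[0,T]$, in particular $\varphi(t) \le \varphi(t_0)$, so $\varphi$ attains its maximum at $t_0$. Now there are two cases. If $t_0 \in (0,T)$ is an interior point, then elementary calculus gives $\varphi'(t_0)=0$, hence $\partial f/\partial t = 0 \ge 0$ at $(x_0,t_0)$. If $t_0 = T$, then for small $h>0$ we have $\varphi(T-h) \le \varphi(T)$, so the one-sided difference quotient $(\varphi(T)-\varphi(T-h))/h \ge 0$, and letting $h \to 0^+$ yields $\varphi'(T)\ge 0$. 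In either case $\partial f/\partial t \ge 0$ at $(x_0,t_0)$, as required.

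There is no substantive obstacle here — the only mild subtlety is that on the closed interval $[0,T]$ one cannot conclude $\partial f/\partial t = 0$ at an endpoint $t_0 = T$, only the one-sided inequality $\partial f/\partial t \ge 0$; this is exactly what the statement asserts. The hypothesis $t_0 \ne 0$ is needed because at $t_0=0$ the reverse one-sided inequality would hold and no conclusion about $\partial f/\partial t$ could be drawn.
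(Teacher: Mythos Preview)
Your proof is correct. The paper itself does not supply a proof of this proposition --- it simply writes ``Exercise for the reader'' --- so there is nothing to compare against; your argument (reduce to Proposition \ref{sdt} on the time-slice $\{t=t_0\}$, then use one-variable calculus in $t$, distinguishing the interior case $t_0\in(0,T)$ from the boundary case $t_0=T$) is exactly the standard one and fills in the exercise cleanly.
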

\begin{proof}
Exercise for the reader.  \qed
\end{proof}

We remark that, in practice, one is usually given a function $f$ defined on a half-open time interval $[0,T)$ say, rather than a compact interval.  To apply
this proposition it may be necessary to fix an arbitrary $T_0 \in (0, T)$ and work on $[0,T_0]$.  Since we use this procedure many times in the notes, we will often omit to mention the fact that we are restricting to such a compact interval.
Note also that Propositions \ref{sdt} and \ref{pmp} still hold with $M$ replaced by an open set $U \subseteq M$ as long as the maximum (or minimum) of $f$ is achieved in the interior of the set $U$.

We end this section with a useful application of the maximum principle in the case where $f$ satisfies a heat-type differential inequality.

\begin{proposition} \label{propheat} Fix $T$ with $0 < T \le \infty$.
Suppose that $f=f(x,t)$ is a smooth function on $M \times [0,T)$ satisfying the differential inequality
\begin{equation} \label{diffineq}
\left( \ddt{} - \Delta \right) f \le 0.
\end{equation}
Then $\sup_{(x,t) \in M \times [0, T)} f(x,t) \le \sup_{x\in M} f(x,0).$
\end{proposition}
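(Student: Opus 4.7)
The plan is to reduce to the compact-time setting already handled by Proposition \ref{pmp}, and to get around the fact that the differential inequality $(\partial_t - \Delta)f \le 0$ is non-strict by introducing a small linear-in-$t$ perturbation. So first I would fix an arbitrary $T_0 \in (0, T)$ and work on the compact parabolic cylinder $M \times [0, T_0]$; at the end I will let $T_0 \to T$.

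Next, for $\varepsilon > 0$, I would define the perturbed function
\begin{equation*}
\tilde{f}(x, t) = f(x, t) - \varepsilon t,
\end{equation*}
which is smooth on $M \times [0, T_0]$ and satisfies the \emph{strict} differential inequality
\begin{equation*}
\left( \ddt{} - \Delta \right) \tilde{f} \le -\varepsilon < 0.
\end{equation*}
Since $M \times [0, T_0]$ is compact, $\tilde{f}$ attains its maximum at some point $(x_0, t_0)$. I claim $t_0 = 0$: if instead $t_0 > 0$, then Proposition \ref{pmp} applied to $\tilde{f}$ (with the maximum version, so $t_0 \in (0, T_0]$ gives $\partial_t \tilde{f} \ge 0$ and $\Delta \tilde{f} \le 0$ at $(x_0, t_0)$) would yield $(\partial_t - \Delta)\tilde{f}(x_0, t_0) \ge 0$, directly contradicting the strict inequality above. (A minor point to be careful about: Proposition \ref{pmp} is stated giving $\partial_t \ge 0$ only at interior times, but the same conclusion at $t_0 = T_0$ follows because $\tilde{f}(\cdot, T_0)$ being maximal over $[0, T_0]$ forces $\partial_t \tilde{f}(x_0, T_0) \ge 0$ by a one-sided difference quotient argument.)

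Hence $t_0 = 0$, and for every $(x, t) \in M \times [0, T_0]$,
\begin{equation*}
f(x, t) - \varepsilon t = \tilde{f}(x, t) \le \tilde{f}(x_0, 0) = f(x_0, 0) \le \sup_{x \in M} f(x, 0).
\end{equation*}
Sending $\varepsilon \to 0$ gives $f(x, t) \le \sup_{x \in M} f(x, 0)$ for all $(x, t) \in M \times [0, T_0]$, and since $T_0 \in (0, T)$ was arbitrary we conclude that the same bound holds on $M \times [0, T)$, as required.

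The only real subtlety — and the reason the bare maximum principle is not enough — is the non-strict nature of (\ref{diffineq}): without the $-\varepsilon t$ perturbation, the conclusion of Proposition \ref{pmp} at an interior-time maximum would only give $(\partial_t - \Delta)f \ge 0$, which is consistent with (\ref{diffineq}) rather than contradictory. The $\varepsilon$-trick is the standard device that turns this into a genuine contradiction, and letting $\varepsilon \to 0$ at the end recovers the sharp bound. The passage from $[0,T_0]$ to $[0,T)$ is immediate.
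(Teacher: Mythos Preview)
Your proof is correct and follows essentially the same approach as the paper: restrict to a compact interval $[0,T_0]$, perturb by $-\varepsilon t$ to force a strict inequality, use Proposition \ref{pmp} to conclude the maximum occurs at $t_0=0$, then let $\varepsilon \to 0$ and $T_0 \to T$. Your parenthetical remark about the endpoint $t_0=T_0$ is more cautious than necessary, since Proposition \ref{pmp} is stated on a closed interval and already covers that case.
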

\begin{proof}
 Fix $T_0 \in (0,T)$.  For $\ve>0$, define $f_{\ve} = f-\ve t$.  Suppose that $f_{\ve}$ on $M \times [0,T_0]$ achieves its maximum at $(x_0, t_0)$.  If $t_0>0$ then by Proposition \ref{pmp},
\begin{equation}
0 \le \left( \ddt{} - \Delta \right) f_{\ve} \, (x_0, t_0) \le - \ve,
\end{equation}
a contradiction.  Hence the maximum of $f_{\ve}$ is achieved at $t_0=0$ and
\begin{equation}
\sup_{(x,t) \in M \times [0, T_0]} f(x,t) \le
\sup_{(x, t) \in M \times [0, T_0]} f_{\ve} (x,t) + \ve T_0 \le \sup_{x \in M} f(x, 0) + \ve T_0.
\end{equation}
Let $\ve \rightarrow 0$.  Since $T_0$ is arbitrary, this proves the result. \qed
\end{proof}

We remark that a similar result of course holds for the infimum of $f$ if we replace $\left( \ddt{} - \Delta \right) f \le 0$ by $\left( \ddt{} - \Delta \right) f \ge 0$.  Finally, note that  Proposition \ref{propheat} holds, with the same proof, if the Laplace operator $\Delta$ in (\ref{diffineq}) is defined with respect to a metric $g=g(t)$ that depends on $t$.

\subsection{Other analytic results and definitions}

In this subsection, we list a number of other results and definitions from analysis, besides the maximum principle, which we will need later.  For a good reference, see \cite{A2}.
Let $(M, \omega)$ be a compact K\"ahler manifold of complex dimension $n$.  In these lecture notes, we will be concerned only with smooth functions and tensors so for the rest of this section assume that all functions and tensors on $M$ are smooth.  The following is known as the \emph{Poincar\'e inequality}.

\begin{theorem} \label{poincare}
There exists a constant $C_P$ such that for any real-valued function $f$ on $M$ with $\int_M f \omega^n=0$, we have
\begin{equation}
\int_M f^2 \omega^n \le C_P \int_M | \partial f|^2 \omega^n,
\end{equation}
for $| \partial f|^2 = g^{\ov{j} i} \partial_i f \partial_{\ov{j}} f$.
\end{theorem}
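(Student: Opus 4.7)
The plan is to prove the inequality by a standard compactness and contradiction argument, reducing the statement to the fact that a function on the connected compact manifold $M$ with vanishing weak derivative must be constant.

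First, I would assume $M$ is connected (otherwise argue on each component separately) and negate the conclusion. If no such $C_P$ exists, then for every $k \in \mathbb{N}$ there is a smooth real-valued function $f_k$ with $\int_M f_k \, \omega^n = 0$ and $\int_M f_k^2 \, \omega^n > k \int_M |\partial f_k|^2 \, \omega^n$. After normalizing by $\| f_k \|_{L^2}$, I may assume $\int_M f_k^2 \, \omega^n = 1$ while $\int_M |\partial f_k|^2 \, \omega^n < 1/k$, and still $\int_M f_k \, \omega^n = 0$. Note that for a real-valued function, $|df|^2_{g_{\mathbb{R}}}$ and $|\partial f|^2_g$ differ by a fixed constant factor (coming from the relationship in \eqref{gR}), so the $W^{1,2}$-norms defined via $|\partial \cdot|$ and via the Riemannian gradient are comparable. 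Thus $\{f_k\}$ is a bounded sequence in $W^{1,2}(M)$.

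Next, I would invoke the Rellich-Kondrachov compactness theorem for the compact Riemannian manifold $(M, g_{\mathbb{R}})$: the inclusion $W^{1,2}(M) \hookrightarrow L^2(M)$ is compact. Passing to a subsequence, $f_k \to f_\infty$ strongly in $L^2(M)$ and weakly in $W^{1,2}(M)$ for some $f_\infty \in W^{1,2}(M)$. The $L^2$-convergence gives
\[
\int_M f_\infty \, \omega^n = 0 \quad \text{and} \quad \int_M f_\infty^2 \, \omega^n = 1.
\]
The weak convergence in $W^{1,2}$, together with lower semicontinuity of the norm, yields
\[
\int_M |\partial f_\infty|^2 \, \omega^n \le \liminf_{k \to \infty} \int_M |\partial f_k|^2 \, \omega^n = 0,
\]
so $\partial f_\infty = 0$ weakly, and hence $df_\infty = 0$ weakly (using that $f_\infty$ is real-valued, so $\bar\partial f_\infty = \overline{\partial f_\infty} = 0$ as well).

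The final step, and the main technical point, is to conclude that $f_\infty$ is constant. This is a standard result in Sobolev theory: a $W^{1,2}$ function on a connected Riemannian manifold with vanishing weak derivative is a.e.\ equal to a constant. (One proves this by mollifying in coordinate charts and using connectedness to transport the value.) Combined with $\int_M f_\infty \, \omega^n = 0$, we obtain $f_\infty \equiv 0$, contradicting $\| f_\infty \|_{L^2} = 1$. I expect the hardest step to be this last one — making the move from ``weak derivative zero'' to ``constant'' rigorous — but it is a well-known fact in Sobolev theory and can simply be cited. \qed
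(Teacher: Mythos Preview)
Your argument is correct. Note, however, that the paper does not actually prove this theorem: it is stated in the preliminaries section as a standard analytic fact, with a reference to Aubin's book \cite{A2} and no proof given. The only further comment in the paper is the remark that $C_P$ is, up to a universal factor, equal to $\lambda^{-1}$ for $\lambda$ the first nonzero eigenvalue of $-\Delta$.

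That remark points to the other standard route: expand $f$ in an $L^2$-orthonormal basis of eigenfunctions of $-\Delta$, use that the mean-zero condition kills the constant eigenfunction, and read off the inequality directly from the eigenvalue expansion. Your compactness/Rellich--Kondrachov argument is equally standard and has the advantage of not requiring the spectral theorem; the eigenfunction approach, on the other hand, identifies the sharp constant. Either is an acceptable proof of a result the paper simply quotes.
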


We remark that the constant $C_P$ is (up to scaling by some universal factor) equal to $\lambda^{-1}$ where $\lambda$ is the first nonzero eigenvalue of the operator $-\Delta$ associated to $g$.

Next, we have the \emph{Sobolev inequality}.

\begin{theorem}  \label{sobolev} Assume $n>1$.
There exists a uniform constant $C_S$ such that for any real-valued function $f$ on $M$, we have
\begin{equation}
\left( \int_M |f|^{2\beta} \omega^n \right)^{1/\beta} \le C_S \left( \int_M | \partial f|^2 \omega^n + \int_M |f|^2 \omega^n \right),
\end{equation}
for $\beta = n/(n-1)>1$.
\end{theorem}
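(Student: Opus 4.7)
The plan is to reduce the global Sobolev inequality on $(M,\omega)$ to the classical Euclidean Sobolev inequality via a finite cover by coordinate charts and a partition of unity. Since $M$ has real dimension $m = 2n$, the Euclidean critical exponent $2m/(m-2) = 2n/(n-1) = 2\beta$ matches the exponent in the statement exactly; the hypothesis $n>1$ is precisely what makes this exponent finite.

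First I would establish the Euclidean version: for every $u \in C_c^\infty(\mathbb{R}^{2n})$,
\[
\left( \int_{\mathbb{R}^{2n}} |u|^{2\beta}\, d\mu \right)^{1/\beta} \le C_0 \int_{\mathbb{R}^{2n}} |\nabla u|^2\, d\mu,
\]
where $d\mu$ denotes Lebesgue measure. This is the standard Gagliardo--Nirenberg--Sobolev inequality, which I would prove by first establishing the $L^1$ version $\|u\|_{L^{m/(m-1)}} \le C\|\nabla u\|_{L^1}$ via slicing and iterated H\"older, and then applying it to $|u|^p$ with $p = 2(m-1)/(m-2)$ and using H\"older's inequality in the form $\int |u|^{p-1}|\nabla u|\, d\mu \le \|u\|_{L^{2(p-1)}}^{p-1}\|\nabla u\|_{L^2}$. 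This closes up into the desired bound because $pm/(m-1) = 2m/(m-2) = 2(p-1)$. This step is purely Euclidean and uses none of the geometry of $M$.

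Next I would cover $M$ by finitely many coordinate charts $\{U_\alpha\}$ on each of which the K\"ahler metric $g$ is uniformly comparable to the Euclidean metric $\geuc$ of the chart coordinates; this is available by compactness of $M$ and smoothness of $g$. Choosing a smooth partition of unity $\{\chi_\alpha\}$ subordinate to this cover with $|\nabla \chi_\alpha|_g \le C_1$, and writing $f = \sum_\alpha \chi_\alpha f$, the triangle inequality in $L^{2\beta}$ reduces the problem to estimating each $\|\chi_\alpha f\|_{L^{2\beta}}$ separately. Since $\chi_\alpha f$ extends by zero to an element of $C_c^\infty(\mathbb{R}^{2n})$, the Euclidean inequality applies; the product rule gives $|\nabla(\chi_\alpha f)|^2 \le C(|\nabla\chi_\alpha|^2 |f|^2 + |\partial f|^2)$, and uniform comparability of $g$ with $\geuc$ converts Euclidean norms to $g$-norms at the cost of a constant. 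The term $\int |f|^2 \omega^n$ on the right-hand side of the claimed inequality is exactly what is needed to absorb the $|f|\,|\nabla\chi_\alpha|$ contribution coming from differentiating the cutoffs; summing over the finite cover then yields the result. The only genuine obstacle is the Euclidean inequality itself, which is a classical calculation rather than anything specific to K\"ahler geometry.
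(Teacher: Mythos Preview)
Your argument is correct and is essentially the standard proof of the Sobolev inequality on a compact Riemannian manifold: the exponent matching $2m/(m-2)=2n/(n-1)=2\beta$ with $m=2n$ is exactly right, the Gagliardo--Nirenberg bootstrap from the $L^1$ inequality is the usual route to the $L^2$ version, and the partition-of-unity localization with the $\int |f|^2$ term absorbing the cutoff derivatives is precisely how one globalizes. One small remark: for a real-valued $f$ the Riemannian gradient satisfies $|\nabla_{\mathbb{R}} f|_{g_{\mathbb{R}}}^2 = 2|\partial f|_g^2$, so passing between the Euclidean $|\nabla u|^2$ and the paper's $|\partial f|^2$ costs only a harmless constant.

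As for comparison with the paper: the paper does not actually prove this theorem. It is stated without proof in the preliminaries section as a known analytic input, with a pointer to Aubin's book \cite{A2} for details. So there is no ``paper's own proof'' to compare against; your proposal supplies exactly the kind of argument one would find in such a reference.
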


We give now some definitions for later use.     Given a function $f$, define the $C^0$  norm on $M$ to be $\| f \|_{C^0(M)} = \sup_M |f|$.  We give a similar definition for any subset $U \subset M$.
Given a (real) tensor $W$ and a Riemannian metric $g$, we define $|W|_g^2$ by contracting with $g$, in the obvious way (cf. Section \ref{sectkahler}).  Define $\| W \|_{C^0(M,g)}$ to be the $C^0(M)$ norm of $|W|_g$. If no confusion arises, we will often drop the $M$ and $g$ in denoting norms.

Given a function $f$ on $M$, we define for $p \ge 1$ the $L^p(M,\omega)$ norm with respect to a K\"ahler metric $\omega$ by
\begin{equation}
 \| f\|_{L^p(M, \omega)} = \left( \int_M |f|^p \omega^n \right)^{1/p}.
\end{equation}
Note that $\| f\|_{L^p(M,\omega)} \rightarrow \| f\|_{C^0(M)}$ as $p \rightarrow \infty$.

 We use $\nabla_{\mathbb{R}}$ to denote the (real) covariant derivative of $g$.  Given a function $f$, write $\nabla_{\mathbb{R}}^m f$ for the tensor with components (in real coordinates) $(\nabla_{\mathbb{R}})_{i_1} \cdots (\nabla_{\mathbb{R}})_{i_m} f$ and similarly for $\nabla$ acting on tensors.

For a function $f$ and a subset $U \subseteq M$, define
\begin{equation}
\| f \|_{C^k(U, g)} = \sum_{m=0}^k \| \nabla_{\mathbb{R}}^m f \|_{C^0(U,g)},
\end{equation}
and similarly for tensors.

We say that a tensor $T$ has \emph{uniform $C^{\infty}(M,g)$ bounds} if for each $k=0, 1,2, \ldots$ there exists a uniform constant $C_k$ such that $\| T \|_{C^k(M,g)} \le C_k$.  Given an open subset $U \subseteq M$ we say that $T$ has \emph{uniform $C^{\infty}_{\emph{loc}}(U,g)$ bounds} if for any compact subset $K \subseteq U$ there exist constants $C_{k,K}$ such that $\| T \|_{C^k(K,g)} \le C_{k,K}$.  We say that a family of tensors $T_t$ \emph{converges in} $C^{\infty}_{\textrm{loc}}(U,g)$ to a tensor $T_{\infty}$ if for every compact $K \subseteq U$, and each $k=0,1,2, \ldots$, the tensors $T_t$ converge to $T_{\infty}$ in $C^k(K,g)$.

Given $\beta \in (0,1)$, define the H\"older norm $C^{\beta}(M, g)$ of a function $f$ by
\begin{equation}
\| f\|_{C^{\beta}(M, g)} = \| f\|_{C^0(M)} + \sup_{p\neq q} \frac{ |f(p)-f(q)|}{d(p,q)^{\beta}},
\end{equation}
for $d$ the distance function of $g$.  The $C^{\beta}(M,g)$ norm for tensors $T$ is defined similarly, except that we must use parallel transport with respect to $g$ construct the difference $T(p)-T(q)$.  For a positive integer $k$, define $\| f \|_{C^{k+\beta}(M,g)} = \| f \|_{C^k(M,g)} + \| \nabla_{\mathbb{R}}^k f \|_{C^{\beta}(M,g)}$, and similarly for tensors.

Finally, we define what is meant by \emph{Gromov-Hausdorff convergence}.  This is a notion of convergence for metric spaces.  Given two subsets $A$ and $B$ of a metric space $(X,d)$, we define the \emph{Hausdorff distance} between $A$ and $B$ to be
\begin{equation}
d_{\textrm{H}}(A, B) = \inf \{ \ve>0 \ | \ A \subseteq B_{\ve} \ \textrm{and} \ B \subseteq A_{\ve} \}
\end{equation}
where $A_{\ve}= \cup_{a \in A} \{ x \in X \ | \ d(a,x) \le \ve \}.$  We then define the \emph{Gromov-Hausdorff distance} between two compact metric spaces $X$ and $Y$ to be
\begin{equation}
d_{\textrm{GH}}(X,Y) = \inf_{f,g}  d_{\textrm{H}} (f(X), g(Y)),
\end{equation}
where the infimum is taken over all isometric embeddings $f: X \rightarrow Z$, $g: Y \rightarrow Z$ into a metric space $Z$ (for all possible $Z$).  We then say that a family $X_t$ of compact metric spaces \emph{converges in the Gromov-Hausdorff sense} to a compact metric space $X_{\infty}$ if the $X_t$ converge to $X_{\infty}$ with respect to $d_{\textrm{GH}}$.

\subsection{Dolbeault cohomology, line bundles and divisors} \label{sectdol}

In this section we introduce cohomology classes, line bundles, divisors, Hermitian metrics etc.  Good references for this and the next subsection are \cite{GH, KM}.
Let $M$ be a compact complex manifold.  We say that a form $\alpha$ is \emph{$\ov{\partial}$-closed} if $\ov{\partial} \alpha =0$ and \emph{$\ov{\partial}$-exact} if $\alpha = \ov{\partial} \eta$ for some form $\eta$.
Define the \emph{Dolbeault cohomology group} $H_{\ov{\partial}}^{1,1}(M, \mathbb{R})$ by
\begin{equation}
H_{\ov{\partial}}^{1,1}(M, \mathbb{R}) = \frac{ \{ \ov{\partial}\textrm{-closed real (1,1)-forms} \}}{ \{ \ov{\partial}\textrm{-exact real (1,1)-forms} \}}.
\end{equation}
A K\"ahler metric $\omega$ on $M$ defines a nonzero element $[\omega]$ of $H_{\ov{\partial}}^{1,1}(M, \mathbb{R})$.    If a cohomology class $\alpha \in  H_{\ov{\partial}}^{1,1}(M, \mathbb{R})$ can be written $\alpha = [\omega]$ for some K\"ahler metric $\omega$ then we say that $\alpha$ is a \emph{K\"ahler class} and  write $\alpha>0$.

 A basic result of K\"ahler geometry is the $\partial\ov{\partial}$-Lemma.

\begin{theorem}
Let $(M, \omega)$ be a compact K\"ahler manifold.  Suppose that $0= [\alpha] \in H_{\ov{\partial}}^{1,1}(M, \mathbb{R})$ for a real smooth $\ov{\partial}$-closed $(1,1)$-form $\alpha$.  Then there exists a real-valued smooth function $\varphi$ with $\alpha = \ddbar \varphi$, which is uniquely determined up to the addition of a constant.
\end{theorem}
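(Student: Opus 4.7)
The plan is to invoke Hodge theory on the compact K\"ahler manifold $M$. First, I would observe that $\alpha$ is actually $d$-closed: since $\alpha$ is real, $\partial\alpha = \partial\bar\alpha = \overline{\dbar\alpha} = 0$. Because $[\alpha]=0$ in $H^{1,1}_{\dbar}(M,\mathbb{R})$, the $\dbar$-harmonic projection of $\alpha$ vanishes; together with $\dbar\alpha=0$, the Hodge decomposition for the $\dbar$-Laplacian $\Delta_{\dbar}$ collapses to $\alpha = \dbar\,\dbar^{*}G\alpha$, where $G$ denotes the associated Green's operator. Setting $\xi := \dbar^{*}G\alpha$, one obtains a smooth $(1,0)$-form with $\alpha = \dbar\xi$.

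The heart of the argument is then to show that $\xi$ is $\partial$-closed; this is where the K\"ahler hypothesis is indispensable. From the K\"ahler identity $[\Lambda,\partial]=\sqrt{-1}\,\dbar^{*}$ one deduces the anticommutation $\partial\dbar^{*}+\dbar^{*}\partial=0$; moreover $\Delta_{\partial}=\Delta_{\dbar}$, so $G$ commutes with $\partial$. Combining these facts with $\partial\alpha=0$ gives
\[
\partial\xi \;=\; \partial\dbar^{*}G\alpha \;=\; -\dbar^{*}\partial G\alpha \;=\; -\dbar^{*}G\,\partial\alpha \;=\; 0.
\]

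Since $\xi$ is a $\partial$-closed $(1,0)$-form, the Hodge decomposition for $\Delta_{\partial}$ (with the $\partial^{*}\partial$-term dropping out) yields $\xi = \mathcal{H}(\xi) + \partial\varphi_{0}$ for the smooth complex-valued function $\varphi_{0} := \partial^{*}G\xi$. Harmonic forms on a K\"ahler manifold are annihilated by $\dbar$ as well, so $\dbar\mathcal{H}(\xi)=0$ and hence
\[
\alpha \;=\; \dbar\xi \;=\; \dbar\partial\varphi_{0} \;=\; -\partial\dbar\varphi_{0} \;=\; \ddbar\bigl(2\pi\sqrt{-1}\,\varphi_{0}\bigr).
\]
Because $\alpha$ is real and $\ddbar$ sends real functions to real forms, setting $\varphi := -2\pi\,\mathrm{Im}(\varphi_{0})$ will produce a real smooth function with $\alpha = \ddbar\varphi$.

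For uniqueness, if $\ddbar(\varphi_{1}-\varphi_{2})=0$ then $\Delta(\varphi_{1}-\varphi_{2}) = \mathrm{tr}_{\omega}\,\ddbar(\varphi_{1}-\varphi_{2}) = 0$, and the maximum principle forces $\varphi_{1}-\varphi_{2}$ to be constant. The main obstacle in the argument is the $\partial$-closedness of $\xi$ in the second step; without the K\"ahler condition the identity $\partial\dbar^{*}+\dbar^{*}\partial=0$ fails, and indeed the $\partial\dbar$-Lemma itself can fail on non-K\"ahler complex manifolds.
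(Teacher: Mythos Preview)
Your argument is the standard Hodge-theoretic proof of the $\partial\dbar$-Lemma and is correct. The paper, however, does not actually prove this theorem: it is stated in Section~\ref{sectdol} as ``a basic result of K\"ahler geometry'' without proof, the reader being implicitly referred to standard texts such as \cite{GH, KM}. So there is no proof in the paper to compare against; your write-up supplies exactly the kind of argument one finds in those references.

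A couple of minor remarks on presentation. First, when you pass from $\alpha = \dbar\partial\varphi_0$ to $\alpha = \ddbar(2\pi\sqrt{-1}\,\varphi_0)$ and then extract the real part, it is worth stating explicitly that $\overline{\ddbar\psi} = \ddbar\,\ov{\psi}$ (an easy check from $\ddbar = \frac{\sqrt{-1}}{2\pi}\partial\dbar$), which is what justifies $\mathrm{Re}\bigl(\ddbar\psi\bigr) = \ddbar\bigl(\mathrm{Re}\,\psi\bigr)$. Second, your assertion that $G$ commutes with $\partial$ is correct but deserves one more word: since $\Delta_{\partial} = \Delta_{\dbar}$ on a K\"ahler manifold, the two Laplacians share the same harmonic space and the same Green's operator, and the Green's operator for $\Delta_{\partial}$ automatically commutes with $\partial$ and $\partial^{*}$.
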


In other words, a real $(1,1)$-form $\alpha$ is $\ov{\partial}$-exact if and only if it is $\partial\ov{\partial}$-exact.  It is an immediate consequence of the  $\partial\ov{\partial}$-Lemma that if $\omega$ and $\omega'$ are K\"ahler metrics in the same K\"ahler class then $\omega' = \omega+ \ddbar \varphi$ for some smooth function $\varphi$, which is uniquely determined up to a constant, and sometimes referred to as a \emph{(K\"ahler) potential}.

A \emph{line bundle} $L$ over $M$ is given by an open cover $\{ U_{\alpha} \}$ of $M$ together with collection of \emph{transition functions} $\{ t_{\alpha \beta} \}$ which are holomorphic maps $t_{\alpha \beta}: U_{\alpha} \cap U_{\beta} \rightarrow \mathbb{C}^*$ satisfying
\begin{equation} \label{cocycle}
t_{\alpha \beta} t_{\beta \alpha}=1, \quad t_{\alpha \beta} t_{\beta \gamma} = t_{\alpha \gamma}.
\end{equation}
We identify two such collections of transition functions $\{ t_{\alpha\beta} \}$ and $\{ t'_{\alpha \beta} \}$ if we can find holomorphic functions $f_{\alpha} : U_{\alpha} \rightarrow \mathbb{C}^*$ with $t'_{\alpha\beta} = \frac{f_{\alpha}}{f_{\beta}} t_{\alpha \beta}$.  (In addition, we also need to identify $( \{ U_{\alpha} \}, \{ t_{\alpha \beta} \})$, $( \{U'_{\gamma} \}, \{ t'_{\gamma \delta} \})$ whenever $\{ U'_{\gamma} \}$ is a refinement of $\{ U_{\alpha} \}$ and the $t'_{\gamma\delta}$ are restrictions of the $t_{\alpha \beta}$.  We will not dwell on  technical details about refinements etc and instead refer the reader to \cite{GH} or \cite{KM}.)  Given line bundles $L, L'$ with transition functions $\{ t_{\alpha \beta} \}$, $\{ t'_{\alpha \beta} \}$ write $LL'$ for the new line bundle with transition functions $\{ t_{\alpha \beta} t'_{\alpha \beta}\}$.  Similarly, for any $m \in \mathbb{Z}$, we  define line bundles $L^m$ by $\{ t_{\alpha \beta}^m \}$.  We call $L^{-1}$ the inverse of $L$.     Sometimes we use the additive notation for line bundles, writing $L+L'$ for $LL'$ and $mL$ for $L^m$.

A \emph{holomorphic section} $s$ of $L$ is a collection $\{ s_{\alpha} \}$ of holomorphic maps $s_{\alpha} : U_{\alpha} \rightarrow \mathbb{C}$ satisfying the transformation rule $s_{\alpha} = t_{\alpha \beta} s_{\beta}$ on $U_{\alpha} \cap U_{\beta}$.  A \emph{Hermitian metric} $h$ on $L$ is a collection $\{ h_{\alpha} \}$ of smooth positive functions $h_{\alpha}: U_{\alpha} \rightarrow \mathbb{R}$ satisfying the transformation rule $h_{\alpha} = |t_{\beta \alpha}|^2 h_{\beta}$ on $U_{\alpha} \cap U_{\beta}$.   Given a holomorphic section $s$ and a Hermitian metric $h$, we can define the \emph{pointwise norm squared} of $s$ with respect to $h$ by  $|s|^2_h = h_{\alpha} s_{\alpha} \ov{s_{\alpha}}$ on $U_{\alpha}$.  The reader can check that $|s|^2_h$ is a well-defined function on $M$.

We define the \emph{curvature} $R_h$ of a Hermitian metric $h$ on $L$ to be the closed $(1,1)$ form on $M$ given by $R_h = - \ddbar \log h_{\alpha}$ on $U_{\alpha}$.  Again, we let the reader check that this is well-defined.
Define the \emph{first Chern class} $c_1(L)$ of $L$ to be the cohomology class $[R_h] \in H^{1,1}_{\ov{\partial}}(M, \mathbb{R})$.  Since any two Hermitian metrics $h, h'$ on $L$ are related by $h' = h e^{-\varphi}$ for some smooth function $\varphi$, we see that $R_{h'} = R_h + \ddbar \varphi$ and hence $c_1(L)$ is well-defined independent of choice of Hermitian metric.  Note that if $h$ is a Hermitian metric on $L$ then $h^m$ is a Hermitian metric on $L^m$ and $c_1(L^m) = m c_1(L)$.

Every complex manifold $M$ is equipped with a line bundle $K_M$, known as the \emph{canonical bundle}, whose transition functions are given by $t_{\alpha \beta} = \det \left( \partial z_{\beta}^i/{\partial z_{\alpha}^j} \right)$ on $U_{\alpha}\cap U_{\beta}$, where $U_{\alpha}$ are coordinate charts for $M$ with coordinates $z^1_{\alpha}, \ldots, z^n_{\alpha}$.  If $g$ is a K\"ahler metric (or more generally, a Hermitian metric) on $M$ then $h_{\alpha}=\det (g^{\alpha}_{i \ov{j}} )^{-1}$ on $U_{\alpha}$ defines a Hermitian metric on $K_M$.  The inverse $K_M^{-1}$ of $K_M$ is sometimes called the \emph{anti-canonical bundle}.  Its first Chern class $c_1(K_M^{-1})$ is called the \emph{first Chern class of $M$} and is often denoted by $c_1(M)$.  It follows from Proposition \ref{Rcformula} and the above definitions that $c_1(M) = [\textrm{Ric}(\omega)]$ for any K\"ahler metric $\omega$ on $M$.

We now discuss \emph{divisors} on $M$.  First, we say that a subset $V$ of $M$ is an \emph{analytic hypersurface} if $V$ is locally given as the zero set $\{ f=0 \}$ of a locally defined holomorphic function $f$.  In general, $V$ may not be a submanifold.  Denote by $V^{\textrm{reg}}$ the set of points $p \in V$ for which $V$ is a submanifold of $M$ near $p$.  We say that $V$ is \emph{irreducible} if $V^{\textrm{reg}}$ is connected.
  A \emph{divisor} $D$ on $M$ is a formal finite sum $\sum_i a_i V_i$ where $a_i \in \mathbb{Z}$ and each $V_i$ is an irreducible analytic hypersurface of $M$.    We say that $D$ is \emph{effective} if the $a_i$ are all nonnegative.  The \emph{support} of $D$ is the union of the $V_i$ for each $i$ with $a_i \neq 0$.

Given a divisor $D$ we define an \emph{associated line bundle} as follows.  Suppose that $D$ is given by local defining functions $f_{\alpha}$ (vanishing on $D$ to order 1) over an open cover $U_{\alpha}$.  Define transition functions $t_{\alpha \beta} = f_{\alpha}/f_{\beta}$ on $U_{\alpha} \cap U_{\beta}$.  These are holomorphic and nonvanishing in $U_{\alpha}\cap U_{\beta}$, and satisfy (\ref{cocycle}).  Write $[D]$ for the associated line bundle, which is well-defined independent of choice of local defining functions.  Note that the map $D \mapsto [D]$ is not injective.  Indeed  if $D\neq 0$ is defined by a meromorphic function $f$ on $M$ then $[D]$ is trivial.

As an example: associated to a hyperplane $ \{ Z_i =0 \}$ in $\mathbb{P}^n$ is the line bundle $H$, called the \emph{hyperplane bundle}.  Taking the open cover $U_{\alpha} = \{ Z_{\alpha} \neq 0 \}$, the hyperplane is given by $ Z_i/Z_{\alpha}=0$ in $U_{\alpha}$.  Thus we can define $H$ by the transition functions $t_{\alpha \beta} = Z_{\beta}/Z_{\alpha}$.    Define a Hermitian metric $h_{\textrm{FS}}$ on $H$ by
\begin{equation}
(h_{\textrm{FS}})_{\alpha} = \frac{|Z_{\alpha}|^2}{|Z_0|^2 + \cdots + |Z_n|^2} \quad \textrm{on} \quad U_{\alpha}.
\end{equation}
Notice that $R_{h_{\textrm{FS}}} = \omega_{\textrm{FS}}$.  The canonical bundle of $\mathbb{P}^n$ is given by $K_{\mathbb{P}^n} = -(n+1)H$ and  $c_1(\mathbb{P}^{n}) = (n+1) [\omega_{\textrm{FS}}]>0$.  The line bundle $H$ is sometimes written $\mathcal{O}(1)$.

\subsection{Notions of positivity of line bundles} \label{sectnotion}

Let $L$ be a line bundle over a compact K\"ahler manifold $(M, \omega)$.  We say that $L$ is \emph{positive}  if $c_1(L)>0$.  This is equivalent to saying that there exists a Hermitian metric $h$ on $L$ for which $R_h$ is a K\"ahler form.

The Kodaira Embedding Theorem relates the positivity of $L$ with embeddings of $M$ into projective space via sections of $L$.  More precisely, write $H^0(M, L)$ for the vector space of holomorphic sections of $L$.   This is finite dimensional if not empty.   We say that $L$ is \emph{very ample} if for any ordered basis
 $\underline{s} = (s_0, \ldots, s_N)$ of $H^0(M, L)$, the map $\iota_{\underline{s}}: M \rightarrow \mathbb{P}^N$ given by
 \begin{equation} \label{iota}
 \iota_{\underline{s}} (x) = [s_0(x), \ldots, s_N(x)],
 \end{equation}
is well-defined and an embedding.  Note that $s_0(x), \ldots, s_N(x)$ are not well-defined as elements of $\mathbb{C}$, but $[s_0(x), \ldots, s_N(x)]$ is a well-defined element of $\mathbb{P}^N$ as long as not all the $s_i(x)$ vanish.  We say that $L$ is \emph{ample} if there exists a positive integer $m_0$ such that $L^m$ is very ample for all $m \ge m_0$.  The Kodaira Embedding Theorem states:

\begin{theorem}
$L$ is ample if and only if $L$ is positive.
\end{theorem}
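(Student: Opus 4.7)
The plan is to prove the two directions separately. The direction ``ample implies positive'' is the easier one: if $L^m$ is very ample via the embedding $\iota_{\underline{s}} : M \to \mathbb{P}^N$, then $\iota_{\underline{s}}^* H = L^m$ where $H$ is the hyperplane bundle. Pulling back the Fubini-Study Hermitian metric $h_{\mathrm{FS}}$ on $H$ produces a Hermitian metric on $L^m$ whose curvature is $\iota_{\underline{s}}^* \omega_{\mathrm{FS}}$, a K\"ahler form on $M$ (since $\iota_{\underline{s}}$ is an immersion). Therefore $m \, c_1(L) = c_1(L^m) > 0$, so $c_1(L) > 0$.

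For the hard direction, the strategy I would follow is the classical one via Kodaira vanishing. The first step is Kodaira's vanishing theorem: if $L$ is positive, then $H^q(M, K_M + L) = 0$ for all $q \geq 1$. The proof applies the Bochner-Kodaira-Nakano identity to $K_M + L$-valued harmonic $(0,q)$-forms; the positivity of $R_h$ for a suitable Hermitian metric $h$ on $L$ provides a positive lower bound that forces all such harmonic representatives to vanish.

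With vanishing in hand, one needs to show that for $m$ sufficiently large, the global sections of $L^m$ (i) have no common zero, (ii) separate any two distinct points, and (iii) separate tangent directions at each point. The standard technique for each of (i)--(iii) is to blow up $M$ at the relevant point, pair of points, or first-order infinitesimal neighborhood, producing $\pi : \tilde{M} \to M$ with exceptional divisor $E$. One verifies that for $m$ large a suitable twist $\pi^*(L^m) - k E$ is still positive on $\tilde{M}$ by exhibiting an explicit Hermitian metric whose curvature is a K\"ahler form; applying Kodaira vanishing on $\tilde{M}$ to the relevant bundle and chasing the associated long exact cohomology sequence produces sections of $L^m$ on $M$ with the required vanishing/non-vanishing pattern.

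The main obstacle is obtaining uniformity: we need a single $m_0$ such that $L^m$ is very ample for every $m \geq m_0$, which forces the sections produced above to separate every pair of points and every tangent direction simultaneously. This requires a compactness argument, using compactness of $M$ (and of the unit tangent bundle) together with quantitative control of the positivity of $\pi^*(L^m) - kE$ as the blow-up point varies. An alternative analytic route uses H\"ormander's $L^2$ estimates for $\dbar$ with singular Hermitian weights carrying logarithmic poles at the chosen points, which sidesteps the blow-ups but shifts the burden onto constructing the weights and executing the weighted estimates.
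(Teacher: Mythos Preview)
Your easy direction (ample $\Rightarrow$ positive) matches the paper's argument: the paper pulls back $\omega_{\textrm{FS}}$ via $\iota_{\underline{s}}$, computes $\iota_{\underline{s}}^*\omega_{\textrm{FS}} = R_h + \frac{\sqrt{-1}}{2\pi}\partial\overline{\partial} f$ for a global function $f = \log(|s_0|^2_h + \cdots + |s_N|^2_h)$, and concludes $\frac{1}{m}\iota_{\underline{s}}^*\omega_{\textrm{FS}} \in c_1(L)$, so $c_1(L)>0$. Your phrasing in terms of pulling back $h_{\textrm{FS}}$ to a Hermitian metric on $L^m$ is equivalent.

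For the hard direction (positive $\Rightarrow$ ample), note that the paper does \emph{not} give a proof: it simply remarks that this is ``the hard part of this theorem'' and moves on. Your outline via Kodaira vanishing, blow-ups at points and pairs of points, and the exact-sequence/compactness argument is the standard textbook route (as in Griffiths--Harris), and is a correct sketch. So here you are supplying considerably more than the paper does; the paper treats Kodaira embedding as a black box for the difficult implication.
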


The hard part of this theorem is the `if' direction.   For the other direction, assume that $L^m$ is very ample, with $(s_0, \ldots, s_N)$ a basis of $H^0(M, L^m)$.
Since $M$ is a submanifold of $\mathbb{P}^n$, we see that $\iota_{\underline{s}}^* \ofs$ is a K\"ahler form on $M$ and  if $h$ is any Hermitian metric on $L^m$ then by definition of $\iota_{\underline{s}}$,
\begin{equation}
\iota_{\underline{s}}^* \ofs = - \ddbar \log h + \ddbar \log ( |s_0|^2_h + \cdots + |s_N|^2_h) = R_h + \ddbar f,
\end{equation}
for a globally defined function $f$. This implies that $\frac{1}{m} \iota_{\underline{s}}^*\ofs \in c_1(L)$ and hence $c_1(L)>0$.

We say that a line bundle $L$ is \emph{globally generated} if for each $x \in M$ there exists a holomorphic section $s$ of $L$ such that $s(x) \neq 0$.  If $L$ is globally generated then given an ordered basis $\underline{s} = (s_0, \ldots, s_N)$ of holomorphic sections of $L$, we have a well-defined holomorphic map $\iota_{\underline{s}}: M \rightarrow \mathbb{P}^N$ given by (\ref{iota})  (although it is not necessarily an embedding).  We say that a line bundle $L$ is
 \emph{semi-ample} if there exists a positive integer $m_0$ such that $L^{m_0}$ is globally generated.  Observe that if $L$ is semi-ample then, by considering again the pull-back of $\ofs$ to $M$ by an appropriate map $\iota_{\underline{s}}$, there exists a Hermitian metric $h$ on $L$ such that $R_h$ is a nonnegative (1,1)-form.  That is, $c_1(L)$ contains a nonnegative representative.

We next discuss the pairing of line bundles with curves in $M$.   By a \emph{curve} in $M$ we mean an analytic subvariety of dimension 1.  If $C$ is smooth, then we define
\begin{equation}
L \cdot C = \int_C R_h,
\end{equation}
where $h$ is any Hermitian metric on $L$.  By Stokes' Theorem, $L \cdot C$ is independent of choice of $h$.  If $C$ is not smooth then we integrate over $C^{\textrm{reg}}$, the smooth part of $C$ (Stokes' Theorem still holds - see for example  \cite{GH}, p.33).  We can also pair a divisor $D$ with a curve by setting $D \cdot C = [D] \cdot C$, and we may pair a general element $\alpha \in H^{1,1}(M, \mathbb{R})$ with a curve $C$ by setting $\alpha \cdot C = \int_C \eta$ for $\eta \in \alpha$.

We say that a line bundle $L$ is \emph{nef} if $L \cdot C \ge 0$ for all curves $C$ in $M$ (`nef' is an abbreviation of either `numerically eventually free' or `numerically effective', depending on whom you ask).  It follows immediately from the definitions that:
\begin{equation}
L \ \textrm{ample} \quad \Rightarrow \quad L \ \textrm{semi-ample} \quad \Rightarrow \quad L \ \textrm{nef}.
\end{equation}

  We may also pair a line bundle with itself $n$ times, where $n$ is the complex dimension of $M$.  Define
  \begin{equation}
c_1(L)^n := \int_M (R_h)^n.
  \end{equation}
Moreover, given any $\alpha \in H^{1,1}(M, \mathbb{R})$ we define $\alpha^n = \int_M \eta^n$ for $\eta \in \alpha$.

Assume now that $M$ is a smooth projective variety.  We say that a line bundle $L$ on $M$ is \emph{big}  if there exist constants $m_0$ and $c>0$ such that $\dim H^0(M, L^m) \ge c \, m^n$ for all $m \ge m_0$.     It follows from the Riemann-Roch Theorem (see \cite{Ha, L}, for example) that a nef line bundle is big if and only if $c_1(L)^n>0$.  It follows that an ample line bundle is both nef and big.  If $M$ has $K_M$ big then we say that $M$ is of \emph{general type}.   If $M$ has $K_M$ nef then we say that $M$ is a \emph{smooth minimal model}.

We define the \emph{Kodaira dimension} of $M$ to be the infimum of $\kappa \in [-\infty, \infty)$  such  that there exists a constant $C$ with  $\dim H^0(M, K_M^m) \le C m^{\kappa}$ for all positive $m$.  In the special case that all $H^0(M, K_M^m)$ are empty, we have $\kappa=-\infty$.  The largest possible value of $\kappa$ is $n$. We write $\kod(M)$ for the Kodaira dimension $\kappa$ of $M$. Thus if $M$ is of general type then $\kod(M)=n$.  If $M$ is \emph{Fano}, which means that $c_1(M)>0$ then $\kod (M)=-\infty$.

If $K_M$ is semi-ample then for $m$ sufficiently large, the map $\iota_{\underline{s}}: M \rightarrow \mathbb{P}^N$ given by sections of $K_M^m$ has image a subvariety $Y$, which is uniquely determined up to isomorphism.  $Y$ is called the \emph{canonical model} of $M$ and $\dim Y = \kod(M)$ \cite{L}.

We now quote some results from algebraic geometry:

\pagebreak[3]
\begin{theorem} \label{algebraic} Let $M$ be an projective algebraic manifold.
\begin{enumerate}
\item[(i)] Let $\alpha$ be a K\"ahler class and let $L$ be a nef line bundle.  Then $\alpha + s \, c_1(L)$ is K\"ahler for all $s>0$.
\item[(ii)]  (Kawamata's Base Point Free Theorem) If $L$ is nef  and $aL - K_X$ is nef and big for some $a>0$ then  $L$ is semi-ample.
\item[(iii)]  (Kodaira's Lemma) Let $L$ be a nef and big line bundle on $M$.
Then there exists an effective divisor $E$ and $\delta>0$ such that $c_1(L) - \ve c_1( [E])>0$   for all $\ve \in (0, \delta]$.
\end{enumerate}
\end{theorem}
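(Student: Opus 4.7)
These three statements are quoted from algebraic geometry, so I will describe the approach I would take rather than give a complete proof. For part (i), the plan is to exploit the transcendental description of nefness. By a theorem of Demailly, the algebraic nefness $L \cdot C \ge 0$ implies that for every $\epsilon > 0$ the class $c_1(L)$ admits a smooth representative $\chi_\epsilon$ with $\chi_\epsilon \ge -\epsilon \omega$, where $\omega \in \alpha$ is a Kähler representative. Then for $\epsilon < 1/s$ the form $\omega + s\chi_\epsilon \ge (1-s\epsilon)\omega$ is Kähler and lies in $\alpha + s\,c_1(L)$, which gives the claim. This is essentially the statement that the Kähler cone is open and that nef classes lie in its closure, so that translating the closed nef cone by a Kähler class lands strictly inside the Kähler cone.

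For part (iii), the key input is bigness. Fix a very ample line bundle $A$ on $M$ with a smooth divisor $D \in |A|$ cut out by a section. From the short exact sequence
\begin{equation}
0 \to \mathcal{O}(L^m - A) \to \mathcal{O}(L^m) \to \mathcal{O}(L^m)|_{D} \to 0
\end{equation}
one obtains $\dim H^0(M, L^m - A) \ge \dim H^0(M, L^m) - \dim H^0(D, L^m|_D)$. The first term grows like $m^n$ by bigness, while the second is $O(m^{n-1})$ since $\dim D = n-1$, so for $m$ large there is a nonzero section of $L^m - A$, whose zero divisor $E$ gives a linear equivalence $mc_1(L) = c_1(A) + c_1([E])$. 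Hence
\begin{equation}
c_1(L) - \epsilon\, c_1([E]) = (1 - \epsilon m)\,c_1(L) + \epsilon\, c_1(A),
\end{equation}
and for $0 < \epsilon \le \delta := 1/m$ this is the sum of a nonnegative multiple of a nef class and a positive multiple of an ample class. By part (i), this combination is Kähler, which is exactly the desired conclusion.

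For part (ii), Kawamata's Base Point Free Theorem, the main tool is the Kawamata-Viehweg vanishing theorem. The strategy is to show that $L^m$ is globally generated for all $m \gg 0$ by producing, for each $x \in M$, a section of some $L^m$ not vanishing at $x$. Using bigness of $aL - K_X$, one constructs an effective $\mathbb{Q}$-divisor $B$ numerically equivalent to a small positive multiple of $aL - K_X$ whose multiplier ideal sheaf $\mathcal{I}(B)$ cuts out precisely the point $x$ (after a suitable perturbation to control the log canonical locus). Kawamata-Viehweg vanishing applied to $mL - K_X - B$ then yields $H^1(M, \mathcal{I}(B) \otimes L^m) = 0$ for $m$ large, so the evaluation map $H^0(M, L^m) \to (L^m)_x$ is surjective, giving the nonvanishing section. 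I expect the delicate construction and perturbation of the $\mathbb{Q}$-divisor $B$, together with the induction on dimension needed to promote pointwise nonvanishing to a uniform $m_0$, to be the main obstacle; I would follow Lazarsfeld's treatment in \emph{Positivity in Algebraic Geometry} for the technical details.
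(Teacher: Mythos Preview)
Your sketches are correct and in fact go considerably further than the paper does: the paper's ``proof'' of this theorem consists entirely of citations (Demailly's notes and Lazarsfeld for (i) and (iii), Kawamata--Matsuda--Matsuki and Shokurov for (ii)), with no argument given. The arguments you outline for (i) and (iii) are precisely the standard ones found in those references, and your multiplier-ideal sketch for the Base Point Free Theorem follows the modern treatment in Lazarsfeld rather than the original cone-theorem route of Kawamata and Shokurov, but both lead to the same result. There is nothing to correct here.
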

\begin{proof}  For part (i), see for example Proposition 6.2 in \cite{De} or Corollary 1.4.10 in \cite{L}.  For part (ii), see \cite{KMM, Sh}.
 For part (iii), see for example p.43 of \cite{De}. \qed
\end{proof}

It will be useful to gather here some results from complex surfaces which we will make use of later.
First we have the \emph{Adjunction Formula} for surfaces.  See for example \cite{GH} or \cite{BHPV}.

\begin{theorem} \label{thmadj}
Let $M$ be a K\"ahler surface, with $C$ an irreducible smooth curve in $M$.  Then if $g(C)$ is the genus of $C$, we have
\begin{equation}\label{adj}
 1 + \frac{K_M \cdot C + C \cdot C}{2}=g(C).
\end{equation}
Moreover, if $C$ is an irreducible, possibly singular, curve in $M$, we have
\begin{equation}\label{adj2}
 1 + \frac{K_M \cdot C + C \cdot C}{2} \ge 0,
\end{equation}
with equality if and only if $C$ is smooth and isomorphic to $\mathbb{P}^1$.
\end{theorem}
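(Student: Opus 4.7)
The plan is to establish the equality (\ref{adj}) in the smooth case by identifying the canonical bundle of $C$ with the restriction of $K_M \otimes [C]$ to $C$, and then reducing the singular case to the smooth one via normalization.

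For the smooth case, I would start from the normal bundle exact sequence
\begin{equation}
0 \to T_C \to T_M|_C \to N_{C/M} \to 0,
\end{equation}
valid because $C$ is a smooth complex submanifold. Taking determinants gives an isomorphism of line bundles $K_M|_C \cong K_C \otimes N_{C/M}^{-1}$, and since $C$ is a divisor on $M$ cut out (locally) by a single holomorphic function, the normal bundle is $N_{C/M} \cong [C]|_C$. Rearranging produces the adjunction identity $K_C \cong (K_M \otimes [C])|_C$ as line bundles on $C$. Integrating the curvature of a Hermitian metric yields $\deg K_C = K_M \cdot C + C \cdot C$, using the definition of intersection numbers from Section \ref{sectnotion}. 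On the other hand, the Gauss--Bonnet theorem (or Riemann--Roch) on the compact Riemann surface $C$ gives $\deg K_C = 2g(C)-2$, and combining these yields (\ref{adj}).

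For the singular case, I would invoke the normalization $\nu: \tilde{C} \to C$, which is a smooth compact Riemann surface together with a finite birational map onto $C$. The arithmetic genus $p_a(C) := 1 + \tfrac{1}{2}(K_M \cdot C + C \cdot C)$ (which is well-defined via intersection numbers pulled back from a resolution or computed directly on the ambient surface) satisfies $p_a(C) = g(\tilde{C}) + \delta(C)$, where $\delta(C) = \sum_{p \in C^{\mathrm{sing}}} \delta_p$ is the sum of the local $\delta$-invariants $\delta_p = \dim_{\mathbb{C}}(\nu_*\mathcal{O}_{\tilde{C}}/\mathcal{O}_C)_p$, each of which is a strictly positive integer at a singular point. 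Since $g(\tilde{C}) \ge 0$ and $\delta(C) \ge 0$, inequality (\ref{adj2}) follows at once.

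The equality case is then transparent: $p_a(C)=0$ forces both $g(\tilde{C})=0$ and $\delta(C)=0$. The vanishing of $\delta(C)$ means $C$ has no singular points, so $\nu$ is an isomorphism and $C$ is smooth; the vanishing of the genus together with compactness then identifies $C\cong \tilde{C}\cong \mathbb{P}^1$ by the uniformization of compact Riemann surfaces. Conversely, $C\cong \mathbb{P}^1$ has $g=0$ and is smooth, giving $p_a(C)=0$ by the smooth case just established. The main technical obstacle is justifying the identity $p_a(C)=g(\tilde{C})+\delta(C)$ for possibly singular curves embedded in a Kähler surface, which I would handle by working with the short exact sequence $0 \to \mathcal{O}_C \to \nu_*\mathcal{O}_{\tilde{C}} \to \mathcal{Q} \to 0$, where $\mathcal{Q}$ is supported on $C^{\mathrm{sing}}$ with stalks of length $\delta_p$, and comparing Euler characteristics while noting that the left-hand side of (\ref{adj2}) computes $1-\chi(\mathcal{O}_C)$ by the standard Riemann--Roch formula for curves on a surface.
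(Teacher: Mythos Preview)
The paper does not actually prove this theorem: it is stated as background with the remark ``See for example \cite{GH} or \cite{BHPV}'' and no argument is given. Your proposal is essentially the standard proof found in those references --- the adjunction isomorphism $K_C \cong (K_M \otimes [C])|_C$ via the normal bundle sequence for the smooth case, and the normalization plus $\delta$-invariant comparison for the singular case --- and it is correct as a sketch.

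One small point worth tightening: you should verify that the quantity $1 + \tfrac{1}{2}(K_M \cdot C + C \cdot C)$ really equals the arithmetic genus $1 - \chi(\mathcal{O}_C)$ for an arbitrary (possibly singular) curve on a K\"ahler surface, since the paper defines the intersection numbers analytically via integrals of curvature forms rather than via sheaf cohomology. This amounts to Riemann--Roch for the embedded curve (or equivalently adjunction at the level of dualizing sheaves, $\omega_C \cong (K_M \otimes [C])|_C$, which holds for any Cartier divisor), and is covered in the references cited; but it is the one place where the analytic and algebraic definitions need to be reconciled.
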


Note that $C \cdot C$ is well-defined, since $M$ has complex dimension 2 and so $C$ is both a curve and a divisor.  We may write $C^2$ instead of $C\cdot C$.
Generalizing the intersection pairing, we have the \emph{cup product form} on $H^{1,1}(M, \mathbb{R})$ given by $\alpha \cdot \beta = \int_M \alpha \wedge \beta$.  Again, we write $\alpha^2$ instead of $\alpha \cdot \alpha$.  A divisor $D$  in $M$ defines an element of $H^{1,1}(M, \mathbb{R})$ by $D \mapsto [R_h] \in H^{1,1}(M, \mathbb{R})$ for $h$ a Hermitian metric on the line bundle $[D]$, and  this is consistent with our previous definitions.

We have the  \emph{Hodge Index Theorem} for K\"ahler surfaces (see for example Theorem IV.2.14 of \cite{BHPV} or p.470 of \cite{GH}).

\begin{theorem} \label{index}
The cup product form on $H^{1,1}(M, \mathbb{R})$ is non-degenerate of type $(1, k-1)$, where $k$ is the dimension of $H^{1,1}(M, \mathbb{R})$.  In particular, if $\alpha \in H^{1,1}(M, \mathbb{R})$ satisfies $\alpha^2>0$ then for any $\beta \in H^{1,1}(M, \mathbb{R})$,
\begin{equation}
\alpha \cdot \beta =0 \quad \Rightarrow \quad \beta^2<0 \ \ \textrm{or} \ \  \beta =0.
 \end{equation}
\end{theorem}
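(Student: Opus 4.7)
The plan is to reduce the signature claim to the Hodge--Riemann bilinear relations, after which the ``in particular'' statement becomes pure linear algebra. First I would fix a K\"ahler form $\omega$ on $M$, noting that $[\omega]^2 = \int_M \omega^2 > 0$, and then decompose
\[
H^{1,1}(M, \mathbb{R}) = \mathbb{R}[\omega] \oplus P, \qquad P := \{\beta \in H^{1,1}(M, \mathbb{R}) : \beta \cdot [\omega] = 0\}.
\]
This is an orthogonal direct sum with respect to the cup product by the definition of $P$, and the restriction of the form to $\mathbb{R}[\omega]$ is positive definite, so the signature statement reduces to showing that the form is negative definite on $P$.

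For this step, given $\beta \in P$, I would take the unique harmonic representative $\eta$ with respect to the K\"ahler metric $g$ associated to $\omega$. By the K\"ahler identities, $\eta \wedge \omega$ is again harmonic; since it represents the cohomology class $\beta \cdot [\omega] = 0$, it must vanish identically, so $\eta$ is pointwise primitive. Then, at each point, Proposition \ref{propnormal} lets me simultaneously diagonalize $\eta$ and $\omega$, after which a direct calculation shows that a pointwise primitive real $(1,1)$-form on a surface satisfies $\eta \wedge \eta \le 0$, with strict inequality unless $\eta$ vanishes at that point. Integrating gives $\beta^2 \le 0$ with equality iff $\eta \equiv 0$, i.e.\ $\beta = 0$, which establishes the signature $(1, k-1)$.

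For the final assertion, once the signature is known I choose a basis of $H^{1,1}(M, \mathbb{R})$ in which the cup product form is $\mathrm{diag}(1, -1, \ldots, -1)$. Writing $\alpha = (\alpha_0, a)$ and $\beta = (\beta_0, b)$ with $a, b \in \mathbb{R}^{k-1}$, the hypothesis $\alpha^2 > 0$ becomes $\alpha_0^2 > |a|^2$ (in particular $\alpha_0 \ne 0$), and $\alpha \cdot \beta = 0$ becomes $\alpha_0 \beta_0 = \langle a, b \rangle$. If $b \ne 0$, Cauchy--Schwarz gives $|\alpha_0 \beta_0| \le |a|\,|b| < |\alpha_0|\,|b|$, hence $\beta_0^2 < |b|^2$ and $\beta^2 < 0$. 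If $b = 0$, then $\alpha_0 \beta_0 = 0$ forces $\beta_0 = 0$ and $\beta = 0$.

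The main obstacle is the pointwise Hodge--Riemann inequality, which is the only place genuine K\"ahler input enters. For a surface it reduces to the two-variable observation that $\lambda_1 + \lambda_2 = 0$ implies $\lambda_1 \lambda_2 = -\lambda_1^2 \le 0$, so the actual analytic content is minimal; the care needed is in tracking the constants and signs in the identifications $\eta \wedge \omega = (\mathrm{tr}_\omega \eta)\,\omega^2/2$ and $\eta \wedge \eta$ expressed in a frame diagonalizing $\eta$ against $\omega$.
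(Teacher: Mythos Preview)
Your argument is correct. Note, however, that the paper does not actually supply a proof of Theorem~\ref{index}: it is stated as background and referenced to Theorem~IV.2.14 of \cite{BHPV} and p.~470 of \cite{GH}. So there is no proof in the paper to compare against; what you have written is precisely the standard Hodge--Riemann argument one finds in those references, specialized to surfaces.

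One small comment on presentation: your invocation of Proposition~\ref{propnormal} is fine since a real $(1,1)$-form has Hermitian coefficient matrix, but you should state explicitly that the diagonalization is pointwise and that the eigenvalues $\lambda_1, \lambda_2$ are real. The computation $\eta \wedge \eta = \lambda_1\lambda_2\,\omega^2$ at a point, together with $\lambda_1 + \lambda_2 = 0$, then gives the desired sign. The linear-algebra deduction of the ``in particular'' clause via Cauchy--Schwarz is clean; just note separately the degenerate subcase $a = 0$ (where $\langle a,b\rangle = 0$ forces $\beta_0 = 0$ directly), so that the strict inequality $|a|\,|b| < |\alpha_0|\,|b|$ is not needed there.
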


Finally, we state the Nakai-Moishezon criterion for K\"ahler surfaces, due to Buchdahl and Lamari \cite{Bu, La}.

\begin{theorem} \label{nakai}
Let $M$ be a K\"ahler surface and $\beta$ be a K\"ahler class on $M$. If $\alpha\in H^{1,1}(M, \mathbb{R})$ is a class satisfying $$\alpha^2>0, ~~~ \alpha\cdot \beta>0, ~~~\alpha\cdot C>0$$ for every irreducible curve  $C$ on $M$, then $\alpha$ is a K\"ahler class on $M$.
\end{theorem}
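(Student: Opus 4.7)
The strategy is a continuity argument along the family of classes $\alpha_t := \alpha + t\beta$ for $t \geq 0$. Since $\beta$ is K\"ahler and the K\"ahler cone is open, $\alpha_t$ is K\"ahler for all sufficiently large $t$ (write $\alpha_t = t(\beta + t^{-1}\alpha)$ and use openness). Define
\[
t_0 = \inf\{t \geq 0 : \alpha_t \text{ is a K\"ahler class}\};
\]
the goal is to show $t_0 = 0$ and that $\alpha$ itself is K\"ahler. The Nakai--Moishezon hypotheses pass to $\alpha_{t_0}$: since $\beta$ is K\"ahler, $\alpha_{t_0}^2 = \alpha^2 + 2t_0\alpha\cdot\beta + t_0^2\beta^2 > 0$, $\alpha_{t_0}\cdot\beta > 0$, and $\alpha_{t_0}\cdot C = \alpha\cdot C + t_0\,\beta\cdot C > 0$ for every irreducible curve $C$. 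Suppose for contradiction that $\alpha_{t_0}$ is not K\"ahler. Being a limit of K\"ahler classes, it lies on the boundary of the K\"ahler cone, hence is nef. The problem reduces to showing: a nef class on a K\"ahler surface satisfying the Nakai--Moishezon hypotheses is K\"ahler.

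The next and central step is to produce a \emph{K\"ahler current} in $\alpha_{t_0}$, i.e.\ a closed positive $(1,1)$-current $T$ with $T \geq \omega$ for some K\"ahler form $\omega$ and $[T] = \alpha_{t_0}$. Since $\alpha_{t_0}$ is nef and big (the latter from $\alpha_{t_0}^2 > 0$), one uses a mass concentration argument of Buchdahl/Lamari: solve the cohomological equation $\alpha_{t_0}^{n-1} \cdot \alpha_{t_0} > 0$ by a $\partial\bar\partial$-potential whose singularities concentrate positive mass, exploiting the Kaˆhler (rather than just projective) form of the $\partial\bar\partial$-lemma. Apply Siu's semicontinuity theorem to decompose
\[
T = \sum_{i=1}^N \lambda_i\,[C_i] + R, \qquad \lambda_i > 0,
\]
where the $C_i$ are distinct irreducible curves and $R$ is a closed positive current with vanishing generic Lelong numbers along every curve. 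Passing to cohomology, $\alpha_{t_0} = \sum_i \lambda_i\,[C_i] + [R]$, and Demailly regularization shows that $[R]$ is still nef, so in particular $[R]\cdot C_j \geq 0$ for every $j$.

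Pairing with each $C_j$ gives $\alpha_{t_0}\cdot C_j = \sum_i \lambda_i\,C_i\cdot C_j + [R]\cdot C_j$, and by hypothesis the left-hand side is positive. The key algebraic ingredient is the Hodge Index Theorem (Theorem \ref{index}): since $\alpha_{t_0}^2 > 0$, the cup product form is negative definite on the orthogonal complement of $\alpha_{t_0}$ in $H^{1,1}(M,\mathbb{R})$. Writing each $[C_i]$ as a multiple of $\alpha_{t_0}$ plus an orthogonal part, one checks that if $D := \sum_i \lambda_i [C_i] \neq 0$ then the $N\times N$ intersection matrix $(C_i\cdot C_j)$ cannot be compatible with the simultaneous positivity of $\alpha_{t_0}\cdot C_j$ for all $j$ and the nefness of $[R] = \alpha_{t_0} - D$ (because $D\cdot D$ would have to be both negative, by Hodge index, and bounded below by the positive quantity $\alpha_{t_0}\cdot D - [R]\cdot D$). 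This forces $D = 0$, so the divisorial part of $T$ vanishes and $\alpha_{t_0}$ is represented by a positive current with no divisorial singularities. A final application of Demailly regularization, combined with solving a complex Monge--Amp\`ere equation using $\alpha_{t_0}^2 > 0$, upgrades $\alpha_{t_0}$ to an honest K\"ahler class, contradicting that it was on the boundary of the K\"ahler cone. Hence $t_0 = 0$ and $\alpha$ is K\"ahler.

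\textbf{Main obstacle.} The hardest step is the construction of the K\"ahler current in the nef and big class $\alpha_{t_0}$ on a possibly non-projective K\"ahler surface, where one does not have the algebraic tools of the projective Nakai--Moishezon theorem; this is precisely where the Buchdahl--Lamari analysis is required, and it is what distinguishes the K\"ahler case from the classical projective one.
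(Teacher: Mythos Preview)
The paper does not actually prove Theorem~\ref{nakai}; it merely \emph{states} the Nakai--Moishezon criterion for K\"ahler surfaces and attributes it to Buchdahl and Lamari \cite{Bu, La}, noting also the higher-dimensional generalization of Demailly--Paun \cite{Dpa}. So there is no proof in the paper to compare your proposal against.

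That said, your outline is in the right spirit and broadly follows the Demailly--Paun continuity strategy. A few of the steps are stated too loosely to count as a proof. The Hodge index paragraph is the weakest point: the inequality you sketch, that $D\cdot D$ must be ``both negative and bounded below by $\alpha_{t_0}\cdot D - [R]\cdot D$'', does not obviously yield a contradiction as written, since $\alpha_{t_0}\cdot D - [R]\cdot D = D\cdot D$ tautologically. The actual mechanism is different: one shows (via Demailly regularization) that $\alpha_{t_0}$ carries a K\"ahler current with analytic singularities along a finite union of curves, and then one must \emph{glue} this current with local K\"ahler potentials near those curves, using the hypothesis $\alpha_{t_0}\cdot C_i > 0$ to solve a $\partial\bar\partial$-equation on a neighborhood of each $C_i$. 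In the Buchdahl--Lamari surface case this gluing is what replaces the algebraic intersection argument you attempted. Your identification of the main obstacle (producing the K\"ahler current on a non-projective K\"ahler surface) is correct, but the elimination of the divisorial part needs the gluing step rather than a pure Hodge index contradiction.
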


A generalization of this to K\"ahler manifolds of any dimension was established by Demailly-Paun \cite{Dpa}.

\pagebreak
\section{General estimates for the K\"ahler-Ricci flow} \label{sectgen}

In this section we introduce the K\"ahler-Ricci flow equation.  We derive a number of fundamental evolution equations and estimates for the flow which will be used extensively throughout these notes.  In addition,  we discuss higher order estimates for the flow.

\subsection{The K\"ahler-Ricci flow}

Let $(M, \omega_0)$ be a compact K\"ahler manifold of complex dimension $n$.
A solution of the \emph{K\"ahler-Ricci flow} on $M$ starting at $\omega_0$ is a family of K\"ahler metrics $\omega=\omega(t)$ solving
\begin{equation} \label{krf00}
\ddt{} \omega  = - \Ric(\omega), \qquad \omega|_{t=0} = \omega_0.
\end{equation}
Note that this differs from Hamilton's equation (\ref{hamiltonrf}) by a factor of 2:  see Remark \ref{factorof2}.

For later use it will be convenient to consider a more general equation than (\ref{krf00}), namely
\begin{equation} \label{nu}
\ddt{} \omega  = - \Ric(\omega) - \nu \omega, \qquad \omega|_{t=0} = \omega_0,
\end{equation}
where $\nu$ is a fixed real number which we take to be either $\nu=0$ or $\nu=1$. As we will discuss later in Section \ref{sectn0}, the case $\nu=1$ corresponds to a rescaling of (\ref{krf00}).  When $\nu=1$ we call (\ref{nu}) the \emph{normalized K\"ahler-Ricci flow}.

We have the following existence and uniqueness result.

\begin{theorem} \label{hamilton} There exists a unique solution $\omega=\omega(t)$ to (\ref{nu}) on some maximal time interval $[0, T)$ for some $T$ with $0< T \le \infty$.
\end{theorem}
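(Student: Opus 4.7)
The plan is to use the $\partial\bar\partial$-lemma to reduce the tensor equation (\ref{nu}) to a scalar parabolic complex Monge-Amp\`ere equation, and then invoke standard short-time existence theory for nonlinear parabolic PDEs. First I would fix a smooth volume form $\Omega$ on $M$ and set $\chi := -\ddbar\log\Omega$, a global closed representative of $c_1(M)$. I would then define a smooth family $\hat\omega_t$ of closed real $(1,1)$-forms solving the cohomological ODE $\frac{d}{dt}\hat\omega_t = -\chi - \nu\hat\omega_t$ with $\hat\omega_0 = \omega_0$; explicitly $\hat\omega_t = \omega_0 - t\chi$ when $\nu=0$ and $\hat\omega_t = e^{-t}\omega_0 - (1-e^{-t})\chi$ when $\nu=1$. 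Since $\omega_0$ is K\"ahler and positivity is open in $C^2$, $\hat\omega_t$ remains K\"ahler for $t$ in a neighborhood of $0$. The $\partial\bar\partial$-lemma then lets me parametrize any short-time solution of (\ref{nu}) as $\omega(t) = \hat\omega_t + \ddbar\varphi(t)$ with $\varphi(0)=0$; using the identity $\Ric(\omega) - \chi = -\ddbar\log(\omega^n/\Omega)$ and absorbing an additive function of $t$ into $\varphi$, equation (\ref{nu}) becomes the scalar parabolic complex Monge-Amp\`ere equation
\begin{equation}\label{plan-MA}
\ddt{\varphi} = \log\frac{(\hat\omega_t + \ddbar\varphi)^n}{\Omega} - \nu\varphi, \qquad \varphi(0) = 0.
\end{equation}

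Next I would establish short-time existence for (\ref{plan-MA}). Its right-hand side has linearization $\Delta_\omega - \nu$ at any $\varphi$ with $\omega := \hat\omega_t + \ddbar\varphi > 0$, so the equation is strictly parabolic in a neighborhood of $t=0$. An Inverse Function Theorem argument in the parabolic H\"older spaces $C^{2+\alpha,\,1+\alpha/2}$, with invertibility of the linearized operator supplied by standard linear parabolic Schauder theory, then produces a H\"older solution on some interval $[0,\varepsilon)$, and parabolic bootstrapping upgrades this to $C^\infty$ in space-time.

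For uniqueness, suppose $\varphi_1,\varphi_2$ are two smooth solutions of (\ref{plan-MA}) on a common interval. Using the identity $\log\det A - \log\det B = \int_0^1 \textrm{tr}((sA+(1-s)B)^{-1}(A-B))\,ds$, one sees that $\psi := \varphi_1 - \varphi_2$ satisfies a linear strictly parabolic equation $\ddt{\psi} = a^{i\bar j}\partial_i\partial_{\bar j}\psi - \nu\psi$ with $(a^{i\bar j})$ positive definite Hermitian, and $\psi(0) = 0$; applying Proposition \ref{propheat} to $\pm\psi$ (absorbing the $-\nu\psi$ via $\psi \mapsto e^{\nu t}\psi$ if desired) gives $\psi \equiv 0$. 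Defining $T$ to be the supremum of those $T' > 0$ for which (\ref{plan-MA}) admits a smooth solution on $[0,T')$, the short-time result yields $T > 0$, and uniqueness allows me to patch solutions on overlapping intervals into a single maximal smooth solution on $[0,T)$.

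The main obstacle, and the only nontrivial analytic input, is the short-time existence for the fully nonlinear equation (\ref{plan-MA}); everything else is essentially formal given the $\partial\bar\partial$-lemma and the maximum principle. The IFT in parabolic H\"older spaces outlined above is the cleanest route, though one could alternatively invoke Hamilton's DeTurck-trick short-time existence for the general Ricci flow on Riemannian manifolds and verify that the K\"ahler condition is preserved along the flow.
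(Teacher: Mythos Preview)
Your proposal is correct and follows essentially the same route as the paper: the paper reduces (\ref{nu}) to the scalar parabolic complex Monge--Amp\`ere equation (see Section~\ref{sectpma}, equation (\ref{pcma})) and then simply invokes ``standard parabolic theory'' \cite{Lie} for short-time existence and uniqueness, while also noting (immediately after Theorem~\ref{hamilton}) the alternative via Hamilton's short-time existence plus preservation of the K\"ahler condition---exactly the two routes you outline. Your write-up is in fact more detailed than the paper's on the analytic step (spelling out the IFT in parabolic H\"older spaces and the maximum-principle uniqueness argument for the difference), but the strategy is the same.
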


Since the case $\nu=1$ is a rescaling of (\ref{krf00}), it suffices to consider (\ref{krf00}).
We will provide a proof of this in Section \ref{maximal}, and   show in addition that $T$ can be prescribed in terms of the cohomology class of $[\omega_0]$ and the manifold $M$.   Theorem \ref{hamilton} also follows from the well-known results of Hamilton.  Indeed
we can use the
short time existence result  of Hamilton \cite{H1} (see also \cite{DeT}) to obtain a maximal solution to the Ricci flow $\ddt{} g_{ij} = - R_{ij}$ on $[0, T)$ starting at $g_0$ for some $T>0$.  Since the Ricci flow preserves the K\"ahler condition (see e.g. \cite{H3}), $g(t)$ solves (\ref{krf00}) on $[0,T)$.   Note  that this argument does not  explicitly give us the value of $T$.

A remark about notation.  When we write tensorial objects such as curvature tensors $R_{i \ov{j} k \ov{\ell}}$, covariant derivatives $\nabla_i$, Laplace operators $\Delta$, we refer to the objects corresponding  to  the evolving metric $\omega=\omega(t)$, unless otherwise indicated.

\subsection{Evolution of scalar curvature}

Let $\omega=\omega(t)$ be a solution to the K\"ahler-Ricci flow (\ref{nu}) on $[0,T)$ for $T$ with $0 < T \le \infty$.
We compute the well-known evolution of the scalar curvature.
\begin{theorem} \label{scalar}
The scalar curvature $R$ of $\omega=\omega(t)$ evolves by
\begin{equation}
\ddt{R} = \Delta R + | \emph{Ric}(\omega) |^2 +  \nu  R,
\end{equation}
where $| \emph{Ric}(\omega)|^2 = g^{\ov{\ell}i} g^{ \ov{j}k} R_{i \ov{j}} R_{k \ov{\ell}}$.  Hence the scalar curvature has a lower bound
\begin{equation} \label{lbR}
R(t)  \ge - \nu n -C_0e^{- \nu t},
\end{equation}
for $C_0 = - \inf_M R(0) - \nu n$.
\end{theorem}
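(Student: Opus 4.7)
The plan is to first derive the parabolic evolution equation for $R$ by a direct computation, then extract the lower bound by combining it with the elementary inequality $|\mathrm{Ric}(\omega)|^2 \ge R^2/n$ and the minimum principle (Proposition \ref{propheat}) applied to a well-chosen auxiliary function.

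For the evolution equation, I would first compute $\partial_t g^{\bar{j}i}$ and $\partial_t \log \det g$ from the flow equation (\ref{nu}). Using the determinant identity recalled in the proof of Proposition \ref{Rcformula}, one gets $\partial_t \log\det g = g^{\bar{j}i} \partial_t g_{i\bar{j}} = -R - \nu n$ and $\partial_t g^{\bar{j}i} = R^{\bar{j}i} + \nu g^{\bar{j}i}$. Differentiating $R_{i\bar{j}} = -\partial_i \partial_{\bar{j}} \log\det g$ in time then yields $\partial_t R_{i\bar{j}} = \partial_i\partial_{\bar{j}}(R + \nu n) = \partial_i \partial_{\bar{j}} R$. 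Plugging these into $\partial_t R = (\partial_t g^{\bar{j}i}) R_{i\bar{j}} + g^{\bar{j}i} \partial_t R_{i\bar{j}}$ produces
\[
\partial_t R = |\mathrm{Ric}(\omega)|^2 + \nu R + \Delta R,
\]
which is the stated equation. This is a routine computation that I would not dwell on.

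For the lower bound, the key observation is the pointwise inequality $|\mathrm{Ric}(\omega)|^2 \ge R^2/n$: since $R_{i\bar{j}}$ is Hermitian with trace $R$, diagonalizing by Proposition \ref{propnormal} and applying Cauchy–Schwarz to its eigenvalues gives this at once. My strategy is then to introduce the auxiliary function $u(x,t) = (R(x,t) + \nu n)\, e^{\nu t}$, designed so that its initial infimum is exactly $-C_0$ and its long-time behavior matches the claimed asymptotic $-\nu n$. Using the evolution equation above, one computes
\[
\left(\partial_t - \Delta\right) u = e^{\nu t} \left( |\mathrm{Ric}(\omega)|^2 + 2\nu R + \nu^2 n \right) \ge e^{\nu t} \cdot \frac{(R + \nu n)^2}{n} \ge 0,
\]
where the inequality uses $|\mathrm{Ric}(\omega)|^2 \ge R^2/n$ and the identity $R^2/n + 2\nu R + \nu^2 n = (R+\nu n)^2/n$. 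The minimum-principle analogue of Proposition \ref{propheat} (applied to $-u$, with the $t$-dependent Laplacian as noted in the remark following that proposition) then gives $\inf_M u(t) \ge \inf_M u(0) = \inf_M R(0) + \nu n = -C_0$, which rearranges to $R(t) \ge -\nu n - C_0 e^{-\nu t}$.

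The main subtlety — and the only nontrivial step — is choosing the auxiliary function $u$ so that the reaction term becomes a perfect square; once that choice is made, everything reduces to the scalar minimum principle already established. The computation of $\partial_t R$ and the trace inequality $|\mathrm{Ric}|^2 \ge R^2/n$ are standard, so I expect no real obstacles beyond spotting this completion-of-squares device.
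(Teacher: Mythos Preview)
Your proof is correct and follows essentially the same approach as the paper's: both derive the evolution equation by differentiating $R$ via $\log\det g$, invoke the Cauchy--Schwarz inequality $|\mathrm{Ric}|^2 \ge R^2/n$, and apply the minimum principle to the auxiliary quantity $e^{\nu t}(R+\nu n)$. The only cosmetic difference is that the paper first records $(\partial_t - \Delta)R \ge \frac{1}{n}(R+\nu n)^2 - \nu(R+\nu n)$ before multiplying through by $e^{\nu t}$, whereas you compute $(\partial_t - \Delta)u$ directly.
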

\begin{proof}  Taking the trace of the evolution equation (\ref{nu}) gives
\begin{equation}
g^{\ov{\ell}k} \ddt{} g_{k \ov{\ell}} = - R - \nu n.
\end{equation}
Since $R = - g^{ \ov{j}i} \partial_i \partial_{\ov{j}} \log \det g$ we have
\begin{align}
\ddt{R} & =  - g^{\ov{j}i} \partial_i \partial_{\ov{j}} \left( g^{ \ov{\ell}k} \ddt{} g_{k \ov{\ell}}\right)- \left( \ddt{} g^{\ov{j}i} \right) \partial_i \partial_{\ov{j}} \log \det g \\ &= \Delta R + g^{\ov{\ell}i} g^{ \ov{j}k}  R_{k \ov{\ell}} R_{i \ov{j}} + \nu R,
\end{align}
as required.   For (\ref{lbR}), we use the elementary fact that $n | \Ric (\omega)|^2 \ge R^2$ to obtain
\begin{equation}
\left( \ddt{}- \Delta \right) R \ge  \frac{1}{n} R(R + \nu n) = \frac{1}{n} (R+\nu n)^2 - \nu (R+\nu n).
\end{equation}
Hence
\begin{equation}
\left( \ddt{} - \Delta \right) (e^{\nu t} (R+\nu n)) \ge 0.
\end{equation}
By the maximum principle (see Proposition \ref{propheat} and the remark following it), the quantity $e^{\nu t} (R+\nu n)$ is bounded below by $\inf_M R(0) + \nu n$, its value at time $t=0$.  \qed
\end{proof}

We remark that although we used the K\"ahler condition to prove Theorem \ref{scalar}, in fact it holds in full generality for the Riemannian Ricci flow \cite{H1} (see also \cite{CK}).

Theorem \ref{scalar} implies a bound on the volume form of the metric.

\begin{corollary} \label{volform} Let $\omega=\omega(t)$ be a solution of (\ref{nu}) on $[0,T)$ and $C_0$ as in Theorem \ref{scalar}.
\begin{enumerate}
\item[(i)] If $\nu=0$ then
\begin{equation} \label{vf}
\omega^n(t) \le e^{C_0 t} \omega^n(0).
\end{equation}
In particular, if $T$ is finite then the volume form $\omega^n(t)$ is uniformly bounded from above for $t \in [0,T)$.
\item[(ii)] If $\nu=1$ there exists a uniform constant $C$ such that
\begin{equation} \label{vf2}
\omega^n(t) \le e^{C_0(1- e^{-t})} \omega^n(0).
\end{equation}
In particular,  the volume form $\omega^n(t)$ is uniformly bounded from above for $ t \in [0,T)$.
\end{enumerate}
\end{corollary}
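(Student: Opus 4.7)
The proof will be a direct consequence of Theorem \ref{scalar} combined with the evolution equation of the volume form. The main idea is to convert the pointwise lower bound on the scalar curvature into a pointwise upper bound on the time derivative of $\log(\omega^n/\omega_0^n)$, and then integrate in time.

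First, I would compute $\ddt \log(\omega^n/\omega_0^n)$. This is well-defined as a function on $M$ since the ratio is a smooth positive function. Applying the determinant derivative identity used in the proof of Proposition \ref{Rcformula} to the evolution equation (\ref{nu}), namely $\ddt g_{i \ov{j}} = -R_{i \ov{j}} - \nu g_{i \ov{j}}$, yields
\begin{equation}
\ddt{} \log \frac{\omega^n}{\omega_0^n} = g^{\ov{j} i} \ddt{} g_{i \ov{j}} = -R - \nu n.
\end{equation}

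Next, I invoke the lower bound (\ref{lbR}) from Theorem \ref{scalar}: $R \ge -\nu n - C_0 e^{-\nu t}$, which gives $-R - \nu n \le C_0 e^{-\nu t}$. Integrating the differential inequality $\ddt \log(\omega^n/\omega_0^n) \le C_0 e^{-\nu t}$ from $0$ to $t$ yields the two cases directly. For $\nu = 0$ we obtain $\log(\omega^n(t)/\omega_0^n) \le C_0 t$, which is (\ref{vf}). For $\nu = 1$ we obtain $\log(\omega^n(t)/\omega_0^n) \le C_0 \int_0^t e^{-s} \, ds = C_0 (1 - e^{-t})$, which is (\ref{vf2}).

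For the final assertions, the uniform upper bound on $\omega^n(t)$ follows by observing that $e^{C_0 t} \le e^{C_0 T}$ when $T < \infty$ in case (i), while in case (ii) the exponent $C_0(1 - e^{-t})$ is bounded above by $\max(C_0, 0)$ for all $t \in [0,T)$, so $\omega^n(t) \le e^{\max(C_0,0)} \omega^n(0)$ uniformly in time, regardless of whether $T$ is finite. There is no real obstacle here: the entire argument is a one-line integration of (\ref{lbR}), and the only mild subtlety is keeping track of the two cases $\nu = 0$ and $\nu = 1$ and handling the sign of $C_0$ in case (ii).
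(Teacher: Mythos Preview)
Your proof is correct and follows exactly the same approach as the paper: compute $\ddt{} \log(\omega^n(t)/\omega^n(0)) = -R - \nu n \le C_0 e^{-\nu t}$ using the scalar curvature lower bound from Theorem~\ref{scalar}, then integrate in time. The paper presents this in a single line, while you spell out the integration and the handling of the sign of $C_0$ in case (ii), but the argument is identical.
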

\begin{proof}
We have
\begin{equation}
\ddt{} \log \frac{\omega^n(t)}{\omega^n(0)} = g^{\ov{j}i} \ddt{} g_{i \ov{j}} = -R - \nu n \le C_0 e^{-\nu t}.
\end{equation}
Integrating in time, we obtain (\ref{vf}) and (\ref{vf2}).  \qed
\end{proof}

\subsection{Evolution of the trace of the metric}

We now prove an estimate for the trace of the metric along the K\"ahler-Ricci flow.
This is originally due to Cao \cite{Cao} and is the parabolic version of an estimate for the complex Monge-Amp\`ere equation due to Yau and Aubin \cite{A, Y2}.  We give the estimate in the form of an evolution inequality.  We begin by computing the evolution of $\tr{\hat{\omega}}{\omega}$, the trace of $\omega$ with respect to a fixed metric $\hat{\omega}$, using the notation of Section \ref{sectmax}.

\begin{proposition} \label{propkeyeqn}
Let $\hat{\omega}$ be a fixed K\"ahler metric on $M$, and let $\omega=\omega(t)$ be a solution to the K\"ahler-Ricci flow (\ref{nu}).  Then
\begin{equation} \label{keyeqn}
\left( \ddt{} - \Delta \right) \emph{tr}_{\hat{\omega}}{\, \omega} = - \nu \, \emph{tr}_{\hat{\omega}}{\, \omega}  - g^{ \ov{\ell}k} \hat{R}_{k \ov{\ell}}^{\ \ \, \ov{j} i} g_{i \ov{j}} - \hat{g}^{\ov{j}i} g^{ \ov{q}p} g^{ \ov{\ell} k} \hat{\nabla}_i g_{p \ov{\ell}} \hat{\nabla}_{\ov{j}} g_{k \ov{q}},
\end{equation}
where $ \hat{R}_{k \ov{\ell}}^{\ \ \, \ov{j} i}$, $\hat{\nabla}$ denote the curvature and covariant derivative with respect to $\hat{g}$.
\end{proposition}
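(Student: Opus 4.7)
The plan is to verify the tensorial identity \eqref{keyeqn} pointwise, working at an arbitrary point $p \in M$ and exploiting Proposition \ref{propnormal} to choose holomorphic coordinates normal for $\hat{g}$ at $p$ (so that $\hat{g}_{i\ov{j}}(p) = \delta_{ij}$ and $\partial_k \hat{g}_{i\ov{j}}(p) = 0$). Since both sides are tensors, this suffices. Throughout, I'll write $\text{tr}_{\hat\omega}\omega = \hat{g}^{\ov{j}i} g_{i\ov{j}}$.

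First I compute the time derivative. Because $\hat{g}$ does not depend on $t$, the flow equation \eqref{nu} immediately gives
\[
\tfrac{\partial}{\partial t}\,\text{tr}_{\hat\omega}\omega \;=\; \hat{g}^{\ov{j}i}\bigl(-R_{i\ov{j}} - \nu g_{i\ov{j}}\bigr) \;=\; -\hat{g}^{\ov{j}i} R_{i\ov{j}} - \nu \,\text{tr}_{\hat\omega}\omega.
\]

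Next I compute $\Delta\,\text{tr}_{\hat\omega}\omega = g^{\ov{\ell}k}\partial_k\partial_{\ov{\ell}}(\hat{g}^{\ov{j}i}g_{i\ov{j}})$ at $p$. The product rule produces four terms, and the two middle terms vanish at $p$ because $\partial \hat{g}^{\ov{j}i}(p) = 0$. The remaining terms are $(\partial_k\partial_{\ov{\ell}}\hat{g}^{\ov{j}i}) g_{i\ov{j}}$ and $\hat{g}^{\ov{j}i}\partial_k\partial_{\ov{\ell}} g_{i\ov{j}}$. For the first, differentiating $\hat{g}^{\ov{j}i}\hat{g}_{i\ov{m}} = \delta^j_m$ twice and using that first derivatives of $\hat{g}$ vanish at $p$ yields $\partial_k\partial_{\ov{\ell}}\hat{g}^{\ov{j}i} = -\partial_k\partial_{\ov{\ell}}\hat{g}_{j\ov{i}}$ at $p$; combining with formula \eqref{formulacurvature} applied to $\hat g$ (whose quadratic term vanishes in normal coordinates) gives $\partial_k\partial_{\ov{\ell}}\hat{g}^{\ov{j}i} = \hat{R}_{k\ov{\ell}j\ov{i}}$, which agrees with $\hat{R}_{k\ov{\ell}}^{\ \ \ov{j}i}$ at $p$. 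For the second term, applying \eqref{formulacurvature} to $g$ itself gives $\partial_k\partial_{\ov{\ell}} g_{i\ov{j}} = -R_{k\ov{\ell}i\ov{j}} + g^{\ov{q}p}(\partial_k g_{i\ov{q}})(\partial_{\ov{\ell}} g_{p\ov{j}})$, and after contracting with $g^{\ov{\ell}k}$ the Ricci term $g^{\ov{\ell}k} R_{k\ov{\ell}i\ov{j}} = R_{i\ov{j}}$ appears by Proposition \ref{Rsymmetry}(ii).

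Assembling, at $p$:
\[
\Delta\,\text{tr}_{\hat\omega}\omega \;=\; g^{\ov{\ell}k}\hat{R}_{k\ov{\ell}}^{\ \ \ov{j}i} g_{i\ov{j}} - \hat{g}^{\ov{j}i} R_{i\ov{j}} + g^{\ov{\ell}k}\hat{g}^{\ov{j}i} g^{\ov{q}p}(\partial_k g_{i\ov{q}})(\partial_{\ov{\ell}} g_{p\ov{j}}).
\]
Subtracting this from $\partial_t \text{tr}_{\hat\omega}\omega$ cancels the Ricci-of-$g$ terms and leaves exactly $-\nu\,\text{tr}_{\hat\omega}\omega$ minus the curvature-of-$\hat g$ term minus the gradient term. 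It remains to rewrite $\partial_k g_{i\ov{q}}\,\partial_{\ov{\ell}}g_{p\ov{j}}$ in the form appearing in \eqref{keyeqn}: at $p$ the $\hat{\Gamma}$'s vanish, so $\partial = \hat{\nabla}$ on the tensor $g_{i\ov{j}}$; invoking the K\"ahler condition \eqref{kahlercondition} for both $g$ and $\hat g$ gives the symmetry $\hat{\nabla}_k g_{i\ov{q}} = \hat{\nabla}_i g_{k\ov{q}}$ and $\hat{\nabla}_{\ov{\ell}} g_{p\ov{j}} = \hat{\nabla}_{\ov{j}} g_{p\ov{\ell}}$, which transforms the gradient term into $\hat{g}^{\ov{j}i} g^{\ov{q}p} g^{\ov{\ell}k}\hat{\nabla}_i g_{p\ov{\ell}}\,\hat{\nabla}_{\ov{j}} g_{k\ov{q}}$, as required.

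The main obstacle is purely bookkeeping: correctly tracking which index group is contracted with $\hat g$ versus $g$, correctly identifying $\partial_k\partial_{\ov{\ell}}\hat{g}^{\ov{j}i}$ with the $\hat{g}$-raised curvature $\hat{R}_{k\ov{\ell}}^{\ \ \ov{j}i}$, and exploiting the K\"ahler symmetry to match the gradient term to the stated form. No deeper analytic input is needed beyond formula \eqref{formulacurvature}, the evolution equation, and normal coordinates.
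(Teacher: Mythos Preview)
Your proof is correct and follows essentially the same approach as the paper's: compute $\Delta\,\mathrm{tr}_{\hat\omega}\omega$ in normal coordinates for $\hat g$, compute $\partial_t\,\mathrm{tr}_{\hat\omega}\omega$ from the flow equation, and subtract so the $\hat g^{\ov{j}i}R_{i\ov{j}}$ terms cancel. You are slightly more explicit than the paper in justifying the identification $\partial_k\partial_{\ov{\ell}}\hat g^{\ov{j}i} = \hat R_{k\ov{\ell}}{}^{\ov{j}i}$ and in invoking the K\"ahler symmetry $\hat\nabla_k g_{i\ov{q}} = \hat\nabla_i g_{k\ov{q}}$ to match the gradient term, but the underlying argument is identical.
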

\begin{proof}
Compute using normal coordinates for $\hat{g}$ and the formula (\ref{formulacurvature}),
\begin{align} \nonumber
\Delta \tr{\hat{\omega}}{\omega} & = g^{ \ov{\ell} k} \partial_k \partial_{\ov{\ell}} (\hat{g}^{\ov{j}i} g_{i \ov{j}}) \\ \nonumber
& = g^{ \ov{\ell}k} (\partial_k \partial_{\ov{\ell}}\, \hat{g}^{ \ov{j}i }) g_{i \ov{j}} + g^{ \ov{\ell} k} \hat{g}^{\ov{j} i} \partial_k \partial_{\ov{\ell}} g_{i \ov{j}} \\
& = g^{\ov{\ell} k} \hat{R}_{k \ov{\ell}}^{\ \ \, \ov{j} i} g_{i \ov{j}} - \hat{g}^{ \ov{j} i} R_{i \ov{j}} + \hat{g}^{\ov{j} i} g^{ \ov{q} p} g^{ \ov{\ell} k} \partial_i g_{p \ov{\ell}} \partial_{\ov{j}} g_{k \ov{q}},
\end{align}
and
\begin{equation}
\ddt{} \tr{\hat{\omega}}{\omega} = - \hat{g}^{ \ov{j} i} R_{i \ov{j}} - \nu \, \tr{\hat{\omega}}{\omega},
\end{equation}
and combining these gives (\ref{keyeqn}). \qed
\end{proof}

We use Proposition \ref{propkeyeqn} to prove the following estimate, which will be used frequently in the sequel:

\begin{proposition} \label{propChat}
Let $\hat{\omega}$ be a fixed K\"ahler metric on $M$, and let $\omega=\omega(t)$ be a solution to  (\ref{nu}).  Then there exists a constant $\hat{C}$ depending only on the lower bound of the bisectional curvature for $\hat{g}$ such that
\begin{equation} \label{eqnlog}
\left( \ddt{} - \Delta \right) \log \emph{tr}_{\hat{\omega}}{\, \omega} \le \hat{C} \emph{tr}_{\omega}{\, \hat{\omega}}- \nu.
\end{equation}
\end{proposition}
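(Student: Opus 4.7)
Write $S = \textrm{tr}_{\hat{\omega}}{\,\omega} = \hat{g}^{\ov{j}i} g_{i\ov{j}}$. The starting point is the standard identity for the Laplacian of a logarithm,
\[
\left( \ddt{} - \Delta \right) \log S \; = \; \frac{1}{S}\left( \ddt{} - \Delta \right) S \; + \; \frac{1}{S^2}\, g^{\ov{n}m}\partial_m S\, \partial_{\ov{n}} S.
\]
Insert the formula from Proposition~\ref{propkeyeqn} for $(\partial_t - \Delta)S$. The $-\nu S$ term gives $-\nu$ upon division by $S$, so it remains to show that the remaining three terms combine to give at most $\hat{C}\,\textrm{tr}_{\omega}{\,\hat\omega}$.

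For the curvature term, fix a point $p$ and choose coordinates that are normal for $\hat g$ and simultaneously diagonalize $g$ (Proposition~\ref{propnormal}), say $\hat g_{i\ov j}(p)=\delta_{ij}$ and $g_{i\ov j}(p)=\lambda_i\delta_{ij}$. Then
\[
- g^{\ov{\ell}k}\hat R_{k\ov\ell}^{\ \ \,\ov j i}g_{i\ov j} \;=\; -\sum_{i,k} \lambda_k^{-1}\lambda_i\, \hat R_{k\ov k i\ov i}.
\]
If $\hat{C}_0$ is a lower bound for the bisectional curvatures $\hat R_{k\ov k i\ov i}$ of $\hat g$ (which is bounded below on the compact manifold $M$), this is at most $\hat{C}_0\bigl(\sum_k\lambda_k^{-1}\bigr)\bigl(\sum_i\lambda_i\bigr) = \hat{C}_0\,\textrm{tr}_\omega\hat\omega \cdot S$. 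Dividing by $S$ yields a contribution of at most $\hat{C}_0\,\textrm{tr}_\omega\hat\omega$.

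The heart of the argument is to show that the bad gradient term from $(\partial_t-\Delta)S$ absorbs $|\partial S|^2_g/S$, i.e.\ that
\[
\frac{g^{\ov{n}m}\partial_m S\,\partial_{\ov n}S}{S} \;\le\; \hat{g}^{\ov{j}i} g^{\ov{q}p} g^{\ov{\ell} k}\, \hat\nabla_i g_{p\ov\ell}\,\hat\nabla_{\ov j} g_{k\ov q}.
\]
In the coordinates above, set $A_{imj}:=\hat\nabla_i g_{m\ov j}$, which by the K\"ahler condition (\ref{kahlercondition}) is symmetric in $i$ and $m$. Then $\partial_m S = \sum_i A_{mii}$, and the inequality becomes
\[
\sum_m \lambda_m^{-1}\Bigl|\sum_i A_{mii}\Bigr|^2 \;\le\; S\sum_{i,p,k}\lambda_p^{-1}\lambda_k^{-1}|A_{ipk}|^2.
\]
This is a two-step Cauchy--Schwarz: write $\sum_i A_{mii} = \sum_i\lambda_i^{1/2}\cdot \lambda_i^{-1/2}A_{mii}$ to get $|\sum_iA_{mii}|^2 \le S\sum_i \lambda_i^{-1}|A_{mii}|^2$, and then observe that $\sum_{m,i}\lambda_m^{-1}\lambda_i^{-1}|A_{mii}|^2$ is dominated by the full sum over $(i,p,k)$ on the right (using $A_{imi}=A_{mii}$). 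Combining the three contributions gives (\ref{eqnlog}) with $\hat{C}=\hat{C}_0$.

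The main obstacle is the last step, because one has to use both the K\"ahler symmetry of $\hat\nabla g$ and a carefully weighted Cauchy--Schwarz in the eigenvalues $\lambda_i$; the curvature term, by contrast, is an essentially algebraic consequence of a uniform lower bound on the bisectional curvature of the fixed metric $\hat g$. \qed
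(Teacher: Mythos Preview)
Your argument is correct and follows essentially the same route as the paper's proof: start from the log identity, insert Proposition~\ref{propkeyeqn}, bound the curvature term using a lower bound on the bisectional curvature of $\hat g$, and absorb $|\partial S|_g^2/S$ into the gradient term via a weighted Cauchy--Schwarz together with the K\"ahler symmetry $\hat\nabla_m g_{i\ov i}=\hat\nabla_i g_{m\ov i}$. The paper organizes the Cauchy--Schwarz step slightly differently (it first applies Cauchy--Schwarz to the sum over $j,k$ and then pulls out a factor $\sqrt{g_{j\ov j}}$), but the content is identical to your two-step version.

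One small slip: if $\hat C_0$ denotes a \emph{lower} bound for $\hat R_{k\ov k i\ov i}$, then $-\sum_{i,k}\lambda_k^{-1}\lambda_i\,\hat R_{k\ov k i\ov i}\le -\hat C_0\,(\sum_k\lambda_k^{-1})(\sum_i\lambda_i)$, so the constant appearing on the right of (\ref{eqnlog}) should be $\hat C=-\hat C_0$, not $\hat C=\hat C_0$. This is exactly how the paper defines it, setting $\hat C=-\inf\hat R_{i\ov i j\ov j}$.
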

\begin{proof}
First observe  that for a positive function $f$,
\begin{equation} \label{logf}
\Delta \log f = \frac{\Delta f}{f} - \frac{| \partial f|^2_{g}}{f^2}.
\end{equation}
It follows immediately from Proposition \ref{propkeyeqn} that
\begin{eqnarray}  \nonumber
\lefteqn{\left( \ddt{} - \Delta \right) \log \tr{\hat{\omega}}{\omega} }\\
 && = \frac{1}{\tr{\hat{\omega}}{\omega}} \left( - \nu\tr{\hat{\omega}}{\omega} - g^{ \ov{\ell} k} \hat{R}_{k \ov{\ell}}^{\ \ \, \ov{j} i} g_{i \ov{j}}  + \frac{ | \partial \tr{\hat{\omega}}{\omega}|^2_g}{\tr{\hat{\omega}}{\omega}} -  \hat{g}^{ \ov{j}i} g^{ \ov{q}p} g^{ \ov{\ell}k} \hat{\nabla}_i g_{p \ov{\ell}} \hat{\nabla}_{\ov{j}} g_{k \ov{q}} \right). \label{eqnlog1}
\end{eqnarray}
We claim that
\begin{equation} \label{claimcs}
\frac{ | \partial \tr{\hat{\omega}}{\omega}|^2_g}{\tr{\hat{\omega}}{\omega}} -  \hat{g}^{\ov{j}i} g^{ \ov{q}p} g^{\ov{\ell}k} \hat{\nabla}_i g_{p \ov{\ell}} \hat{\nabla}_{\ov{j}} g_{k \ov{q}}  \le 0.
\end{equation}
To prove this we choose normal coordinates for $\hat{g}$ for which $g$ is diagonal.  Compute using the Cauchy-Schwarz inequality
\begin{align} \nonumber
| \partial \tr{\hat{\omega}}{\omega}|^2_{g} & = \sum_i g^{ \ov{i}i} \partial_i \left( \sum_j g_{j \ov{j}} \right) \partial_{\ov{i}} \left( \sum_k g_{k \ov{k}} \right) \\ \nonumber
& = \sum_{j,k} \sum_i g^{\ov{i}i}(\partial_i g_{j \ov{j}}) (\partial_{\ov{i}} g_{k \ov{k}}) \\ \nonumber
& \le \sum_{j,k} \left( \sum_i g^{\ov{i}i} |\partial_i g_{j \ov{j}}|^2 \right)^{1/2} \left( \sum_i g^{\ov{i}i} |\partial_i g_{k \ov{k}}|^2 \right)^{1/2} \\ \nonumber
& = \left( \sum_j \left( \sum_i g^{ \ov{i}i} |\partial_i g_{j \ov{j}}|^2 \right)^{1/2} \right)^2 \\ \nonumber
& = \left( \sum_j \sqrt{g_{j \ov{j}}} \left( \sum_i g^{\ov{i}i} g^{ \ov{j} j} |\partial_i g_{j \ov{j}}|^2 \right)^{1/2} \right)^2 \\ \nonumber
& \le \sum_\ell g_{\ell \ov{\ell}} \sum_{i,j} g^{\ov{i}i} g^{ \ov{j} j} | \partial_j g_{i \ov{j}}|^2 \\ \label{long}
& \le (\tr{\hat{\omega}}{\omega}) \sum_{i,j,k} g^{ \ov{i} i} g^{ \ov{j} j} \partial_k g_{i \ov{j}} \partial_{\ov{k}} g_{j \ov{i}},
 \end{align}
where in the second-to-last line we used the K\"ahler condition to give $\partial_i g_{j \ov{j}} = \partial_j g_{i \ov{j}}$.  The inequality (\ref{long}) gives exactly (\ref{claimcs})

We can now complete the proof of the proposition.  Define a constant $\hat{C}$ by
\begin{equation}
\hat{C} = - \inf_{x \in M}  \{ \hat{R}_{i \ov{i} j \ov{j}}(x) \ | \ \{ \partial_{z^1}, \ldots, \partial_{z^n} \} \textrm{ is orthonormal w.r.t. } \hat{g} \textrm{ at } x, \ i,j=1, \ldots, n \},
\end{equation}
which is finite since we are taking the infimum of a continuous function over a compact set.

Then computing at a point using normal coordinates for $\hat{g}$ for which the metric $g$ is diagonal  we have
\begin{equation} \label{eqnlb1}
g^{\ov{\ell} k} \hat{R}_{k \ov{\ell}}^{\ \ \, \ov{j} i} g_{i \ov{j}} = \sum_{k, i} g^{ \ov{k} k} \hat{R}_{k \ov{k} i \ov{i}} g_{i \ov{i}} \ge - \hat{C} \sum_{k} g^{ \ov{k} k} \sum_i g_{i \ov{i}} = -\hat{C} (\tr{\hat{\omega}}{\omega}) (\tr{\omega}{\hat{\omega}}).
\end{equation}
Combining (\ref{eqnlog1}), (\ref{claimcs}) and (\ref{eqnlb1}) yields (\ref{eqnlog}). \qed
\end{proof}

\subsection{The parabolic Schwarz Lemma}

In this section we prove the parabolic Schwarz lemma of \cite{SoT1}.  This is a parabolic version of Yau's Schwarz lemma \cite{Y1}.  We state it here in the form of an evolution inequality.

\begin{theorem} \label{psl}
Let $f: M \rightarrow N$  be a holomorphic map between compact complex manifolds $M$ and $N$ of complex dimension $n$ and $\kappa$ respectively.  Let $\omega_0$ and $\omega_N$ be K\"ahler metrics on $M$ and $N$ respectively and let $\omega=\omega(t)$ be a solution of (\ref{nu}) on $M \times [0,T)$, namely
\begin{equation} \label{krfmu}
\ddt{} \omega  = - \emph{Ric}(\omega) - \nu \omega, \qquad \omega|_{t=0} = \omega_0,
\end{equation}
for $t \in [0,T)$,  with either $\nu=0$ or $\nu=1$.  Then for all points of $M \times [0,T)$ with $\emph{tr}_{\omega}{(f^* \omega_N)}$ positive we have
\begin{equation} \label{psi}
\left( \ddt{} - \Delta \right) \log \emph{tr}_{\omega}{(f^* \omega_N)} \le C_N \emph{tr}_{\omega} (f^* \omega_N) + \nu,
\end{equation}
where $C_N$ is an upper bound for the bisectional curvature of $\omega_N$.
\end{theorem}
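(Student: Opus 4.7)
The plan is to parallel the computation behind Proposition \ref{propChat}, with the roles of the fixed and evolving metrics interchanged. Set $u = \tr{\omega}{(f^*\omega_N)} = g^{\ov{j}i} u_{i\ov{j}}$, where in local coordinates
\[
u_{i\ov{j}} = (h_{\alpha \ov{\beta}} \circ f)\, \partial_i f^\alpha \, \ov{\partial_j f^\beta},
\]
with $h_{\alpha \ov{\beta}}$ the K\"ahler metric on $N$ associated with $\omega_N$. Crucially, $u_{i\ov{j}}$ is time-independent: only $g^{\ov{j}i}$ evolves.

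I would then compute $\ddt{} u$ and $\Delta u$ at an arbitrary point $p$ with $u(p)>0$, choosing coordinates which are simultaneously normal for $g$ at $p$ and for $h$ at $f(p)$. Differentiating the identity $g^{\ov{j}i}g_{i\ov m}=\delta_{jm}$ in $t$ yields $\ddt{} g^{\ov{j}i} = g^{\ov{j}k} R_{k \ov{\ell}} g^{\ov{\ell} i} + \nu g^{\ov{j} i}$, and so $\ddt{} u = g^{\ov{j}k} g^{\ov{\ell}i} R_{k\ov{\ell}} u_{i\ov{j}} + \nu u$. For the Laplacian, expand $\Delta u = g^{\ov{\ell}k} \partial_k \partial_{\ov{\ell}}(g^{\ov{j}i} u_{i \ov{j}})$. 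Using (\ref{formulacurvature}), the derivatives of $g^{\ov{j}i}$ contribute a Ricci-of-$g$ term; the factor $\partial_k \partial_{\ov{\ell}}(h_{\alpha \ov{\beta}}\circ f)$ produces $-\tilde{R}_{\gamma \ov{\delta}\alpha \ov{\beta}}\, \partial_k f^\gamma \ov{\partial_\ell f^\delta}$ (curvature of $\omega_N$ at $f(p)$); the second derivatives of $F^\alpha_i = \partial_i f^\alpha$ produce a nonnegative ``Hessian of $f$'' term $\sum_\alpha (\partial_k \partial_i f^\alpha) \ov{\partial_\ell \partial_j f^\alpha}$; and the holomorphy of $f$ kills every contribution involving $\ov{\partial} \partial f^\alpha$. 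Subtracting, the Ricci-of-$g$ pieces in $\ddt{} u$ and $\Delta u$ coincide and cancel exactly, leaving
\[
\left( \ddt{} - \Delta \right) u = \nu u + g^{\ov{\ell}k} g^{\ov{j}i} \tilde{R}_{\gamma \ov{\delta} \alpha \ov{\beta}}\, \partial_k f^\gamma \ov{\partial_\ell f^\delta}\, \partial_i f^\alpha \ov{\partial_j f^\beta} - g^{\ov{\ell} k} g^{\ov{j} i} \sum_\alpha (\partial_k \partial_i f^\alpha) \ov{\partial_\ell \partial_j f^\alpha}.
\]

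Next, the bisectional curvature hypothesis $\tilde{R}(X, \ov{X}, Y, \ov{Y}) \le C_N |X|_h^2 |Y|_h^2$ applied to $X=\partial_k f$ and $Y=\partial_i f$ in a $g$-orthonormal frame at $p$ bounds the $\tilde{R}$-term above by $C_N u^2$. To finish, pass to $\log u$ via
\[
\left(\ddt{} - \Delta\right)\log u = \frac{1}{u}\left(\ddt{} - \Delta\right)u + \frac{|\partial u|_g^2}{u^2},
\]
and observe that in our coordinates $\partial_k u |_p= \sum_{i,\alpha}(\partial_k \partial_i f^\alpha)\, \ov{\partial_i f^\alpha}$. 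A Cauchy--Schwarz argument entirely analogous to (\ref{claimcs}) then gives $|\partial u|_g^2 \le u \cdot g^{\ov{\ell}k} g^{\ov{j}i}\sum_\alpha (\partial_k \partial_i f^\alpha) \ov{\partial_\ell \partial_j f^\alpha}$, so the nonnegative ``Hessian of $f$'' term precisely absorbs $|\partial u|_g^2 / u$, leaving $(\ddt{} - \Delta) \log u \le \nu + C_N\, \tr{\omega}{(f^*\omega_N)}$, as required.

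The main obstacle is the index bookkeeping: two distinct curvature tensors (one on $M$, one on $N$) and two sets of indices must be handled simultaneously, and one must verify carefully that the Ricci-of-$g$ contributions from the time derivative and the Laplacian cancel with matching signs. Once that cancellation is secured, the rest of the argument is structurally identical to Yau's Schwarz lemma and to the proof of Proposition \ref{propChat}.
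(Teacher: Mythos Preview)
Your proposal is correct and follows essentially the same route as the paper's proof: normal coordinates for $g$ at $p$ and for $h$ at $f(p)$, cancellation of the Ricci-of-$g$ terms between $\ddt{}u$ and $\Delta u$, the bound on the $\tilde R$-term by $C_N u^2$, and the Cauchy--Schwarz inequality (\ref{claimcs2}) absorbing $|\partial u|_g^2/u$ into the Hessian-of-$f$ term. The only differences are cosmetic (you write out $(\ddt{}-\Delta)u$ before passing to $\log u$, whereas the paper goes directly to $(\ddt{}-\Delta)\log u$).
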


Observe that a simple maximum principle argument immediately  gives the following consequence which the reader will recognize as similar to the conclusion of Yau's Schwarz lemma.

\begin{corollary}  If the bisectional curvature of $\omega_N$ has a negative upper bound $C_N<0$ on $N$ then there exists a constant $C>0$ depending only on $C_N$, $\omega_0$, $\omega_N$ and $\nu$ such that $\emph{tr}_{\omega} (f^* \omega_N) \le C$ on $M \times [0,T)$ and hence
\begin{equation}
 \omega \ge \frac{1}{C} f^* \omega_N, \quad \textrm{on } M \times [0,T).
\end{equation}
\end{corollary}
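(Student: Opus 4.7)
The plan is to plug the differential inequality (\ref{psi}) from Theorem \ref{psl} into the parabolic maximum principle, exploiting the sign $C_N < 0$ to absorb the $\nu$-term and produce a uniform upper bound. Writing $u := \textrm{tr}_\omega(f^*\omega_N)$, I would fix an arbitrary $T_0 \in (0, T)$ and locate a point $(x_0,t_0)$ where $u$ attains its maximum on $M \times [0,T_0]$. The delicate point in this argument is that (\ref{psi}) is only asserted where $u > 0$, whereas $u$ can vanish on proper subvarieties of $M$ where the differential of $f$ drops rank; this is the step I would flag as the main one requiring care, but it is ultimately harmless, since the maximum principle only requires $\log u$ to be smooth in a neighborhood of the maximizer, which holds automatically as soon as the maximum value of $u$ is strictly positive.

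If $u(x_0, t_0) = 0$ then $u \equiv 0$ on $M \times [0,T_0]$ and the conclusion is trivial. Otherwise $u > 0$ near $(x_0, t_0)$, so $\log u$ is smooth there and also attains its local maximum at $(x_0, t_0)$. When $t_0 = 0$ the bound reduces to initial data; when $t_0 > 0$, Proposition \ref{pmp} applied to $\log u$ gives $\partial_t \log u \ge 0$ and $\Delta \log u \le 0$ at $(x_0,t_0)$, so (\ref{psi}) collapses at that point to
\begin{equation*}
0 \,\le\, C_N\, u(x_0, t_0) + \nu,
\end{equation*}
and the negativity of $C_N$ yields $u(x_0, t_0) \le \nu/|C_N|$. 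Combining the two subcases and noting that $T_0 < T$ was arbitrary gives the uniform bound $u \le C$ on $M \times [0,T)$ with
\begin{equation*}
C \,:=\, \max\bigl\{\, \textstyle\sup_M \textrm{tr}_{\omega_0}(f^*\omega_N),\ \nu/|C_N|\, \bigr\},
\end{equation*}
which depends only on $C_N$, $\omega_0$, $\omega_N$, $\nu$ as claimed.

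For the second conclusion I would invoke Proposition \ref{propnormal} pointwise: at any $(p,t) \in M \times [0,T)$ choose normal coordinates for $\omega(t)$ at $p$ that simultaneously diagonalize the nonnegative Hermitian form $f^*\omega_N$, with eigenvalues $\mu_1,\ldots,\mu_n \ge 0$. The trace bound then reads $\sum_i \mu_i = u(p,t) \le C$, which forces $\mu_i \le C$ for each $i$ and hence $f^*\omega_N \le C\,\omega$ at $(p,t)$. Rearranging gives $\omega \ge \frac{1}{C} f^* \omega_N$ pointwise on $M \times [0,T)$, completing the proof.
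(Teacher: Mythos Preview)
Your proof is correct and is exactly the ``simple maximum principle argument'' the paper alludes to without writing out. The handling of the set where $u$ may vanish, the case split on $t_0$, and the pointwise diagonalization for the metric inequality are all fine; note in particular that when $\nu=0$ your inequality $0\le C_N u(x_0,t_0)$ forces $u(x_0,t_0)=0$, contradicting the assumption that the maximum is positive, so the case $t_0>0$ simply does not occur and the bound reduces to the initial data---your formula for $C$ already accommodates this.
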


In practice, we will find the inequality (\ref{psi}) more useful than this corollary, since the assumption of negative bisectional curvature is rather strong.
For the proof of Theorem \ref{psl}, we will follow quite closely the notation and calculations given in \cite{SoT1}.

\begin{proof}[Proof of Theorem \ref{psl}] Fix $x$ in $M$ with $f(x)=y \in N$, and choose  normal coordinate systems $(z^i)_{i=1, \ldots, n}$ for $g$ centered at $x$ and $(w^{\alpha})_{\alpha=1, \ldots, \kappa}$ for $g_N$ centered at $y$.  The map $f$ is given locally as $(f^1, \ldots, f^{\kappa})$ for holomorphic functions $f^{\alpha} = f^{\alpha}(z^1, \ldots, z^n)$.  Write $f^{\alpha}_i$ for $\frac{\partial}{\partial z^i} f^{\alpha}$.  To simplify notation we write the components of $g_N$ as $h_{\alpha \ov{\beta}}$ instead of $(g_N)_{\alpha \ov{\beta}}$.  The components of the tensor $f^* g_N$ are then $f^{\alpha}_i \ov{f^{\beta}_{j}} h_{\alpha \ov{\beta}}$   and hence $ \tr{\omega}{(f^* \omega_N)}= g^{ \ov{j}i } f^{\alpha}_i \ov{f^{\beta}_{j}} h_{\alpha \ov{\beta}}$.  Writing $u = \tr{\omega}{(f^* \omega_N)}>0$, we compute at $x$,
\begin{align} \nonumber
\Delta u & = g^{\ov{\ell}k} \partial_k \partial_{\ov{\ell}} \left( g^{\ov{j} i} f_i^{\alpha} \ov{f^{\beta}_{j}} h_{\alpha \ov{\beta}} \right) \\
& = R^{ \ov{j} i} f_i^{\alpha} \ov{f^{\beta}_{j}} h_{\alpha \ov{\beta}} + g^{\ov{\ell} k} g^{ \ov{j} i} ( \partial_k f^{\alpha}_i )(\ov{ \partial_{\ell} f^{\beta}_j} )h_{\alpha \ov{\beta}} - g^{ \ov{\ell} k} g^{ \ov{j} i} S_{\alpha \ov{\beta} \gamma \ov{\delta}} f^\alpha_i \ov{f^{\beta}_j} f^{\gamma}_k \ov{f^{\delta}_{\ell}}, \label{Deltau}
\end{align}
for $S_{\alpha \ov{\beta} \gamma \ov{\delta}}$ the curvature tensor of $g_N$ on $N$.  Next,
\begin{align}
\ddt{u}  = - g^{\ov{\ell} i} g^{ \ov{j} k} \left( \ddt{} g_{k \ov{\ell}} \right) f^{\alpha}_i \ov{f^{\beta}_j} h_{\alpha \ov{\beta}} = R^{ \ov{j}i} f^{\alpha}_i \ov{f^{\beta}_j} h_{\alpha \ov{\beta}} + \nu u. \label{ddtu}
\end{align}
Combining (\ref{Deltau}) and (\ref{ddtu}) with  (\ref{logf}), we obtain
\begin{align} \nonumber
\left( \ddt{} - \Delta \right) \log u & =  \frac{1}{u}  g^{ \ov{\ell} k} g^{ \ov{j} i} S_{\alpha \ov{\beta} \gamma \ov{\delta}} f^\alpha_i \ov{f^{\beta}_j} f^{\gamma}_k \ov{f^{\delta}_{\ell}} \\
&  + \frac{1}{u} \left( \frac{|\partial u|_{g}^2}{u} - g^{ \ov{\ell} k} g^{ \ov{j} i} ( \partial_k f^{\alpha}_i )(\ov{ \partial_{\ell} f^{\beta}_j} )h_{\alpha \ov{\beta}} \right) + \nu.
\end{align}
If $C_N$ is an upper bound for the bisectional curvature of $g_N$ we see that
\begin{equation}
g^{\ov{\ell} k} g^{\ov{j}i} S_{\alpha \ov{\beta} \gamma \ov{\delta}} f^\alpha_i \ov{f^{\beta}_j} f^{\gamma}_k \ov{f^{\delta}_{\ell}} \le C_N u^2,
\end{equation}
and hence (\ref{psi}) will follow from the inequality
\begin{equation} \label{claimcs2}
\frac{|\partial u|_{g}^2}{u} -  g^{ \ov{\ell} k} g^{ \ov{j} i} ( \partial_k f^{\alpha}_i )(\ov{ \partial_{\ell} f^{\beta}_j} )h_{\alpha \ov{\beta}} \le 0.
\end{equation}
The inequality (\ref{claimcs2}) is analogous to (\ref{claimcs}) and the proof is almost identical.  Indeed, at the point $x$,
\begin{align} \nonumber
| \partial u|^2_g & = \sum_{i,j,k, \alpha, \beta} \ov{f_i^{\alpha}} f_j^{\beta} \partial_k f^{\alpha}_i \ov{ \partial_k f^{\beta}_j} \\ \nonumber
& \le  \sum_{i,j, \alpha, \beta} | f_i^{\alpha}| |f_j^{\beta}| \left( \sum_k | \partial_k f^{\alpha}_i |^2 \right)^{1/2} \left( \sum_{\ell} | \partial_{\ell} f^{\beta}_j |^2 \right)^{1/2} \\ \nonumber
& = \left( \sum_{i, \alpha} | f_i^{\alpha} | \left( \sum_k |\partial_k f^{\alpha}_i |^2 \right)^{1/2} \right)^2 \\
& \le \left( \sum_{j, \beta} |f^{\beta}_j|^2 \right) \left( \sum_{i,k, \alpha} |\partial_k f^{\alpha}_{i}|^2 \right) = u \, g^{ \ov{\ell} k} g^{ \ov{j} i} ( \partial_k f^{\alpha}_i )(\ov{ \partial_{\ell} f^{\beta}_j} )h_{\alpha \ov{\beta}},
\end{align}
which gives (\ref{claimcs2}).  \qed
\end{proof}

\subsection{The 3rd order estimate } \label{sect3rd}

In this section we prove the so-called `3rd order' estimate for the K\"ahler-Ricci flow assuming that the metric is uniformly bounded.   By 3rd order estimate we mean an estimate on the first derivative of the K\"ahler metric, which is  of order 3 in terms of the potential function.  Since the work of Yau \cite{Y2} on the elliptic Monge-Amp\`ere equation, such estimates have  often been referred to as  \emph{Calabi estimates} in reference to a well-known calculation of Calabi \cite{C1}.  There are now many generalizations of the Calabi estimate \cite{Che, ShW, To2, TWY, ZZ}.   A  parabolic Calabi estimate was applied to the K\"ahler-Ricci flow in \cite{Cao}.  Phong-\v{S}e\v{s}um-Sturm \cite{PSS} later gave a succinct and explicit formula, which we will describe here.

 Let $\omega=\omega(t)$ be a solution of the normalized K\"ahler-Ricci flow (\ref{nu}) on $[0,T)$ for $0<T\le \infty$ and let $\hat{\omega}$ be a fixed K\"ahler metric on $M$.
 We wish to estimate the quantity $S = | \hat{\nabla} g |^2$ where $\hat{\nabla}$ is the covariant derivative with respect to $\hat{g}$ and the norm $| \, \cdot \,  |$ is taken with respect to the evolving metric $g$.  Namely
 \begin{equation}
 S = g^{\ov{j} i } g^{\ov{\ell} k} g^{\ov{q} p} \hat{\nabla}_i g_{k\ov{q}}\ov{\hat{\nabla}_j  g_{\ell \ov{p}}}.
 \end{equation}
 Define a tensor $\Psi^{k}_{ij}$ by
 \begin{equation}
\Psi^k_{ij} :=  \Gamma^k_{ij} - \hat{\Gamma}^k_{ij} = g^{\ov{\ell} k} \hat{\nabla}_i g_{j \ov{\ell}}.
 \end{equation}
We may rewrite $S$ as
\begin{equation} \label{rewriteS}
S  = | \Psi|^2=  g^{\ov{j} i} g^{\ov{q} p} g_{k \ov{\ell}} \Psi_{ip}^{k}  \ov{\Psi_{jq}^{\ell} }.
\end{equation}
We have the following key equality of Phong-\v{S}e\v{s}um-Sturm \cite{PSS}.

\begin{proposition} \label{propPSS1} With the notation above, $S$ evolves by
\begin{align} \label{PSS1}
\left( \ddt{} - \Delta \right) S & = - | \ov{\nabla} \Psi |^2 - | \nabla \Psi |^2 + \nu | \Psi |^2
 - 2 \emph{Re} \left( \, g^{\ov{j} i} g^{\ov{q} p} g_{k \ov{\ell}} \nabla^{\ov{b}} \hat{R}_{i \ov{b} p}^{\ \ \ k} \ov{\Psi_{j q}^{\ell }} \right),
\end{align}
where $\nabla^{\ov{b}} = g^{\ov{b} a} \nabla_a$ and  $\hat{R}_{i \ov{b} p}^{\ \ \ k} := \hat{g}^{\ov{m} k} \hat{R}_{i \ov{b} p \ov{m}}$.
\end{proposition}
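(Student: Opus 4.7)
The plan is to first establish a tensor-level identity for $(\ddt{} - \Delta)\Psi$ and then to promote it to the scalar-level identity for $S = |\Psi|^2$ by accounting for the metric factors in (\ref{rewriteS}). For the time derivative, $\hat{\Gamma}$ is time-independent so $\ddt{}\Psi^k_{ij} = \ddt{}\Gamma^k_{ij}$, and the standard variation formula $\delta \Gamma^k_{ij} = g^{\ov{\ell}k}\nabla_i(\delta g_{j\ov{\ell}})$ applied to (\ref{nu}), together with $\nabla g = 0$ (which kills the $\nu$ term), yields $\ddt{}\Psi^k_{ij} = -\nabla_i R_j{}^k$, where $R_j{}^k := g^{\ov{\ell}k} R_{j \ov{\ell}}$. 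For the antiholomorphic derivative, only pure-index Christoffel symbols are nonzero in the K\"ahler case, so $\nabla_{\ov{b}}\Psi^k_{ip} = \partial_{\ov{b}}(\Gamma^k_{ip} - \hat{\Gamma}^k_{ip})$, and the definition $R^{\ m}_{i\ k\ov{\ell}} = -\partial_{\ov{\ell}}\Gamma^m_{ik}$ gives the tensor identity
\begin{equation*}
\nabla_{\ov{b}}\Psi^k_{ip} = \hat{R}_{i\ov{b}p}^{\ \ \ k} - R_{i\ov{b}p}^{\ \ \ k}.
\end{equation*}
Applying $g^{\ov{b}a}\nabla_a$ to this and invoking the second Bianchi identity (Proposition \ref{Rsymmetry}(iii)) together with the Ricci contraction $g^{\ov{b}a} R_{a\ov{b} p \ov{m}} = R_{p\ov{m}}$, the divergence of the $R$-term collapses to $\nabla_i R_p{}^k$. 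Hence $g^{\ov{b}a}\nabla_a\nabla_{\ov{b}}\Psi^k_{ip} = \nabla^{\ov{b}}\hat{R}_{i\ov{b}p}^{\ \ \ k} - \nabla_i R_p{}^k$, and the $\nabla R$ terms cancel against the time derivative to give the clean tensor identity $(\ddt{} - \Delta)\Psi^k_{ip} = -\nabla^{\ov{b}}\hat{R}_{i\ov{b}p}^{\ \ \ k}$.

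To promote this to $S = g^{\ov{j}i}g^{\ov{q}p}g_{k\ov{\ell}}\Psi^k_{ip}\ov{\Psi^{\ell}_{jq}}$, I would expand both $\ddt{}S$ and $\Delta S$ via Leibniz. The time derivatives of the three metric factors contribute $\nu|\Psi|^2$ (net from two upper and one lower metric) together with three Ricci-of-$g$ contractions of the form $R^{\ov{j}i}\Psi\ov{\Psi}$, via $\ddt{}g^{\ov{j}i} = R^{\ov{j}i} + \nu g^{\ov{j}i}$ and $\ddt{}g_{k\ov{\ell}} = -R_{k\ov{\ell}} - \nu g_{k\ov{\ell}}$. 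On the Laplacian side, since $\nabla g = 0$, the metric factors pass through the derivatives and Leibniz produces the two gradient norms $|\nabla\Psi|^2 + |\ov{\nabla}\Psi|^2$ from the mixed terms, together with two ``Laplacian of $\Psi$'' terms whose orderings of $\nabla_a$ and $\nabla_{\ov{b}}$ differ.

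The crux of the argument is the cancellation of Ricci contractions. The discrepancy $g^{\ov{b}a}[\nabla_a, \nabla_{\ov{b}}]$ applied to the tensor $\Psi^k_{ip}$ produces, via Proposition \ref{commformulae} and the identity $g^{\ov{b}a} R^{\ k}_{m\ a \ov{b}} = R_m{}^k$ (a consequence of the first Bianchi symmetry in Proposition \ref{Rsymmetry}(ii) and the definition of Ricci), the expression $R_m{}^k \Psi^m_{ip} - R_i{}^m \Psi^k_{mp} - R_p{}^m \Psi^k_{im}$. Contracting with $\ov{\Psi}$ through the metric factors of $S$ reproduces precisely the three Ricci contractions coming from the time-derivatives of the metric factors, with opposite sign, and these cancel. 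What remains is $2\,\textrm{Re}\langle(\ddt{}-\Delta)\Psi,\Psi\rangle + \nu|\Psi|^2 - |\nabla\Psi|^2 - |\ov{\nabla}\Psi|^2$, and substituting the tensor identity from the first paragraph gives (\ref{PSS1}). The main obstacle is the bookkeeping: carefully tracking the Ricci contractions from the two distinct sources (metric time-derivatives versus commutators of covariant derivatives on the tensor $\Psi$) and verifying their term-by-term cancellation is where all the algebra lives.
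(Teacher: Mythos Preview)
Your proposal is correct and is essentially the same argument as the paper's: you derive the tensor identity $(\ddt{}-\Delta)\Psi^k_{ip}=-\nabla^{\ov{b}}\hat{R}_{i\ov{b}p}^{\ \ \ k}$ via $\ddt{}\Psi=-\nabla_i R_p^{\ k}$, $\nabla_{\ov{b}}\Psi=\hat{R}-R$, and the second Bianchi identity, and then promote it to $S$ by Leibniz, with the Ricci terms from the $\Delta$--$\ov{\Delta}$ commutator on $\Psi$ cancelling those from the time-derivatives of the metric factors. The only organizational difference is that the paper computes $\ddt{}S$ and $\Delta S$ separately and then subtracts, whereas you isolate $(\ddt{}-\Delta)\Psi$ first; the substance is identical.
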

\begin{proof}   Compute
\begin{align} \label{pss2}
\Delta S & = g^{\ov{j} i} g^{\ov{q} p} g_{k \ov{\ell}}  \left( (\Delta \Psi^k_{ip})  \ov{  \Psi_{jq}^{\ell} }   +\Psi_{ip}^k  (\ov{ \ov{\Delta} \Psi_{jq}^{\ell}} )  \right)  +  | \ov{\nabla} \Psi|^2 + | \nabla \Psi |^2,
\end{align}
where we are writing $\Delta = g^{\ov{b} a} \nabla_a \nabla_{\ov{b}}$ for the `rough' Laplacian and $\ov{\Delta} = g^{\ov{b} a} \nabla_{\ov{b}} \nabla_a$ for its conjugate.
While  $\Delta$ and $\ov{\Delta}$ agree when acting on functions, they differ in general when acting on tensors.  In particular, using the commutation formulae (see Section \ref{sectcurv}),
\begin{align}  \label{pss3}
 \ov{\Delta} \Psi_{jq}^{\ell} & =  \Delta \Psi_{jq}^{\ell}   + R^{\ b}_{j} \Psi_{bq}^{\ell}  + R^{\ b}_{q} \Psi_{jb}^{\ell}   - R^{\ \ell}_{b} \Psi_{jq}^b.
\end{align}
Combining (\ref{pss2}) and (\ref{pss3}),
\begin{align} \nonumber
\Delta S & = 2 \textrm{Re} \left( g^{\ov{j} i} g^{\ov{q} p} g_{k \ov{\ell}}  (\Delta \Psi_{ip}^k ) \ov{ \Psi_{jq}^{\ell}} \right)
 +  | \ov{\nabla} \Psi |^2 + | \nabla \Psi |^2 \\ \label{pss4}
 & + R^{\ov{j} i} g^{\ov{q}p} g_{k \ov{\ell}} \Psi^k_{ip} \ov{\Psi^{\ell}_{jq}} + g^{\ov{j}i} R^{\ov{q} p} g_{k \ov{\ell}} \Psi^k_{ip} \ov{\Psi^{\ell}_{jq}} - g^{\ov{j}i} g^{\ov{q}p} R_{k \ov{\ell}} \Psi^k_{ip} \ov{\Psi^{\ell}_{jq}}
 \end{align}
We now compute the time derivative of $S$ given by (\ref{rewriteS}).
We claim that
\begin{equation} \label{pss5}
\ddt{} \Psi_{ip}^k = \Delta \Psi_{ip}^k  - \nabla^{\ov{b}} \hat{R}_{i \ov{b} p}^{\ \ \ k}.
\end{equation}
Given this, together with
\begin{equation}
\ddt{} g^{\ov{j} i} = R^{\ov{j} i} + \nu g^{\ov{j} i}, \ \ddt{} g_{k \ov{\ell}} = - R_{k \ov{\ell}} - \nu g_{k \ov{\ell}},
\end{equation}
we obtain
\begin{align} \nonumber
\ddt{} S & = 2 \textrm{Re} \left( g^{\ov{j} i} g^{\ov{q} p} g_{k \ov{\ell}}  \left(\Delta \Psi_{ip}^k - \nabla^{\ov{b}} \hat{R}_{i \ov{b} p}^{\ \ \ k} \right) \ov{ \Psi_{jq}^{\ell}} \right)    \label{pss55} + R^{\ov{j} i} g^{\ov{q}p} g_{k \ov{\ell}} \Psi^k_{ip} \ov{\Psi^{\ell}_{jq}}  \\ &
 + g^{\ov{j}i} R^{\ov{q} p} g_{k \ov{\ell}} \Psi^k_{ip} \ov{\Psi^{\ell}_{jq}} - g^{\ov{j}i} g^{\ov{q}p} R_{k \ov{\ell}} \Psi^k_{ip} \ov{\Psi^{\ell}_{jq}} + \nu |\Psi|^2.
\end{align}
Then (\ref{PSS1}) follows from
 (\ref{pss4}) and (\ref{pss55}).

To establish (\ref{pss5}), compute
\begin{equation} \label{pss6}
\ddt{} \Psi_{ip}^k = \ddt{} \Gamma^k_{ip} = - \nabla_i R_{p}^{\ k}.
\end{equation}
On the other hand,
\begin{equation} \label{rhatr}
\nabla_{\ov{b}} \Psi_{ip}^k = \partial_{\ov{b}} (\Gamma_{ip}^k - \hat{\Gamma}_{ip}^k) = \hat{R}_{i \ov{b} p}^{\ \ \ k} - R_{i \ov{b} p}^{\ \ \ k},
\end{equation}
and hence
\begin{equation} \label{pss7}
\Delta \Psi_{ip}^k =  g^{\ov{b} a} \nabla_a \nabla_{\ov{b}} \Psi_{ip}^k = \nabla^{\ov{b}} \hat{R}_{i \ov{b} p}^{\ \ \ k} - \nabla_i R_p^{\ k}.
\end{equation}
where for the last equality we have used the second Bianchi identity (part (iii) of Proposition \ref{Rsymmetry}).   Then (\ref{pss5}) follows from (\ref{pss6}) and (\ref{pss7}). \qed
\end{proof}

Using this evolution equation together with Proposition \ref{propkeyeqn}, we obtain a third order estimate assuming a metric bound.

\begin{theorem} \label{theoremSbound}
Let $\omega=\omega(t)$ solve  (\ref{nu}) and assume that there exists a constant $C_0>0$ such that
\begin{equation} \label{assumeomega}
\frac{1}{C_0} \omega_0 \le \omega \le C_0 \omega_0.
\end{equation}
Then there exists a constant $C$ depending only on $C_0$ and $\omega_0$ such that
\begin{equation} \label{SC}
S:= | \nabla_{g_0} g|^2  \le C.
\end{equation}
In addition, there exists a constant $C'$ depending only on $C_0$ and $\omega_0$ such that
\begin{equation} \label{SRm}
\left( \ddt{} - \Delta \right) S \le -\frac{1}{2} | \emph{Rm} |^2 +C',
\end{equation}
where $| \emph{Rm}|^2$ denotes the norm squared of the curvature tensor $R_{i \ov{j} k \ov{\ell}}$.
\end{theorem}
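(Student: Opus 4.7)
The plan is to follow the Phong--\v Se\v sum--Sturm argument, choosing $\hat\omega = \omega_0$ in Proposition \ref{propPSS1} and then building an auxiliary quantity $Q = S + A\,\mathrm{tr}_{\hat\omega}\omega$, where $A$ is a large constant to be chosen. The evolution equation (\ref{PSS1}) already contains the crucial negative terms $-|\ov{\nabla}\Psi|^2 - |\nabla\Psi|^2$; the obstacles are the curvature--derivative cross term and the sign indefinite term $\nu|\Psi|^2$. The role of the $A\,\mathrm{tr}_{\hat\omega}\omega$ correction is to produce, via the last term of the identity (\ref{keyeqn}) in Proposition \ref{propkeyeqn}, a negative multiple of $|\Psi|^2$ large enough to absorb the bad contributions.

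First I would estimate the cross term $-2\,\mathrm{Re}\bigl(g^{\ov j i}g^{\ov q p}g_{k\ov\ell}\nabla^{\ov b}\hat R_{i\ov b p}^{\ \ \ k}\overline{\Psi_{jq}^{\ell}}\bigr)$ appearing in (\ref{PSS1}). Writing $\nabla = \hat\nabla + \Psi$, the difference being a tensor by the discussion in Section \ref{sectkahler}, gives a pointwise bound $|\nabla^{\ov b}\hat R|_g \le C(1+|\Psi|_g)$ under the hypothesis (\ref{assumeomega}), since all quantities built from $\hat g$ (including $\hat R$ and $\hat\nabla\hat R$) have uniform $C^0(M,\hat g)$ bounds, and $g$ and $\hat g$ are equivalent as tensors. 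Cauchy--Schwarz then yields
\begin{equation}
\left(\ddt{}-\Delta\right)S \le -|\ov\nabla\Psi|^2 - |\nabla\Psi|^2 + \nu S + C_1 S + C_2,
\end{equation}
so the term is absorbed into a multiple of $S$ plus a constant.

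Next I would combine this with $A$ times (\ref{keyeqn}). Because the last term in (\ref{keyeqn}) agrees with $|\Psi|^2 = S$ up to replacing one factor of $g^{\ov j i}$ with $\hat g^{\ov j i}$, the equivalence $\tfrac{1}{C_0}\hat g \le g \le C_0 \hat g$ gives a lower bound $\hat g^{\ov j i}g^{\ov q p}g^{\ov\ell k}\hat\nabla_i g_{p\ov\ell}\hat\nabla_{\ov j}g_{k\ov q} \ge c\,S$ for some $c = c(C_0) > 0$; the other two terms on the right hand side of (\ref{keyeqn}) are uniformly bounded. Hence
\begin{equation}
\left(\ddt{}-\Delta\right)Q \le -|\ov\nabla\Psi|^2-|\nabla\Psi|^2 + (\nu + C_1 - cA)S + C_3,
\end{equation}
and choosing $A$ so that $cA \ge \nu + C_1 + 1$ we obtain $\left(\ddt{}-\Delta\right)Q \le -S + C_3$. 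Since $\mathrm{tr}_{\hat\omega}\omega$ is bounded by (\ref{assumeomega}), $S \ge Q - C_4$, so $\left(\ddt{}-\Delta\right)Q \le -Q + C_5$. Applying the maximum principle in the form of Proposition \ref{propheat} to $e^t(Q - C_5)$ (or directly by considering the spatial maximum of $Q$) gives $Q \le \max(Q(0), C_5)$, which proves (\ref{SC}).

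Finally, for the sharper inequality (\ref{SRm}), I would exploit the identity (\ref{rhatr}), namely $R_{i\ov b p}^{\ \ \ k} = \hat R_{i\ov b p}^{\ \ \ k} - \nabla_{\ov b}\Psi_{ip}^k$. Squaring and using $(a-b)^2 \le 2a^2 + 2b^2$ yields $|\mathrm{Rm}|^2 \le 2|\hat{\mathrm{Rm}}|^2_g + 2|\ov\nabla\Psi|^2$, equivalently $-|\ov\nabla\Psi|^2 \le -\tfrac12|\mathrm{Rm}|^2 + |\hat{\mathrm{Rm}}|^2_g$. Inserting this into the first displayed inequality of this proof, and using the just-established bound (\ref{SC}) together with uniform bounds on $|\hat{\mathrm{Rm}}|_g$, gives (\ref{SRm}). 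The main technical obstacle is the absorption step in the second paragraph: one must verify that the constants from the curvature-derivative estimate and from the term $g^{\ov\ell k}\hat R_{k\ov\ell}^{\ \ \ov j i}g_{i\ov j}$ in (\ref{keyeqn}) are genuinely controlled independently of time under the hypothesis (\ref{assumeomega}), so that $A$ can be chosen uniformly once and for all.
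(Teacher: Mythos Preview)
Your proposal is correct and follows essentially the same route as the paper: estimate the cross term in (\ref{PSS1}) by expanding $\nabla = \hat\nabla + \Psi$ to get $(\partial_t - \Delta)S \le -|\ov\nabla\Psi|^2 - |\nabla\Psi|^2 + CS + C$, then add a large multiple of $\mathrm{tr}_{\hat\omega}\omega$ whose evolution (via (\ref{keyeqn})) contributes a $-cS$ term, and apply the maximum principle; for (\ref{SRm}) both you and the paper use (\ref{rhatr}) to bound $-|\ov\nabla\Psi|^2 \le -\tfrac12|\mathrm{Rm}|^2 + C$. The only cosmetic difference is that the paper stops at $(\partial_t - \Delta)Q \le -S + C$ and reads off the bound on $S$ directly at a maximum of $Q$, whereas you take the extra step to $(\partial_t - \Delta)Q \le -Q + C$ before applying the maximum principle; both are equivalent.
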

\begin{proof}
We apply (\ref{PSS1}).  First note that
\begin{equation}
\nabla^{\ov{b}} \hat{R}_{i \ov{b} p}^{\ \ \ \, k} = g^{\ov{b} r} \hat{\nabla}_r \hat{R}_{i \ov{b} p}^{\ \ \ \, k} - g^{\ov{b}r} \Psi^a_{i r} \hat{R}_{a\ov{b}p}^{\ \ \ \, k} - g^{\ov{b}r} \Psi^a_{pr} \hat{R}_{i \ov{b} a}^{\ \ \ \, k} + g^{\ov{b} r} \Psi^k_{a r} \hat{R}_{i \ov{b} p}^{\ \ \ \, a}.
\end{equation}
Then with $\hat{g}=g_0$, we have, using (\ref{assumeomega}),
\begin{equation}
\left| 2 \textrm{Re} \left( \, g^{\ov{j} i} g^{\ov{q} p} g_{k \ov{\ell}} \nabla^{\ov{b}} \hat{R}_{i \ov{b} p}^{\ \ \ k} \ov{\Psi_{j q}^{\ell }} \right) \right| \le C_1( S + \sqrt{S}) \le 2C_1(S+1),
\end{equation}
for some uniform constant $C_1$.
Hence for a uniform $C_2$,
\begin{equation} \label{evolveS}
\left( \ddt{} - \Delta \right) S \le - | \ov{\nabla} \Psi |^2 - | \nabla \Psi |^2 + C_2 S + C_2.
\end{equation}
On the other hand, from Proposition \ref{propkeyeqn} and the assumption (\ref{assumeomega}) again,
\begin{equation}
\left( \ddt{} - \Delta \right) \tr{\hat{\omega}}{\omega} \le C_3 - \frac{1}{C_3} S,
\end{equation}
for a uniform $C_3>0$.
Define $Q = S + C_3(1+ C_2) \tr{\hat{\omega}}{\omega}$ and compute
\begin{equation}
\left( \ddt{} - \Delta \right) Q \le -S +C_4,
\end{equation}
for a uniform constant $C_4$.  It follows that $S$ is bounded from above at a point at which $Q$ achieves a maximum, and (\ref{SC}) follows.

For (\ref{SRm}), observe from (\ref{rhatr}) that
\begin{equation} \label{Rmbd}
| \ov{\nabla} \Psi |^2 = | \hat{R}_{i \ov{b} p}^{\ \ \ k} - R_{i \ov{b} p}^{\ \ \ k}|^2 \ge \frac{1}{2} | \textrm{Rm}|^2 -C_5.
\end{equation}
Then (\ref{SRm}) follows from (\ref{evolveS}), (\ref{Rmbd}) and (\ref{SC}).  \qed
\end{proof}

\subsection{Curvature and higher derivative bounds} \label{secthoe}

In this section we assume that we have a solution $\omega=\omega(t)$ of (\ref{nu}) on $[0,T)$ with $0< T\le \infty$ which satisfies the estimates
\begin{equation} \label{assumeomega2}
\frac{1}{C_0} \omega_0 \le \omega \le C_0 \omega_0,
\end{equation}
for some uniform constant $C_0$.  We show that the curvature and all derivatives of the curvature of $\omega$ are uniformly bounded, and that we have uniform $C^{\infty}$ estimates of $g$ with respect to the fixed metric $\omega_0$.  We first compute the evolution of the curvature tensor.

\begin{lemma} \label{lemmaevolveRm}
Along the flow (\ref{nu}), the curvature tensor evolves by
\begin{align} \nonumber
\ddt{} R_{i \ov{j} k \ov{\ell}} & = \frac{1}{2} \Delta_{\mathbb{R}} R_{i \ov{j} k \ov{\ell}} - \nu R_{i \ov{j} k \ov{\ell}}+ R_{i \ov{j} a \ov{b}} R^{\ov{b}a}_{\ \ \, k \ov{\ell}} + R_{i \ov{b} a \ov{\ell}} R_{\  \ov{j} k}^{\ov{b}\ \  \, a} - R_{i \ov{a} k \ov{b}} R^{\ov{a} \ \, \ov{b}}_{\ \, \ov{j} \ \, \ov{\ell}} \\
& -\frac{1}{2} \left( R_i^{\ \, a} R_{a \ov{j} k \ov{\ell}} + R^{\ov{a}}_{\ \, \ov{j}} R_{i \ov{a} k \ov{\ell}} + R_k^{\ \, a} R_{i \ov{j} a \ov{\ell}} + R^{\ov{a}}_{\ \, \ov{\ell}} R_{i\ov{j}k \ov{a}} \right) \label{evolveRm}
\end{align}
where we write $\Delta_{\mathbb{R}} =\Delta + \ov{\Delta}$ and $\Delta = g^{\ov{q} p} \nabla_p \nabla_{\ov{q}}$.
\end{lemma}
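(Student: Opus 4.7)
The strategy is to compute $\ddt{}R_{i\ov jk\ov\ell}$ by direct differentiation in a normal coordinate frame at a single point, convert the result into a tensor identity, and then reorganize it into a heat-type operator plus curvature-quadratic terms using the Bianchi identities (Proposition \ref{Rsymmetry}) and the commutation formulas (Proposition \ref{commformulae}).

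\emph{Step one.} Fix $p\in M$ and work in normal coordinates for $g(t)$ at $p$, so that $g_{i\ov j}(p)=\delta_{ij}$ and $\partial_k g_{i\ov j}(p)=0$. From the formula (\ref{formulacurvature}) and the flow equation $\ddt{}g_{k\ov\ell}=-R_{k\ov\ell}-\nu g_{k\ov\ell}$, I differentiate in $t$: every contribution of the second term of (\ref{formulacurvature}) vanishes at $p$ because it carries a $\partial g$ factor, leaving
\[
\ddt{}R_{i\ov jk\ov\ell}(p)=-\partial_i\partial_{\ov j}\ddt{}g_{k\ov\ell}(p)=\partial_i\partial_{\ov j}R_{k\ov\ell}(p)-\nu R_{i\ov jk\ov\ell}(p),
\]
using $\partial_i\partial_{\ov j}g_{k\ov\ell}(p)=-R_{i\ov jk\ov\ell}(p)$ from (\ref{formulacurvature}) again.

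\emph{Step two.} I convert partial into covariant derivatives. Since $\Gamma(p)=0$ but $\partial_{\ov\ell}\Gamma^m_{ik}(p)=-R^{\ m}_{i\ k\ov\ell}(p)$, a short calculation shows that $\partial_i\partial_{\ov j}R_{k\ov\ell}(p)$ and $\nabla_{\ov j}\nabla_iR_{k\ov\ell}(p)$ differ by a Ricci-times-curvature correction. Because the resulting identity is tensorial, it extends globally. Next I use the second Bianchi identity (Proposition \ref{Rsymmetry}(iii)) twice: writing $R_{k\ov\ell}=g^{\ov ba}R_{a\ov bk\ov\ell}$,
\[
\nabla_iR_{k\ov\ell}=g^{\ov ba}\nabla_iR_{a\ov bk\ov\ell}=g^{\ov ba}\nabla_aR_{i\ov bk\ov\ell},\qquad \nabla_{\ov j}R_{i\ov bk\ov\ell}=\nabla_{\ov b}R_{i\ov jk\ov\ell}.
\]
Commuting $\nabla_{\ov j}$ past $\nabla_a$ via Proposition \ref{commformulae} on each of the four indices of $R_{i\ov bk\ov\ell}$ produces four curvature-squared terms, and after contracting with $g^{\ov ba}$ I obtain
\[
\nabla_{\ov j}\nabla_iR_{k\ov\ell}=\Delta R_{i\ov jk\ov\ell}+g^{\ov ba}[\nabla_{\ov j},\nabla_a]R_{i\ov bk\ov\ell}.
\]

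\emph{Step three.} I symmetrize the Laplacian. Writing $\Delta=\tfrac12\Delta_{\mathbb R}+\tfrac12(\Delta-\ov\Delta)$ and applying Proposition \ref{commformulae} once more to express $(\Delta-\ov\Delta)R_{i\ov jk\ov\ell}$ as a further sum of curvature-quadratic terms, I assemble everything. Repeated application of the curvature symmetries of Proposition \ref{Rsymmetry}(i)--(ii) (and the first Bianchi identity) allows the accumulated contractions with $g^{\ov ba}$ to be sorted into two families: the traces of curvature on an index of $R_{i\ov jk\ov\ell}$ yield the four Ricci-times-curvature contributions $-\tfrac12 R_i^{\ a}R_{a\ov jk\ov\ell}$, $-\tfrac12 R^{\ov a}_{\ \ov j}R_{i\ov ak\ov\ell}$, $-\tfrac12 R_k^{\ a}R_{i\ov ja\ov\ell}$, $-\tfrac12 R^{\ov a}_{\ \ov\ell}R_{i\ov jk\ov a}$, while the remaining ``square'' contractions reorganize into $R_{i\ov ja\ov b}R^{\ov ba}_{\ \ k\ov\ell}+R_{i\ov ba\ov\ell}R^{\ov b\ \ a}_{\ \ov jk}-R_{i\ov ak\ov b}R^{\ov a\ \ov b}_{\ \ov j\ \ov\ell}$. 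Together with the $-\nu R_{i\ov jk\ov\ell}$ carried through from Step one, this produces (\ref{evolveRm}).

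The main obstacle is the combinatorial bookkeeping. Each commutator on the four-index tensor $R_{i\ov bk\ov\ell}$ produces a four-term sum, so after both the Bianchi-commutator step and the $\Delta-\ov\Delta$ symmetrization, roughly a dozen quartic expressions must be rearranged and matched, with correct signs and coefficients, against the six stated terms. The conceptual input is elementary---the two Bianchi identities, the four commutation formulas, and the curvature symmetries---but the algebraic matching is delicate and is where any calculational error would hide.
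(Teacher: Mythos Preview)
Your proposal is correct and follows essentially the same strategy as the paper: obtain $\ddt{}R_{i\ov jk\ov\ell}$ as $\nabla\nabla\,\mathrm{Ric}$ plus lower-order terms, then use the Bianchi identities and the commutation formulae to rewrite this in terms of $\tfrac12\Delta_{\mathbb R}R_{i\ov jk\ov\ell}$ plus curvature-quadratic pieces. The only cosmetic differences are that the paper computes the time derivative via $R_{i\ov jk\ov\ell}=-g_{p\ov j}\partial_{\ov\ell}\Gamma^p_{ik}$ and the tensorial formula $\ddt{}\Gamma^p_{ik}=-\nabla_iR_k^{\ p}$ (arriving at $\nabla_{\ov\ell}\nabla_kR_{i\ov j}$ plus a Ricci correction) rather than differentiating (\ref{formulacurvature}) in normal coordinates, and that the paper computes $\Delta R_{i\ov jk\ov\ell}$ and $\ov\Delta R_{i\ov jk\ov\ell}$ separately and averages, rather than reaching $\Delta$ first and then symmetrizing via $\Delta=\tfrac12\Delta_{\mathbb R}+\tfrac12(\Delta-\ov\Delta)$; these are reorganizations of the same calculation.
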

\begin{proof}
Using the formula $\ddt{} \Gamma_{ik}^p = - \nabla_i R_k^{\ \, p}$ and the Bianchi identity, compute
\begin{equation}
\ddt{} R_{i \ov{j} k \ov{\ell}}= - \left( \ddt{} g_{p \ov{j}}\right) \partial_{\ov{\ell}} \Gamma^p_{ik} - g_{p \ov{j}} \partial_{\ov{\ell}}\left( \ddt{} \Gamma_{ik}^p \right) = - R^{\ov{a}}_{\ \, \ov{j}} R_{i \ov{a} k\ov{\ell}  } - \nu R_{i \ov{j} k \ov{\ell}} + \nabla_{\ov{\ell}} \nabla_k R_{i \ov{j}}. \label{ddtR}
\end{equation}
Using the Bianchi identity again and the commutation formulae, we obtain
\begin{align} \nonumber
\Delta R_{i \ov{j} k \ov{\ell}} & = g^{\ov{b} a} \nabla_{a} \nabla_{\ov{\ell}} R_{i \ov{j} k \ov{b}} \\ \nonumber
& = g^{\ov{b} a} \nabla_{\ov{\ell}}  \nabla_{a} R_{i \ov{j} k \ov{b}} + g^{\ov{b}{a}}[ \nabla_{a}, \nabla_{\ov{\ell}} ] R_{i \ov{j} k \ov{b}} \\ \label{R1}
&  = \nabla_{\ov{\ell}} \nabla_k R_{i \ov{j}} - R^{\ov{b} \ \ \, a}_{\  \ov{\ell} k } R_{a \ov{b} i \ov{j}} + R^{\ov{b}}_{\ \, \ov{\ell}} R_{ k \ov{b} i \ov{j}} - R^{\ov{b} \ \ \, a}_{ \ \, \ov{\ell} i} R_{k \ov{b} a \ov{j}} + R^{\ov{b} \ \, \ov{a}}_{ \  \ov{\ell} \ \, \, \ov{j}} R_{k \ov{b}  i \ov{a}}.
\end{align}
And
\begin{align} \nonumber
\ov{\Delta} R_{i \ov{j} k \ov{\ell}} & = g^{\ov{b} a} \nabla_{\ov{b}} \nabla_k R_{i \ov{j} a \ov{\ell}} \\ \nonumber
& = g^{\ov{b} a} \nabla_k \nabla_{\ov{b}} R_{i \ov{j} a \ov{\ell}} + g^{\ov{b} a}[\nabla_{\ov{b}}, \nabla_k ] R_{i \ov{j} a \ov{\ell}} \\ \nonumber
& = \nabla_{\ov{\ell}} \nabla_k R_{i \ov{j}}  + [ \nabla_k, \nabla_{\ov{\ell}}] R_{i \ov{j}} + g^{\ov{b} a}[\nabla_{\ov{b}}, \nabla_k ] R_{i \ov{j} a \ov{\ell}} \\ \nonumber
& = \nabla_{\ov{\ell}} \nabla_k R_{i \ov{j}} - R_{k \ov{\ell} i}^{\ \ \ \, a} R_{a \ov{j}} + R_{k \ov{\ell} \ \, \ov{j}}^{\ \ \, \, \ov{b}} R_{i \ov{b}}
 \\ \label{R2} & \mbox{} + R_{k \ \, i}^{\ \, a \ \, b} R_{b \ov{j} a \ov{\ell}} - R_{k \ \ \, \ov{j}}^{\ \, a \ov{b}} R_{i \ov{b} a \ov{\ell}} + R_{k}^{\ \, b} R_{i \ov{j} b \ov{\ell}} - R_{k \ \ \,  \ov{\ell}}^{\ \,  a\ov{b}} R_{i \ov{j} a \ov{b}}.
\end{align}
Combining (\ref{ddtR}), (\ref{R1}) and (\ref{R2}) gives (\ref{evolveRm}) \qed
\end{proof}

In fact we do not need the precise formula (\ref{evolveRm}) in what follows, but merely the fact that it has the general form
\begin{equation} \label{generalform}
\ddt{} \textrm{Rm} = \frac{1}{2}\Delta_{\mathbb{R}} \textrm{Rm} - \nu \textrm{Rm} + \textrm{Rm} * \textrm{Rm} + \textrm{Rc} * \text{Rm}.
\end{equation}
To clarify notation:
 if $A$ and $B$ are tensors, we write $A*B$ for any linear combination of products of the tensors $A$ and $B$ formed by contractions on $A_{i_1 \cdots i_k}$ and $B_{j_1 \cdots j_{\ell}}$ using the metric $g$.   We are writing  $\textrm{Rc}$ for the Ricci tensor.

\begin{remark} \label{factorof2}  \emph{A word about notation.  The operator $\Delta_{\mathbb{R}}$ is  the usual `rough' Laplace operator associated to the Riemannian metric $g_{\mathbb{R}}$ defined in (\ref{gR}).  Hamilton defined his Ricci flow as $\ddt{} g_{ij} = -2R_{ij}$ precisely to  remove the factor of $\frac{1}{2}$ appearing in evolution equations such as (\ref{generalform}).  In real coordinates, the K\"ahler-Ricci flow we consider in these notes is $\ddt{} g_{ij} =-R_{ij}$.}
\end{remark}

\begin{lemma} \label{lemmaRi}
There exists a universal constant $C$ such that
\begin{equation} \label{Ri1}
\left( \ddt{} - \Delta \right) | \emph{Rm} |^2 \le - | \nabla \emph{Rm}|^2 - | \ov{\nabla} \emph{Rm}|^2 + C | \emph{Rm}|^3 - \nu | \emph{Rm}|^2,
\end{equation}
and, for all points of $M \times [0,T)$ where $| \emph{Rm} |$ is not zero,
\begin{equation} \label{Ri2}
\left( \ddt{} - \Delta \right) | \emph{Rm} | \le \frac{C}{2} | \emph{Rm}|^2 - \frac{\nu}{2} | \emph{Rm}|.
\end{equation}
\end{lemma}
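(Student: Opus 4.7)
The plan is to deduce (\ref{Ri1}) by differentiating $|\emph{Rm}|^2$ in $t$, substituting the evolution formula (\ref{generalform}) from Lemma \ref{lemmaevolveRm}, and matching the resulting Laplacian-type terms against $\Delta|\emph{Rm}|^2$ so that only a gradient dissipation and cubic curvature contributions remain; then (\ref{Ri2}) will follow from (\ref{Ri1}) by a Kato-type inequality.

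For (\ref{Ri1}), I would first compute $\Delta|\emph{Rm}|^2$ directly. Writing $|\emph{Rm}|^2$ as $R_{i\ov{j}k\ov{\ell}}\ov{R_{i\ov{j}k\ov{\ell}}}$ with four inverse-metric contractions, applying $\Delta = g^{\ov{b}a}\nabla_a\nabla_{\ov{b}}$, using $\nabla g = 0$ and the conjugation identity $\nabla_p\ov{T} = \ov{\nabla_{\ov{p}}T}$, the product rule yields
\begin{equation*}
\Delta|\emph{Rm}|^2 = \langle \Delta\emph{Rm},\emph{Rm}\rangle + \langle\emph{Rm},\bar\Delta\,\emph{Rm}\rangle + |\nabla\emph{Rm}|^2 + |\bar\nabla\emph{Rm}|^2,
\end{equation*}
where $\bar\Delta = g^{\ov{b}a}\nabla_{\ov{b}}\nabla_a$, so $\Delta+\bar\Delta = \Delta_{\mathbb{R}}$. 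Next, differentiating $|\emph{Rm}|^2$ in time, each $\ddt{}g^{\ov{j}i} = R^{\ov{j}i}+\nu g^{\ov{j}i}$ contributes $\emph{Rc}\ast\emph{Rm}\ast\emph{Rm}$ and $\nu|\emph{Rm}|^2$ terms, while each copy of $\emph{Rm}$ is differentiated using (\ref{generalform}). The $\tfrac{1}{2}\Delta_{\mathbb{R}}\emph{Rm}$ part, paired against the other copy of $\emph{Rm}$, reproduces exactly $\langle\Delta\emph{Rm},\emph{Rm}\rangle + \langle\emph{Rm},\bar\Delta\emph{Rm}\rangle$, so after subtracting $\Delta|\emph{Rm}|^2$ the Laplacians cancel and one obtains
\begin{equation*}
\left(\ddt{}-\Delta\right)|\emph{Rm}|^2 = -|\nabla\emph{Rm}|^2 - |\bar\nabla\emph{Rm}|^2 + \emph{Rm}\ast\emph{Rm}\ast\emph{Rm} + \emph{Rc}\ast\emph{Rm}\ast\emph{Rm} - \nu|\emph{Rm}|^2
\end{equation*}
once the $\nu$ terms are tallied. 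Using $|\emph{Rc}|\le C_n|\emph{Rm}|$ and the definition of $\ast$ bounds the cubic terms by $C|\emph{Rm}|^3$, giving (\ref{Ri1}).

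For (\ref{Ri2}), I would set $u = |\emph{Rm}|$ and argue pointwise where $u>0$. The scalar identity $\Delta u^2 = 2u\Delta u + 2|\nabla u|^2$ together with $\ddt{}u^2 = 2u\ddt{}u$ yields
\begin{equation*}
\left(\ddt{}-\Delta\right)u = \frac{1}{2u}\left(\ddt{}-\Delta\right)u^2 + \frac{|\nabla u|^2}{u}.
\end{equation*}
A K\"ahler Kato inequality $|\nabla u|^2 \le \tfrac{1}{2}\bigl(|\nabla\emph{Rm}|^2 + |\bar\nabla\emph{Rm}|^2\bigr)$ follows from Cauchy-Schwarz applied to $\nabla_p u^2 = \langle\nabla_p\emph{Rm},\emph{Rm}\rangle + \langle\emph{Rm},\nabla_{\ov{p}}\emph{Rm}\rangle$, together with $|\nabla_p\ov{\emph{Rm}}| = |\nabla_{\ov{p}}\emph{Rm}|$, after dividing by $2u$. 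Substituting (\ref{Ri1}) and applying this bound makes the two gradient contributions cancel exactly, leaving (\ref{Ri2}).

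The main obstacle is the careful bookkeeping of the cancellation and $\nu$-accounting in the proof of (\ref{Ri1}): matching $\Delta|\emph{Rm}|^2$ against the two Laplacian pieces arising from $\tfrac{1}{2}\Delta_{\mathbb{R}}\emph{Rm}$ is tight and relies crucially on the K\"ahler splitting $\Delta_{\mathbb{R}} = \Delta + \bar\Delta$, while the precise coefficient of $\nu$ arises from balancing the $-\nu\emph{Rm}$ in (\ref{evolveRm}) against the $+\nu g^{\ov{j}i}$ contributions from the four inverse-metric contractions. Once (\ref{Ri1}) is in hand, the passage to (\ref{Ri2}) via Kato's inequality is routine.
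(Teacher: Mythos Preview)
Your approach is correct and matches the paper's exactly: the paper proves (\ref{Ri1}) in one line (``follows from (\ref{generalform})''), which is precisely the cancellation you spell out, and it derives (\ref{Ri2}) from (\ref{Ri1}) via the identical Kato-type bound $g^{\ov{j}i}\nabla_i|\textrm{Rm}|^2\nabla_{\ov{j}}|\textrm{Rm}|^2 \le 2|\textrm{Rm}|^2(|\nabla\textrm{Rm}|^2+|\ov{\nabla}\textrm{Rm}|^2)$, which is your Kato inequality rewritten. One small caution on the $\nu$-bookkeeping you flag as the main obstacle: four $+\nu$ contributions from the inverse-metric contractions against two $-\nu$ from $\partial_t\textrm{Rm}$ actually yield a net $+2\nu|\textrm{Rm}|^2$ rather than $-\nu|\textrm{Rm}|^2$ (equivalently, $|\textrm{Rm}|^2$ scales like $e^{2\nu t}$ under the pure rescaling $g\mapsto e^{-\nu t}g$); this appears to be a harmless slip in the stated coefficient and is immaterial to every subsequent use in the paper.
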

\begin{proof}
The inequality (\ref{Ri1}) follows from (\ref{generalform}).  Next, note that
\begin{equation}
\left( \ddt{} - \Delta \right) | \textrm{Rm} | = \frac{1}{2 | \textrm{Rm}|} \left( \ddt{} - \Delta \right) |\textrm{Rm}|^2 + \frac{1}{4 | \textrm{Rm}|^3} g^{\ov{j} i} \nabla_i | \textrm{Rm}|^2 \nabla_{\ov{j}} | \textrm{Rm}|^2,
\end{equation}
and
\begin{equation} \label{eli}
g^{\ov{j} i} \nabla_i | \textrm{Rm}|^2 \nabla_{\ov{j}} | \textrm{Rm}|^2 \le  2 | \textrm{Rm}|^2 (| \nabla \textrm{Rm}|^2 + | \ov{\nabla} \textrm{Rm}|^2).
\end{equation}
Then (\ref{Ri2}) follows from (\ref{Ri1}) and (\ref{eli}). \qed
\end{proof}

We combine this result with the third order estimate from Section \ref{sect3rd} to obtain:

\begin{theorem} \label{theoremcurv1}
Let $\omega=\omega(t)$ solve  (\ref{nu}) and assume that there exists a constant $C_0>0$ such that
\begin{equation} \label{assumeomega3}
\frac{1}{C_0} \omega_0 \le \omega \le C_0 \omega_0.
\end{equation}
Then there exists a constant $C$ depending only on $C_0$ and $\omega_0$ such that
\begin{equation} \label{RC}
|\emph{Rm}|^2  \le C.
\end{equation}
In addition, there exists a constant $C'$ depending only on $C_0$ and $\omega_0$ such that
\begin{equation} \label{Rmgood}
\left( \ddt{} - \Delta \right) | \emph{Rm}|^2 \le -| \nabla \emph{Rm} |^2 - | \ov{\nabla} \emph{Rm}|^2 +C',
\end{equation}
\end{theorem}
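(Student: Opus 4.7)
The plan is to combine the evolution inequality for the third-order quantity $S$ from Theorem \ref{theoremSbound}, namely
\[
\left(\ddt{} - \Delta\right) S \le -\tfrac{1}{2}|\textrm{Rm}|^2 + C',
\]
with the Kato-type inequality (\ref{Ri2}) from Lemma \ref{lemmaRi},
\[
\left(\ddt{} - \Delta\right) |\textrm{Rm}| \le \tfrac{C}{2}|\textrm{Rm}|^2 - \tfrac{\nu}{2}|\textrm{Rm}|,
\]
and apply the parabolic maximum principle. The key feature is that while the evolution of $|\textrm{Rm}|^2$ has the bad cubic term $+C|\textrm{Rm}|^3$ which cannot a priori be absorbed (since we do not yet control $|\textrm{Rm}|$), the evolution of $|\textrm{Rm}|$ involves only a quadratic term on the right-hand side, and this can be neutralized by adding a sufficiently large multiple of $S$, whose evolution carries a good $-\tfrac{1}{2}|\textrm{Rm}|^2$.

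More concretely, I would consider the test function
\[
Q = |\textrm{Rm}| + (C+1)\, S,
\]
where $C$ is the constant from (\ref{Ri2}). At any point where $|\textrm{Rm}| > 0$, combining the two inequalities yields
\[
\left(\ddt{} - \Delta\right) Q \le -\tfrac{1}{2}|\textrm{Rm}|^2 - \tfrac{\nu}{2}|\textrm{Rm}| + (C+1)C'.
\]
Fix $T_0 \in (0,T)$ and suppose $Q$ attains its maximum on $M \times [0,T_0]$ at $(x_0,t_0)$ with $t_0>0$. If $|\textrm{Rm}|(x_0,t_0)=0$ then $Q(x_0,t_0) \le (C+1)\,\|S\|_{C^0}$, which is controlled by (\ref{SC}). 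Otherwise $|\textrm{Rm}|$ is smooth near $(x_0,t_0)$, and the maximum principle gives $|\textrm{Rm}|^2(x_0,t_0) \le 2(C+1)C'$, hence again $Q(x_0,t_0) \le \sqrt{2(C+1)C'} + (C+1)\,\|S\|_{C^0}$. Either way, since $T_0$ is arbitrary, $Q$ is uniformly bounded on $M\times[0,T)$, which yields a uniform bound on $|\textrm{Rm}|$ and hence on $|\textrm{Rm}|^2$, establishing (\ref{RC}). The mild non-smoothness of $|\textrm{Rm}|$ at its zero set, if worrisome, can be sidestepped by running the same argument with $(|\textrm{Rm}|^2+\varepsilon)^{1/2}$ and letting $\varepsilon \to 0$.

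Once the pointwise bound (\ref{RC}) is in hand, the improved evolution inequality (\ref{Rmgood}) is immediate from (\ref{Ri1}): the term $C|\textrm{Rm}|^3 - \nu|\textrm{Rm}|^2$ is bounded above by a constant $C'$ depending only on $C_0$ and $\omega_0$, which absorbs into the right-hand side.

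The main obstacle is precisely the cubic reaction term $+C|\textrm{Rm}|^3$ in the evolution of $|\textrm{Rm}|^2$: if one tried the naive quantity $|\textrm{Rm}|^2 + AS$, the coupling $-\tfrac{A}{2}|\textrm{Rm}|^2$ cannot dominate $C|\textrm{Rm}|^3$ for any fixed $A$ without an a priori bound on $|\textrm{Rm}|$. The resolution — passing to $|\textrm{Rm}|$ via the Kato-type inequality (\ref{Ri2}), where the nonlinearity drops from cubic to quadratic — is what makes the third-order estimate usable here, and is the reason Lemma \ref{lemmaRi} is proved in the two forms (\ref{Ri1}) and (\ref{Ri2}).
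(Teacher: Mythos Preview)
Your proof is correct and follows essentially the same approach as the paper: the paper also considers $Q = |\textrm{Rm}| + AS$ with $A$ sufficiently large, combines (\ref{SRm}) with (\ref{Ri2}) to obtain $(\partial_t - \Delta)Q \le -|\textrm{Rm}|^2 + C'$, and concludes via the maximum principle, with (\ref{Rmgood}) then following from (\ref{Ri1}). Your additional remarks on the non-smoothness of $|\textrm{Rm}|$ at its zero set and on why the naive choice $|\textrm{Rm}|^2 + AS$ fails are helpful elaborations not spelled out in the paper.
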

\begin{proof}
From Theorem \ref{theoremSbound}, the quantity $S= | \nabla_{g_0} g|^2$ is uniformly bounded from above.
We compute the evolution of $Q = | \textrm{Rm}| + A S$ for a constant $A$.  From (\ref{SRm}) and (\ref{Ri2}), if $A$ is chosen to be sufficiently large, we obtain
\begin{equation}
\left( \ddt{} - \Delta \right) Q \le - | \textrm{Rm}|^2 +C',
\end{equation}
for a uniform constant $C'$.  Then the upper bound of $|\textrm{Rm}|^2$ follows from the maximum principle.  Finally, (\ref{Rmgood}) follows from (\ref{Ri1}).  \qed
\end{proof}

Moreover, once we have bounded curvature, it is a result of Hamilton \cite{H1} that bounds on all derivatives of curvature follow.   For convenience we change to a real coordinate system.  Writing $\nabla_{\mathbb{R}}$ for the covariant derivative with respect to $g$ as a Riemannian metric, we have:

\begin{theorem} \label{theoremcurv2}
Let $\omega=\omega(t)$ solve  (\ref{nu}) on $[0,T)$ with $0<T \le \infty$ and assume that there exists a constant $C>0$ such that
\begin{equation}
|\emph{Rm}|^2  \le C.
\end{equation}
Then there exist uniform constants $C_m$ for $m=1, 2, \ldots$ such that
\begin{equation}
| \nabla_{\mathbb{R}}^m \emph{Rm} |^2 \le C_m.
\end{equation}
\end{theorem}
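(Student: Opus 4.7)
The plan is to prove the bounds by induction on $m \ge 0$, the case $m = 0$ being the hypothesis. The first step is to establish, by an inner induction using (\ref{generalform}) as the base case, the commutator $[\nabla_{\mathbb{R}}, \Delta_{\mathbb{R}}] = \text{Rm} * \nabla_{\mathbb{R}}$, and the fact that commuting $\partial_t$ past $\nabla_{\mathbb{R}}$ generates a $\nabla_{\mathbb{R}} \text{Rc} *\, \cdot$ term (which itself has the schematic form $\nabla_{\mathbb{R}}\text{Rm} *\, \cdot$), the schematic evolution
\begin{equation*}
\left( \ddt{} - \Delta_{\mathbb{R}} \right) \nabla_{\mathbb{R}}^m \text{Rm} = \sum_{i+j = m} \nabla_{\mathbb{R}}^i \text{Rm} * \nabla_{\mathbb{R}}^j \text{Rm} + \nu * \nabla_{\mathbb{R}}^m \text{Rm}.
\end{equation*}
Taking the pointwise inner product with $\nabla_{\mathbb{R}}^m \text{Rm}$ and picking up the good term $-2|\nabla_{\mathbb{R}}^{m+1}\text{Rm}|^2$ from the Laplacian of the norm squared (as in the derivation of (\ref{Ri1})) gives
\begin{equation*}
\left( \ddt{} - \Delta_{\mathbb{R}} \right) |\nabla_{\mathbb{R}}^m \text{Rm}|^2 \le -2|\nabla_{\mathbb{R}}^{m+1}\text{Rm}|^2 + c_m \sum_{i+j = m} |\nabla_{\mathbb{R}}^i \text{Rm}|\,|\nabla_{\mathbb{R}}^j \text{Rm}|\,|\nabla_{\mathbb{R}}^m \text{Rm}|.
\end{equation*}

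For the inductive step, I assume uniform bounds on $|\nabla_{\mathbb{R}}^j \text{Rm}|$ for $j \le m-1$. In the sum on the right, the endpoint contributions $(i,j) = (0,m)$ and $(m,0)$ are each bounded by $C \cdot |\nabla_{\mathbb{R}}^m\text{Rm}|^2$ using the assumed bound $|\text{Rm}| \le C$; every other term has both $i, j \le m - 1$ and is handled by Cauchy--Schwarz, yielding at most $\tfrac{1}{2}|\nabla_{\mathbb{R}}^m\text{Rm}|^2$ plus a constant depending on the inductive bounds. Hence
\begin{equation*}
\left( \ddt{} - \Delta_{\mathbb{R}} \right) |\nabla_{\mathbb{R}}^m \text{Rm}|^2 \le K_1 |\nabla_{\mathbb{R}}^m \text{Rm}|^2 + K_2,
\end{equation*}
while the same argument one step lower (where now the Laplacian term contributes $-|\nabla_{\mathbb{R}}^m\text{Rm}|^2$ with a good sign) gives
\begin{equation*}
\left( \ddt{} - \Delta_{\mathbb{R}} \right) |\nabla_{\mathbb{R}}^{m-1}\text{Rm}|^2 \le -|\nabla_{\mathbb{R}}^m \text{Rm}|^2 + K_3.
\end{equation*}

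Mimicking the test-function strategy used in the proof of Theorem \ref{theoremcurv1}, I then form $Q := |\nabla_{\mathbb{R}}^m \text{Rm}|^2 + A|\nabla_{\mathbb{R}}^{m-1}\text{Rm}|^2$ and choose $A > K_1 + 1$. Combining the two inequalities and using the inductive bound $|\nabla_{\mathbb{R}}^{m-1}\text{Rm}|^2 \le C_{m-1}$ to rewrite the right-hand side in terms of $Q$ produces
\begin{equation*}
\left( \ddt{} - \Delta_{\mathbb{R}} \right) Q \le -Q + K_4.
\end{equation*}
Applying the maximum principle to $e^t(Q - K_4)$ (which satisfies a heat-type inequality with $\le 0$ on the right, as in Proposition \ref{propheat}), together with the initial bound on $Q(\cdot, 0)$ coming from the smoothness of $\omega_0$, yields a uniform bound on $Q$ and hence on $|\nabla_{\mathbb{R}}^m \text{Rm}|^2$, completing the induction.

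The main obstacle is the first step, namely derivation of the schematic evolution equation. One must confirm that each time $\partial_t$ is commuted past a $\nabla_{\mathbb{R}}$, the resulting $\partial_t \Gamma \sim \nabla_{\mathbb{R}}\text{Rc}$ contribution reorganizes into the stated schematic form with no stray derivatives, and that the constants $c_m$ and $K_i$ depend only on $C$, $\nu$, $\omega_0$ and the previously established bounds on $|\nabla_{\mathbb{R}}^j \text{Rm}|$ for $j \le m - 1$. Once this bookkeeping is in place, the remainder is a structural iteration of the Bernstein-type argument already used to prove Theorem \ref{theoremcurv1}.
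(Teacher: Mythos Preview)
Your proposal is correct and follows essentially the same Shi-type argument as the paper: derive the schematic evolution of $\nabla_{\mathbb{R}}^m\textrm{Rm}$, form the auxiliary quantity $Q = |\nabla_{\mathbb{R}}^m\textrm{Rm}|^2 + A|\nabla_{\mathbb{R}}^{m-1}\textrm{Rm}|^2$, and close the induction via the maximum principle. One minor correction: with the paper's normalization (see Remark~\ref{factorof2}) the relevant heat operator is $\partial_t - \tfrac{1}{2}\Delta_{\mathbb{R}}$ rather than $\partial_t - \Delta_{\mathbb{R}}$, so the good term is $-|\nabla_{\mathbb{R}}^{m+1}\textrm{Rm}|^2$ instead of $-2|\nabla_{\mathbb{R}}^{m+1}\textrm{Rm}|^2$; this constant factor has no effect on the structure of the argument.
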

\begin{proof}
We give a sketch of the proof and leave the details as an exercise to the reader.  We use a maximum principle argument due to Shi \cite{Shi} (see  \cite{CLN} for a good exposition).   In fact we do not need the full force of Shi's results, which are local, since  we are assuming a global curvature bound.

From Lemma \ref{lemmaevolveRm} and an induction argument (see Theorem 13.2 of \cite{H1})
\begin{equation} \label{corH}
\left( \ddt{} - \frac{1}{2}\Delta_{\mathbb{R}} \right) \nabla_{\mathbb{R}}^m \textrm{Rm} = \sum_{p+q=m} \nabla_{\mathbb{R}}^p \textrm{Rm}* \nabla_{\mathbb{R}}^q \textrm{Rm}.
\end{equation}
It follows that
\begin{equation}
\left( \ddt{} - \frac{1}{2} \Delta_{\mathbb{R}} \right) |  \nabla_{\mathbb{R}}^m \textrm{Rm} |^2 = -  | \nabla_{\mathbb{R}}^{m+1} \textrm{Rm} |^2 + \sum_{p+q=m} \nabla_{\mathbb{R}}^p \textrm{Rm}* \nabla_{\mathbb{R}}^q \textrm{Rm} * \nabla_{\mathbb{R}}^m \textrm{Rm}.
\end{equation}
Moreover, since $|\textrm{Rm}|^2$ is bounded we have from Lemma \ref{lemmaRi} that
\begin{equation}
\left( \ddt{} - \frac{1}{2} \Delta_{\mathbb{R}} \right) | \textrm{Rm}|^2 \le - | \nabla_{\mathbb{R}} \textrm{Rm}|^2 + C',
\end{equation}
for some uniform constant $C'$.  For the case $m=1$, if we set $Q = | \nabla_{\mathbb{R}} \textrm{Rm}|^2 + A | \textrm{Rm}|^2$ for $A>0$ sufficiently large then from (\ref{corH}),
\begin{equation}
\left( \ddt{} -  \frac{1}{2} \Delta_{\mathbb{R}} \right) Q \le - | \nabla_{\mathbb{R}} \textrm{Rm}|^2 + C'',
\end{equation}
and it follows from the maximum principle that $| \nabla_{\mathbb{R}} \textrm{Rm}|^2$ is uniformly bounded from above.  In addition,
\begin{equation}
\left( \ddt{} -  \frac{1}{2} \Delta_{\mathbb{R}} \right) | \nabla_{\mathbb{R}} \textrm{Rm}|^2 \le - | \nabla_{\mathbb{R}}^{2} \textrm{Rm} |^2 + C''',
\end{equation}
and an induction completes the proof.  \qed
\end{proof}

Next, we show that once we have a uniform bound on a metric evolving by the K\"ahler-Ricci flow, together with bounds on derivatives of curvature, then we have $C^{\infty}$ bounds for the metric.  Moreover, this result is local:

\begin{theorem} \label{interior}
Let $\omega = \omega(t)$ solve (\ref{nu}) on $U \times [0,T)$ with $0 \le T \le \infty$, where $U$ is an open subset of $M$.  Assume that there there exist constants $C_m$ for $m=0, 1, 2 \ldots$ such that
\begin{equation} \label{assumeomega4}
\frac{1}{C_0} \omega_0 \le \omega \le C_0 \omega_0, \quad S \le C_0 \quad \textrm{and} \quad  | \nabla_{\mathbb{R}}^m \emph{Rm} |^2 \le C_m.
\end{equation}
Then for any compact subset $K \subset U$ and for $m=1, 2, \ldots,$ there exist constants $C'_m$ depending only on $\omega_0$, $K$, $U$ and $C_m$ such that
\begin{equation}
\| \omega(t) \|_{C^m(K, g_0)} \le C'_m.
\end{equation}
\end{theorem}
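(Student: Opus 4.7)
The plan is to establish the interior $C^m$ estimates via interior parabolic regularity theory applied to the K\"ahler-Ricci flow in local holomorphic coordinates on a cover of $K$ by coordinate balls contained in $U$.

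First I would reduce to a local statement: cover $K$ by finitely many coordinate balls $B \subset U$, so it suffices to prove uniform-in-$t$ $C^m$ bounds on slightly smaller balls $B' \Subset B$. In local holomorphic coordinates on $B$, writing $R_{i\bar j} = g^{\bar\ell k} R_{k\bar\ell i\bar j}$ and applying the curvature formula (\ref{formulacurvature}), the K\"ahler-Ricci flow (\ref{nu}) takes the quasilinear parabolic form
\begin{equation*}
\partial_t g_{i\bar j} = g^{\bar\ell k}\partial_k\partial_{\bar\ell} g_{i\bar j} - g^{\bar\ell k} g^{\bar q p} \partial_k g_{i\bar q} \partial_{\bar\ell} g_{p\bar j} - \nu g_{i\bar j}.
\end{equation*}
The hypothesis $\omega \sim \omega_0$ makes the principal operator $g^{\bar\ell k}\partial_k \partial_{\bar\ell}$ uniformly elliptic with $L^{\infty}$ coefficients; the hypothesis $S \le C_0$ gives uniform $L^{\infty}$ bounds on $\partial g$ and hence on the lower order quadratic term on the right-hand side.

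Next, I would apply interior parabolic $L^p$ estimates on parabolic sub-cylinders $B_r(x) \times [\max(0,t-r^2), t] \subset B \times [0,T)$. Because these estimates depend only on the $L^{\infty}$ bounds of coefficients and right-hand side (together with the uniform ellipticity constant and $\|\omega_0\|_{C^m}$ when $t - r^2 < 0$, where the initial data enters), and are time-translation invariant for $t \ge r^2$, they produce $W^{2,p}_{\text{loc}}$ bounds on $g$ uniform in $t \in [0,T)$, for every $p < \infty$. Morrey embedding then yields uniform $C^{1,\alpha}_{\text{loc}}(B')$ bounds on $g$ for any $\alpha \in (0,1)$.

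Once $g \in C^{1,\alpha}$, the coefficients $g^{\bar\ell k}$ are in $C^{1,\alpha}$, the lower order term is in $C^{0,\alpha}$, and interior parabolic Schauder estimates give $g \in C^{2,\alpha}_{\text{loc}}(B')$. To continue the bootstrap I would invoke the assumed bounds $|\nabla^m \text{Rm}|^2 \le C_m$: once $g \in C^{k,\alpha}$, the tensor $\Psi = \Gamma - \Gamma_0$ lies in $C^{k-1,\alpha}$, and iterated use of the identity $\nabla_0 = \nabla - \Psi*(\cdot)$ converts the $\nabla^j\text{Rm}$ bounds into bounds on $\nabla_0^j \text{Rm}$ (and thence on partial derivatives of Rm in local coordinates) up to the available order. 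These bounds enter the right-hand side of the equations obtained by differentiating the system spatially, whose coefficients remain uniformly elliptic and now have $C^{k-1,\alpha}$ regularity. Applying Schauder at each level gains one derivative, giving $g \in C^{k,\alpha}_{\text{loc}}(B')$ for every $k$, with constants depending only on $\omega_0$, $K$, $U$, and the $C_m$'s.

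The main obstacle is organizing the Schauder bootstrap so that the nonlinear structure of the equation (whose principal coefficients depend on the unknown $g$) is handled correctly and so that each upgrade in regularity yields constants independent of $T$. The time-translation invariance of interior parabolic Schauder estimates is essential for the $T$-independence; the hypothesized curvature-derivative bounds $C_m$ are precisely what is needed to feed the bootstrap at each order, since they control (after conversion via $\Psi$) the high-order inhomogeneous terms arising when one differentiates the quasilinear equation $k$ times.
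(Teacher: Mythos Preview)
Your parabolic bootstrap is a valid route, but the paper proceeds quite differently and in a way that is cleaner for this particular statement. Rather than treating the flow equation as a quasilinear parabolic system, the paper fixes a time $t$ and uses the curvature identity to write, in local holomorphic coordinates on a ball,
\[
\Delta_{\textrm{E}}\, g_{i\bar j} \;=\; -\sum_k R_{k\bar k i\bar j} \;+\; \sum_{k,p,q} g^{q\bar p}\,\partial_k g_{i\bar q}\,\partial_{\bar k} g_{p\bar j} \;=: Q_{i\bar j},
\]
i.e.\ a Poisson equation with the \emph{flat} Euclidean Laplacian. The hypotheses $\omega\sim\omega_0$, $S\le C_0$, and $|\mathrm{Rm}|\le C$ put $Q_{i\bar j}$ in $L^p$, so elliptic $L^p$ estimates and Morrey give $g\in C^{1+\beta}$; then the key observation is that the $m$th derivative of $Q_{i\bar j}$ is a sum of products of covariant derivatives of $\mathrm{Rm}$ and derivatives of $g$ of order at most $m+1$, so once $g\in C^{m+1+\beta}$ one has $Q\in C^{m+\beta}$ and elliptic Schauder closes the bootstrap. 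This buys several simplifications over your approach: the principal operator is fixed (no $g$-dependence in the top order), there is no parabolic time regularity to track or time-translation argument needed for $T$-independence, and the curvature-derivative bounds enter directly as right-hand-side bounds rather than through differentiating the evolution equation. Your method would also succeed, but it front-loads more machinery (parabolic $L^p$ and Schauder on cylinders, handling of the quasilinear principal part) than the problem actually requires.
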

\begin{proof}   This is a well-known result.  See \cite{CK}, for example, or the discussion in \cite{PSSW3}.  We give just a sketch of the proof following quite closely the arguments in \cite{ShW, SW2}.  It suffices to prove the result on the  ball $B$ say, in a fixed holomorphic coordinate chart.  We will obtain the $C^{\infty}$ estimates for $\omega(t)$ on a slightly smaller ball.
Fix a time $t \in (0, T]$.  Consider the equations
\begin{equation} \label{poisson}
\Delta_{\textrm{E}} g_{i \bar{j}} = - \sum_k R_{k \bar{k} i \bar{j}} + \sum_{k,p,q} g^{q \bar{p}} \partial_k g_{i \bar{q}} \partial_{\bar{k}} g_{p \bar{j}}=: Q_{i \bar{j}}.
\end{equation}
where $\Delta_{\textrm{E}} = \sum_k \partial_k \partial_{\bar{k}}$.  For each fixed $i, j$, we can regard (\ref{poisson}) as Poisson's equation $\Delta_{\textrm{E}} g_{i \bar{j}} = Q_{i \bar{j}}$.


Fix $p>2n$.  From our assumptions, each  $\| Q_{i \bar{j}} \|_{L^p(B)}$ is uniformly bounded.  Applying the standard elliptic estimates  (see  Theorem 9.11 of \cite{GT} for example) to (\ref{poisson}) we see that the Sobolev norm $\| g_{i \bar{j}} \|_{L^p_2}$ is uniformly bounded on a slightly smaller ball.  From now on, the estimates that we state will always be modulo shrinking the ball slightly.
  Morrey's embedding theorem (Theorem 7.17 of \cite{GT}) gives that  $\| g_{i \bar{j}} \|_{C^{1+ \beta}}$ is uniformly bounded for some $0<\beta<1$.

The key observation we now need is as follows:     the $m$th derivative of $Q_{i \bar{j}}$  can be written in the form
$A * B$
where each $A$ or $B$ represents either a  covariant derivative of $\textrm{Rm}$  or a quantity involving derivatives of $g$ up to order at most $m+1$.  Hence if $g$ is uniformly bounded in $C^{m+1+ \beta}$  then each $Q_{i \bar{j}}$ is uniformly bounded in $C^{m+ \beta}$.

Applying this observation with $m=0$ we see that each $\| Q_{i\bar{j}} \|_{C^{\beta}}$ is uniformly bounded.
The standard Schauder estimates  (see Theorem 4.8 of \cite{GT}) give that $\| g_{i \bar{j}} \|_{C^{2+\beta}}$ is uniformly bounded.

We can now apply a bootstrapping argument.  Applying the observation with $m=1$ we see that $Q_{i\bar{j}}$  is uniformly bounded in $C^{1+\beta}$, and so on.  This completes the proof. \qed
\end{proof}

Combining Theorems \ref{theoremcurv1}, \ref{theoremcurv2} and \ref{interior}, we obtain:

\begin{corollary}  \label{choe}
Let $\omega = \omega(t)$ solve (\ref{nu}) on $M \times [0,T)$ with $0 \le T \le \infty$.  Assume that there  exists a constant $C_0$  such that
\begin{equation} \label{assumeomega5}
\frac{1}{C_0} \omega_0 \le \omega \le C_0 \omega_0.
\end{equation}
Then for $m=1, 2, \ldots$, there exist uniform constants $C_m$  such that
\begin{equation}
\| \omega(t) \|_{C^m(g_0)} \le C_m.
\end{equation}
\end{corollary}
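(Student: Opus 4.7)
The plan is essentially to chain together the three results \ref{theoremcurv1}, \ref{theoremcurv2}, and \ref{interior} that were just established, since the hypothesis (\ref{assumeomega5}) is precisely the input needed to start the chain.

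First, I would invoke Theorem \ref{theoremcurv1} directly: the bound $\frac{1}{C_0}\omega_0 \le \omega \le C_0 \omega_0$ is exactly the hypothesis (\ref{assumeomega3}), so we obtain a uniform constant $C$ with $|\mathrm{Rm}|^2 \le C$ on $M \times [0,T)$. As a byproduct from Theorem \ref{theoremSbound} (which is used inside the proof of \ref{theoremcurv1} but is also directly available from hypothesis (\ref{assumeomega5})), we also get the third-order bound $S = |\nabla_{g_0} g|^2 \le C$.

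Second, with the curvature bound in hand, Theorem \ref{theoremcurv2} gives uniform constants $\tilde{C}_m$ with $|\nabla_{\mathbb{R}}^m \mathrm{Rm}|^2 \le \tilde{C}_m$ for all $m \ge 1$. At this stage we have verified all the hypotheses (\ref{assumeomega4}) of Theorem \ref{interior}, taking $U = M$. Since $M$ is compact, we may take $K = M$ as the compact subset and conclude that $\| \omega(t) \|_{C^m(M, g_0)} \le C_m$ for uniform constants $C_m$ depending only on $\omega_0$ and the constants from the previous steps.

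There is no real obstacle here: the only thing to watch is bookkeeping, namely checking that every constant appearing in Theorems \ref{theoremcurv1}, \ref{theoremcurv2}, \ref{interior} depends only on $C_0$ and $\omega_0$ (and, for the higher-order steps, on the constants produced at earlier stages, which themselves depend only on $C_0$ and $\omega_0$). Once this is observed, the corollary follows immediately as a packaging of the preceding three theorems.
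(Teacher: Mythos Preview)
Your proposal is correct and matches the paper's own proof, which simply states that the corollary follows by combining Theorems \ref{theoremcurv1}, \ref{theoremcurv2} and \ref{interior}. Your added remark that Theorem \ref{theoremSbound} supplies the $S$-bound needed among the hypotheses (\ref{assumeomega4}) of Theorem \ref{interior}, and that one takes $U=K=M$, is exactly the bookkeeping that makes this chaining work.
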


In fact, there is a local version of Corollary \ref{choe}.  Although we will not actual make use of it in these lecture notes, we state here the result:

\begin{theorem}
Let $\omega = \omega(t)$ solve (\ref{nu}) on $U \times [0,T)$ with $0 \le T \le \infty$, where $U$ is an open subset of $M$.  Assume that there there exists a constant $C_0$ for such that
\begin{equation} \label{assumeomega6}
\frac{1}{C_0} \omega_0 \le \omega \le C_0 \omega_0.
\end{equation}
Then for any compact subset $K \subset U$ and for $m=1, 2, \ldots,$ there exist constants $C'_m$ depending only on $\omega_0$, $K$ and $U$  such that
\begin{equation}
\| \omega(t) \|_{C^m(K, g_0)} \le C'_m.
\end{equation}
\end{theorem}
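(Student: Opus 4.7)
The plan is to localize each of the three ingredients that combine to give Corollary \ref{choe}, namely the third-order estimate (Theorem \ref{theoremSbound}), the curvature estimate (Theorem \ref{theoremcurv1}), and the higher-derivative curvature bounds (Theorem \ref{theoremcurv2}). The final step, Theorem \ref{interior}, is already local, so once the local curvature derivatives are in hand one may simply invoke it. All localizations are Shi-type cutoff arguments \cite{Shi}: multiply the quantity being estimated by $\eta^2$ for a suitable cutoff $\eta$, and run the same maximum principle as in the compact case, with Cauchy--Schwarz used to absorb the error terms produced by the cutoff into the good negative terms in the respective evolution inequalities.

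Concretely, fix a chain of open sets $K \Subset V_N \Subset V_{N-1} \Subset \cdots \Subset V_1 \Subset U$ (the number of layers $N$ depends on $m$) and cutoff functions $\eta_j \in C^\infty_c(V_{j-1})$ with $\eta_j \equiv 1$ on $V_j$. For the local third-order estimate, consider
\begin{equation}
Q_1 = \eta_1^2 S + A \, \tr{\hat{\omega}}{\omega},
\end{equation}
with $\hat{\omega}=\omega_0$ and $A$ large. Using Proposition \ref{propPSS1} and Proposition \ref{propkeyeqn}, together with the already assumed metric bound (\ref{assumeomega6}), I would compute $(\partial_t - \Delta) Q_1$ just as in the proof of Theorem \ref{theoremSbound}; the extra terms $-(\Delta \eta_1^2) S$ and $-2 \mathrm{Re}(g^{\ov{j}i}\partial_i \eta_1^2 \, \partial_{\ov{j}} S)$ coming from the cutoff can be absorbed into the good terms $-|\nabla \Psi|^2 - |\ov{\nabla}\Psi|^2$ after one Cauchy--Schwarz, at the cost of adding a multiple of $S|\nabla \eta_1|^2$, which is controlled by $S$ alone. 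The resulting inequality $(\partial_t - \Delta) Q_1 \le -\eta_1^2 S + C$ then yields a uniform bound on $\eta_1^2 S$, hence $S \le C$ on $V_1$.

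With this local third-order bound in place, one repeats the argument one layer inside: set $Q_2 = \eta_2^2 |\emph{Rm}|^2 + A \eta_1^2 S$ and apply Lemma \ref{lemmaRi} and Theorem \ref{theoremSbound} (in its localized form) to conclude $|\emph{Rm}|^2 \le C$ on $V_2$. Inductively, for each $k$ set $Q_{k+2} = \eta_{k+2}^2 |\nabla_{\mathbb{R}}^k \emph{Rm}|^2 + A \eta_{k+1}^2 |\nabla_{\mathbb{R}}^{k-1} \emph{Rm}|^2$ and use Hamilton's evolution equation (\ref{corH}) in the same fashion to propagate bounds on $|\nabla_{\mathbb{R}}^k \emph{Rm}|^2$ into a slightly smaller set. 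After $m+2$ layers one has all the hypotheses of Theorem \ref{interior} available on an open set still containing $K$, and applying that theorem directly produces the desired $C^m(K, g_0)$-bound on $\omega(t)$.

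The main obstacle is the bookkeeping in the cutoff arguments: each differential inequality from the compact case now contains additional terms of the form $(\text{derivative of } \eta) \cdot (\text{derivative of the quantity being estimated})$, and these must be absorbed into genuinely negative terms (the $-|\nabla \Psi|^2$, $-|\nabla \emph{Rm}|^2$ etc.) using Cauchy--Schwarz without disturbing the sign of the dominant term at the maximum point. This forces a delicate choice of the constants $A$ and a careful nesting of the cutoffs; it is exactly the place where, in Shi's original local estimates, the argument becomes technically involved, even though at each step no new idea is needed beyond the global case treated in Theorems \ref{theoremSbound}--\ref{theoremcurv2}.
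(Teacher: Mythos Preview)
The paper actually \emph{omits} the proof of this theorem, simply remarking that it ``can either be proved using the Schauder estimates of Evans--Krylov \cite{E, K} (see also \cite{CLN, G}) or using local maximum principle arguments \cite{ShW}.'' Your proposal is precisely the second of these two routes, and the overall strategy is sound: localize Theorems \ref{theoremSbound}, \ref{theoremcurv1}, \ref{theoremcurv2} with Shi-type cutoffs, then feed the resulting bounds into the already-local Theorem \ref{interior}.

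A couple of small points on the bookkeeping. First, since $\tr{\hat{\omega}}{\omega}$ is only defined on $U$, when you apply the maximum principle to $Q_1 = \eta_1^2 S + A\,\tr{\hat{\omega}}{\omega}$ you should restrict to a compact $\ov{W}$ with $\textrm{supp}(\eta_1)\subset W \Subset U$; on $\partial W$ one has $\eta_1=0$ and $\tr{\hat{\omega}}{\omega}\le nC_0$ by hypothesis, so the boundary contribution is harmless. Second, once you have $S\le C$ on $\ov{V_1}$, it is cleaner at the next stage to take $Q_2 = \eta_2^2 |\textrm{Rm}|^2 + A S$ on $\ov{V_1}$ (not $A\eta_1^2 S$): on $V_1$ the inequality $(\partial_t-\Delta)S \le -\tfrac{1}{2}|\textrm{Rm}|^2 + C'$ from (\ref{SRm}) is available with no cutoff, and on $\partial V_1$ one has $\eta_2=0$ so $Q_2=AS$ is bounded. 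This avoids having to re-absorb the $\eta_1$-errors at every later step and makes the induction transparent. With these tweaks the argument goes through exactly as you outline.
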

\begin{proof}
This can either be proved using the Schauder estimates of Evans-Krylov \cite{E, K} (see also \cite{CLN, G}) or using local maximum principle arguments \cite{ShW}.  We omit the proof.  \qed
\end{proof}

\pagebreak
\section{Maximal existence time for the K\"ahler-Ricci flow} \label{maximal}

In this section we identify the maximal existence time for a smooth solution of the K\"ahler-Ricci flow.  To do this, we rewrite the K\"ahler-Ricci flow as a parabolic complex Monge-Amp\`ere equation.

\subsection{The parabolic Monge-Amp\`ere equation} \label{sectpma}

Let
 $\omega=\omega(t)$ be a solution of the K\"ahler-Ricci flow
 \begin{equation} \label{krf}
\ddt{} \omega  = - \Ric(\omega), \qquad \omega|_{t=0} = \omega_0.
\end{equation}
As long as the solution exists, the cohomology class $[\omega(t)]$ evolves by
\begin{equation}
\frac{d}{dt} [\omega(t)] = - c_1(M), \qquad [\omega(0)] = [\omega_0],
\end{equation}
and solving this ordinary differential equation gives $[\omega(t)] = [\omega_0] - t c_1(M)$.  Immediately we see that a necessary condition for the K\"ahler-Ricci flow to exist for $t \in [0,t')$ is that $[\omega_0] - t c_1(M)>0$ for $t \in [0,t')$.  This necessary condition is in fact sufficient.  If we define
\begin{equation} \label{T}
T = \sup \{ t>0 \ | \ [\omega_0] - t c_1(M) >0 \},
\end{equation}
then we have:

\begin{theorem} \label{longtime}
There exists a unique maximal solution $g(t)$ of the K\"ahler-Ricci flow (\ref{krf}) for $t \in [0,T)$.
\end{theorem}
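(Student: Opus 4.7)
The plan is to recast the K\"ahler-Ricci flow as a scalar parabolic complex Monge-Amp\`ere equation and then establish uniform $C^\infty$ estimates on any compact subinterval $[0,T']$ of $[0,T)$, which allows the flow to be extended past $T'$. Fix $T' \in (0,T)$, choose $T'' \in (T',T)$ and a K\"ahler form $\omega''$ in the class $[\omega_0] - T'' c_1(M)$, and set
\begin{equation*}
\hat\omega_t = \frac{T''-t}{T''}\omega_0 + \frac{t}{T''}\omega'', \qquad t\in[0,T''].
\end{equation*}
Then $\hat\omega_t$ is a smooth family of K\"ahler metrics with $[\hat\omega_t]=[\omega_0]-tc_1(M)$. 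Using the $\partial\dbar$-Lemma, select a smooth volume form $\Omega$ with $-\ddbar\log\Omega$ equal to $\partial\hat\omega_t/\partial t$. Writing $\omega(t) = \hat\omega_t + \ddbar\varphi$, the flow (\ref{krf}) becomes the scalar equation
\begin{equation*}
\frac{\partial\varphi}{\partial t} = \log\frac{(\hat\omega_t+\ddbar\varphi)^n}{\Omega}, \qquad \varphi|_{t=0}=0,
\end{equation*}
which is uniformly parabolic at the initial time; standard parabolic theory (or the Hamilton--DeTurck existence result quoted in the excerpt) yields a unique short-time smooth solution. Necessity $T_{\max}\le T$ is immediate since a smooth K\"ahler-Ricci flow forces $[\omega(t)] = [\omega_0]-tc_1(M)$ to stay K\"ahler.

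For sufficiency I would derive a priori $C^\infty(M,g_0)$ bounds on $\omega(t)$ for $t\in[0,T']$. A $C^0$ bound on $\varphi$ comes from applying the maximum principle to $\varphi\mp At$. Differentiating the equation in $t$ yields $(\ddt{}-\Delta)\dot\varphi = \tr{\omega}{\partial\hat\omega_t/\partial t}$, and the maximum principle applied to $\dot\varphi\pm A\varphi$ for a suitably large $A$ produces a $C^0$ bound on $\dot\varphi$, which in turn pins the volume form $\omega^n$ between two fixed positive multiples of $\Omega$. The heart of the proof is the second-order estimate: applying Proposition \ref{propChat} with the time-dependent reference $\hat\omega_t$ (which contributes only a controlled extra term from the $t$-derivative of $\hat\omega_t$) gives
\begin{equation*}
\left(\ddt{}-\Delta\right)\log\tr{\hat\omega_t}{\omega} \le \hat C\,\tr{\omega}{\hat\omega_t} + C_1.
\end{equation*}
Using the identity $\Delta\varphi = n - \tr{\omega}{\hat\omega_t}$, the quantity $Q=\log\tr{\hat\omega_t}{\omega} - A\varphi$ with $A>\hat C$ then satisfies $(\ddt{}-\Delta)Q \le -(A-\hat C)\tr{\omega}{\hat\omega_t} + C_2$. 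At a maximum point of $Q$ this bounds $\tr{\omega}{\hat\omega_t}$ from above; combined with the two-sided bound on $\omega^n/\Omega$ and the elementary relation between the two traces via the determinant ratio, this yields a uniform upper bound on $\tr{\hat\omega_t}{\omega}$, hence the metric equivalence $C^{-1}\omega_0\le\omega(t)\le C\omega_0$ on $[0,T']$.

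Once metric equivalence is in place, Corollary \ref{choe} upgrades it to uniform $C^m(M,g_0)$ bounds on $\omega(t)$ for every $m$. Hence $\omega(t)$ converges smoothly as $t\to T'$ to a K\"ahler metric, and short-time existence applied with that metric as initial data extends the flow past $T'$; letting $T'\to T$ produces the unique maximal solution on $[0,T)$. The main obstacle in this outline is the second-order estimate: one must carefully absorb the $t$-dependence of $\hat\omega_t$ into Proposition \ref{propChat} and choose the auxiliary quantity $Q$ so that the $-A\tr{\omega}{\hat\omega_t}$ contribution coming from $-A\Delta\varphi$ dominates the bisectional curvature term $\hat C\,\tr{\omega}{\hat\omega_t}$; once that closes, the remaining steps are routine applications of the maximum principle and the results of Section \ref{sectgen}.
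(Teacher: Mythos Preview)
Your approach is essentially the same as the paper's: reduce to a parabolic Monge--Amp\`ere equation on $[0,T')$, establish $C^0$ bounds on $\varphi$ and $\dot\varphi$, prove a trace estimate via Proposition~\ref{propChat}, invoke Corollary~\ref{choe}, and extend past the putative singular time by short-time existence. Two details differ. For the lower bound on $\dot\varphi$ the paper uses the Tian--Zhang quantity $Q=(T'-t)\dot\varphi+\varphi+nt$, which satisfies the clean identity $(\ddt{}-\Delta)Q=\tr{\omega}{\hat\omega_{T'}}>0$; your $\dot\varphi\pm A\varphi$ gives the upper bound directly, but for the lower bound you need one more step, since at a minimum the positive term $\tr{\omega}{(\chi+A\hat\omega_t)}$ is not a priori bounded above --- you must invoke the arithmetic--geometric mean inequality to compare it with $e^{-\dot\varphi/n}$. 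For the second-order estimate the paper applies Proposition~\ref{propChat} with the \emph{fixed} reference $\omega_0$ and closes via the eigenvalue inequality $\frac{1}{\lambda_i}+A\log\lambda_i\le C$, which avoids using the $\dot\varphi$ lower bound altogether (a robustness the paper explicitly notes and reuses in Sections~\ref{secttsuji} and~\ref{sectfinite}); your time-dependent $\hat\omega_t$ version also works but depends on the volume bound you established first. After the metric equivalence the remainder of your argument is identical to the paper's.
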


This theorem was proved by Cao \cite{Cao} in the special case when $c_1(M)$ is zero or definite.    In this generality, the result is due to Tian-Zhang \cite{TZha}.  Weaker versions appeared earlier in the work of Tsuji (see \cite{Ts1} and Theorem 8 of \cite{Ts2}).

We now begin the proof of Theorem \ref{longtime}.  Fix $T'<T$.  We will show that there exists a solution to (\ref{krf}) on $[0,T')$.
First we observe that  (\ref{krf}) can be rewritten as a parabolic complex Monge-Amp\`ere equation.

To do this, we need to choose reference metrics $\hat{\omega}_t$ in the cohomology classes $[\omega_0] - tc_1(M)$.  Since $[\omega_0] - T' c_1(M)$ is a K\"ahler class, there exists a K\"ahler form $\eta$ in $[\omega_0] - T'c_1(M)$.  We choose our family of reference metrics $\hat{\omega}_t$ to be the linear path of metrics between $\omega_0$ and $\eta$.    Namely, define
\begin{equation} \label{chi}
\chi = \frac{1}{T'}(\eta - \omega_0) \in - c_1(M),
\end{equation}
and
\begin{equation}
\hat{\omega}_t = \omega_0 + t \chi = \frac{1}{T'} (( T'-t) \omega_0 + t\eta) \in [\omega_0] - t c_1(M).
\end{equation}
Fix a volume form $\Omega$ on $M$ with
\begin{equation}
\ddbar \log \Omega = \chi = \ddt{} \hat{\omega}_t \in - c_1(M),
\end{equation}
which exists by the discussion in Section \ref{sectdol}.
Notice that here we are abusing notation somewhat  by writing $\ddbar \log \Omega$.  To clarify, we mean that if the volume form $\Omega$ is written in local coordinates $z^i$ as $$\Omega = a(z^1, \ldots, z^n) (\sqrt{-1})^n dz^1 \wedge d\ov{z^1} \wedge \cdots \wedge dz^n \wedge d\ov{z^n},$$
for a locally defined smooth positive function $a$ then we define
$
\ddbar \log \Omega = \ddbar \log a.
$
Although the function $a$ depends on the choice of holomorphic coordinates, the $(1,1)$-form $\ddbar \log a$ does not, as the reader can easily verify.

We now consider the \emph{parabolic complex Monge-Amp\`ere equation}, for $\varphi= \varphi(t)$ a real-valued function on $M$,
\begin{equation} \label{pcma}
\ddt{ \varphi} = \log \frac{ (\hat{\omega}_t + \ddbar \varphi)^n}{\Omega}, \qquad \hat{\omega}_t+ \ddbar \varphi>0 , \qquad \varphi|_{t=0} =0.
\end{equation}
This equation is equivalent to the K\"ahler-Ricci flow (\ref{krf}).  Indeed,
given a smooth solution $\varphi$ of (\ref{pcma}) on $[0,T')$, we can obtain a solution $\omega= \omega(t)$ of (\ref{krf}) on $[0,T')$ as follows.  Define  $\omega(t) = \hat{\omega}_t  + \ddbar \varphi$
and observe that $\omega(0) = \hat{\omega}_0=\omega_0$ and
\begin{equation}
\ddt{} \omega = \ddt{} \hat{\omega}_t + \ddbar \left( \ddt{\varphi}  \right) = - \Ric(\omega),
\end{equation}
as required.  Conversely, suppose that $\omega= \omega(t)$ solves (\ref{krf}) on $[0,T')$.  Then since $\hat{\omega}_t \in [\omega(t)]$, we can apply the $\partial \ov{\partial}$-Lemma to find a  family of potential functions $\tilde{\varphi}(t)$ such that $\omega(t) = \hat{\omega}_t + \ddbar \tilde{\varphi}(t)$ and $\int_M \tilde{\varphi}(t) \omega_0^n=0$.  By standard elliptic regularity theory the family $\tilde{\varphi}(t)$ is smooth on $M \times [0,T')$.  Then
\begin{equation}
\ddbar \log \omega^n = \ddt{} \omega = \ddbar \log \Omega  + \ddbar \left( \ddt{ \tilde{\varphi}} \right),
\end{equation}
and since the only pluriharmonic functions on $M$ are the constants, we see that
$$\ddt{\tilde{\varphi}} = \log \frac{ \omega^n}{\Omega} + c(t),$$
for some smooth function $c : [0,T') \rightarrow \mathbb{R}$.  Now set $\varphi(t) = \tilde{\varphi}(t) - \int_0^t c(s)ds - \tilde{\varphi}(0)$, noting that
since $\omega(0) = \omega_0$ the function $\tilde{\varphi}(0)$ is  constant.  It follows that $\varphi= \varphi(t)$ solves the parabolic complex Monge-Amp\`ere equation (\ref{pcma}).

To prove Theorem \ref{longtime} then, it suffices to study (\ref{pcma}).  Since the linearization of the right hand side of (\ref{pcma}) is the Laplace operator $\Delta_{g(t)}$, which is  elliptic, it follows that (\ref{pcma}) is a strictly parabolic (nonlinear) partial differential equation for $\varphi$.  The standard parabolic theory \cite{Lie} gives a unique maximal solution of (\ref{pcma}) for some time interval $[0,\Tmax)$ with $0 < \Tmax \le \infty$.
We may assume without loss of generality that $\Tmax < T'$.  We will then obtain a contradiction by showing that a solution of (\ref{pcma}) exists beyond $\Tmax$.  This will be done in the next two subsections.

\subsection{Estimates for the potential and the volume form}

We assume now that we have a solution $\varphi=\varphi(t)$ to the parabolic complex Monge-Amp\`ere equation (\ref{pcma}) on $[0,\Tmax)$, for $0 < \Tmax < T' < T$.  Our goal is to establish uniform estimates for $\varphi$ on $[0, \Tmax)$.    In this subsection we will prove a $C^0$ estimate for $\varphi$ and a lower bound for the volume form.

Note that $\hat{\omega}_t$ is a family of smooth K\"ahler forms on the closed interval $[0,\Tmax]$.  Hence by compactness we have uniform bounds on $\hat{\omega}_t$ from above and below (away from zero).

\begin{lemma} \label{lemmaphibound} There exists a uniform $C$ such that for all $t \in [0,\Tmax)$,
\begin{equation}
 \| \varphi(t) \|_{C^0(M)} \le C.
\end{equation}
\end{lemma}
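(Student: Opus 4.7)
The plan is to obtain both the upper and lower bounds for $\varphi$ by applying the parabolic maximum principle (Proposition \ref{pmp}) to the auxiliary functions $\varphi - At$ and $\varphi + At$, where $A>0$ is a sufficiently large constant depending on the uniform bounds for the reference family $\hat{\omega}_t$ and the fixed volume form $\Omega$ on the compact interval $[0,T']\supset [0,T_{\max}]$. The key structural input is that $\hat{\omega}_t$ is smooth in $t$ on $[0,T']$, hence $\sup_{M\times [0,T']}\log(\hat{\omega}_t^n/\Omega)$ and $\inf_{M\times[0,T']}\log(\hat{\omega}_t^n/\Omega)$ are finite.

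For the upper bound, I would apply the standard argument: fix $T_0 \in (0, T_{\max})$, and let $(x_0,t_0)$ be a maximum point of $\varphi - At$ on $M \times [0,T_0]$. If $t_0 = 0$, then $\varphi(x,t) - At \le 0$ for $(x,t)\in M\times[0,T_0]$, giving $\varphi \le AT_{\max}$. If $t_0 > 0$, the maximum principle gives $\ddbar\varphi(x_0,t_0) \le 0$ and $\frac{\partial}{\partial t}(\varphi - At)(x_0,t_0) \ge 0$. Since $\hat{\omega}_{t_0} + \ddbar\varphi(x_0,t_0) > 0$ (the equation forces this) and $\ddbar\varphi(x_0,t_0) \le 0$, by the monotonicity of the determinant on positive Hermitian matrices we get $(\hat{\omega}_{t_0} + \ddbar\varphi)^n \le \hat{\omega}_{t_0}^n$ at $x_0$. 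Plugging into the parabolic Monge-Ampère equation then yields
\begin{equation*}
A \le \log\frac{(\hat{\omega}_{t_0}+\ddbar\varphi)^n}{\Omega}(x_0) \le \log\frac{\hat{\omega}_{t_0}^n}{\Omega}(x_0) \le C_1,
\end{equation*}
for a uniform constant $C_1$, which is a contradiction if we choose $A := C_1 + 1$. Hence the maximum of $\varphi - At$ must be attained at $t=0$, so $\varphi \le AT_{\max}$ on $M \times [0,T_{\max})$.

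For the lower bound, I would run the symmetric argument applied to $\varphi + At$ at a minimum point. At an interior minimum, $\ddbar\varphi \ge 0$ forces $(\hat{\omega}_{t_0}+\ddbar\varphi)^n \ge \hat{\omega}_{t_0}^n$, and the sign reversal in $\frac{\partial}{\partial t}(\varphi + At)(x_0,t_0) \le 0$ gives
\begin{equation*}
-A \ge \log\frac{(\hat{\omega}_{t_0}+\ddbar\varphi)^n}{\Omega}(x_0) \ge \log\frac{\hat{\omega}_{t_0}^n}{\Omega}(x_0) \ge -C_2,
\end{equation*}
a contradiction once $A$ is chosen larger than $C_2$. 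Thus $\varphi \ge -AT_{\max}$. Combining, $\|\varphi(t)\|_{C^0(M)} \le AT_{\max}$ on $[0,T_{\max})$, proving the lemma. There is no real obstacle here; the only subtlety worth flagging is the standard restriction to a compact subinterval $[0,T_0]$ before letting $T_0 \uparrow T_{\max}$, as noted in the remark following Proposition \ref{pmp}, and the use of the elementary matrix fact that $0 < P \le Q$ (Hermitian) implies $\det P \le \det Q$.
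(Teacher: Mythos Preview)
Your proposal is correct and follows essentially the same approach as the paper: both apply the parabolic maximum principle to $\varphi - At$ and $\varphi + Bt$, use $\sqrt{-1}\,\partial\bar\partial\varphi \le 0$ (resp.\ $\ge 0$) at the extremum together with the monotonicity of the determinant to compare $(\hat\omega_t+\sqrt{-1}\,\partial\bar\partial\varphi)^n$ with $\hat\omega_t^n$, and then choose $A,B$ large relative to $\sup/\inf_{M\times[0,\Tmax]}\log(\hat\omega_t^n/\Omega)$ to force the extremum to $t=0$. The only cosmetic difference is notation (the paper writes $\theta=\varphi-At$, $\psi=\varphi+Bt$).
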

\begin{proof}
For the upper bound of $\varphi$, we will apply the maximum principle to $\theta := \varphi - At$ for $A>0$ a uniform constant to be determined later.  From (\ref{pcma}) we have
\begin{equation}
\ddt{ \theta} = \log \frac{ (\hat{\omega}_t + \ddbar \theta)^n}{\Omega} - A.
\end{equation}
Fix $t' \in (0, \Tmax)$.  Since $M \times [0,t']$ is compact, $\theta$ attains a maximum at some point $(x_0, t_0) \in M \times [0,t']$.  We claim that if $A$ is  sufficiently large we have $t_0=0$.

Otherwise $t_0>0$.  Then by Proposition \ref{pmp},
at $(x_0, t_0)$,
\begin{equation}
0 \le \ddt{\theta} = \log \frac{(\hat{\omega}_{t_0} + \ddbar \theta)^n}{\Omega} - A \le  \log \frac{\hat{\omega}_{t_0}^n}{\Omega} - A \le -1,
\end{equation}
a contradiction, where we have chosen $A \ge 1 + \sup_{M \times [0, \Tmax]} \log (\hat{\omega}_t^n/ \Omega)$.   Hence we have proved the claim that $t_0=0$, giving  $\sup_{M \times [0,t']} \theta \le \sup_M \theta|_{t=0} = 0$ and thus
\begin{equation}
\varphi(x,t) \le At \le A \Tmax, \qquad \textrm{for } (x,t) \in M \times [0,t'].
\end{equation}
Since $t' \in (0, \Tmax)$ was arbitrary, this gives a uniform upper bound for $\varphi$ on $[0,\Tmax)$.

We apply a similar argument to $\psi = \varphi + Bt$ for $B$ a positive constant with $B\ge 1 - \inf_{M \times [0,\Tmax]} \log(\hat{\omega}_t^n/\Omega)$ and obtain
\begin{equation}
\varphi(x,t) \ge - B \Tmax, \qquad \textrm{for } (x,t) \in M \times [0,t'],
\end{equation}
giving the lower bound.  \qed
\end{proof}

Next we prove a lower bound for the volume form along the flow, or equivalently a lower bound for $\dot{\varphi}=\partial \varphi/\partial t$.  This argument is due to Tian-Zhang \cite{TZha}.

\begin{lemma} \label{lemmaphidot}
There exists a uniform $C>0$ such that on $M \times  [0, T_{\emph{max}})$,
\begin{equation}
\frac{1}{C} \Omega \le \omega^n(t) \le C \Omega,
\end{equation}
or equivalently, $\| \dot{\varphi} \|_{C^0}$ is uniformly bounded.
\end{lemma}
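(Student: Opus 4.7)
The plan is to bound $\dot\varphi$ from both sides via the parabolic maximum principle applied to auxiliary quantities built out of $\dot\varphi$, $\varphi$, and $t$. First I would differentiate the parabolic complex Monge-Amp\`ere equation (\ref{pcma}) in time. Since $\ddt{}\hat\omega_t = \chi$, this gives the fundamental evolution equation
\begin{equation*}
\left(\ddt{} - \Delta\right)\dot\varphi = \tr{\omega}{\chi},
\end{equation*}
where $\Delta$ is the Laplacian of the evolving metric $\omega = \hat\omega_t + \ddbar\varphi$. The difficulty is that the sign of $\tr{\omega}{\chi}$ is not controlled in general, so the naive maximum principle applied to $\dot\varphi$ alone does not work in either direction.

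For the upper bound, I would consider $Q = \dot\varphi - A\varphi$ and choose the constant $A$ large enough that $A\hat\omega_t - \chi$ is strictly positive on $M \times [0,\Tmax]$, which is possible because $\hat\omega_t$ is uniformly positive on this compact interval while $\chi$ is fixed. A direct computation using $\Delta\varphi = n - \tr{\omega}{\hat\omega_t}$ gives
\begin{equation*}
\left(\ddt{} - \Delta\right) Q = \tr{\omega}{(\chi - A\hat\omega_t)} + An - A\dot\varphi.
\end{equation*}
At an interior maximum of $Q$, the left-hand side is $\geq 0$, and the trace term is negative by our choice of $A$, so $\dot\varphi \leq n$ there. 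Combined with the $C^0$ bound on $\varphi$ from Lemma \ref{lemmaphibound}, this yields a uniform upper bound for $\dot\varphi$.

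The main obstacle is the lower bound, since the forcing $\tr{\omega}{\chi}$ has no favorable sign. The key trick is to exploit the gap between $\Tmax$ and $T$. Recall that $T' \in (\Tmax, T)$ was chosen so that $\eta := \omega_0 + T'\chi$ is K\"ahler, and crucially $\hat\omega_t + (T'-t)\chi \equiv \eta$. I would consider the quantity
\begin{equation*}
Q = (T'-t)\dot\varphi + \varphi + nt.
\end{equation*}
A straightforward computation, again using $\Delta\varphi = n - \tr{\omega}{\hat\omega_t}$, yields
\begin{equation*}
\left(\ddt{} - \Delta\right) Q = (T'-t)\tr{\omega}{\chi} + \tr{\omega}{\hat\omega_t} = \tr{\omega}{\eta} > 0.
\end{equation*}
By the minimum principle, $Q$ attains its infimum at $t=0$, where $Q = T'\log(\omega_0^n/\Omega)$ is uniformly bounded below. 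Using the $C^0$ bound on $\varphi$ and the bound $t \leq \Tmax$, this gives $(T'-t)\dot\varphi \geq -C$, and since $T' - t \geq T' - \Tmax > 0$ is bounded away from zero, we obtain a uniform lower bound for $\dot\varphi$, completing the proof.
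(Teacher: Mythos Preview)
Your proof is correct. The lower bound argument---the quantity $Q = (T'-t)\dot\varphi + \varphi + nt$ and the computation showing $(\ddt{}-\Delta)Q = \tr{\omega}{\eta}>0$---is exactly the paper's approach (the paper writes $\hat\omega_{T'}$ in place of your $\eta$, but these are the same form).

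For the upper bound you take a slightly different route. The paper simply invokes Corollary \ref{volform}(i), i.e.\ the volume-form upper bound coming from the scalar curvature lower bound along the flow, to conclude $\omega^n \le C\Omega$ directly. Your argument instead runs a maximum principle on $\dot\varphi - A\varphi$ with $A$ chosen so that $A\hat\omega_t - \chi > 0$ on the compact interval $[0,\Tmax]$. This is perfectly valid and has the minor advantage of being self-contained (it does not rely on the scalar curvature evolution), at the cost of being marginally longer. Either way the upper bound is the easy direction; the substantive content of the lemma is the lower bound, and there your argument and the paper's coincide.
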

\begin{proof}
The upper bound of $\omega^n$ follows from part (i) of Corollary \ref{volform}.  Note that since this is equivalent to an upper bound of $\dot{\varphi}$, we have given an alternative proof of the upper bound part of Lemma \ref{lemmaphibound}.

For the lower bound of $\omega^n$, differentiate (\ref{pcma}):
\begin{equation} \label{evolvephit}
\ddt{ \dot{\varphi}} = \Delta \dot{\varphi} + \tr{\omega}{\chi},
\end{equation}
where we recall that $\chi = \partial \hat{\omega}_t/\partial t$ is defined by (\ref{chi}).  Define a quantity $Q= (T' - t) \dot{\varphi} + \varphi +nt$ and compute using (\ref{evolvephit}),
\begin{equation} \label{evolveQ1}
\left( \ddt{} - \Delta \right) Q = (T'-t) \tr{\omega}{\chi} + n - \Delta \varphi = \tr{\omega}{ (\hat{\omega}_t + (T'-t) \chi)} = \tr{\omega}{\hat{\omega}_{T'}} >0,
\end{equation}
where we have used the fact that
\begin{equation}
\Delta \varphi = \tr{\omega}{(\omega - \hat{\omega}_t)} = n - \tr{\omega}{\hat{\omega}_t}.
\end{equation}
Then by the maximum principle (Proposition \ref{propheat}), $Q$ is uniformly bounded from below on $M \times [0,\Tmax)$ by its infimum at the initial time.  Thus
\begin{equation}
(T'-t) \dot{\varphi} + \varphi +nt \ge T' \inf_M  \, \log \frac{\omega_0^n}{\Omega}, \qquad \textrm{on } M \times [0,\Tmax),
\end{equation}
and since $\varphi$ is uniformly bounded from Lemma \ref{lemmaphibound} and $T'-t \ge T' - \Tmax>0$, this gives the desired lower bound of $\dot{\varphi}$. \qed
\end{proof}

\pagebreak[3]
\subsection{A uniform bound for the evolving metric}

Again we assume that we have a solution $\varphi=\varphi(t)$ to  (\ref{pcma}) on $[0,\Tmax)$, for $0 < \Tmax < T' < T$.  From Lemma \ref{lemmaphibound}, we have a uniform bound for $\| \varphi\|_{C^0(M)}$ and we will use this together with Proposition \ref{propChat} to obtain an upper bound for the quantity $\tr{\omega_0}{\omega}$ on $[0,\Tmax)$.  This argument is similar to those in \cite{A, Y2} (see also \cite{Cao} and Lemmas \ref{lemmametricbdn} and \ref{lemmayauC2} below).  We will then  complete the proof of Theorem \ref{longtime}.

\begin{lemma} \label{lemmatr}
There exists a uniform $C$ such that on $M \times  [0,T_{\emph{max}})$,
\begin{equation} \label{trub}
\emph{tr}_{\omega_0}{\, \omega} \le C.
\end{equation}
\end{lemma}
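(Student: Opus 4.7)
The plan is to apply a maximum principle argument to a quantity of the form
\[
Q = \log \operatorname{tr}_{\omega_0} \omega - A \varphi,
\]
for a sufficiently large constant $A>0$, where the $-A\varphi$ term produces a `good' negative contribution $\operatorname{tr}_\omega \omega_0$ that dominates the bad curvature term in Proposition~\ref{propChat}. This is the parabolic analogue of the Aubin--Yau second order estimate, adapted here by Cao.

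First, I would apply Proposition~\ref{propChat} with $\hat{\omega} = \omega_0$ and $\nu=0$, using that $g_0$ has bounded bisectional curvature, to obtain a uniform constant $\hat C$ with
\[
\left( \ddt{} - \Delta \right) \log \operatorname{tr}_{\omega_0}\omega \le \hat{C}\, \operatorname{tr}_\omega \omega_0 .
\]
Next I would compute the evolution of $\varphi$: from the Monge--Amp\`ere equation $\dot\varphi = \log(\omega^n/\Omega)$, and from $\omega - \hat\omega_t = \ddbar \varphi$,
\[
\left( \ddt{} - \Delta \right) \varphi = \log \frac{\omega^n}{\Omega} - n + \operatorname{tr}_\omega \hat\omega_t.
\]
Since $\hat\omega_t$ is a smooth family of K\"ahler metrics on the compact time interval $[0,T_{\max}]\subset [0,T')$, there is a uniform $c>0$ with $\hat\omega_t \ge c\, \omega_0$, so $\operatorname{tr}_\omega \hat\omega_t \ge c\, \operatorname{tr}_\omega \omega_0$. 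Combined with the uniform bound on $\dot\varphi$ from Lemma~\ref{lemmaphidot}, this yields
\[
\left( \ddt{} - \Delta \right)(-A\varphi) \le -Ac\, \operatorname{tr}_\omega \omega_0 + A\,C_1,
\]
for a uniform $C_1$. Choosing $A$ with $Ac \ge \hat C + 1$, the quantity $Q$ satisfies
\[
\left( \ddt{} - \Delta \right) Q \le -\operatorname{tr}_\omega \omega_0 + A C_1 .
\]

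Now fix $t' \in (0,T_{\max})$ and let $(x_0,t_0)$ be a point where $Q$ attains its maximum on $M\times[0,t']$. If $t_0>0$, Proposition~\ref{pmp} gives $\operatorname{tr}_\omega \omega_0(x_0,t_0) \le AC_1$. Using the pointwise inequality (in terms of the eigenvalues $\lambda_i$ of $\omega$ with respect to $\omega_0$)
\[
\operatorname{tr}_{\omega_0}\omega = \sum_i \lambda_i \le \frac{\omega^n}{\omega_0^n}\cdot (\operatorname{tr}_\omega\omega_0)^{n-1},
\]
together with the upper bound $\omega^n \le C\,\omega_0^n$ from Lemma~\ref{lemmaphidot}, we obtain a uniform bound $\operatorname{tr}_{\omega_0}\omega(x_0,t_0) \le C_2$. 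Hence
\[
Q(x_0,t_0) = \log \operatorname{tr}_{\omega_0}\omega(x_0,t_0) - A\varphi(x_0,t_0) \le \log C_2 + A\|\varphi\|_{C^0} =: C_3,
\]
using Lemma~\ref{lemmaphibound}. If instead $t_0 = 0$, the same bound holds trivially since $Q|_{t=0} = \log n$. Thus $Q \le C_3$ everywhere, and applying Lemma~\ref{lemmaphibound} once more yields $\log \operatorname{tr}_{\omega_0}\omega \le C_3 + A\|\varphi\|_{C^0}$, giving (\ref{trub}) with a constant independent of $t'\in(0,T_{\max})$.

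The only subtlety is the need to dominate $\hat{C}\operatorname{tr}_\omega\omega_0$ by the good term coming from $-A\Delta\varphi$; this is what forces the choice $\hat\omega_t \ge c\, \omega_0$ together with the normalization of $\hat\omega_t$ on the compact time interval, and the use of the lower bound on $\omega^n/\Omega$ (i.e.\ of $\dot\varphi$) from Lemma~\ref{lemmaphidot}. All other ingredients are routine.
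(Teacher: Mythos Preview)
Your proof is correct, and it uses the same test quantity $Q = \log \operatorname{tr}_{\omega_0}\omega - A\varphi$ as the paper. However, the way you handle the maximum point differs from the paper in a meaningful way. You invoke the bound on $\dot\varphi$ from Lemma~\ref{lemmaphidot} (equivalently the volume lower bound) twice: first to absorb $-A\dot\varphi$ into a constant, and then to convert the bound on $\operatorname{tr}_\omega\omega_0$ at the maximum into a bound on $\operatorname{tr}_{\omega_0}\omega$ via $\operatorname{tr}_{\omega_0}\omega \le (\omega^n/\omega_0^n)(\operatorname{tr}_\omega\omega_0)^{n-1}$.

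The paper deliberately avoids using the $\dot\varphi$ bound. It keeps the term $-A\log(\omega^n/\Omega)$ and arrives at the inequality $\sum_i \bigl(\tfrac{1}{\lambda_i} + A\log\lambda_i\bigr) \le C$ at the maximum point; since $x\mapsto \tfrac{1}{x} + A\log x$ is bounded below for $x>0$, each eigenvalue $\lambda_i$ is bounded above directly. The paper explicitly remarks after the proof that your simpler route is available, but that their argument is needed later (Lemma~\ref{trick2} and Section~\ref{sectfinite}) in situations where no uniform lower bound on $\dot\varphi$ holds. So your argument buys brevity here, while the paper's buys reusability.
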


\begin{proof} We consider the quantity
\begin{equation}
Q = \log \tr{\omega_0}{\omega} - A \varphi,
\end{equation}
for $A>0$ a uniform constant to be determined later.  For a fixed $t' \in (0, \Tmax)$, assume that $Q$ on $M \times [0,t']$ attains a maximum at a point $(x_0, t_0)$.  Without loss of generality, we may suppose that $t_0 >0$.  Then at $(x_0, t_0)$, applying Proposition \ref{propChat} with $\hat{\omega} = \omega_0$,
\begin{align} \nonumber
0 \le \left( \ddt{} - \Delta \right) Q & \le C_0 \tr{\omega}{\omega_0} - A \dot{\varphi} + A \Delta \varphi \\
& = \tr{\omega}{(C_0\omega_0 - A \hat{\omega}_{t_0}}) - A \log \frac{\omega^n}{\Omega}  + An,
\end{align}
for $C_0$ depending only on the lower bound of the bisectional curvature of $g_0$.  Choose $A$ sufficiently large so that
$A \hat{\omega}_{t_0} - (C_0 +1) \omega_0$ is K\"ahler on $M$.  Then
\begin{equation}
\tr{\omega}{(C_0\omega_0 - A\hat{\omega}_{t_0})} \le - \tr{\omega}{\omega_0},
\end{equation}
and
so at $(x_0, t_0)$,
\begin{equation}
\tr{\omega}{\omega_0} + A \log \frac{\omega^n}{\Omega} \le An,
\end{equation}
and hence
\begin{equation} \label{conc}
\tr{\omega}{\omega_0} + A \log \frac{\omega^n}{\omega_0^n} \le C,
\end{equation}
for some uniform constant $C$.  At $(x_0, t_0)$, choose coordinates so that
\begin{equation} \label{coords}
(g_0)_{i \ov{j}} = \delta_{i j} \quad \textrm{and} \quad g_{i\ov{j}} = \lambda_i \delta_{ij}, \quad \textrm{for } i, j=1, \ldots, n,
\end{equation}
for positive $\lambda_1, \ldots, \lambda_n.$
Then (\ref{conc}) is precisely
\begin{equation}
\sum_{i=1}^n \left( \frac{1}{\lambda_i} + A \log \lambda_i \right) \le C.
\end{equation}
Since the function $x \mapsto \frac{1}{x} + A \log x$ for $x>0$ is uniformly bounded from below, we have (for a different $C$),
\begin{equation}
 \left( \frac{1}{\lambda_i} + A \log \lambda_i \right) \le C, \quad \textrm{for } i=1, \ldots, n.
\end{equation}
Then $A \log \lambda_i \le C$, giving a uniform upper bound for $\lambda_i$ and hence $(\tr{\omega_0}{\omega})(x_0, t_0)$.  Since $\varphi$ is uniformly bounded on $M \times [0, \Tmax)$ we see that $Q(x_0, t_0)$ is uniformly bounded from above.  Hence $Q$ is bounded from above on $M \times [0,t']$ for any $t' < \Tmax$.  Using again that $\varphi$ is uniformly bounded we obtain the required estimate (\ref{trub}).  \qed
\end{proof}

Note that we did not make use of the bound on $\dot{\varphi}$ in the above argument.  By doing so we could have simplified the proof slightly.  However, it turns out that the argument of Lemma \ref{lemmatr} will be useful later (see Lemma \ref{trick2} and Section \ref{sectfinite} below) where we do not have a uniform lower bound of $\dot{\varphi}$.

As a consequence of Lemma \ref{lemmatr}, we have:

\begin{corollary} \label{cmetricbound}
There exists a uniform $C>0$ such that on $M \times  [0,\Tmax)$,
\begin{equation}
\frac{1}{C} \omega_0 \le \omega \le C \omega_0.
\end{equation}
\end{corollary}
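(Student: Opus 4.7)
The plan is to combine the upper bound on $\mathrm{tr}_{\omega_0}\omega$ from Lemma \ref{lemmatr} with the two-sided bound on the volume form from Lemma \ref{lemmaphidot}. Once both are in hand, the estimate is a pointwise linear algebra argument.

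First I would derive the upper bound $\omega \le C\omega_0$. Fix $(x,t) \in M \times [0,T_{\max})$ and choose coordinates at $x$ as in (\ref{coords}) so that $(g_0)_{i\bar j} = \delta_{ij}$ and $g_{i\bar j} = \lambda_i \delta_{ij}$ with $\lambda_1,\dots,\lambda_n > 0$. Then $\mathrm{tr}_{\omega_0}\omega = \sum_i \lambda_i$, so Lemma \ref{lemmatr} gives $\lambda_i \le C$ for every $i$, which translates back to $\omega \le C \omega_0$ at $x$ (hence everywhere on $M \times [0,T_{\max})$).

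Next I would obtain the lower bound $\omega \ge \frac{1}{C}\omega_0$. In the same simultaneously-diagonalizing coordinates, Lemma \ref{lemmaphidot} says
\begin{equation*}
\lambda_1\cdots \lambda_n \;=\; \frac{\omega^n}{\omega_0^n} \;\ge\; c\,\frac{\Omega}{\omega_0^n} \;\ge\; c',
\end{equation*}
where $c' > 0$ is uniform (using compactness of $M$ and the smoothness of $\Omega$ and $\omega_0$). Combining this with the upper bound $\lambda_j \le C$ for $j \ne i$ yields
\begin{equation*}
\lambda_i \;=\; \frac{\lambda_1\cdots\lambda_n}{\prod_{j\ne i}\lambda_j} \;\ge\; \frac{c'}{C^{n-1}},
\end{equation*}
which is a uniform positive lower bound for each eigenvalue $\lambda_i$. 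This gives $\omega \ge \frac{1}{C'}\omega_0$ at $x$, and since $x$ and $t$ were arbitrary, this holds on all of $M \times [0,T_{\max})$.

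There is no real obstacle here, since the hard work has already been done in Lemmas \ref{lemmaphidot} and \ref{lemmatr}; the corollary is essentially the observation that control of the trace plus control of the determinant controls each individual eigenvalue in a bounded range. Adjusting the constant $C$ at the end gives the two-sided inequality as stated. \qed
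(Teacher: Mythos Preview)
Your proof is correct and follows essentially the same approach as the paper: both derive the upper bound from Lemma~\ref{lemmatr} and the lower bound by combining the trace bound with the volume form bound of Lemma~\ref{lemmaphidot} via a pointwise eigenvalue argument. The only cosmetic difference is that the paper bounds $\tr{\omega}{\omega_0} = \sum_i 1/\lambda_i$ using the inequality $\sum_i 1/\lambda_i \le \frac{1}{(n-1)!}(\sum_i \lambda_i)^{n-1}/\prod_i \lambda_i$, whereas you bound each $\lambda_i$ directly from $\lambda_i = (\prod_j \lambda_j)/\prod_{j\ne i}\lambda_j$; these are equivalent elementary observations.
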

\begin{proof}
The upper bound follows from Lemma  \ref{lemmatr}.   For the lower bound,
\begin{equation} \label{gam}
\tr{\omega}{\omega_0} \le \frac{1}{(n-1)!} (\tr{\omega_0}{\omega})^{n-1} \frac{\omega_0^n}{\omega^n} \le C,
\end{equation}
using Lemma \ref{lemmaphidot}.  To verify the first inequality of (\ref{gam}), choose coordinates as in (\ref{coords}) and observe that
\begin{equation}
\frac{1}{\lambda_1} +\cdots + \frac{1}{\lambda_n} \le \frac{1}{(n-1)!} \frac{ (\lambda_1 + \cdots + \lambda_n)^{n-1}}{\lambda_1 \cdots \lambda_n},
\end{equation}
for positive $\lambda_i$.  \qed
\end{proof}

We can now finish the proof of Theorem \ref{longtime}.

\begin{proof}[Proof of Theorem \ref{longtime}]
  Combining Corollary \ref{cmetricbound} with Corollary \ref{choe}, we obtain uniform $C^{\infty}$ estimates for $g(t)$ on $[0, \Tmax)$.  Hence as $t \rightarrow \Tmax$, the metrics $g(t)$ converge in $C^{\infty}$ to a smooth K\"ahler metric $g(\Tmax)$ and thus we obtain a smooth solution to the K\"ahler-Ricci flow on $[0, \Tmax]$.  But we have already seen from Theorem \ref{hamilton}  (or by the discussion at the end of Section \ref{sectpma}) that we can always find a smooth solution of the K\"ahler-Ricci flow on some, possibly short, time interval with  any initial K\"ahler metric.  Applying this to $g(\Tmax)$, we obtain a solution of the K\"ahler-Ricci flow $g(t)$ on $[0, \Tmax + \ve)$ for $\ve>0$.  But this contradicts the definition of $\Tmax$, and completes the proof of Theorem \ref{longtime}.
  \qed \end{proof}

\pagebreak

\section{Convergence of the flow in the cases $c_1(M)<0$ and $c_1(M)=0$} \label{sectn0}

In this section we show that  the K\"ahler-Ricci flow converges, after appropriate normalization, to a K\"ahler-Einstein metric in the cases $c_1(M)<0$ and $c_1(M)=0$.  This was originally proved by Cao \cite{Cao} and makes use of parabolic versions of estimates due to Yau and Aubin  \cite{A, Y2} and also Yau's well-known $C^0$ estimate for the complex Monge-Amp\`ere equation \cite{Y2}.

\subsection{The normalized K\"ahler-Ricci flow when $c_1(M)<0$} \label{sectnkrf}

We first consider the case of a manifold $M$ with $c_1(M)<0$.     We restrict to the case when $[\omega_0]= -c_1(M)$.  By Theorem \ref{longtime} we have a solution to the K\"ahler-Ricci flow (\ref{krf}) for $t \in [0,\infty)$.  The K\"ahler class $[\omega(t)]$ is given by $(1+t)[\omega_0]$ which diverges as $t \rightarrow \infty$.  To avoid this we consider instead the \emph{normalized K\"ahler-Ricci flow}
\begin{equation} \label{nkrf}
\ddt{} \omega  = - \Ric(\omega) - \omega, \qquad \omega|_{t=0} = \omega_0.
\end{equation}
This is just a rescaling of (\ref{krf}) and we have a solution $\omega(t)$ to (\ref{nkrf})  for all time.   Indeed if $\tilde{\omega}(s)$ solves $\frac{\partial}{\partial s} \tilde{\omega} (s) = - \Ric (\tilde{\omega}(s))$ for $s \in [0,\infty)$ then $\omega(t) = \tilde{\omega}(s)/(s+1)$ with $t=\log (s+1)$ solves (\ref{nkrf}).  Conversely, given a solution to (\ref{nkrf}) we can rescale to obtain a solution to (\ref{krf}).

Since we have chosen $[\omega_0] = - c_1(M)$, we immediately see that $[\omega(t)] = [\omega_0]$ for all $t$.
 The following result is due to Cao \cite{Cao}.

\begin{theorem} \label{thmc1n}  The solution $\omega=\omega(t)$ to (\ref{nkrf}) converges in $C^{\infty}$ to the unique K\"ahler-Einstein metric $\omega_{\emph{KE}} \in -c_1(M)$.
\end{theorem}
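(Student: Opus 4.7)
The strategy is to reformulate (\ref{nkrf}) as a parabolic complex Monge-Amp\`ere equation with a built-in damping term, establish uniform $C^\infty$ bounds on the potential, and then exploit the damping to obtain exponential-in-time $C^\infty$ convergence to the K\"ahler-Einstein metric.

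Since $[\omega(t)] = [\omega_0] = -c_1(M)$ is preserved along the normalized flow, write $\omega(t) = \omega_0 + \ddbar \varphi(t)$ and fix a smooth volume form $\Omega$ on $M$ with $\Ric(\Omega) = -\omega_0$ (possible by the $\partial\ov{\partial}$-Lemma since $-\omega_0 \in c_1(M)$). A computation entirely analogous to Section~\ref{sectpma} shows that (\ref{nkrf}) is equivalent to
\begin{equation*}
\ddt{\varphi} = \log \frac{(\omega_0 + \ddbar \varphi)^n}{\Omega} - \varphi, \qquad \varphi|_{t=0} = 0.
\end{equation*}
A $C^0$ bound on $\varphi$ is now immediate from the damping term: at a spatial maximum at time $t>0$, Proposition~\ref{pmp} gives $\ddbar\varphi \le 0$ and $\dot\varphi \ge 0$, hence $\varphi \le \log(\omega_0^n/\Omega)$, with a symmetric bound at the minimum, so $\|\varphi(t)\|_{C^0(M)} \le C$. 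Differentiating the Monge-Amp\`ere equation in $t$ yields $(\ddt{} - \Delta)\dot\varphi = -\dot\varphi$, so $e^t \dot\varphi$ satisfies the homogeneous heat equation along the flow; applying Proposition~\ref{propheat} to $\pm e^t \dot\varphi$ gives the exponential decay $\|\dot\varphi(t)\|_{C^0} \le e^{-t}\|\dot\varphi(0)\|_{C^0}$.

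With $\|\varphi\|_{C^0}$ in hand, I apply Proposition~\ref{propChat} with $\hat\omega = \omega_0$ and run exactly the argument of Lemma~\ref{lemmatr} on $Q = \log \tr{\omega_0}{\omega} - A\varphi$ (for $A$ sufficiently large) to obtain a uniform upper bound on $\tr{\omega_0}{\omega}$; combined with the control $\omega^n = e^{\dot\varphi + \varphi}\Omega$ coming from the bounds on $\dot\varphi$ and $\varphi$, this gives $\tfrac{1}{C}\omega_0 \le \omega(t) \le C\omega_0$ uniformly on $[0,\infty)$, and Corollary~\ref{choe} promotes this to uniform $C^\infty(M,g_0)$ bounds on $\omega(t)$. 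The exponential decay $\|\dot\varphi(t)\|_{C^0} \le Ce^{-t}$ then forces $\varphi(t)$ to be Cauchy in $C^0$, converging to some $\varphi_\infty$; combined with the uniform $C^\infty$ bounds and standard interpolation, $\varphi(t) \to \varphi_\infty$ in $C^\infty(M)$, and hence $\omega(t) \to \omega_\infty := \omega_0 + \ddbar \varphi_\infty$ in $C^\infty$. Passing to the limit $\dot\varphi \to 0$ in the Monge-Amp\`ere equation gives $\omega_\infty^n = e^{\varphi_\infty}\Omega$, and applying $-\ddbar \log$ yields $\Ric(\omega_\infty) = -\omega_\infty$. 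Uniqueness of $\omega_{\textrm{KE}}$ in $-c_1(M)$ is a standard maximum principle argument applied to the difference of any two K\"ahler-Einstein potentials.

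\textbf{Main obstacle.} The bulk of the real analytic work is the $C^2$/Laplacian estimate, but Proposition~\ref{propChat} already supplies the key differential inequality and the $C^0$ bound on $\varphi$ is essentially free from the $-\varphi$ damping term; in Cao's original treatment this was where the main technical effort lay, via a parabolic version of the Yau-Aubin Monge-Amp\`ere estimates. The essential structural feature making exponential convergence work is precisely this damping term, which reflects the sign of $c_1(M)$; without it (the $c_1(M)=0$ case with $\nu=0$) the heat-equation-for-$e^t\dot\varphi$ trick above breaks down and one needs a genuinely different approach.
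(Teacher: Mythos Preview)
Your proposal is correct and follows essentially the same route as the paper: reduce to the damped parabolic Monge--Amp\`ere equation (\ref{npcma}), obtain the exponential decay of $\dot\varphi$ from $(\partial_t - \Delta)(e^t\dot\varphi)=0$ (the paper's Lemma~\ref{lemmaex}), get the metric bound via Proposition~\ref{propChat} (the paper's Lemma~\ref{lemmametricbdn}), invoke Corollary~\ref{choe}, and pass to the limit. One small difference: you obtain the $C^0$ bound on $\varphi$ directly from the damping term via the maximum principle (as the paper later does in Lemma~\ref{lemmat1} for the nef--big case), whereas here the paper derives it by first establishing $|\dot\varphi|\le Ce^{-t}$ and integrating; both are fine, though your phrase ``spatial maximum at time $t>0$'' should read ``space-time maximum with $t_0>0$'' for Proposition~\ref{pmp} to apply as stated.
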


We recall that a \emph{K\"ahler-Einstein metric} is a K\"ahler metric $\oke$ with $\Ric(\oke) = \mu \oke$ for some constant $\mu \in \mathbb{R}$.  If $\oke \in -c_1(M)$ then we necessarily have $\mu=-1$.  The existence of a K\"ahler-Einstein metric on $M$ with $c_1(M)<0$ is due to Yau \cite{Y2} and Aubin \cite{A} independently.

The uniqueness of $\oke \in -c_1(M)$ is due to Calabi \cite{C0} and follows from the maximum principle.  Indeed, suppose $\oke', \oke \in -c_1(M)$ are both K\"ahler-Einstein.  Writing  $\oke' = \oke+ \ddbar \varphi$, we have $\Ric(\oke') = -\oke' =   \Ric(\oke) - \ddbar \varphi$  and hence
\begin{equation}
\log \frac{ (\oke + \ddbar \varphi)^n}{\oke^n} = \varphi + C,
\end{equation}
for some constant $C$.  By considering the maximum and minimum values of $\varphi + C$ on $M$ we see that $\varphi + C=0$ and hence $\oke=\oke'$.

To prove Theorem \ref{thmc1n},
 we reduce (\ref{nkrf}) to a parabolic complex Monge-Amp\`ere equation as in the previous section.  Let $\Omega$ be a volume form on $M$ satisfying
\begin{equation}
\ddbar \log \Omega = \omega_0 \in -c_1(M), \qquad \int_M \Omega = \int_M \omega_0^n.
\end{equation}
Then we consider the \emph{normalized parabolic complex Monge-Amp\`ere equation},
\begin{equation} \label{npcma}
\ddt{ \varphi} = \log \frac{ (\omega_0 + \ddbar \varphi)^n}{\Omega} - \varphi, \qquad \omega_0+ \ddbar \varphi>0 , \qquad \varphi|_{t=0} =0.
\end{equation}
Given a solution $\varphi=\varphi(t)$ of (\ref{npcma}), the metrics $\omega = \omega_0 + \ddbar \varphi$ solve (\ref{nkrf}).  Conversely, as in Section \ref{sectpma},
 given a solution $\omega=\omega(t)$ of (\ref{nkrf}) we can obtain via the $\partial\ov{\partial}$-Lemma a solution $\varphi=\varphi(t)$ of (\ref{npcma}).

We wish to obtain estimates for $\varphi$ solving (\ref{npcma}).  First:

\begin{lemma} \label{lemmaex}  We have
\begin{enumerate}
\item[(i)] There exists a uniform constant $C$ such that for $t$ in  $[0,\infty)$,
\begin{equation}
\| \dot{\varphi} (t) \|_{C^0(M)} \le C e^{-t}.
\end{equation}
\item[(ii)]  There exists a continuous real-valued function $\varphi_{\infty}$ on $M$ such that for $t$ in $[0, \infty)$,
\begin{equation} \label{eb}
\| \varphi(t) - \varphi_{\infty} \|_{C^0(M)} \le C e^{-t}
\end{equation}
\item[(iii)] $\| \varphi(t) \|_{C^0(M)}$ is uniformly bounded for $t \in [0, \infty)$.
\item[(iv)] There exists a uniform constant $C'$ such that on $M \times [0,\infty)$, the volume form of $\omega=\omega(t)$ satisfies
\begin{equation}
\frac{1}{C'} \omega_0^n \le \omega^n  \le C' \omega_0^n.
\end{equation}
\end{enumerate}
\end{lemma}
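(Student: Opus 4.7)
The four parts form a tight chain, all stemming from the exponential factor in the normalization. Here is the plan.

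First I would derive the evolution equation of $\dot{\varphi}$. Differentiating (\ref{npcma}) in $t$ gives
\begin{equation}
\left(\ddt{} - \Delta\right)\dot{\varphi} = -\dot{\varphi},
\end{equation}
since $\partial_t \omega = \ddbar\dot{\varphi}$ and $\partial_t \log \omega^n = g^{\ov j i}\partial_i\partial_{\ov j}\dot\varphi = \Delta\dot\varphi$. The key observation is that the quantity $u := e^t\dot{\varphi}$ then satisfies $\left(\ddt{} - \Delta\right) u = 0$. Applying Proposition \ref{propheat} to $u$ and to $-u$ gives $\|u(t)\|_{C^0(M)} \le \|u(0)\|_{C^0(M)} = \|\dot\varphi(0)\|_{C^0(M)}$. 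At $t=0$ we have $\varphi \equiv 0$, so $\dot\varphi(0) = \log(\omega_0^n/\Omega)$, which is a fixed smooth function on the compact manifold $M$, hence bounded. This establishes (i) with $C = \|\log(\omega_0^n/\Omega)\|_{C^0(M)}$.

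For (ii), I would use (i) to show that $\varphi(t)$ is Cauchy in $C^0$. For any $s < t$,
\begin{equation}
|\varphi(t,x) - \varphi(s,x)| \le \int_s^t |\dot\varphi(u,x)|\,du \le \int_s^t C e^{-u}\,du \le C e^{-s}.
\end{equation}
Thus $\{\varphi(t,\cdot)\}$ is Cauchy in $C^0(M)$ as $t \to \infty$, and so converges uniformly to some continuous function $\varphi_\infty$ on $M$. Letting $t \to \infty$ in the above estimate (with $s$ fixed) yields $\|\varphi(s) - \varphi_\infty\|_{C^0(M)} \le C e^{-s}$, which is (\ref{eb}). Part (iii) is then immediate: since $\varphi_\infty$ is continuous on the compact manifold $M$, it is bounded, and $\|\varphi(t)\|_{C^0(M)} \le \|\varphi_\infty\|_{C^0(M)} + Ce^{-t}$ is uniformly bounded on $[0,\infty)$.

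For (iv), I would rearrange (\ref{npcma}) as
\begin{equation}
\log\frac{\omega^n}{\Omega} = \dot\varphi + \varphi.
\end{equation}
By (i) and (iii), the right-hand side is uniformly bounded in $C^0(M)$ on $[0,\infty)$, so there is a uniform constant $C''$ with $\frac{1}{C''}\Omega \le \omega^n \le C''\Omega$. Since $\Omega$ and $\omega_0^n$ are both fixed smooth positive volume forms on the compact $M$, they are comparable, yielding (iv). There is no serious obstacle here; the crux is the observation that the normalization $-\omega$ on the right of (\ref{nkrf}) makes $\dot\varphi$ satisfy a backward-damped heat equation, which is what drives the exponential convergence.
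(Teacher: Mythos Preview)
Your proof is correct and follows essentially the same approach as the paper: you derive the evolution equation for $\dot\varphi$, observe that $e^t\dot\varphi$ solves the heat equation and apply the maximum principle for (i), then use the Cauchy estimate in $C^0$ for (ii) and (iii), and read off (iv) from the equation. The paper's argument is identical in structure; your version simply spells out a few details (the explicit constant $C=\|\log(\omega_0^n/\Omega)\|_{C^0}$ and the comparability of $\Omega$ with $\omega_0^n$) that the paper leaves implicit.
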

\begin{proof}  Compute
\begin{equation}
\ddt{\dot{\varphi}} = \Delta \dot{\varphi} - \dot{\varphi},
\end{equation}
and hence
\begin{equation}
\ddt{} (e^t \dot{\varphi}) = \Delta (e^t \dot{\varphi}).
\end{equation}
Then (i) follows from the maximum principle (Proposition \ref{propheat}).   For (ii), let $s, t \ge 0$ and $x$ be in $M$.  Then
\begin{equation} \label{ts}
| \varphi(x,s) - \varphi(x,t)| = \left| \int_t^s \dot{\varphi}(x,u) du \right| \le \int_t^s |\dot{\varphi}(x,u)| du \le \int_t^s Ce^{-u} du = C(e^{-t} - e^{-s}),
\end{equation}
which shows that $\varphi(t)$ converges uniformly to some continuous function $\varphi_{\infty}$ on $M$.  Taking the limit in (\ref{ts}) as $s \rightarrow \infty$ gives (ii).  (iii) follows immediately from (ii).  (iv) follows from (\ref{npcma}) together with (i) and (iii). \qed
\end{proof}

We use the $C^0$ bound on $\varphi$ to obtain an upper bound on the evolving metric.

\begin{lemma} \label{lemmametricbdn}
There exists a uniform constant $C$ such that on $M \times [0,\infty)$, $\omega=\omega(t)$ satisfies
\begin{equation}
\frac{1}{C} \omega_0 \le \omega \le C \omega_0.
\end{equation}
\end{lemma}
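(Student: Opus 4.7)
The plan is to run the same Yau-Aubin-style $C^2$ argument used in Lemma \ref{lemmatr} and Corollary \ref{cmetricbound}, adapted to the normalized flow. The key inputs we already have are: the $C^0$ bound $\|\varphi\|_{C^0(M)} \le C$ and the two-sided bound $\tfrac{1}{C'}\omega_0^n \le \omega^n \le C'\omega_0^n$ from Lemma \ref{lemmaex}, together with the trace inequality of Proposition \ref{propChat}. Note that since we are working with the normalized flow \eqref{nkrf}, we have $\nu = 1$ in Proposition \ref{propChat}, which produces a helpful negative term $-\nu\,\mathrm{tr}_{\hat\omega}\omega$ compared to the unnormalized case.

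First I would consider the quantity
\begin{equation*}
Q = \log \mathrm{tr}_{\omega_0} \omega - A\varphi,
\end{equation*}
for a large constant $A$ to be chosen, and compute $(\partial_t - \Delta)Q$. Proposition \ref{propChat} with $\hat\omega = \omega_0$ gives $(\partial_t - \Delta)\log\mathrm{tr}_{\omega_0}\omega \le C_0\,\mathrm{tr}_\omega \omega_0 - 1$, where $C_0$ is a lower bound on the bisectional curvature of $g_0$. Using $\Delta\varphi = n - \mathrm{tr}_\omega \omega_0$ and $\dot\varphi = \log(\omega^n/\Omega) - \varphi$ from \eqref{npcma}, I obtain
\begin{equation*}
\left(\partial_t - \Delta\right)Q \le (C_0 - A)\,\mathrm{tr}_\omega\omega_0 - A\log\frac{\omega^n}{\omega_0^n} + A\varphi + An - 1 - A\log\frac{\omega_0^n}{\Omega}.
\end{equation*}

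Choosing $A = C_0 + 1$ and fixing $t' > 0$, at a point $(x_0,t_0)$ where $Q$ attains its maximum on $M \times [0,t']$, the case $t_0 = 0$ is immediate, while if $t_0 > 0$ the maximum principle gives $(\partial_t - \Delta)Q \ge 0$ at that point. Since $\varphi$ is uniformly bounded by Lemma \ref{lemmaex}(iii) and $\log(\omega_0^n/\Omega)$ is a fixed smooth function, this yields
\begin{equation*}
\mathrm{tr}_\omega \omega_0 + A\log\frac{\omega^n}{\omega_0^n} \le C \qquad \text{at } (x_0,t_0).
\end{equation*}
Diagonalizing $\omega$ with respect to $\omega_0$ at $(x_0,t_0)$ with eigenvalues $\lambda_1,\dots,\lambda_n$, this reads $\sum_i(1/\lambda_i + A\log\lambda_i) \le C$. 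Each summand is bounded below (the function $x \mapsto 1/x + A\log x$ is), so each term is bounded above; in particular $A\log\lambda_i \le C$ and hence $\lambda_i \le C''$. This bounds $\mathrm{tr}_{\omega_0}\omega$ at $(x_0,t_0)$, and using the $C^0$ bound on $\varphi$ once more gives a uniform upper bound for $Q$, hence $\mathrm{tr}_{\omega_0}\omega \le C$ on $M \times [0,\infty)$ after letting $t' \to \infty$.

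Finally, for the lower bound $\omega \ge \tfrac{1}{C}\omega_0$, I would repeat the elementary inequality at the end of Corollary \ref{cmetricbound}: in coordinates diagonalizing $\omega$ and $\omega_0$ simultaneously,
\begin{equation*}
\mathrm{tr}_\omega \omega_0 = \sum_i \frac{1}{\lambda_i} \le \frac{1}{(n-1)!}\,\frac{(\lambda_1 + \cdots + \lambda_n)^{n-1}}{\lambda_1\cdots\lambda_n} = \frac{1}{(n-1)!}\,(\mathrm{tr}_{\omega_0}\omega)^{n-1}\,\frac{\omega_0^n}{\omega^n},
\end{equation*}
and now Lemma \ref{lemmaex}(iv) together with the just-proved upper bound yields $\mathrm{tr}_\omega\omega_0 \le C$, which is equivalent to $\omega \ge \tfrac{1}{C}\omega_0$. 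The main (but standard) obstacle is the trace estimate step above; here the $-\nu = -1$ term from the normalization is what allows us to close the argument without needing an independent bound on $\dot\varphi$ beyond what Lemma \ref{lemmaex} supplies.
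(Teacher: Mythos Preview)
Your proposal is correct and follows essentially the same approach as the paper: you use the identical test quantity $Q=\log\tr{\omega_0}{\omega}-A\varphi$, Proposition~\ref{propChat} with $\nu=1$, and the maximum principle. The only cosmetic difference is in how the max-point inequality is closed: the paper uses the bound on $\dot\varphi$ from Lemma~\ref{lemmaex}(i) directly to get $\tr{\omega}{\omega_0}\le C$ and then converts via the volume bound (as in \eqref{trtr}), whereas you substitute $\dot\varphi=\log(\omega^n/\Omega)-\varphi$ and finish with the eigenvalue argument of Lemma~\ref{lemmatr}; both routes are standard and interchangeable.
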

\begin{proof}
By part (iv) of Lemma \ref{lemmaex} and the argument of Corollary \ref{cmetricbound}, it suffices to obtain a uniform upper bound for $\tr{\omega_0}{\omega}$.


Applying Proposition \ref{propChat},
\begin{equation} \label{mod}
\left( \ddt{} - \Delta \right) \log \tr{\omega_0}{\omega} \le C_0 \tr{\omega}{\omega_0} - 1,
\end{equation}
for $C_0$ depending only on $g_0$.   We apply the maximum principle to the quantity $Q = \log \tr{\omega_0}{\omega} - A \varphi$ as in the proof of Lemma \ref{lemmatr}, where $A$ is to be chosen later.   We have
\begin{equation}
\left( \ddt{} - \Delta \right) Q \le C_0 \tr{\omega}{\omega_0} - 1 - A \dot{\varphi} + An - A \tr{\omega}{\omega_0}.
\end{equation}
Assume that $Q$ achieves a maximum at a point $(x_0, t_0)$ with $t_0>0$.
Choosing $A=C_0+1$ and using the fact that $\dot{\varphi}$ is uniformly bounded, we see that $\tr{\omega}{\omega_0}$ is uniformly bounded at $(x_0, t_0)$.  Arguing as in  (\ref{gam}), we have,
\begin{equation} \label{trtr}
(\tr{\omega_0}{\omega}) (x_0, t_0) \le \frac{1}{(n-1)!} \left( \tr{\omega}{\omega_0} \right)^{n-1}(x_0, t_0) \frac{\omega^n}{\omega_0^n} (x_0, t_0) \le C,
\end{equation}
using part (iv) of Lemma \ref{lemmaex}.   Since $\varphi$ is uniformly bounded, this shows that $Q$ is bounded from above at $(x_0, t_0)$.  Hence $\tr{\omega_0}{\omega}$ is uniformly bounded from above.  \qed
 \end{proof}

We can now complete the proof of Theorem \ref{thmc1n}.  By Corollary \ref{choe} we have uniform $C^{\infty}$ estimates on $\omega(t)$.  Since $\varphi(t)$ is  bounded in $C^0$ it follows that we have uniform $C^{\infty}$ estimates on $\varphi(t)$.  Recall that $\varphi(t)$ converges uniformly to a continuous function $\varphi_{\infty}$ on $M$ as $t \rightarrow \infty$.    By the Arzela-Ascoli Theorem and the uniqueness of limits, it follows immediately that there exist times $t_k \rightarrow \infty$ such that the sequence of functions $\varphi(t_k)$ converges in $C^{\infty}$ to $\varphi_{\infty}$, which is smooth.   In fact we have this convergence without passing to a subsequence.  Indeed, suppose not.  Then  there exists an integer $k$, an $\ve>0$ and a sequence of times $t_i \rightarrow \infty$ such that
\begin{equation}
 \| \varphi(t_i) - \varphi_{\infty} \|_{C^k(M)} \ge \ve, \quad \textrm{for all } i.
\end{equation}
But since  $\varphi(t_i)$ is a sequence of functions with uniform $C^{k+1}$ bounds we apply the Arzela-Ascoli Theorem to obtain a subsequence $\varphi(t_{i_j})$ which converges in $C^k$ to $\varphi_{\infty}'$, say, with
\begin{equation}
\| \varphi'_{\infty} - \varphi_{\infty} \|_{C^k(M)} \ge \ve,
\end{equation}
so that $\varphi'_{\infty} \neq \varphi_{\infty}$.  But $\varphi({t_{i_j}})$ converges uniformly to $\varphi_{\infty}$, a contradiction.  Hence $\varphi(t)$ converges to $\varphi_{\infty}$ in $C^{\infty}$ as $t\rightarrow \infty$.

It remains to show that the limit metric $\omega_{\infty} = \omega_0 +\ddbar \varphi_{\infty}$ is K\"ahler-Einstein.  Since from Lemma \ref{lemmaex}, $\dot{\varphi}(t) \rightarrow 0$
 as $t \rightarrow \infty$, we can take a limit as $t \rightarrow \infty$ of (\ref{npcma}) to obtain
 \begin{equation}
 \log \frac{\omega_{\infty}^n}{\Omega} - \varphi_{\infty} =0,
 \end{equation}
 and applying $\ddbar$ to both sides of this equation gives that $\Ric(\omega_{\infty}) = -\omega_{\infty}$ as required.  This completes the proof of Theorem \ref{thmc1n}.

\subsection{The case of $c_1(M)=0$: Yau's zeroth order estimate}

In this section we discuss the case of the K\"ahler-Ricci flow on a K\"ahler manifold $(M, g_0)$ with vanishing first Chern class.  Unlike the case of $c_1(M)<0$ dealt with above, there will be no restriction on the K\"ahler class $[\omega_0]$.

By Theorem \ref{longtime}, there is a solution $\omega(t)$ of the K\"ahler-Ricci flow (\ref{krf}) for $t \in [0,\infty)$ and we have $[\omega(t)] = [\omega_0]$.  The following result is due to Cao \cite{Cao} and makes use of Yau's celebrated zeroth order estimate, which we will describe in this subsection.

\begin{theorem} \label{thmc0}
The solution $\omega(t)$ to (\ref{krf}) converges in $C^{\infty}$ to the unique K\"ahler-Einstein metric $\oke \in [\omega_0]$.
\end{theorem}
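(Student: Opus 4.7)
First I would reduce the flow to a parabolic complex Monge-Amp\`ere equation as in Section \ref{sectpma}. Since $c_1(M) = 0$, the $\partial\ov{\partial}$-lemma supplies a smooth volume form $\Omega$ on $M$ with $\ddbar \log \Omega = 0$ and, after rescaling, $\int_M \Omega = \int_M \omega_0^n$. Because $[\omega(t)] = [\omega_0]$ is preserved by the flow, we may take $\hat\omega_t \equiv \omega_0$ and write $\omega(t) = \omega_0 + \ddbar\varphi(t)$ where
\begin{equation} \label{pmac0}
\ddt{\varphi} = \log \frac{(\omega_0 + \ddbar\varphi)^n}{\Omega}, \qquad \varphi|_{t=0}=0.
\end{equation}
The expected limit metric $\oke = \omega_0 + \ddbar \varphi_{\infty}$ is characterized by $(\omega_0 + \ddbar\varphi_\infty)^n = \Omega$ and is then automatically Ricci-flat.

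For the a priori estimates, differentiating \eqref{pmac0} and using $\ddbar \log \Omega = 0$ gives $(\partial_t - \Delta)\dot\varphi = 0$, so Proposition \ref{propheat} yields $\|\dot\varphi(t)\|_{C^0} \le \|\log(\omega_0^n/\Omega)\|_{C^0}$. At each fixed $t$ the equation $(\omega_0 + \ddbar\varphi)^n = e^{\dot\varphi}\Omega$ is a complex Monge-Amp\`ere equation with bounded right-hand side, and since $[\omega(t)] = [\omega_0]$ one has automatically $\int_M e^{\dot\varphi}\Omega = \int_M \omega_0^n$. Yau's zeroth order estimate then yields a uniform bound $\sup_M \varphi(t) - \inf_M \varphi(t) \le C$, and setting $\tilde\varphi(t) = \varphi(t) - \sup_M \varphi(t)$ gives $\|\tilde\varphi\|_{C^0} \le C$. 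Applying the argument of Lemma \ref{lemmatr} via Proposition \ref{propChat} with $\hat\omega = \omega_0$ to $\log \tr{\omega_0}{\omega} - A\tilde\varphi$, combined with the two-sided volume form bound, produces $C^{-1}\omega_0 \le \omega(t) \le C\omega_0$, and Corollary \ref{choe} upgrades this to uniform $C^\infty$ bounds on $\omega(t)$.

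The main obstacle is the convergence statement, which demands $\dot\varphi \to 0$ at a quantitative rate rather than just a bound. Since $\Ric(\omega) = -\ddbar \log \omega^n = -\ddbar \dot\varphi$ we have $R = -\Delta\dot\varphi$, and a direct computation using $\partial_t\omega^n = (\Delta\dot\varphi)\omega^n$ and integration by parts yields
\begin{equation}
\frac{d}{dt} \int_M \dot\varphi^{\,2}\, \omega^n = -2\int_M (1+\dot\varphi)\,|\partial\dot\varphi|^2 \,\omega^n.
\end{equation}
On the other hand, the identity $\int_M \omega^n = \int_M \Omega$ forces $\int_M (e^{\dot\varphi}-1)\,\Omega = 0$, which in view of the $C^0$ bound on $\dot\varphi$ controls the mean of $\dot\varphi$ by its $L^2$ norm squared. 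Combining with the Poincar\'e inequality of Theorem \ref{poincare} for $\omega(t)$, whose constant is uniform thanks to the $C^\infty$ metric bounds, yields a differential inequality of the form $\frac{d}{dt} \int_M \dot\varphi^{\,2}\omega^n \le -\delta \int_M \dot\varphi^{\,2}\omega^n$ once $\|\dot\varphi\|_{C^0}$ is sufficiently small (and more generally via an $L^{2p}$ energy argument for large $p$). A Moser iteration using the Sobolev inequality of Theorem \ref{sobolev} bootstraps the resulting exponential $L^2$-decay to $\|\dot\varphi(t)\|_{C^0} \le C e^{-\delta' t}$.

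With exponential decay of $\dot\varphi$ in hand, the argument of Lemma \ref{lemmaex}(ii) shows that $\tilde\varphi(t)$ is $C^0$-Cauchy and converges uniformly to a continuous $\varphi_{\infty}$; combined with the $C^\infty$ bounds and Arzel\`a-Ascoli, this promotes to $C^\infty$ convergence of the whole family without passing to a subsequence, as at the end of Section \ref{sectnkrf}. Letting $t \to \infty$ in \eqref{pmac0} gives $(\omega_0 + \ddbar \varphi_\infty)^n = \Omega$, so $\Ric(\omega_\infty) = -\ddbar \log \Omega = 0$. Uniqueness of $\oke \in [\omega_0]$ follows from Calabi's maximum principle argument: for any Ricci-flat $\omega' \in [\omega_0]$, the function $\log(\omega')^n - \log\omega_\infty^n$ is pluriharmonic and therefore constant, and equality of integrals forces this constant to vanish; writing $\omega' = \omega_\infty + \ddbar\phi$ then gives $(\omega_\infty + \ddbar\phi)^n = \omega_\infty^n$, which by examining $\phi$ at its maximum and minimum forces $\phi$ to be constant, so $\omega' = \omega_\infty$.
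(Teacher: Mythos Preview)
Your reduction to the parabolic Monge--Amp\`ere equation, the bound on $\dot\varphi$ via the heat equation, the oscillation bound via Yau's estimate, and the metric bound are all essentially the same as the paper's treatment (the paper handles the unboundedness of $\varphi$ by normalizing with $\frac{1}{V}\int_M\varphi\,\Omega$ and using Jensen's inequality to show this average is monotone, rather than with $\sup_M\varphi$; your choice works too, modulo the fact that $\sup_M\varphi(t)$ is only Lipschitz in $t$).

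The convergence argument is where you diverge from the paper and where there is a genuine gap. Your identity
\[
\frac{d}{dt}\int_M \dot\varphi^{\,2}\,\omega^n = -2\int_M (1+\dot\varphi)\,|\partial\dot\varphi|^2\,\omega^n
\]
is correct, but the right-hand side is only nonpositive when $\dot\varphi > -1$ pointwise, and all you know a priori is $|\dot\varphi|\le C$ for some possibly large $C$. So the claimed differential inequality $\frac{d}{dt}\int\dot\varphi^2\omega^n \le -\delta\int\dot\varphi^2\omega^n$ is circular: it needs the very smallness of $\dot\varphi$ you are trying to prove. Your parenthetical $L^{2p}$ fix does yield monotonicity of $\int\dot\varphi^{2p}\omega^n$ for $p$ large, but the Poincar\'e step fails: the mean of $\dot\varphi^p$ is not controlled quadratically by $\|\dot\varphi^p\|_{L^2}$ (your Taylor argument applies to $\dot\varphi$, not to $\dot\varphi^p$), so you do not get exponential decay. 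Without exponential decay the Cauchy argument for $\tilde\varphi$ collapses.

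The paper avoids this by using the Mabuchi energy as the monotone quantity: along the flow $\frac{d}{dt}\Mab_{\omega_0}(\varphi) = -\int_M|\partial\dot\varphi|^2\,\omega^n \le 0$ \emph{unconditionally}. Since $\Mab$ is bounded one gets $\int_0^\infty\int|\partial\dot\varphi|^2\omega^n\,dt < \infty$, and a further differential inequality $\frac{d}{dt}\int|\partial\dot\varphi|^2\omega^n \le C\int|\partial\dot\varphi|^2\omega^n$ (which uses only the $C^\infty$ bounds already established) upgrades this to $\int|\partial\dot\varphi|^2\omega^n(t)\to 0$ for all $t$. One then extracts convergent subsequences via Arzel\`a--Ascoli, identifies every limit as Ricci-flat, and concludes full convergence from the \emph{uniqueness} of the Ricci-flat metric in $[\omega_0]$---no exponential rate and no Cauchy argument are needed. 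Your $L^{2p}$ monotonicity could be made to work along the same lines (monotonicity $\Rightarrow$ subsequential limit $\Rightarrow$ uniqueness), but not via the exponential-decay route you outline.
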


Since $c_1(M)=0$,  the K\"ahler-Einstein metric $\oke$ must be K\"ahler-Ricci flat (if $\Ric(\oke) = \mu \oke$ then $c_1(M) =  [\mu \oke] = 0$ implies $\mu=0$).  Note that, as Theorem \ref{thmc0} implies,  there is a unique K\"ahler-Einstein metric in \emph{every} K\"ahler class on $M$.

The uniqueness part of the argument is due to Calabi  \cite{C0}.   Suppose $\oke' = \oke + \ddbar \varphi$ is another K\"ahler-Einstein metric in the same cohomology class.  Then the equation $\Ric(\oke') = \Ric(\oke)$ gives
\begin{equation}
\log \frac{\oke'^n}{\oke^n} = C,
\end{equation}
for some constant $C$.  Exponentiating and then integrating gives $C=1$ and hence $\oke'^n = \oke'^n$.   Then compute, using integration by parts,
\begin{align} \nonumber
0 = \int_M \varphi (\oke^n - \oke'^n)  &= - \int_M \varphi \ddbar \varphi \wedge (\sum_{i=0}^{n-1} \oke^i \wedge \oke'^{n-1-i}) \\ \nonumber
& = \int_M \frac{\sqrt{-1}}{2\pi} \partial \varphi \wedge \ov{\partial} \varphi \wedge (\sum_{i=0}^{n-1} \oke^i \wedge \oke'^{n-1-i}) \\ \label{eunique}
& \ge \frac{1}{n} \int_M | \partial \varphi |^2_{\oke} \oke^n,
\end{align}
which implies that $\varphi$ is constant and hence $\oke=\oke'$.

As usual, we reduce (\ref{krf}) to a parabolic complex Monge-Amp\`ere equation.  Since $c_1(M)=0$ there exists a unique volume form $\Omega$ satisfying
\begin{equation}
\ddbar \log \Omega = 0, \qquad \int_M \Omega = \int_M \omega_0^n.
\end{equation}
Then solving (\ref{krf}) is equivalent to solving the parabolic complex Monge-Amp\`ere equation
\begin{equation} \label{pcma0}
\ddt{ \varphi} = \log \frac{ (\omega_0 + \ddbar \varphi)^n}{\Omega}, \qquad \omega_0+ \ddbar \varphi>0 , \qquad \varphi|_{t=0} =0.
\end{equation}

We first observe:

\pagebreak[3]
\begin{lemma} \label{c0ddt} We have
\begin{enumerate}
\item[(i)]  There exists a uniform constant $C$ such that for $t \in [0,\infty)$
\begin{equation}
\| \dot{\varphi}(t) \|_{C^0(M)} \le C.
\end{equation}
\item[(ii)]  There exists a uniform constant $C'$ such that on $M \times [0,\infty)$ the volume form of $\omega=\omega(t)$ satisfies
\begin{equation}
\frac{1}{C'} \omega_0^n \le \omega^n \le C' \omega_0^n.
\end{equation}
\end{enumerate}
\end{lemma}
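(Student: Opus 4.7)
The plan is to derive both parts from a clean heat equation for $\dot{\varphi}$. Since $c_1(M)=0$ and the flow is unnormalized, the reference metric in (\ref{pcma0}) is the fixed metric $\omega_0$ and there is no explicit time dependence on the right hand side, nor is there a zeroth order drift. Thus differentiating (\ref{pcma0}) in $t$ should yield
\begin{equation}
\ddt{} \dot{\varphi} = \tr{\omega}{\ddbar \dot{\varphi}} = \Delta \dot{\varphi},
\end{equation}
where $\Delta$ is the Laplacian of the evolving metric $\omega(t) = \omega_0 + \ddbar \varphi$.

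For part (i), I would apply the parabolic maximum principle in the form of Proposition \ref{propheat} (together with its analogue for the infimum, noted in the remark following that proposition, and the fact that it applies equally well to a $t$-dependent Laplacian). This gives
\begin{equation}
\inf_M \dot{\varphi}(\cdot, 0) \le \dot{\varphi}(x,t) \le \sup_M \dot{\varphi}(\cdot, 0).
\end{equation}
At $t=0$ we have $\varphi \equiv 0$, so $\omega_0 + \ddbar \varphi = \omega_0$ and hence $\dot{\varphi}(\cdot, 0) = \log(\omega_0^n/\Omega)$, which is a smooth function on the compact manifold $M$ and therefore bounded. This yields the desired uniform bound $\| \dot{\varphi}(t) \|_{C^0(M)} \le C$ for all $t \in [0,\infty)$.

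For part (ii), I would simply exponentiate the parabolic Monge--Amp\`ere equation (\ref{pcma0}): the flow equation gives directly $\omega^n = e^{\dot{\varphi}} \Omega$, so the uniform bound on $\dot{\varphi}$ from part (i) immediately yields $C_1^{-1} \Omega \le \omega^n \le C_1 \Omega$ for some uniform $C_1>0$. Since both $\Omega$ and $\omega_0^n$ are smooth positive volume forms on the compact manifold $M$, their ratio is bounded above and below by positive constants, so we conclude $(C')^{-1} \omega_0^n \le \omega^n \le C' \omega_0^n$ as claimed.

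There is no serious obstacle here; the argument is short because the vanishing of $c_1(M)$ permits us to use $\omega_0$ itself as reference metric in the Monge--Amp\`ere reduction, eliminating the time dependence of the right hand side that forced the more delicate auxiliary quantity $Q = (T'-t)\dot{\varphi} + \varphi + nt$ in the proof of Lemma \ref{lemmaphidot}. The only point worth flagging is to verify that differentiating (\ref{pcma0}) does produce a pure heat equation with no extra terms, which is a direct computation using $\ddt{}\log\det g = g^{\ov{j}i} \ddt{} g_{i\ov{j}} = \Delta \dot{\varphi}$.
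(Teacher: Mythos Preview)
Your proof is correct and follows exactly the same approach as the paper: differentiate the Monge--Amp\`ere equation to obtain the heat equation $\ddt{}\dot{\varphi} = \Delta\dot{\varphi}$, apply the maximum principle for (i), and deduce (ii) directly from (i). You have supplied more detail (the explicit initial value $\dot{\varphi}(\cdot,0) = \log(\omega_0^n/\Omega)$ and the comparison of $\Omega$ with $\omega_0^n$), but the argument is the same.
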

\begin{proof}
Differentiating (\ref{pcma0}) with respect to $t$ we obtain
\begin{equation} \label{ddtdotphicM0}
\ddt{ \dot{\varphi}} = \Delta \dot{\varphi},
\end{equation}
and (i) follows immediately from the maximum principle.  Part (ii) follows from (i).  \qed
\end{proof}

We will obtain a bound on the oscillation of   $\varphi(t)$ using Yau's zeroth order estimate for the elliptic complex Monge-Amp\`ere equation.  Note that Yau's estimate holds for any K\"ahler manifold (not just those with $c_1(M)=0$):

\begin{theorem} \label{yau}
Let $(M, \omega_0)$ be a compact K\"ahler manifold and let $\varphi$ be a smooth function on $M$ satisfying
 \begin{equation}
(\omega_0 + \ddbar \varphi)^n = e^F \omega_0^n, \quad \omega_0+ \ddbar \varphi >0
\end{equation}
for some smooth function $F$.
Then there exists a uniform $C$ depending only on $\sup_M F$ and $\omega_0$ such that
\begin{equation} \label{osc}
\emph{osc}_M  \varphi:= \sup_M \varphi - \inf_M \varphi \le C.
\end{equation}
\end{theorem}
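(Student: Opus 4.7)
My plan is to establish (\ref{osc}) by a Moser iteration argument on the Monge-Amp\`ere equation, following Yau's original approach. It is convenient to normalize $\varphi$ so that $\int_M \varphi \, \omega_0^n = 0$; it then suffices to bound $\sup_M \varphi$ above and $\inf_M \varphi$ below. The key preliminary observation is that $\omegaphi > 0$ forces $\Delta \varphi = \textrm{tr}_{\omega_0} \omegaphi - n \ge -n$, so that $-\Delta \varphi \le n$ independently of $\varphi$.

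The upper bound $\sup_M \varphi \le C_1$ follows from a one-sided Green's function argument. Choosing the Green's function $G$ for $g_0$ so that $G(x,y) \ge -K_0$, and using $\int_M \Delta \varphi \, \omega_0^n = 0$ to subtract a multiple of $1$ from $G$, the Green's representation gives
\begin{equation*}
\varphi(x) = \int_M (G(x,y) + K_0)(-\Delta \varphi(y))\, \omega_0^n(y).
\end{equation*}
Since $G + K_0 \ge 0$ and $-\Delta \varphi \le n$, the integrand is pointwise bounded above by $n(G(x,y) + K_0)$, yielding a uniform upper bound $C_1$.

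For the lower bound I would set $\psi := C_1 + 1 - \varphi \ge 1$ and run Moser iteration on the resulting equation. From the elementary factorization
\begin{equation*}
\omega_0^n - \omegaphi^n = \ddbar \psi \wedge T, \qquad T := \sum_{i=0}^{n-1} \omega_0^i \wedge \omegaphi^{n-1-i},
\end{equation*}
the Monge-Amp\`ere equation takes the linear divergence form $\ddbar \psi \wedge T = (1 - e^F)\, \omega_0^n$, where $T$ is closed, positive, and satisfies $T \ge \omega_0^{n-1}$. Multiplying by $\psi^p$ for $p \ge 1$, integrating by parts (since $T$ is closed), and using the identity $\tfrac{\sqrt{-1}}{2\pi}\partial f \wedge \dbar f \wedge \omega_0^{n-1} = \tfrac{1}{n} |\partial f|^2_{g_0}\, \omega_0^n$ with $f = \psi^{(p+1)/2}$, I arrive at
\begin{equation*}
\int_M | \partial \psi^{(p+1)/2} |^2_{g_0}\, \omega_0^n \le C(p+1) \int_M \psi^p\, \omega_0^n,
\end{equation*}
for a constant $C$ depending only on $\sup_M F$ and $\omega_0$. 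Plugging this into the Sobolev inequality (Theorem \ref{sobolev}) applied to $\psi^{(p+1)/2}$, and using the pointwise monotonicity $\psi^p \le \psi^{p+1}$ (which holds because $\psi \ge 1$) to absorb the resulting $L^{p+1}$ term on the right, I obtain the recurrence
\begin{equation*}
\| \psi \|_{L^{q \beta}(\omega_0)}^q \le C' q\, \| \psi \|_{L^q(\omega_0)}^q, \qquad q := p + 1 \ge 2,
\end{equation*}
with $\beta = n/(n-1)$. Iterating along $q_k = 2 \beta^k$, the product $\prod_{j \ge 0}(C' q_j)^{1/q_j}$ converges, so passing to the limit gives $\|\psi\|_{L^\infty} \le K \|\psi\|_{L^2}$. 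Combined with the elementary interpolation $\|\psi\|_{L^2}^2 \le \|\psi\|_{L^1}\|\psi\|_{L^\infty}$, this yields $\|\psi\|_{L^\infty} \le K^2\|\psi\|_{L^1}$, and since the normalization $\overline{\varphi} = 0$ gives $\|\psi\|_{L^1(\omega_0)} = (C_1 + 1)\int_M \omega_0^n$, we conclude $\sup_M \psi$, and hence $-\inf_M \varphi$, is uniformly bounded.

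The main obstacle is the Moser iteration itself: one must absorb the $L^{p+1}$ term produced by the Sobolev inequality without letting the iterative constants grow too quickly, so that the resulting infinite product converges. Both are handled cleanly by working with the shifted quantity $\psi = C_1 + 1 - \varphi \ge 1$, whose uniform strict lower bound by $1$ enables the monotonicity $\psi^p \le \psi^{p+1}$ and keeps the recurrence in the clean multiplicative form above.
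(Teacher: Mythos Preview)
Your argument is correct, and it is the standard variant of Yau's proof that appears, for instance, in Tian's lecture notes \cite{Tbook}. The paper, however, follows Siu's exposition \cite{Si1} and proceeds somewhat differently. Rather than separating the upper and lower bounds, the paper normalizes $\int_M \varphi\, \omega_0^n = 0$ and runs Moser iteration directly on $\varphi|\varphi|^{\alpha}$: multiplying $\omega_0^n - \omega^n$ by $\varphi|\varphi|^{\alpha}$ and integrating by parts yields $\int_M |\partial(\varphi|\varphi|^{\alpha/2})|^2_{\omega_0}\,\omega_0^n \le C(\alpha+1)\int_M |\varphi|^{\alpha+1}\,\omega_0^n$, and the iteration is launched from the bound $\|\varphi\|_{L^2(\omega_0)} \le C$ obtained via the Poincar\'e inequality (Theorem \ref{poincare}). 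Your route replaces Poincar\'e by the Green's function argument for the upper bound, and replaces the signed test function $\varphi|\varphi|^{\alpha}$ by the positive shift $\psi = C_1 + 1 - \varphi \ge 1$; the payoff is that $\psi \ge 1$ cleanly absorbs the $L^{p+1}$ Sobolev term via $\psi^p \le \psi^{p+1}$ and reduces the starting data to the trivial $L^1$ bound $\|\psi\|_{L^1} = (C_1+1)V$. Both arguments rest on the same factorization $\omega_0^n - \omegaphi^n = \ddbar\psi \wedge T$ with $T \ge \omega_0^{n-1}$ and the same Moser recursion, so the difference is organizational rather than conceptual.
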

\begin{proof}  We will follow quite closely the exposition of Siu \cite{Si1}.  We assume without loss of generality that $\int_M \varphi\, \omega_0^n =0$.   We also assume $n>1$ (the case $n=1$ is easier, and we leave it as an exercise for the reader).

   Write $\omega = \omega_0+\ddbar \varphi$.  Then
\begin{align} \nonumber
C \int_M | \varphi| \omega_0^n & \ge \int_M \varphi (\omega_0^n - \omega^n) \\ \nonumber
& = - \int_M \varphi \ddbar \varphi \wedge \sum_{i=0}^{n-1} \omega_0^i \wedge \omega^{n-1-i} \\ \nonumber
& = \int_M \frac{\sqrt{-1}}{2\pi} \partial \varphi \wedge \ov{\partial} \varphi \wedge \sum_{i=0}^{n-1} \omega_0^i \wedge \omega^{n-1-i}  \\
& \ge \frac{1}{n} \int_M | \partial \varphi |_{\omega_0}^2 \omega_0^n.
\end{align}
By the Poincar\'e (Theorem \ref{poincare}) and Cauchy-Schwarz inequalities we have
\begin{equation}
\int_M |\varphi|^2 \omega_0^n \le C \int_M | \partial \varphi |^2_{\omega_0} \omega_0^n \le C' \int_M |\varphi| \omega_0^n \le C'' \left( \int_M | \varphi|^2 \omega_0^n \right)^{1/2},
\end{equation}
and hence $\| \varphi \|_{L^2(\omega_0)} \le C$.  We now repeat this argument with $\varphi$ replaced by $\varphi | \varphi|^{\alpha}$ for $\alpha\ge 0$.  Observe that the map of real numbers $x \mapsto x |x|^{\alpha}$ is differentiable with derivative $(\alpha +1)|x|^{\alpha}$.  Then
\begin{align} \nonumber
C \int_M | \varphi|^{\alpha+1} \omega_0^n & \ge \int_M \varphi | \varphi|^{\alpha} ( \omega_0^n - \omega^n) \\ \nonumber
& = - \int_M \varphi | \varphi |^{\alpha} \ddbar \varphi \wedge \sum_{i=0}^{n-1} \omega_0^i \wedge \omega^{n-1-i} \\ \nonumber
& = (\alpha+1)\int_M   | \varphi|^{\alpha} \sqrt{-1} \partial \varphi \wedge \ov{\partial} \varphi \wedge \sum_{i=0}^{n-1} \omega_0^i \wedge \omega^{n-1-i}\\
& = \frac{(\alpha +1)}{\left( \frac{\alpha}{2} + 1 \right)^2} \int_M \sqrt{-1} \partial \left( \varphi | \varphi|^{\alpha/2} \right) \wedge \ov{\partial} \left( \varphi | \varphi|^{\alpha/2} \right) \wedge \sum_{i=0}^{n-1} \omega_0^i \wedge \omega^{n-1-i}.
\end{align}
It then follows that for some uniform $C>0$,
\begin{equation}
\int_M \left| \partial \left( \varphi | \varphi |^{\alpha/2} \right) \right|^2_{\omega_0} \omega_0^n \le C (\alpha +1) \int_M | \varphi|^{\alpha+1} \omega_0^n.
\end{equation}
Now apply the Sobolev inequality (Theorem \ref{sobolev}) to $f = \varphi | \varphi|^{\alpha/2}$.  Then for $\beta = n/(n-1)$ we have
\begin{equation}
\left( \int_M | \varphi |^{(\alpha+2) \beta} \omega_0^n \right)^{1/\beta} \le C \left( (\alpha+1) \int_M | \varphi |^{\alpha+1}\omega_0^n + \int_M | \varphi |^{\alpha+2} \omega_0^n \right). \label{raise}
\end{equation}
By H\"older's inequality we have for a uniform constant $C$,
\begin{equation}
\int_M | \varphi|^{\alpha+1} \omega_0^n \le 1 + C \int_M | \varphi |^{\alpha+2} \omega_0^n.
\end{equation}
Now substituting $p=\alpha+2$ we have from (\ref{raise}),
\begin{equation}
\| \varphi \|^{p}_{L^{p\beta}(\omega_0)} \le C p \max \left( 1,  \| \varphi \|^p_{L^p(\omega_0)} \right).
\end{equation}
Raising to the power $1/p$ we have for all $p\ge 2$,
\begin{equation} \label{it}
\max ( 1 , \| \varphi \|_{L^{p\beta}(\omega_0)} ) \le C^{1/p} p^{1/p} \max (1, \| \varphi \|_{L^p(\omega_0)}).
\end{equation}
Fix an integer $k>0$.  Replace $p$ in (\ref{it})  by $p\beta^k$ and then $p\beta^{k-1}$ and so on, to obtain
\begin{align} \nonumber
\max ( 1, \| \varphi \|_{L^{p\beta^{k+1}}(\omega_0)}) & \le C^{\frac{1}{p\beta^k}} (p\beta^k)^{\frac{1}{p\beta^k}} \max (1, \| \varphi \|_{L^{p\beta^k}(\omega_0)}) \le \cdots   \\ \nonumber
& \le  C^{\frac{1}{p\beta^k} + \frac{1}{p\beta^{k-1}} + \cdots + \frac{1}{p}} (p\beta^k)^{\frac{1}{p\beta^k}} (p\beta^{k-1})^{\frac{1}{p\beta^{k-1}}} \cdots p^{\frac{1}{p}} \max (1, \| \varphi \|_{L^{p}(\omega_0)}) \\ \label{k}
& = C_k \max (1, \| \varphi \|_{L^{p}(\omega_0)})
\end{align}
for
\begin{equation}
C_k =
 C^{\frac{1}{p} \left( \frac{1}{\beta^k} + \frac{1}{\beta^{k-1}}+  \cdots + 1 \right)} p^{\frac{1}{p}\left( \frac{1}{\beta^k} +  \frac{1}{\beta^{k-1}}+\cdots +1 \right)} \beta^{\frac{1}{p} \left( \frac{k}{\beta^k} + \frac{k-1}{\beta^{k-1}} + \cdots + \frac{1}{\beta} \right)}.
\end{equation}
Since  the infinite sums $\sum \frac{1}{\beta^i}$ and $\sum \frac{i}{\beta^i}$ converge for $\beta = n/(n-1)>1$ we see that for any fixed $p$, the constants $C_k$ are uniformly bounded from above, independent of $k$.

Setting $p=2$ and letting $k \rightarrow \infty$ in (\ref{k}) we finally obtain
\begin{equation}
\max ( 1, \| \varphi \|_{C^0} ) \le C \max (1, \| \varphi \|_{L^2(\omega_0)}) \le C',
\end{equation}
and hence (\ref{osc}).  \qed
\end{proof}

Now the oscillation bound for $\varphi=\varphi(t)$ along the K\"ahler-Ricci flow (\ref{pcma0}) follows immediately:

\begin{lemma} \label{losc}
There exists a uniform constant $C$ such that for $t \in [0,\infty)$,
\begin{equation}
\emph{osc}_M \varphi \le C.
\end{equation}
\end{lemma}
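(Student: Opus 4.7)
The plan is to view the parabolic complex Monge-Amp\`ere equation (\ref{pcma0}) as an elliptic complex Monge-Amp\`ere equation at each fixed time $t$, and then invoke Yau's zeroth order estimate (Theorem \ref{yau}). More precisely, at a fixed time $t \in [0,\infty)$, rewrite (\ref{pcma0}) as
\begin{equation*}
(\omega_0 + \ddbar \varphi(t))^n = e^{F(t)} \omega_0^n, \qquad \omega_0 + \ddbar \varphi(t) > 0,
\end{equation*}
where $F(t) := \dot{\varphi}(t) + \log(\Omega/\omega_0^n)$. Here $\log(\Omega/\omega_0^n)$ is a fixed smooth function on the compact manifold $M$, so it is bounded above by a constant depending only on the background data $\omega_0$ and $\Omega$.

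The next step is to bound $\sup_M F(t)$ uniformly in $t$. This is immediate from Lemma \ref{c0ddt}(i), which gives $\|\dot{\varphi}(t)\|_{C^0(M)} \le C$ for a constant $C$ independent of $t$. Thus $\sup_M F(t)$ is bounded uniformly in $t$ by a constant depending only on $\omega_0$ and $\Omega$.

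Finally, applying Theorem \ref{yau} to $\varphi(t)$ with this uniform bound on $\sup_M F(t)$ yields
\begin{equation*}
\textrm{osc}_M \varphi(t) = \sup_M \varphi(t) - \inf_M \varphi(t) \le C
\end{equation*}
for a uniform constant $C$, independent of $t \in [0,\infty)$, which is the desired conclusion. There is essentially no obstacle here: the entire content of the lemma is that Yau's elliptic estimate applies uniformly along the flow, and the only ingredient needed beyond Theorem \ref{yau} itself is the $C^0$ bound on $\dot{\varphi}$ established in Lemma \ref{c0ddt}. The deeper point (already done) is Yau's proof of Theorem \ref{yau}, i.e.\ the Moser iteration argument that extracts an oscillation bound purely from an upper bound on the volume ratio $F$.
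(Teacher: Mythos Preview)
Your proof is correct and follows exactly the same approach as the paper: rewrite the flow equation at fixed $t$ as an elliptic Monge--Amp\`ere equation with $F(t) = \dot{\varphi}(t) + \log(\Omega/\omega_0^n)$, use Lemma~\ref{c0ddt} to bound $\sup_M F(t)$ uniformly in $t$, and apply Theorem~\ref{yau}.
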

\begin{proof}
From Lemma \ref{c0ddt} we have uniform bounds for $ \dot{\varphi}$.   Rewrite the parabolic complex Monge-Amp\`ere equation (\ref{pcma0}) as
\begin{equation}
(\omega_0 + \ddbar \varphi(t))^n = e^{F(t)} \omega_0^n \quad \textrm{with} \quad F(t) = \log \frac{\Omega}{\omega_0^n} + \dot{\varphi}(t)
\end{equation}
and apply Theorem \ref{yau}. \qed
\end{proof}

\pagebreak[3]
\subsection{The case of $c_1(M)=0$:  higher order estimates and convergence}

In this subsection we complete the proof of Theorem \ref{thmc0}.  The proof for the higher order estimates follows along similar lines as in the case for $c_1(M)<0$.  As above, let $\varphi(t)$ solve the parabolic complex Monge-Amp\`ere equation (\ref{pcma0}) on $M$ with $c_1(M)=0$ and write $\omega= \omega_0 + \ddbar \varphi$.

\begin{lemma} \label{lemmayauC2}  There exists a uniform constant $C$ such that on $M \times [0,\infty)$, $\omega=\omega(t)$ satisfies
\begin{equation}
\frac{1}{C} \omega_0 \le \omega \le C \omega_0.
\end{equation}
\end{lemma}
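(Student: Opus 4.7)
The strategy will closely follow that of Lemma \ref{lemmametricbdn}, using Proposition \ref{propChat} together with a maximum principle argument applied to a quantity of the form $Q = \log \tr{\omega_0}{\omega} - A\varphi$. The essential difference from the $c_1(M)<0$ case is that here we have only an oscillation bound for $\varphi$ (Lemma \ref{losc}), not a genuine $C^0$ bound, so we will need to organize the inequalities so that only $\varphi(x,t) - \varphi(x_0,t_0)$ enters at the end. On the other hand, we still have the full toolkit: Lemma \ref{c0ddt} gives a uniform bound on $\dot\varphi$ and a two-sided bound on the volume form $\omega^n/\omega_0^n$.

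For the upper bound on $\tr{\omega_0}{\omega}$, I will apply Proposition \ref{propChat} with $\hat{\omega}=\omega_0$ and $\nu=0$ to obtain
\begin{equation*}
\left(\ddt{}-\Delta\right)\log \tr{\omega_0}{\omega} \le C_0 \, \tr{\omega}{\omega_0},
\end{equation*}
where $C_0$ depends only on the bisectional curvature of $g_0$. Using $\Delta\varphi = n - \tr{\omega}{\omega_0}$ and the evolution $\dot\varphi = \log(\omega^n/\Omega)$, I compute
\begin{equation*}
\left(\ddt{}-\Delta\right)Q \le (C_0 - A)\tr{\omega}{\omega_0} + An - A\dot\varphi.
\end{equation*}
Choosing $A = C_0 + 1$ and applying Proposition \ref{pmp} to $Q$ on $M\times[0,t']$ for an arbitrary $t'<\infty$, at an interior maximum $(x_0,t_0)$ the left-hand side is $\ge 0$, so $\tr{\omega}{\omega_0}(x_0,t_0) \le An + A\|\dot\varphi\|_{C^0} \le C_1$ by Lemma \ref{c0ddt}(i). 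The inequality
\begin{equation*}
\tr{\omega_0}{\omega} \le \frac{1}{(n-1)!}\bigl(\tr{\omega}{\omega_0}\bigr)^{n-1} \frac{\omega^n}{\omega_0^n},
\end{equation*}
exactly as in (\ref{trtr}), combined with Lemma \ref{c0ddt}(ii), then yields $\tr{\omega_0}{\omega}(x_0,t_0) \le C_2$, hence $Q(x_0,t_0) \le \log C_2 - A\varphi(x_0,t_0)$.

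To propagate this bound to all of $M\times[0,t']$, I use $Q(x,t) \le Q(x_0,t_0)$ to obtain
\begin{equation*}
\log \tr{\omega_0}{\omega}(x,t) \le \log C_2 + A\bigl(\varphi(x,t) - \varphi(x_0,t_0)\bigr) \le \log C_2 + A\,\mathrm{osc}_M\varphi \le C_3,
\end{equation*}
by Lemma \ref{losc}, uniformly in $t'$. This gives a uniform upper bound $\tr{\omega_0}{\omega}\le C$, which is the $\omega\le C\omega_0$ half of the statement. For the reverse direction I argue as in Corollary \ref{cmetricbound}: interchanging the roles of $\omega$ and $\omega_0$ in the elementary pointwise inequality (proved by diagonalizing as in (\ref{gam})) gives
\begin{equation*}
\tr{\omega}{\omega_0} \le \frac{1}{(n-1)!}\bigl(\tr{\omega_0}{\omega}\bigr)^{n-1}\frac{\omega_0^n}{\omega^n} \le C',
\end{equation*}
using the just-obtained upper bound on $\tr{\omega_0}{\omega}$ together with the lower bound on $\omega^n/\omega_0^n$ from Lemma \ref{c0ddt}(ii). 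This gives $\omega\ge (1/C)\omega_0$ and completes the proof. I do not anticipate any serious obstacle; the only delicate point is that, unlike in the $c_1(M)<0$ case, $\varphi$ itself may not be bounded, so the oscillation bound Lemma \ref{losc} is genuinely needed when passing from the maximum of $Q$ to a pointwise bound on $\log\tr{\omega_0}{\omega}$.
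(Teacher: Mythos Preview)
Your argument has a genuine gap at the step where you invoke Lemma \ref{losc}. You write
\[
\log \tr{\omega_0}{\omega}(x,t) \le \log C_2 + A\bigl(\varphi(x,t) - \varphi(x_0,t_0)\bigr) \le \log C_2 + A\,\mathrm{osc}_M\varphi,
\]
but the points $(x,t)$ and $(x_0,t_0)$ lie at \emph{different times}: the maximum of $Q$ on $M\times[0,t']$ may be achieved at some $t_0<t'$. Lemma \ref{losc} only gives $\sup_M\varphi(\cdot,s)-\inf_M\varphi(\cdot,s)\le C$ for each \emph{fixed} $s$; it says nothing about $\varphi(x,t)-\varphi(x_0,t_0)$ across times. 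Since in the $c_1(M)=0$ case $\varphi$ itself need not be bounded (only its spatial oscillation is), and since $|\dot\varphi|\le C$ only yields $|\varphi(x_0,t)-\varphi(x_0,t_0)|\le C|t-t_0|$, which is unbounded as $t'\to\infty$, this step does not go through as written.

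This is exactly the difficulty the paper's proof is designed to address. The paper arrives at the same intermediate estimate $(\tr{\omega_0}{\omega})(x,t)\le Ce^{A(\varphi(x,t)-\inf_{M\times[0,t]}\varphi)}$ and then splits $\varphi=\tilde\varphi+\frac{1}{V}\int_M\varphi\,\Omega$, where $\tilde\varphi$ has mean zero and is therefore uniformly bounded by Lemma \ref{losc}. The remaining time-dependent term $\frac{1}{V}\int_M\varphi(t)\,\Omega-\inf_{[0,t]}\frac{1}{V}\int_M\varphi\,\Omega$ is then shown to vanish via Jensen's inequality:
\[
\frac{d}{dt}\left(\frac{1}{V}\int_M\varphi\,\Omega\right)=\frac{1}{V}\int_M\log\frac{\omega^n}{\Omega}\,\Omega\le\log\left(\frac{1}{V}\int_M\omega^n\right)=0,
\]
so the spatial average is nonincreasing and the infimum over $[0,t]$ is attained at time $t$. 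Your proof is missing this monotonicity ingredient (or some substitute for it); once you insert it, the rest of your argument is correct and matches the paper's.
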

\begin{proof}  By Lemma \ref{c0ddt} and the argument of Corollary \ref{cmetricbound}, it suffices to obtain a uniform upper bound for $\tr{\omega_0}{\omega}$.
As in the case of Lemma \ref{lemmametricbdn}, define $Q = \log \tr{\omega_0}{\omega} - A\varphi$ for $A$ a constant to be determined later.  Compute using Proposition \ref{propChat},
\begin{equation} \label{yauc2eqn}
\left( \ddt{} - \Delta \right) Q \le C_0 \tr{\omega}{\omega_0} - A \dot{\varphi} + An - A \tr{\omega}{\omega_0},
\end{equation}
for $C_0$ depending only on $g_0$.  Choosing $A= C_0 +1$ we have, since $\dot{\varphi}$ is uniformly bounded,
\begin{equation} \label{yauc2eqn2}
\left( \ddt{} - \Delta \right) Q \le - \tr{\omega}{\omega_0} +C.
\end{equation}
We claim that for any $(x,t) \in M \times [0,\infty)$,
\begin{equation} \label{C2v2}
\left( \tr{\omega_0}{\omega}\right) (x, t)  \le C e^{A (\varphi(x,t)- \inf_{M \times [0,t]} \varphi)}.
\end{equation}
To see this, suppose that $Q$ achieves a maximum on $M \times [0,t]$ at the point $(x_0, t_0)$.  We assume without loss of generality that $t_0>0$.
  Applying the maximum principle to (\ref{yauc2eqn2}) we see that $(\tr{\omega}{\omega_0})(x_0,t_0) \le C$ and, by
   the argument of Lemma \ref{lemmametricbdn}, $(\tr{\omega_0}{\omega})(x_0, t_0) \le C'$.  Then for any $x \in M$,
\begin{equation}
\left( \log \tr{\omega_0}{\omega}\right)(x,t) - A \varphi(x,t) = Q(x, t) \le Q(x_0, t_0) \le \log C' - A \varphi(x_0, t_0).
\end{equation}
Exponentiating gives (\ref{C2v2}).

Define
\begin{equation}
\tilde{\varphi} := \varphi - \frac{1}{V} \int_M \varphi\,  \Omega, \quad \textrm{where } \quad V:= \int_M \Omega = \int_M \omega^n.
\end{equation}
From Lemma \ref{losc}, $ \| \tilde{\varphi} \|_{C^0(M)} \le C$.   The estimate (\ref{C2v2}) can be rewritten as:
\begin{align} \nonumber
\left( \tr{\omega_0}{\omega}\right) (x, t) &  \le C e^{A \left(\tilde{\varphi}(x,t) + \frac{1}{V} \int_M \varphi (t)\, \Omega- \inf_{M \times [0,t]} \tilde{\varphi} - \inf_{[0,t]} \frac{1}{V}\int_M \varphi\, \Omega \right)} \\ \label{C2v3}
& \le C e^{C'+\frac{A}{V} \left( \int_M \varphi (t) \Omega - \inf_{[0,t]} \int_M \varphi \, \Omega \right) }.
\end{align}
Using Jensen's inequality,
\begin{align}
\oddt \left( \frac{1}{V} \int_M \varphi \, \Omega \right) & = \frac{1}{V} \int_M \dot{\varphi}\, \Omega
 = \frac{1}{V} \int_M \log \left( \frac{\omega^n}{\Omega} \right) \, \Omega
 \le \log \left( \frac{1}{V} \int_M \omega^n \right) =0,
\end{align}
and hence $\inf_{[0,t]} \int_M \varphi \, \Omega = \int_M \varphi(t) \Omega$.  The required upper bound of $\tr{\omega_0}{\omega}$ follows then from (\ref{C2v3}).  \qed
\end{proof}

It follows from Corollary \ref{choe} that we have uniform $C^{\infty}$ estimates on $g(t)$ and $\varphi(t)$.   It remains to prove the $C^{\infty}$ convergence part of Theorem \ref{thmc0}.
We follow the method of Phong-Sturm \cite{PS} (see also \cite{MS, PSSW1}) and use a functional known as the Mabuchi energy \cite{Mab}.   It is noted in \cite{Cao4, DT} that
 the monotonicity of the  Mabuchi energy along the  K\"ahler-Ricci flow was established in unpublished work of H.-D. Cao in 1991.

We fix a metric $\omega_0$ as above.  The Mabuchi energy is a functional $\Mab_{\omega_0}$ on the space
\begin{equation}
\PSH = \{ \varphi \in C^{\infty}(M) \ | \ \omega_0 + \ddbar \varphi >0 \}
\end{equation}
with the property that  if $\varphi_t$ is any smooth path  in $\PSH$ then
\begin{equation} \label{Mab}
\oddt \Mab_{\omega_0} (\varphi_t) = - \int_M \dot{\varphi}_t R_{\varphi_t}  \, \omega_{\varphi_t}^n,
\end{equation}
where $\omega_{\varphi_t} = \omega_0 + \ddbar \varphi_t$, and $R_{\varphi_t}$ is the scalar curvature of $\omega_{\varphi_t}$.  Observe that if $\varphi_{\infty}$ is a critical point of $\Mab_{\omega_0}$ then $\omega_{\infty} = \omega_0 + \ddbar \varphi_{\infty}$  has zero scalar curvature and hence is Ricci flat  (for that last statement:  since $c_1(M)=0$, then $\Ric (\omega_{\infty}) = \ddbar h_{\infty}$ for some function $h_{\infty}$ and taking the trace  gives $\Delta_{\omega_\infty} h_{\infty}=0$ which implies $h_{\infty}$ is constant and $\Ric(\omega_{\infty})=0$).

  Typically, the Mabuchi energy is defined in terms of its derivative using the formula (\ref{Mab}) but instead we will use the explicit formula as derived in \cite{Tbook}.
Define
\begin{equation} \label{Mab2}
\Mab_{\omega_0} (\varphi) = \int_M \log \frac{\omega_{\varphi}^n}{\omega_0^n}\, \omega_{\varphi}^n - \int_M h_0 (\omega_{\varphi}^n - \omega_0^n),
\end{equation}
where $\omega_{\varphi} = \omega_0 + \ddbar \varphi$ and  $h_0$ is the \emph{Ricci potential} for $\omega_0$ given by
\begin{equation}
\Ric(\omega_0) = \ddbar h_0, \qquad \int_M e^{h_0} \omega_0^n = \int_M \omega_0^n.
\end{equation}
Observe that $\Mab_{\omega_0}$ depends only on the metric $\omega_{\varphi}$  and so can be regarded as a functional on the space of K\"ahler metrics cohomologous to $\omega_0$.   We now need to check that $\Mab_{\omega_0}$ defined by (\ref{Mab2}) satisfies (\ref{Mab}).  Let $\varphi_t$ be any smooth path in $\PSH$.  Using integration by parts, we compute
\begin{align} \nonumber
\oddt \Mab_{\omega_0} (\varphi_t) & = \int_M \Delta \dot{\varphi}_t \, \omega_{\varphi_t}^n + \int_M \log \frac{\omega_{\varphi_t}^n}{\omega_0^n} \, \Delta \dot{\varphi}_t \, \omega_{\varphi_t}^n - \int_M h_0 \Delta \dot{\varphi}_t \, \omega_{\varphi_t}^n \\ \nonumber
& = \int_M \dot{\varphi}_t (- R_{\varphi_t} + \tr{\omega}{\Ric(\omega_0)}   ) \omega_{\varphi_t}^n - \int_M \dot{\varphi}_t \Delta h_0 \, \omega_{\varphi_t}^n \\
& = - \int_M \dot{\varphi}_t R_{\varphi_t} \omega_{\varphi_t}^n.
\end{align}

The key fact we need is as follows:

\begin{lemma} \label{lmd}
Let $\varphi= \varphi(t)$ solve the K\"ahler-Ricci flow (\ref{pcma0}).  Then
\begin{equation} \label{mabe1}
\oddt \emph{Mab}_{\omega_0}(\varphi) = -\int_M | \partial \dot{\varphi}|^2_{\omega} \omega^n.
\end{equation}
In particular, the Mabuchi energy is decreasing along the K\"ahler-Ricci flow.  Moreover, there exists a uniform constant $C$ such that
\begin{equation} \label{mode}
\oddt \int_M | \partial \dot{\varphi}|^2_{\omega} \omega^n \le C \int_M | \partial \dot{\varphi}|^2_{\omega} \omega^n.
\end{equation}
\end{lemma}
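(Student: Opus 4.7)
For the first equality, the plan is to apply the defining property (\ref{Mab}) of the Mabuchi energy and to identify $R$ along the K\"ahler-Ricci flow (\ref{pcma0}). Since $\dot\varphi = \log(\omega^n/\Omega)$ and $\ddbar \log \Omega = 0$ by our choice of $\Omega$, we have $\ddbar \dot\varphi = \ddbar \log \omega^n = -\Ric(\omega)$; tracing with $\omega$ yields the identity $\Delta \dot\varphi = -R$. Substituting $R = -\Delta \dot\varphi$ into (\ref{Mab}) and integrating by parts once gives
$\oddt \Mab_{\omega_0}(\varphi) = -\int_M \dot\varphi\, R\, \omega^n = \int_M \dot\varphi \, \Delta \dot\varphi \, \omega^n = -\int_M |\partial \dot\varphi|^2_\omega \, \omega^n,$
which is (\ref{mabe1}).

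For (\ref{mode}), I would set $v := \dot\varphi$ and $A(t) := \int_M |\partial v|^2_\omega\, \omega^n$ and differentiate $A$ directly under the integral sign. The three inputs are $\oddt v = \Delta v$ from (\ref{ddtdotphicM0}), $\oddt g_{i\ov{j}} = -R_{i\ov{j}} = \partial_i \partial_{\ov{j}} v$ (since $\Ric(\omega) = -\ddbar v$ as above), and $\oddt \omega^n = \Delta v \cdot \omega^n$. Expanding $\oddt(g^{\ov{j}i} v_i v_{\ov{j}}\, \omega^n)$ and integrating by parts in the terms containing $(\Delta v)_i$ and $(\Delta v)_{\ov{j}}$, using the self-adjointness of $\Delta$ on real functions, the computation should reduce to an identity of the form
$\oddt A = \int_M R_{i\ov{j}} v^i v^{\ov{j}} \, \omega^n - 2\int_M (\Delta v)^2 \omega^n + \int_M |\partial v|^2_\omega \, \Delta v \, \omega^n,$
where $v^i := g^{\ov{j}i} v_{\ov{j}}$.

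To conclude $\oddt A \le C A$, the middle term is nonpositive and can be discarded, so the point is to bound the first and third terms by $C A$. By Lemma \ref{lemmayauC2} the metric $\omega(t)$ is uniformly equivalent to $\omega_0$, and therefore by Corollary \ref{choe} we have uniform $C^\infty$ estimates on $\omega(t)$; in particular $|\Ric(\omega)|_\omega \le C$ and $|\Delta v| = |R| \le C$ uniformly on $M \times [0,\infty)$. The pointwise estimates $|R_{i\ov{j}} v^i v^{\ov{j}}| \le C |\partial v|^2_\omega$ and $\bigl| |\partial v|^2_\omega \, \Delta v \bigr| \le C |\partial v|^2_\omega$ then give (\ref{mode}) upon integration. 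The main obstacle is simply the bookkeeping of the differentiation and the integrations by parts; no new analytic tool beyond the estimates already established in Lemma \ref{lemmayauC2} and Corollary \ref{choe} is required.
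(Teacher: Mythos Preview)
Your proposal is correct and matches the paper's proof essentially line for line: the same identification $\Delta\dot\varphi=-R$ via $\ddbar\dot\varphi=-\Ric(\omega)$ for (\ref{mabe1}), the same three-term expansion of $\oddt\int_M|\partial\dot\varphi|^2_\omega\,\omega^n$ (your third term $\int|\partial v|^2\Delta v\,\omega^n$ is the paper's $-\int|\partial\dot\varphi|^2 R\,\omega^n$ after substituting $\Delta v=-R$), and the same appeal to the uniform $C^\infty$ bounds on $\omega$ from Lemma~\ref{lemmayauC2} and Corollary~\ref{choe} to control the Ricci and scalar curvature terms.
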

\begin{proof}  Observe that from the K\"ahler-Ricci flow equation we have $\ddbar \dot{\varphi} = - \Ric(\omega)$ and taking the trace of this gives $\Delta \dot{\varphi} = -R$.  Then
\begin{equation}
\oddt \Mab_{\omega_0}(\varphi) = - \int_M \dot{\varphi} R  \, \omega^n = \int_M \dot{\varphi} \Delta \dot{\varphi} \, \omega^n = -\int_M | \partial \dot{\varphi}|^2_{\omega} \omega^n,
\end{equation}
giving (\ref{mabe1}).  For (\ref{mode}), compute
\begin{align} \nonumber
\oddt \int_M | \partial \dot{\varphi}|^2_{\omega} \omega^n & = \int_M ( \ddt{} g^{\ov{j}{i}})  \partial_i \dot{\varphi} \partial_{\ov{j}} \dot{\varphi}\, \omega^n + 2 \textrm{Re} \left( \int_M g^{\ov{j}i} \partial_i (\Delta\dot{\varphi})  \partial_{\ov{j}} \dot{\varphi}\, \omega^n \right) + \int_M | \partial \dot{\varphi}|^2 \Delta \dot{\varphi}\,  \omega^n \\ \nonumber
& = \int_M R^{\ov{j}i}  \partial_i \dot{\varphi} \partial_{\ov{j}} \dot{\varphi}\, \omega^n - 2 \int_M (\Delta \dot{\varphi})^2 \omega^n - \int_M | \partial \dot{\varphi}|^2 R\, \omega^n \\
& \le  C \int_M | \partial \dot{\varphi}|^2_{\omega} \omega^n,
\end{align}
using (\ref{ddtdotphicM0}), an integration by parts and the fact that, since we have $C^{\infty}$ estimates for $\omega$, we have uniform bounds of the Ricci and scalar curvatures of $\omega$.
  \qed
\end{proof}

It is now straightforward to complete the proof of the convergence of the K\"ahler-Ricci flow.   Since we have uniform estimates for $\omega(t)$ along the flow, we see from the formula (\ref{Mab2}) that the Mabuchi energy is uniformly bounded.  From (\ref{mabe1})  there is a sequence of times $t_i \in [i, i+1]$ for which
\begin{equation} \label{goestozero}
\left( \int_M \left| \partial \log \frac{\omega^n}{\Omega} \right|^2_{\omega} \, \omega^n \right) (t_i) =
\left( \int_M | \partial \dot{\varphi}|^2_{\omega} \, \omega^n \right) (t_i)  \rightarrow 0, \quad \textrm{as } i \rightarrow \infty.
\end{equation}
By the differential inequality (\ref{mode}),
\begin{equation} \label{goestozero2}
\left( \int_M \left| \partial \log \frac{\omega^n}{\Omega} \right|^2_{\omega} \, \omega^n \right) (t)   \rightarrow 0, \quad \textrm{as } t \rightarrow \infty.
\end{equation}
But since we have $C^{\infty}$ estimates for $\varphi(t)$ we can apply the Arzela-Ascoli Theorem to obtain a sequence of times $t_{j}$ such that $\varphi(t_{j})$ converges in $C^{\infty}$ to $\varphi_{\infty}$, say.  Writing $\omega_{\infty} = \omega_0 + \ddbar \varphi_{\infty}>0$, we have from (\ref{goestozero2}),
\begin{equation}
\left( \int_M \left| \partial \log \frac{\omega_{\infty}^n}{\Omega} \right|^2_{\omega_{\infty}} \, \omega_{\infty}^n \right)  = 0,
\end{equation}
and hence
\begin{equation} \label{infty}
\log \frac{\omega_{\infty}^n}{\Omega} = C,
\end{equation}
for some constant $C$.  Taking $\ddbar$ of (\ref{infty}) gives $\Ric(\omega_{\infty})=0$.  Hence for a sequence of times $t_{j} \rightarrow \infty$ the K\"ahler-Ricci flow converges to  $\omega_{\infty}$,  the unique K\"ahler-Einstein metric in the cohomology class $[\omega_0]$.

To see that the convergence of the metrics $\omega(t)$ is in $C^{\infty}$ without passing to a subsequence, we argue as follows.  If not, then by  the same argument as in the proof of Theorem \ref{thmc1n} we can find a sequence of times $t_k \rightarrow \infty$ such that $\omega(t_k)$ converges in $C^{\infty}$ to $\omega_{\infty}' \neq \omega_{\infty}$.  But by (\ref{goestozero2}), $\omega_{\infty}'$ is K\"ahler-Einstein, contradicting the uniqueness of K\"ahler-Einstein metrics in $[\omega_0]$.  This completes the proof of Theorem \ref{thmc0}.

\begin{remark} \emph{It was pointed out to the authors by Zhenlei Zhang that one can equivalently consider the functional $\int_M h\, \omega^n$, where $h$ is the Ricci potential of the evolving metric.}
\end{remark}

\pagebreak

\section{The case when $K_M$ is big and nef} \label{secttsuji}

In the previous section we considered the K\"ahler-Ricci flow on manifolds with $c_1(M)<0$, which is equivalent to the condition that the canonical line bundle $K_M$ is ample.  In this section we consider the case where the line bundle $K_M$ is not necessarily ample, but nef and big.  Such a manifold is known as a smooth minimal model of general type.

\subsection{Smooth minimal models of general type}

As in the case of $c_1(M)<0$ we consider the normalized K\"ahler-Ricci flow
\begin{equation} \label{nkrf2}
\ddt{} \omega  = - \Ric(\omega) - \omega, \qquad \omega|_{t=0} = \omega_0,
\end{equation}
but we impose no restrictions on the K\"ahler class of $\omega_0$.  We will prove:

\begin{theorem} \label{tsuji}
Let $M$ be a smooth minimal model of general type (that is, $K_M$ is  nef and big).  Then
\begin{enumerate}
\item[(i)] The solution $\omega=\omega(t)$ of the normalized K\"ahler-Ricci flow (\ref{nkrf2}) starting at any K\"ahler metric $\omega_0$ on $M$ exists for all time.
\item[(ii)] There exists a codimension 1  analytic subvariety $S$ of $M$ such that $\omega(t)$ converges in $C^{\infty}_{\emph{loc}}(M \setminus S)$ to a K\"ahler metric $\omega_{\emph{KE}}$ defined on $M\setminus S$ which satisfies the K\"ahler-Einstein equation
\begin{equation}
\emph{Ric}(\omega_{\emph{KE}}) = - \omega_{\emph{KE}}, \quad \textrm{on} \  M \setminus S.
\end{equation}
\end{enumerate}
\end{theorem}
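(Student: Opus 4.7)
The plan proceeds in four steps: rescale to obtain long-time existence, reduce to a parabolic Monge-Amp\`ere equation, prove estimates that degenerate only along a subvariety, and pass to the limit.

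Part (i) follows by rescaling. The normalized flow (\ref{nkrf2}) is related to the unnormalized flow $\partial_s\tilde\omega = -\Ric(\tilde\omega)$ by $\omega(t) = e^{-t}\tilde\omega(e^t-1)$, as in Section \ref{sectnkrf}, and by Theorem \ref{longtime} the unnormalized flow exists on $[0,T)$ with $T = \sup\{s : [\omega_0] + sc_1(K_M) > 0\}$. Since $K_M$ is nef, Theorem \ref{algebraic}(i) applied with $\alpha = [\omega_0]$ and $L = K_M$ gives $T = \infty$, so the normalized flow exists for all $t \geq 0$.

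For (ii), I first reduce to a parabolic complex Monge-Amp\`ere equation. Fix a smooth closed real $(1,1)$-form $\chi \in c_1(K_M)$ and a volume form $\Omega$ with $-\ddbar\log\Omega = \chi$, and set $\hat\omega_t := e^{-t}\omega_0 + (1-e^{-t})\chi \in [\omega(t)]$. The flow becomes
$$\ddt\varphi = \log\frac{(\hat\omega_t+\ddbar\varphi)^n}{\Omega} - \varphi, \qquad \varphi(0) = 0, \qquad \omega = \hat\omega_t + \ddbar\varphi > 0.$$
Since $K_M$ is big and nef, Kodaira's Lemma (Theorem \ref{algebraic}(iii)) produces an effective divisor $E$ and $\delta > 0$ such that, for every $\epsilon \in (0,\delta]$, the class $c_1(K_M) - \epsilon c_1([E])$ is K\"ahler. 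Letting $h$ be a smooth Hermitian metric on $[E]$ with defining section $s_E$, the form $\omega_\epsilon := \chi - \epsilon R_h$ is K\"ahler on all of $M$, and the desired exceptional set is $S := \mathrm{supp}(E)$.

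I then derive the a priori estimates. Corollary \ref{volform}(ii) gives $\omega^n \leq C\omega_0^n$ on $M$, and a maximum-principle argument on $\varphi - A(1-e^{-t})$, as in Lemma \ref{lemmaphibound}, yields $\sup_M\varphi \leq C$; a global lower bound $\inf_M\varphi \geq -C$ follows from the parabolic Kolodziej--Eyssidieux-Guedj-Zeriahi $L^\infty$ estimate applied to the equivalent degenerate Monge-Amp\`ere equation (alternatively via Theorem \ref{yau} with a carefully chosen reference). Differentiating the equation and running the maximum principle on $Q := (1-e^{-t})\dot\varphi + \varphi + nt$ (cf.\ Lemma \ref{lemmaphidot}) controls $|\dot\varphi|$ on $M$. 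For the metric estimate away from $S$, I apply Proposition \ref{propChat} with reference $\omega_\epsilon$ and run the maximum principle on
$$Q_\epsilon := \log\tr{\omega_\epsilon}{\omega} - A\varphi + \epsilon\log|s_E|^2_h,$$
which tends to $-\infty$ along $S$, so its maximum is attained in $M\setminus S$; this gives $\tr{\omega_\epsilon}{\omega} \leq C_\epsilon|s_E|_h^{-2A\epsilon}$. Combined with the volume lower bound this produces two-sided metric bounds on every compact subset of $M\setminus S$, and the local form of Corollary \ref{choe} upgrades these to $C^\infty_{\mathrm{loc}}(M\setminus S)$ bounds. Finally, showing $\dot\varphi \to 0$ (for instance by a monotonicity argument on the Mabuchi energy extended to the degenerate setting, mirroring the proof of Theorem \ref{thmc0}), I extract a limit $\varphi_\infty$ in $C^\infty_{\mathrm{loc}}(M\setminus S)$ satisfying $\log((\chi+\ddbar\varphi_\infty)^n/\Omega) = \varphi_\infty$; taking $\ddbar$ shows $\omega_{\mathrm{KE}} := \chi + \ddbar\varphi_\infty$ is K\"ahler-Einstein on $M\setminus S$ with $\Ric(\omega_{\mathrm{KE}}) = -\omega_{\mathrm{KE}}$. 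Uniqueness of the limiting singular K\"ahler-Einstein metric (Eyssidieux-Guedj-Zeriahi) then upgrades subsequential convergence to full convergence, as in the proofs of Theorems \ref{thmc1n} and \ref{thmc0}.

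The main obstacle is the $C^2$ estimate near $S$: the reference form $\chi$ degenerates there (it is only semipositive, not K\"ahler), so Yau's classical maximum-principle argument cannot be applied directly, and must be modified with the singular barrier $\epsilon\log|s_E|^2_h$ to force the maximum of $Q_\epsilon$ into $M\setminus S$.  The constants $C_\epsilon$ blow up as $\epsilon \to 0$, which is precisely why convergence is expected only in $C^\infty_{\mathrm{loc}}(M\setminus S)$ rather than globally, and accounts for the exceptional subvariety $S$ in the statement.
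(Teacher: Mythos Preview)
Your overall architecture is right, and part (i) matches the paper exactly. For part (ii) you diverge from the paper in two places, one a legitimate alternative and one a genuine gap.

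\textbf{The $C^0$ estimate.} You invoke the Ko{\l}odziej--Eyssidieux--Guedj--Zeriahi estimate to get a global lower bound on $\varphi$. The paper's main proof (Lemmas \ref{lemmat1} and \ref{phiE}) avoids pluripotential theory entirely: it proves only $\varphi \ge \varepsilon\log|\sigma|_h^2 - C_\varepsilon$ on $M\setminus E$ via \emph{Tsuji's trick}, applying the maximum principle to $Q = \dot\varphi + \varphi - \varepsilon\log|\sigma|_h^2$ and using that $\hat\omega_\infty + \varepsilon\,\ddbar\log|\sigma|_h^2 \ge c_\varepsilon\omega_0$ off $E$. Your route is valid (the paper itself presents it as an improvement in Section \ref{sectionpluri}), but it imports a deep black box where an elementary maximum-principle argument suffices. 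Note also that your proposed quantity $(1-e^{-t})\dot\varphi + \varphi + nt$ does not straightforwardly adapt Lemma \ref{lemmaphidot}; the paper instead computes $(\partial_t - \Delta)((e^t-1)\dot\varphi - \varphi - nt) = -\tr{\omega}{\omega_0} < 0$, yielding the sharp decay $\dot\varphi \le Cte^{-t}$ for large $t$ (Lemma \ref{lemmat1}(ii)).

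\textbf{The convergence argument: this is the real gap.} You propose showing $\dot\varphi \to 0$ by a ``Mabuchi energy argument extended to the degenerate setting, mirroring Theorem \ref{thmc0}.'' This does not work as stated: the Mabuchi energy argument in Theorem \ref{thmc0} requires uniform global $C^\infty$ bounds on $\omega(t)$ to control $\frac{d}{dt}\int_M|\partial\dot\varphi|^2\omega^n$, and here you only have such bounds on compact subsets of $M\setminus S$. The paper's mechanism is completely different and much simpler: the decay $\dot\varphi \le Cte^{-t}$ implies that $t\mapsto \varphi + Ce^{-t}(t+1)$ is eventually monotone decreasing, and since $\varphi$ is bounded below on each compact $K\subset M\setminus E$ (Lemma \ref{phiE}), $\varphi(t)$ converges pointwise there. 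The $C^\infty_{\textrm{loc}}$ estimates then upgrade this to $C^\infty_{\textrm{loc}}$ convergence, and one reads off $\dot\varphi\to 0$ and the K\"ahler--Einstein equation directly. No Mabuchi functional, no uniqueness of singular KE metrics, no subsequence argument is needed. You should replace your convergence step with this monotonicity argument; the key ingredient you are missing is precisely the estimate $\dot\varphi \le Cte^{-t}$ from equation (\ref{crucial}).
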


We will see later in Section \ref{convunique} that $\omega_{\textrm{KE}}$ is unique under some suitable conditions.
Note that if $K_M$ is not ample, then $\omega_{\textrm{KE}}$ cannot extend to be a smooth K\"ahler metric on $M$, and we call $\omega_{\textrm{KE}}$ a \emph{singular K\"ahler-Einstein metric}.
The first proof of Theorem \ref{tsuji} appeared in the work of Tsuji \cite{Ts1}.  Later, Tian-Zhang \cite{TZha} extended this result (see Section \ref{sectionpluri} below) and clarified some parts of Tsuji's proof.   Our exposition will for the most part follow \cite{TZha}.

From part (i) of Theorem \ref{algebraic} we see that under the assumptions of Theorem \ref{tsuji}, the cohomology class $[\omega_0] - t c_1(M)$ is K\"ahler for all $t\ge 0$ and hence by Theorem \ref{longtime}, the (unnormalized) K\"ahler-Ricci flow has a smooth solution $\omega(t)$ for all time $t$.  Rescaling as in
Section \ref{sectnkrf} we obtain a solution of the normalized K\"ahler-Ricci flow (\ref{nkrf2}) for all time.  This establishes part (i) of Theorem \ref{tsuji}.  Observe that in fact we only need $K_M$ to be nef to obtain a solution to the K\"ahler-Ricci flow for all time.

It is straightforward to calculate the  K\"ahler class of the evolving metric along the flow.  Indeed, $[\omega(t)]$ evolves according to the ordinary differential equation
\begin{equation} \label{ode1}
\frac{d}{dt} [\omega(t)] = - c_1(M) - [\omega], \quad [\omega(0)] = [\omega_0],
\end{equation}
and this has a solution
\begin{equation} \label{ode2}
[\omega(t)]  = - (1- e^{-t}) c_1(M) + e^{-t} [\omega_0].
\end{equation}
This shows that, in particular, $[\omega(t)] \rightarrow -c_1(M)$ as $t \rightarrow \infty$.

We now rewrite (\ref{nkrf2}) as a parabolic complex Monge-Amp\`ere equation.  First, from the Base Point Free Theorem (part (ii) of Theorem \ref{algebraic}), $K_M$ is semi-ample.  Hence there exists a smooth closed nonnegative $(1,1)$-form $\hat{\omega}_{\infty}$ on $M$ with $[\hat{\omega}_{\infty}] = -c_1(M)$.  Indeed, we may take $\hat{\omega}_{\infty} = \frac{1}{m}\Phi^* \omega_{\textrm{FS}}$ where $\Phi: M \rightarrow \mathbb{P}^N$ is a holomorphic map defined by holomorphic sections of $K_M^m$ for $m$ large and $\omega_{\textrm{FS}}$ is the Fubini-Study metric (see Section \ref{sectnotion}).

  Define reference metrics in $[\omega(t)]$ by
\begin{equation}
\hat{\omega}_t= e^{-t} \omega_0 + (1-e^{-t}) \hat{\omega}_{\infty}, \qquad \textrm{for } t\in [0, \infty).
\end{equation}
Let $\Omega$ be the smooth volume form on $M$ satisfying
\begin{equation} \label{vfcond1}
\ddbar \log \Omega = \hat{\omega}_{\infty} \in - c_1(M), \qquad \int_M \Omega = \int_M \omega_0^n.
\end{equation}
We then consider the parabolic complex Monge-Amp\`ere equation
\begin{equation} \label{npcma2}
\ddt{ \varphi} = \log \frac{ (\hat{\omega}_t + \ddbar \varphi)^n}{\Omega} - \varphi, \qquad \hat{\omega}_t+ \ddbar \varphi>0 , \qquad \varphi|_{t=0} =0,
\end{equation}
which is equivalent to (\ref{nkrf2}).    Hence a solution to (\ref{npcma2}) exists for all time.

\pagebreak[3]
\subsection{Estimates}

In this section we prove the estimates needed for  the second part of Theorem \ref{tsuji}.  Assume that $\varphi= \varphi(t)$ solves (\ref{npcma2}).  We have:

\pagebreak[3]
\begin{lemma} \label{lemmat1}
There exists a uniform constants $C$ and $t'>0$ such that on $M$,
\begin{enumerate}
\item[(i)] $\displaystyle{\varphi(t) \le C}$ for $t \ge 0$.
\item[(ii)] $\displaystyle{\dot{\varphi}(t) \le C t e^{-t}}$ for $t \ge t'$.  In particular, $\dot{\varphi}(t) \le C$ for $t \ge 0$.
\item[(iii)] $\displaystyle{\omega^n(t) \le C \Omega}$ for $t \ge 0$.
\end{enumerate}

\end{lemma}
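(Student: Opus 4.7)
All three assertions follow by combining the parabolic Monge--Amp\`ere formulation (\ref{npcma2}) with the Cao-type scalar curvature estimate of Theorem \ref{scalar} specialized to $\nu=1$. For (i) I would apply the maximum principle directly to $\varphi$. Fix $T>0$ and suppose $\varphi$ attains its maximum on $M\times[0,T]$ at an interior point $(x_0,t_0)$ with $t_0>0$; then $\ddbar\varphi\le 0$ and $\dot\varphi\ge 0$ there, so (\ref{npcma2}) forces
\begin{equation}
\varphi(x_0,t_0) \;\le\; \log\frac{(\hat{\omega}_{t_0}+\ddbar\varphi)^n}{\Omega}(x_0) \;\le\; \log\frac{\hat{\omega}_{t_0}^n}{\Omega}(x_0).
\end{equation}
Since $\hat{\omega}_t = e^{-t}\omega_0 + (1-e^{-t})\hat{\omega}_\infty$ is a uniformly bounded convex combination of two smooth forms and $\Omega$ is a fixed nowhere-vanishing volume form, the supremum of $\log(\hat{\omega}_t^n/\Omega)$ over $M\times[0,\infty)$ is finite. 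This bounds $\varphi$ from above on $M\times[0,T]$, and letting $T\to\infty$ yields (i).

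For part (ii), the key observation is to obtain an ODE in $t$ for $\dot\varphi$ by comparing two expressions for $\partial_t \log\omega^n$. Rewriting (\ref{npcma2}) as $\omega^n = e^{\dot\varphi+\varphi}\Omega$ and differentiating gives $\partial_t\log\omega^n = \ddot\varphi + \dot\varphi$. On the other hand, tracing the flow $\partial_t g_{i\ov{j}} = -R_{i\ov{j}} - g_{i\ov{j}}$ produces $\partial_t\log\omega^n = -R - n$. Combining these identities with (\ref{lbR}) (the scalar curvature lower bound, applied with $\nu=1$) gives the differential inequality
\begin{equation}
\ddot\varphi + \dot\varphi = -R - n \;\le\; C_0 e^{-t}, \qquad\textrm{i.e.}\qquad \frac{d}{dt}\bigl(e^t \dot\varphi\bigr) \le C_0.
\end{equation}
Integrating from $0$ to $t$ and using that $\dot\varphi(0) = \log(\omega_0^n/\Omega)$ is a bounded function on $M$, I obtain $e^t \dot\varphi(t) \le C_1 + C_0 t$, whence $\dot\varphi(t) \le (C_1 + C_0 t)e^{-t}$. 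Choosing $t'$ so that the linear term dominates the constant for $t\ge t'$ gives the stated $\dot\varphi \le Cte^{-t}$; the uniform bound $\dot\varphi \le C$ on $[0,\infty)$ follows because $te^{-t}$ is bounded on $[0,\infty)$.

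Part (iii) is then a free corollary: substituting the upper bounds on $\dot\varphi$ and $\varphi$ from (ii) and (i) into $\omega^n = e^{\dot\varphi+\varphi}\Omega$ gives $\omega^n \le C\Omega$ at once. (Alternatively, (iii) can be read off from Corollary \ref{volform}(ii) and the pointwise comparison $\omega_0^n \le C'\Omega$.) I do not regard any individual step above as a real obstacle; the only real ingenuity is the coupling of the scalar curvature estimate with the Monge--Amp\`ere equation to produce an ODE for $e^t\dot\varphi$. What is worth emphasizing is that the lemma is one-sided in both $\varphi$ and $\dot\varphi$, in sharp contrast with the $c_1(M)<0$ setting of Lemma \ref{lemmaex}: the degeneracy of $\hat{\omega}_\infty$ along the base locus of a pluricanonical system prevents matching lower bounds, which is consistent with the fact that the limit K\"ahler--Einstein metric in Theorem \ref{tsuji} is only defined off the subvariety $S$.
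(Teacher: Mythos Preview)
Your proof is correct. Parts (i) and (iii) match the paper's arguments exactly.

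For part (ii), however, you take a genuinely different route from the paper. The paper follows the approach of Tian--Zhang: it computes the parabolic evolution equations
\[
\left(\ddt{} - \Delta\right)(e^t\dot\varphi) = -\tr{\omega}{(\omega_0 - \hat\omega_\infty)}, \qquad
\left(\ddt{} - \Delta\right)(\dot\varphi + \varphi + nt) = \tr{\omega}{\hat\omega_\infty},
\]
and subtracts to obtain $\left(\ddt{} - \Delta\right)\bigl((e^t-1)\dot\varphi - \varphi - nt\bigr) = -\tr{\omega}{\omega_0} < 0$, whence the spatial maximum principle gives $(e^t-1)\dot\varphi \le \varphi + nt$. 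Your argument instead black-boxes the scalar curvature lower bound (\ref{lbR}) and converts the problem to a pointwise ODE inequality $\frac{d}{dt}(e^t\dot\varphi) \le C_0$, which you integrate directly. This is shorter and conceptually clean, and in fact the same idea---differentiating $\dot\varphi + \varphi = \log(\omega^n/\Omega)$ and invoking the scalar curvature bound---is used later in the paper (Proposition~\ref{propzhang}, following Zhang) to control $\dot\varphi$ from \emph{below}. What the paper's approach buys is the explicit identity (\ref{crucial}), which is reused elsewhere (e.g.\ in the proof of Theorem~\ref{zz}); it also avoids appealing to the separate scalar curvature computation. What your approach buys is brevity and a direct link to the geometrically natural quantity $R$.
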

\begin{proof}
Part (i) follows immediately from the maximum principle.  Indeed if $\varphi$ achieves a maximum at a point $(x_0, t_0)$ with $t_0>0$ then, directly from (\ref{npcma2}),
\begin{equation}
0 \le \ddt{\varphi} \le \log \frac{\hat{\omega}_t^n}{\Omega} - \varphi \quad \textrm{at } (x_0,t_0),
\end{equation}
and hence $\varphi \le  \log ( \hat{\omega}_t^n/\Omega) \le C$.

Part (ii) is a result of \cite{TZha}.  Compute
\begin{align}
\left(\ddt{} - \Delta \right) \varphi  & = \dot{\varphi} - n + \tr{\omega}{\hat{\omega}_t}\\ \label{evolvephidot}
\left(\ddt{} - \Delta \right)\dot{\varphi} & = -e^{-t} \tr{\omega}{(\omega_0 - \hat{\omega}_{\infty})} - \dot{\varphi},
\end{align}
using the fact that $\ddt{} \hat{\omega}_t = -e^{-t} (\omega_0 - \hat{\omega}_{\infty})$.  Hence
\begin{align} \label{etphi}
\left(\ddt{} - \Delta \right)\left( e^t \dot{\varphi} \right) &= - \tr{\omega}{(\omega_0 - \hat{\omega}_{\infty})} \\ \label{dotphiphi}
\left(\ddt{} - \Delta \right) \left( \dot{\varphi} + \varphi +nt \right) & =  \tr{\omega}{\hat{\omega}_{\infty}}.
\end{align}
Subtracting (\ref{dotphiphi}) from (\ref{etphi}) gives
\begin{equation} \label{crucial}
\left(\ddt{} - \Delta \right) \left( (e^t-1) \dot{\varphi} - \varphi -nt \right) = - \tr{\omega}{\omega_0}  <0,
\end{equation}
which implies that the maximum of $(e^t-1) \dot{\varphi} - \varphi -nt$ is decreasing in time, giving
\begin{equation}
(e^t-1) \dot{\varphi} - \varphi -nt \le 0.
\end{equation}
This establishes (ii).  Part (iii) follows from Corollary \ref{volform} (or using (i) and (ii) and the fact that $\omega^n/\Omega = e^{\dot{\varphi} +\varphi}$). \qed

\end{proof}

We now prove lower bounds for $\varphi$ and $\dot{\varphi}$ away from a subvariety.   To do this we need to use \emph{Tsuji's trick} of applying Kodaira's Lemma (part (iii) of Theorem \ref{algebraic}).

Since $K_M$ is big and nef, there exists an effective divisor $E$ on $M$ with $K_M - \delta [E]>0$  for some sufficiently small $\delta>0$.    Since $\hat{\omega}_{\infty}$ lies in the cohomology class $c_1(K_M)$ it follows that for any Hermitian metric $h$ of $[E]$ the cohomology class of $\hat{\omega}_{\infty} - \delta R_h$ is K\"ahler.  Then by the $\partial \ov{\partial}$-Lemma we may pick a Hermitian metric $h$ on $[E]$ such that
\begin{equation}
\hat{\omega}_{\infty} - \delta R_h \ge c \omega_0,
\end{equation}
for some constant $c>0$.  Moreover, if we pick any $\ve \in (0, \delta]$ we have
\begin{equation}
\hat{\omega}_{\infty} - \ve R_h \ge c_{\ve} \omega_0,
\end{equation}
for $c_{\ve} = c\ve/\delta >0$.  Indeed, since $\hat{\omega}_{\infty}$ is semi-positive,
\begin{equation}
\hat{\omega}_{\infty} - \ve R_h = \frac{\ve}{\delta} (\hat{\omega}_{\infty} - \delta R_h) + \left( 1 - \frac{\ve}{\delta} \right) \hat{\omega}_{\infty} \ge \frac{\ve}{\delta} (\hat{\omega}_{\infty} - \delta R_h) \ge \frac{c\ve}{\delta}  \omega_0.
\end{equation}

Now fix a holomorphic section $\sigma$ of $[E]$ which vanishes to order 1 along the divisor $E$.  It follows that
\begin{equation} \label{comega0}
\hat{\omega}_{\infty} + \ve \ddbar \log |\sigma|^2_h \ge c_{\ve} \omega_0, \qquad \textrm{on } M \setminus E,
\end{equation}
since $\partial \ov{\partial} \log |\sigma|^2_h = \partial \ov{\partial} \log h$ away from $E$.
Note that here (and henceforth) we are writing $E$ for the support of the divisor $E$.

We can then prove:

\begin{lemma} \label{phiE} With the notation above, for every $\ve \in (0, \delta]$ there exists a constant $C_{\ve}>0$ such that on $(M \setminus E) \times [0, \infty)$,
\begin{enumerate}
\item[(i)] $\displaystyle{\varphi \ge \ve \log |\sigma|^2_h -C_{\ve}}$.
\item[(ii)] $\displaystyle{\dot{\varphi} \ge \ve \log |\sigma|^2_h -C_{\ve}}$.
\item[(iii)] $\displaystyle{ \omega^n \ge \frac{1}{C_{\ve}} | \sigma|^{2\ve}_h \Omega}$.
\end{enumerate}
\end{lemma}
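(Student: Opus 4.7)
My strategy is to first establish (iii) via the maximum principle combined with Tsuji's trick, and then deduce (i) and (ii) directly from (iii) using the bounds already proved in Lemma \ref{lemmat1}.

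For (iii), consider the quantity
\[ Q := \log\frac{\omega^n}{\Omega} - \ve\log|\sigma|^2_h = \dot\varphi + \varphi - \ve\log|\sigma|^2_h, \]
which is smooth on $(M\setminus E)\times[0,\infty)$ and tends to $+\infty$ as one approaches $E$ (since $\log|\sigma|^2_h\to-\infty$ there). Using the evolution equations $\left(\ddt{}-\Delta\right)\varphi = \dot\varphi - n + \tr{\omega}{\hat\omega_t}$ and (\ref{evolvephidot}), together with the identity $\ddbar\log|\sigma|^2_h = -R_h$ on $M\setminus E$, a direct computation gives
\[ \left(\ddt{}-\Delta\right) Q \;=\; -n + \tr{\omega}{(\hat\omega_\infty - \ve R_h)} \;\ge\; -n + c_\ve\,\tr{\omega}{\omega_0}, \]
where the last inequality is exactly the positivity statement (\ref{comega0}). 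Fix $T>0$. Because $Q \to +\infty$ near $E$, the infimum of $Q$ on $(M\setminus E)\times[0,T]$ is attained at some point $(x_0,t_0)$. If $t_0>0$, Proposition \ref{pmp} forces $\tr{\omega}{\omega_0}(x_0,t_0)\le n/c_\ve$. Diagonalising $\omega$ with respect to $\omega_0$ at $(x_0,t_0)$, this trace bound means each eigenvalue $\mu_i$ of $\omega$ relative to $\omega_0$ satisfies $\mu_i \ge c_\ve/n$, whence $\omega^n \ge (c_\ve/n)^n \omega_0^n \ge C'_\ve\Omega$ at that point. Since $-\ve\log|\sigma|^2_h$ is bounded below on $M$ (as $|\sigma|^2_h$ is bounded above), this yields $Q(x_0,t_0)\ge -C_\ve$. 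The case $t_0=0$ is handled similarly, since $(\dot\varphi+\varphi)|_{t=0} = \log(\omega_0^n/\Omega)$ is bounded. Letting $T\to\infty$ gives $Q\ge -C_\ve$ on all of $(M\setminus E)\times[0,\infty)$, which is exactly (iii) after exponentiation.

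Parts (i) and (ii) then follow formally from (iii). Rewriting (iii) as $\dot\varphi + \varphi \ge \ve\log|\sigma|^2_h - C_\ve$ and combining with the upper bound $\varphi\le C$ from Lemma \ref{lemmat1}(i) gives (ii); combining instead with the upper bound $\dot\varphi\le C$ from Lemma \ref{lemmat1}(ii) gives (i).

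The main obstacle is the derivation of (iii). The entire argument hinges on the favorable sign in $\left(\ddt{}-\Delta\right)Q$ that delivers $\tr{\omega}{\omega_0}\le n/c_\ve$ at the interior minimum, and this in turn depends crucially on (\ref{comega0}), which is precisely where Kodaira's Lemma and the big-and-nef hypothesis on $K_M$ enter the argument. Once that sign is secured, the passage from a trace bound to a volume-form bound, and the subsequent deductions of (i) and (ii), are standard.
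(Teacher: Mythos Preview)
Your proof is correct and follows essentially the same approach as the paper: both bound from below the quantity $Q=\dot\varphi+\varphi-\ve\log|\sigma|^2_h$, compute $\left(\ddt{}-\Delta\right)Q\ge c_\ve\tr{\omega}{\omega_0}-n$ via (\ref{comega0}), and use the resulting trace bound at an interior minimum to control $\omega^n$ from below, then deduce (i) and (ii) from the upper bounds in Lemma~\ref{lemmat1}. The only cosmetic difference is that the paper extracts the volume bound from $\tr{\omega}{\omega_0}\le n/c_\ve$ via the arithmetic--geometric mean inequality rather than your individual eigenvalue bound, but both arguments are valid.
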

\begin{proof}
It suffices to prove the estimate
\begin{equation} \label{suffice}
\varphi + \dot{\varphi} \ge \ve \log |\sigma|^2_h -C_{\ve}, \quad \textrm{on } M \setminus E,
\end{equation}
where we write $C_{\ve}$ for a constant that depends only on $\ve$ and the fixed data.
Indeed this inequality immediately implies (iii).  The estimates (i) and (ii) follow from (\ref{suffice}) together with the upper bounds of $\dot{\varphi}$ and $\varphi$ given by Lemma \ref{lemmat1}.

To establish (\ref{suffice}), we will bound from below the quantity $Q$ defined by
\begin{equation} \label{Q}
Q = \dot{\varphi} + \varphi - \ve \log |\sigma|^2_h = \log \frac{\omega^n}{|\sigma|^{2\ve}_h \Omega}, \quad \textrm{on } M \setminus E.
\end{equation}
Observe that for any fixed time $t$, $Q(x,t) \rightarrow \infty$ as $x$ approaches  $E$.  Hence for each time $t$, $Q$ attains a minimum (in space) in the interior of the set $M \setminus E$.  Now
from (\ref{dotphiphi}) we have
\begin{equation}
\left(\ddt{} - \Delta \right) \left( \dot{\varphi} + \varphi \right)  =  \tr{\omega}{\hat{\omega}_{\infty}} -n.
\end{equation}
Using this we compute on $M \setminus E$,
\begin{align}
\left(\ddt{} - \Delta \right) Q & = \tr{\omega}{\hat{\omega}_{\infty}} - n + \ve \tr{\omega}{\left( \ddbar \log |\sigma |^2_h \right) }\\
& = \tr{\omega}{\left( \hat{\omega}_{\infty} + \ve\ddbar \log |\sigma |^2_h \right) } -n \\
& \ge c_{\ve} \tr{\omega}{\omega_0} -n,
\end{align}
where for the last line we used (\ref{comega0}).

Then if  $Q$ achieves a minimum at $(x_0, t_0)$ with $x_0$ in $M \setminus E$ and $t_0>0$ then at $(x_0,t_0)$ we have
\begin{equation}
\tr{\omega}{\omega_0} \le \frac{n}{c_{\ve}}.
\end{equation}
By the geometric-arithmetic means inequality, at $(x_0, t_0)$,
\begin{equation}
\left( \frac{\omega_0^n}{\omega^n} \right)^{1/n} \le \frac{1}{n} \tr{\omega}{\omega_0} \le \frac{1}{c_{\ve}},
\end{equation}
which gives a uniform lower bound for the volume form $\omega^n(x_0,t_0)$.  Hence
\begin{equation}
Q(x_0,t_0) = \log \frac{\omega^n}{| \sigma |^2_h \Omega} (x_0, t_0) \ge -C_{\ve},
\end{equation}
and since $Q$ is bounded below at time $t=0$ we obtain the desired lower bound for $Q$.  \qed
\end{proof}

Next we prove estimates for $g(t)$ away from a divisor.  First, \emph{we from now on fix an $\ve$ in $(0, \delta]$ sufficiently small so that $\omega_0 + \ve \ddbar \log h$ is K\"ahler}. We will need the following lemma.

\begin{lemma} \label{lemmanewref}  For the $\ve>0$ fixed as above,
the metrics $\hat{\omega}_{t, \ve}$ defined by
\begin{equation} \label{newref}
\hat{\omega}_{t, \ve} := \hat{\omega}_t + \ve \ddbar \log h = \hat{\omega}_{\infty} + \ve \ddbar \log h + e^{-t} (\omega_0 -\hat{\omega}_{\infty}).
\end{equation}
give a smooth family of K\"ahler metrics for $t \in [0,\infty)$.  Moreover there exists a constant $C>0$ such that for all $t$,
\begin{equation}
\frac{1}{C} \omega_0 \le \hat{\omega}_{t, \ve} \le C \omega_0.
\end{equation}
\end{lemma}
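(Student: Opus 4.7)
The plan is to exhibit $\hat{\omega}_{t,\ve}$ as a convex combination of two fixed K\"ahler metrics on $M$. Substituting the definition $\hat{\omega}_t = e^{-t}\omega_0 + (1-e^{-t}) \hat{\omega}_\infty$ into the formula for $\hat{\omega}_{t,\ve}$ rearranges it as
\begin{equation}
\hat{\omega}_{t,\ve} = e^{-t} (\omega_0 + \ve \ddbar \log h) + (1 - e^{-t}) (\hat{\omega}_\infty + \ve \ddbar \log h).
\end{equation}
It then suffices to check that both forms appearing in this convex combination are smooth K\"ahler metrics on $M$ which are comparable to $\omega_0$.

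First I would verify each of the two summands. The form $\omega_0 + \ve \ddbar \log h$ is K\"ahler by the choice of $\ve$ fixed immediately before the statement of the lemma. For the second, observe that $\ddbar \log h = -R_h$ locally, so (\ref{comega0}) yields
\begin{equation}
\hat{\omega}_\infty + \ve \ddbar \log h = \hat{\omega}_\infty - \ve R_h \ge c_\ve \omega_0 > 0,
\end{equation}
so this form is also K\"ahler. Since each is a smooth positive-definite $(1,1)$-form on the compact manifold $M$, there exists a constant $C > 0$ with $C^{-1}\omega_0 \le \omega_0 + \ve \ddbar \log h \le C \omega_0$ and similarly for $\hat{\omega}_\infty + \ve \ddbar \log h$.

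Finally I would observe that taking a convex combination with the weights $e^{-t}$ and $1-e^{-t}$ preserves both the upper and lower bound, delivering $C^{-1}\omega_0 \le \hat{\omega}_{t,\ve} \le C \omega_0$ uniformly in $t \in [0,\infty)$. Smoothness of the family in $(x,t) \in M \times [0,\infty)$ is immediate from the explicit formula, and the K\"ahler property of each $\hat{\omega}_{t,\ve}$ follows since a strictly positive real closed $(1,1)$-form is a K\"ahler form. There is no substantive obstacle in this argument: the real work was already absorbed into the inequality (\ref{comega0}) and the choice of $\ve$ small enough to make $\omega_0 + \ve \ddbar \log h$ K\"ahler.
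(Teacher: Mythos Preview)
Your proof is correct and uses the same key idea as the paper: writing $\hat{\omega}_{t,\ve}$ as the convex combination $e^{-t}(\omega_0+\ve\ddbar\log h)+(1-e^{-t})(\hat{\omega}_\infty+\ve\ddbar\log h)$ of two fixed K\"ahler forms. The paper's argument is slightly less streamlined---it first handles large $t$ by observing that $e^{-t}(\omega_0-\hat{\omega}_\infty)$ becomes a small perturbation of the K\"ahler form $\hat{\omega}_\infty+\ve\ddbar\log h$, and then invokes the convex combination only to verify positivity on the remaining compact interval $[0,T_0]$---whereas you apply the convex combination uniformly in $t$ to obtain the two-sided bound in one stroke.
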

\begin{proof}
From (\ref{comega0}) we see that $\hat{\omega}_{\infty} + \ve \ddbar \log h$ is K\"ahler.
Hence we may choose $T_0>0$ sufficiently large so that, for $C>0$ large enough,
\begin{equation}
\frac{1}{C} \omega_0 \le \hat{\omega}_{\infty} + \ve \ddbar \log h + e^{-t} (\omega_0 -\hat{\omega}_{\infty}) \le C \omega_0,
\end{equation}
for all $t > T_0$.  It remains to check that $\hat{\omega}_{t, \ve}$ is K\"ahler for $t \in [0,T_0]$.  But for $t \in [0,T_0]$,
\begin{align} \nonumber
\hat{\omega}_{t,\ve}
& = (1-e^{-t}) \left( \hat{\omega}_{\infty} + \ve \ddbar \log h \right) + e^{-t} \left( \omega_0 + \ve \ddbar \log h \right) \\
& > e^{-T_0} \left( \omega_0 + \ve \ddbar \log h \right) >0,
\end{align}
by definition of $\ve$. \qed
\end{proof}

We can now prove bounds for the evolving metric:

\begin{lemma} \label{trick2}
There exist uniform constants $C$ and $\alpha$ such that on $(M \setminus E) \times [0, \infty)$,
\begin{equation} \label{trt}
\emph{tr}_{\omega_0}{\, \omega} \le \frac{C}{|\sigma|_h^{2\alpha}}.
\end{equation}
Hence there exist uniform constants $C'>0$ and $\alpha'$ such that on $(M \setminus E) \times [0, \infty)$,
\begin{equation} \label{ulbm}
\frac{|\sigma|^{2\alpha'}_h}{C'} \omega_0 \le \omega(t) \le \frac{C'}{|\sigma|^{2\alpha'}_h} \omega_0.
\end{equation}
\end{lemma}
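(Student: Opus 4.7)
The plan is to run Yau's second-order maximum principle argument (as in Lemma \ref{lemmatr}) but with an auxiliary potential modified via Tsuji's trick, so that the quantity controlled blows up in the correct direction near $E$. Set
\[
\tilde{\varphi} := \varphi - \ve \log |\sigma|^2_h.
\]
On $M \setminus E$, since $\sigma$ is holomorphic, $\ddbar \log |\sigma|^2_h = \ddbar \log h = -R_h$ and so $\hat{\omega}_{t,\ve} + \ddbar \tilde{\varphi} = \hat{\omega}_t + \ve\ddbar \log h + \ddbar \varphi - \ve\ddbar\log h + \ddbar\varphi \cdot 0 = \hat{\omega}_t + \ddbar\varphi = \omega$. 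Thus $\omega = \hat{\omega}_{t,\ve} + \ddbar\tilde{\varphi}$ on $M\setminus E$. This is the key identity: it presents $\omega$ as a $\ddbar$-perturbation of the \emph{uniformly positive} reference $\hat{\omega}_{t,\ve}$ from Lemma \ref{lemmanewref}.

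I would then consider $Q := \log \tr_{\omega_0}{\omega} - A \tilde{\varphi}$ on $(M \setminus E) \times [0, T]$ for a large constant $A$. Since $\tilde\varphi \to +\infty$ near $E$ while $\log\tr_{\omega_0}\omega$ is bounded above for each fixed $t$ (as $\omega(t)$ is a smooth K\"ahler metric on $M$), the quantity $Q \to -\infty$ near $E$ and so attains its spatial maximum in the interior of $M\setminus E$. Applying Proposition \ref{propChat} with $\hat\omega=\omega_0$, and using the computation $(\partial_t - \Delta)\tilde\varphi = \dot\varphi - n + \tr_\omega \hat{\omega}_{t,\ve}$ (which follows from $\omega = \hat\omega_{t,\ve}+\ddbar\tilde\varphi$), one gets
\[
\left( \ddt{} - \Delta \right) Q \le C_0 \tr_{\omega}{\omega_0} - 1 - A\dot\varphi + An - A\,\tr_\omega \hat{\omega}_{t,\ve}.
\]
By Lemma \ref{lemmanewref}, $\hat{\omega}_{t,\ve} \ge C_1^{-1}\omega_0$ uniformly, so $\tr_\omega \hat{\omega}_{t,\ve} \ge C_1^{-1}\tr_\omega\omega_0$. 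Choosing $A = C_0 C_1 + 1$ and using $\dot\varphi \le C$ from Lemma \ref{lemmat1}, this simplifies to $(\partial_t-\Delta)Q \le -C_1^{-1}\tr_\omega\omega_0 + C_2$.

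At an interior maximum $(x_0,t_0)$ of $Q$ with $t_0>0$, the maximum principle gives $\tr_\omega\omega_0 \le C$ there. Combining this with $\omega^n \le C\Omega \le C'\omega_0^n$ (Lemma \ref{lemmat1}(iii)) via the elementary inequality (\ref{gam}) produces a uniform upper bound for $\tr_{\omega_0}\omega$ at $(x_0,t_0)$. Together with the lower bound $\tilde\varphi \ge -C_\ve$ (which is the content of Lemma \ref{phiE}(i)), this yields $Q(x_0,t_0) \le C_3$, hence $Q \le C_3$ everywhere. Using the upper bound $\varphi \le C$ to absorb the $\varphi$-part of $\tilde\varphi$, we conclude
\[
\log \tr_{\omega_0}\omega \le C_4 - A\ve \log|\sigma|^2_h,
\]
which is (\ref{trt}) with $\alpha = A\ve$. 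Finally, the inequality (\ref{gam}) combined with this upper bound for $\tr_{\omega_0}\omega$ and the lower bound $\omega^n \ge C_\ve^{-1}|\sigma|^{2\ve}_h\Omega$ from Lemma \ref{phiE}(iii) yields $\tr_\omega\omega_0 \le C''|\sigma|^{-2\alpha'}_h$ for some $\alpha'$, which translates to the two-sided bound (\ref{ulbm}).

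The main obstacle is not any single estimate but the correct bookkeeping: one must identify $\tilde\varphi$ and $\hat\omega_{t,\ve}$ so that (a) $\omega = \hat\omega_{t,\ve} + \ddbar\tilde\varphi$ holds on $M\setminus E$, (b) $\hat\omega_{t,\ve}$ is uniformly K\"ahler (which requires $\ve$ small, as fixed before Lemma \ref{lemmanewref}), and (c) the term $-A\tilde\varphi$ forces $Q$ to tend to $-\infty$ near $E$ while the already-established lower bound for $\tilde\varphi$ (Lemma \ref{phiE}(i)) is just barely strong enough to control $Q$ at its maximum.
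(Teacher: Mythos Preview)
Your overall strategy is exactly the paper's: you use the same test function $Q=\log\tr_{\omega_0}\omega - A(\varphi-\ve\log|\sigma|^2_h)$, the same reference family $\hat\omega_{t,\ve}$ from Lemma~\ref{lemmanewref}, and the same use of Lemma~\ref{phiE}(i) to bound $Q$ at its maximum. The final steps deducing (\ref{trt}) and then (\ref{ulbm}) are also correct.

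There is, however, one genuine gap. In the line
\[
\left(\ddt{}-\Delta\right)Q \le C_0\,\tr_\omega\omega_0 - 1 - A\dot\varphi + An - A\,\tr_\omega\hat\omega_{t,\ve},
\]
you need an \emph{upper} bound for $-A\dot\varphi$, i.e.\ a \emph{lower} bound for $\dot\varphi$. Invoking ``$\dot\varphi\le C$ from Lemma~\ref{lemmat1}'' is a sign error: that only gives $-A\dot\varphi\ge -AC$, which is useless here. And there is no uniform lower bound for $\dot\varphi$ available in this setting --- Lemma~\ref{phiE}(ii) only gives $\dot\varphi\ge \ve\log|\sigma|^2_h - C_\ve$, which blows down near $E$, so the right-hand side of your evolution inequality would not be uniformly bounded. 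The paper flags exactly this point in the remark immediately following Lemma~\ref{lemmatr}.

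The fix, and what the paper actually does, is to substitute $\dot\varphi=\log(\omega^n/\Omega)-\varphi$ and use only the upper bound on $\varphi$ (Lemma~\ref{lemmat1}(i)). This yields, at the maximum of $Q$,
\[
\tr_\omega\omega_0 + A\log\frac{\omega^n}{\omega_0^n}\le C,
\]
and one then concludes $\tr_{\omega_0}\omega\le C$ at that point by the eigenvalue argument of Lemma~\ref{lemmatr} (using that $x\mapsto 1/x+A\log x$ is bounded below). After that, your remaining steps go through unchanged. Replace your appeal to ``$\dot\varphi\le C$'' with this substitution and the proof is complete.
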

\begin{proof}
Define a quantity $Q$ on $M \setminus E$ by
\begin{equation}
Q = \log \tr{\omega_0}{\omega} - A \left( \varphi - \ve \log |\sigma|^2_{h} \right),
\end{equation}
for $A$ a sufficiently large constant to be determined later.  For any fixed time $t$, $Q(x,t) \rightarrow -\infty$ as $x$ approaches $E$. Then compute using Proposition \ref{propChat},
\begin{align}
\left( \ddt{} - \Delta \right) Q & \le C_0 \tr{\omega}{\omega_0} - A \dot{\varphi} + A \Delta \left( \varphi - \ve \log |\sigma|^2_{h}\right).
\end{align}

  Now at any point of $M \setminus E$,
\begin{equation}
\Delta \left( \varphi - \ve \log |\sigma|^2_{h}\right) = \tr{\omega}{\left( \omega - \hat{\omega}_t - \ve \ddbar \log |\sigma|^2_h \right)} = n - \tr{\omega}{\hat{\omega}_{t, \ve}}.
\end{equation}
Applying Lemma \ref{lemmanewref}, we may choose $A$ sufficiently large so that
$A \hat{\omega}_{t, \ve} \ge (C_0 + 1) \omega_0$ and hence
\begin{align} \nonumber
\left( \ddt{} - \Delta \right) Q & \le - \tr{\omega}{\omega_0} - A\left( \log \frac{\omega^n}{\Omega} - \varphi \right) +An   \\
& \le - \tr{\omega}{\omega_0} - A \log \frac{\omega^n}{\omega_0^n} +C,
\end{align}
where we have used the upper bound on $\varphi$ from Lemma \ref{lemmat1}.

Working in a compact time interval $[0,t']$ say, suppose that $Q$ achieves a maximum at $(x_0, t_0)$ with $x_0$ in $M$ and $t_0>0$.  Then at
$(x_0, t_0)$  we have
\begin{equation}
 \tr{\omega}{\omega_0} + A \log \frac{\omega^n}{\omega_0^n} \le C.
\end{equation}
By the same argument as in the proof of Lemma \ref{lemmatr} we see that $(\tr{\omega_0} \omega) (x_0, t_0) \le C$.

Then for any $(x, t) \in (M \setminus E) \times [0, t']$ we have
\begin{align} \nonumber
Q(x,t) & = (\log \tr{\omega_0}{\omega})(x,t) - A \left( \varphi - \ve \log |\sigma|_h^2 \right)(x,t) \\ \nonumber
 & \le  Q(x_0,t_0) \\
& \le  \log C - A\left(\varphi - \ve \log |\sigma|^2_h\right) (x_0, t_0) \le C',
\end{align}
where for the last line we used part (i) of Lemma \ref{phiE}.  Since $t'$ is arbitary, we have on $(M \setminus E) \times [0, \infty)$,
\begin{equation}
\log \tr{\omega_0}{\omega} \le C+ A \left( \varphi - \ve \log |\sigma|_h^2 \right).
\end{equation}
Since $\varphi$ is bounded from above we obtain (\ref{trt}) after exponentiating.

For (\ref{ulbm}), combine (\ref{trt}) with part (iii) of Lemma \ref{phiE}.
\qed

\end{proof}

We now wish to obtain higher order estimates on compact subsets of $M \setminus E$:

\begin{lemma} \label{hoesigma}
For $m=0,1,2, \ldots$, there exist uniform constants $C_m$ and $\alpha_m$ such that on $(M \setminus E) \times [0,\infty)$,
\begin{equation}
S \le \frac{C_0}{|\sigma|_h^{2\alpha_0}}, \quad | \nabla_{\mathbb{R}}^m \emph{Rm}(g) |\le \frac{C_m}{|\sigma|_h^{2\alpha_m}},
\end{equation}
where we are using the notation of Sections \ref{sect3rd} and \ref{secthoe}.
\end{lemma}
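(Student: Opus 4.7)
The approach is to run the arguments of Section \ref{sectgen} --- the Phong-\v{S}e\v{s}um-Sturm third-order estimate, the Shi-type curvature estimate, and Hamilton's higher derivative bounds --- while carefully tracking how the constants degenerate near $E$.  Lemma \ref{trick2} already gives the two-sided bound $C^{-1}|\sigma|_h^{2\alpha'}\omega_0\le\omega\le C|\sigma|_h^{-2\alpha'}\omega_0$ on $M\setminus E$, so $\tr{\omega}{\omega_0}$ and $\tr{\omega_0}{\omega}$ are pointwise controlled by negative powers of $|\sigma|_h$.  The auxiliary function $u:=-\log|\sigma|_h^2$ is smooth on $M\setminus E$ with $\ddbar u = R_h$ bounded globally, and $u\to +\infty$ along $E$, so any additive penalty of the form $-c\log|\sigma|_h^2$ in a test quantity forces its supremum to be attained at an interior point of $M\setminus E$, which is what permits the maximum principle to be applied.

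For the bound on $S$, I would start from Proposition \ref{propPSS1} applied to $\hat g = g_0$ and estimate the error term using the degenerate metric bound of Lemma \ref{trick2}, obtaining
\[
\left(\ddt{} - \Delta\right) S \;\le\; -|\nabla\Psi|^2 - |\ov{\nabla}\Psi|^2 + \frac{C}{|\sigma|_h^{2\beta_0}}(S+1),
\]
while Proposition \ref{propkeyeqn} together with the Cauchy-Schwarz identity (\ref{claimcs}) yields
\[
\left(\ddt{} - \Delta\right) \tr{\omega_0}{\omega} \;\le\; \frac{C}{|\sigma|_h^{2\beta_1}} - \frac{|\sigma|_h^{2\alpha'}}{C}\,S.
\]
I would then apply the maximum principle to
\[
Q \;=\; |\sigma|_h^{2p}\,S \;+\; A\,\tr{\omega_0}{\omega} \;-\; B\bigl(\varphi - \ve\log|\sigma|_h^2\bigr),
\]
with $p$, $A$, $B$ chosen large enough that the weighted negative term in $S$ dominates every positive contribution (including the cross terms $k\tr{\omega}{R_h}\cdot S$ produced when the Laplacian hits the weight).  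Lemma \ref{phiE}(i) guarantees that $\varphi - \ve\log|\sigma|_h^2$ is bounded below on $M\setminus E$ and blows up at $E$, so $Q$ attains its supremum at an interior point of $M\setminus E$; the standard pointwise analysis at that point yields the first claimed estimate.

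Once $S$ is controlled, the $m=0$ curvature bound follows by combining Lemma \ref{lemmaRi} with (\ref{Rmbd}) and the weighted $S$-estimate just proved:  running the maximum principle on $Q = |\sigma|_h^{2p}|\textrm{Rm}|^2 + A|\sigma|_h^{2p'}S$, as in the proof of Theorem \ref{theoremcurv1}, gives a weighted upper bound for $|\textrm{Rm}|^2$.  The higher derivative bounds then follow by induction on $m$ via Shi's commutation formula (\ref{corH}): at each step one applies the maximum principle to
\[
Q_m \;=\; |\sigma|_h^{2p_m}|\nabla_{\mathbb{R}}^m\textrm{Rm}|^2 \;+\; A\,|\sigma|_h^{2p_{m-1}}|\nabla_{\mathbb{R}}^{m-1}\textrm{Rm}|^2,
\]
with $p_m \gg p_{m-1}$ so that the dominant $-|\nabla_{\mathbb{R}}^m\textrm{Rm}|^2$ arising from the previous stage overwhelms the cubic error terms generated by contracting with the degenerate inverse metric.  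Each inductive step introduces a new negative power of $|\sigma|_h$, producing the claimed exponents $\alpha_m$.

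The main obstacle is the bookkeeping in the first step: the Phong-\v{S}e\v{s}um-Sturm computation is already delicate in the smooth setting, and inserting the weight $|\sigma|_h^{2p}$ generates extra cross terms from $\nabla(|\sigma|_h^{2p})\cdot\nabla S$ that must be absorbed into $-|\nabla\Psi|^2 - |\ov{\nabla}\Psi|^2$ via Cauchy-Schwarz, which only works if $p$ is chosen consistently with $\alpha'$.  A cleaner alternative that bypasses much of this calculation is to appeal to the local interior estimate Theorem \ref{interior} applied to shrinking neighborhoods inside $M\setminus E$: on each such neighborhood the bounds of Lemma \ref{trick2} are uniform, and the explicit polynomial dependence on $|\sigma|_h^{-1}$ can then be recovered by a rescaling-and-covering argument along the lines of \cite{ShW}.
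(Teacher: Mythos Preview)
Your overall strategy matches the paper's --- weight $S$ by a power of $|\sigma|_h^2$, couple it with $\tr{\omega_0}{\omega}$, and apply the maximum principle --- but the specific test quantity $Q = |\sigma|_h^{2p}S + A\,\tr{\omega_0}{\omega} - B(\varphi - \ve\log|\sigma|_h^2)$ does not close. The difficulty is the unweighted $\tr{\omega_0}{\omega}$. By Lemma \ref{trick2} this term is only controlled by $C|\sigma|_h^{-2\alpha}$, so even after the differential inequality at the maximum $(x_0,t_0)$ forces $|\sigma|_h^{2p}S(x_0,t_0)\le C$, the value $Q(x_0,t_0)$ still contains $A\,\tr{\omega_0}{\omega}(x_0,t_0)$, which may be as large as $AC|\sigma|_h^{-2\alpha}(x_0)$. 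Your logarithmic barrier contributes only $B\ve\log|\sigma|_h^{-2}(x_0)$ in the negative direction and cannot compete with this polynomial growth, so $Q(x_0,t_0)$ is not uniformly bounded and the estimate cannot be transferred to an arbitrary point. The same issue already appears in the evolution inequality: your $(\partial_t-\Delta)\tr{\omega_0}{\omega}\le C|\sigma|_h^{-2\beta_1}-C^{-1}|\sigma|_h^{2\alpha'}S$ carries a divergent constant on the right that nothing in $Q$ cancels.

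The paper's remedy is to weight the trace term as well, taking $Q=|\sigma|_h^{4K}S+A|\sigma|_h^{K}\tr{\omega_0}{\omega}$. For $K$ large enough both summands extend smoothly to all of $M$ and vanish on $E$, so no barrier term is needed and the maximum principle applies on the compact manifold directly. The evolution of $|\sigma|_h^{K}\tr{\omega_0}{\omega}$ then yields $-C^{-1}|\sigma|_h^{2K}S+C$ with a \emph{bounded} constant (the weight absorbs the divergent $|\sigma|_h^{-K}$), and this negative term exactly matches the surviving $+C|\sigma|_h^{2K}S$ from the weighted $S$-evolution; the cross terms from $\nabla|\sigma|_h^{K}\cdot\nabla\tr{\omega_0}{\omega}$ are absorbed using the retained half of the good squared gradient term in Proposition \ref{propkeyeqn}. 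The higher-order steps follow the same pattern. Finally, your alternative via local interior estimates is viable in principle, but Theorem \ref{interior} itself takes bounds on $S$ and on curvature as hypotheses, so it cannot be invoked directly; one would need the local Evans--Krylov/Schauder theory or the local maximum-principle arguments of \cite{ShW} instead.
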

\begin{proof}
We prove only the bound on $S$ and leave the bounds on curvature and its derivatives as an exercise to the reader.  We will follow quite closely an argument given in \cite{SW2}.
From Proposition \ref{propPSS1} and (\ref{ulbm}),
\begin{align}
\left( \ddt{} - \Delta \right) S & = - | \ov{\nabla} \Psi |^2 - | \nabla \Psi |^2 + | \Psi |^2
 - 2 \textrm{Re} \left( \, g^{\ov{j} i} g^{\ov{q} p} g_{k \ov{\ell}} \nabla^{\ov{b}} \hat{R}_{i \ov{b} p}^{\ \ \ k} \ov{\Psi_{j q}^{\ell }} \right) \\
 & \le - | \ov{\nabla} \Psi |^2 - | \nabla \Psi |^2 + S + C |\sigma|_h^{-K} \sqrt{S},
\end{align}
for a uniform constant $K$.  We have
\begin{equation}
| \partial S | \le \sqrt{S} (| \ov{\nabla} \Psi| + | \nabla \Psi |).
\end{equation}
Moreover,
\begin{equation} \label{boundsigma}
| \partial |\sigma|_h^{4K} | \le C | \sigma|_h^{3K} \quad \textrm{and} \quad | \Delta |\sigma|^{4K}_h | \le C | \sigma|_h^{3K},
\end{equation}
where we are increasing $K$ if necessary.  Then
\begin{align} \nonumber
\left( \ddt{} - \Delta \right) ( |\sigma|_h^{4K} S) & = | \sigma|_h^{4K} \left( \ddt{} - \Delta \right) S - 2 \textrm{Re} (g^{\ov{j} i} \partial_i |\sigma|^{4K}_h \partial_{\ov{j}}  S )  - (\Delta |\sigma|_h^{4K}) S \\ \nonumber
& \le - |\sigma|_h^{4K} ( | \ov{\nabla} \Psi |^2 + | \nabla \Psi |^2 ) + C |\sigma|_h^{3K} \sqrt{S} ( | \ov{\nabla} \Psi| + | \nabla \Psi|) \\ \nonumber \mbox{} & + C |\sigma|_h^{2K} S + C \\ \label{sigS}
& \le C(1+ |\sigma|_h^{2K} S).
\end{align}
But from Proposition \ref{propkeyeqn} and (\ref{ulbm})
\begin{align} \nonumber
\left( \ddt{} - \Delta \right) \tr{\omega_0}{\omega} & = - \tr{\omega_0}{\omega} - g^{\ov{\ell} k} {R(g_0)}_{k \ov{\ell}}^{\ \ \, \ov{j}i}  g_{i \ov{j}} - g_0^{\ov{j} i} g^{\ov{q}p} g^{\ov{\ell} k} \nabla^0_i g_{p \ov{\ell}} \nabla^0_{\ov{j}} g_{k \ov{q}} \\
& \le C | \sigma|_h^{-K} - \frac{1}{C} |\sigma|_h^K S - \frac{1}{2} g_0^{\ov{j} i} g^{\ov{q}p} g^{\ov{\ell} k} \nabla^0_i g_{p \ov{\ell}} \nabla^0_{\ov{j}} g_{k \ov{q}},
\end{align}
where  $\nabla^0$ denotes the covariant derivative with respect to $g_0$.  We may assume that $K$ is large enough so that $| (\Delta | \sigma|_h^K) \tr{\omega_0}{\omega}| \le C$.  Then
\begin{align} \nonumber
\left( \ddt{} - \Delta \right) ( | \sigma|_h^{K} \tr{\omega_0}{\omega}) & \le - \frac{1}{C} | \sigma|^{2K}_h S + C - 2 \textrm{Re} ( g^{\ov{j}i} \partial_i | \sigma|_h^K \partial_{\ov{j}} \tr{\omega_0}{\omega}) \\ \nonumber & \mbox{} - \frac{1}{2} |\sigma|_h^K  g_0^{\ov{j} i} g^{\ov{q}p} g^{\ov{\ell} k} \nabla^0_i g_{p \ov{\ell}} \nabla^0_{\ov{j}} g_{k \ov{q}} \\ \label{sigtr}
& \le - \frac{1}{C} | \sigma|^{2K}_h S +C,
\end{align}
where for the last line we have used:
\begin{align}
| 2 \textrm{Re} ( g^{\ov{j}i} \partial_i | \sigma|_h^K \partial_{\ov{j}} \tr{\omega_0}{\omega}) | & \le C + \frac{1}{C} | \partial |\sigma|^K_h |^2 | \partial \tr{\omega_0}{\omega}|^2 \\
& \le C + \frac{1}{2} |\sigma|^K_h g_0^{\ov{j} i} g^{\ov{q}p} g^{\ov{\ell} k} \nabla^0_i g_{p \ov{\ell}} \nabla^0_{\ov{j}} g_{k \ov{q}} ,
\end{align}
which follows from (\ref{claimcs}), increasing $K$ if necessary.

Now define $Q = | \sigma|_h^{4K}S + A |\sigma|_h^K \tr{\omega_0}{\omega}$ for a constant $A$.  Combining (\ref{sigS})  and (\ref{sigtr}) we see that for $A$ sufficiently large,
\begin{align}
\left( \ddt{} - \Delta \right) Q \le - |\sigma|^{2K}_h S + C,
\end{align}
and then $Q$ is bounded from above by the maximum principle.  The bound on $S$ then follows. \qed
\end{proof}

As a consequence:

\begin{lemma} \label{hoesigma2}
$\varphi=\varphi(t)$ and $\omega=\omega(t)$ are uniformly bounded in $C^{\infty}_{\emph{loc}}(M \setminus E)$.
\end{lemma}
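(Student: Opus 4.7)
The plan is to upgrade the metric bound \eqref{ulbm} of Lemma \ref{trick2} and the curvature derivative bounds of Lemma \ref{hoesigma} to full $C^\infty_{\text{loc}}(M\setminus E)$ estimates by applying the local higher-order regularity theory developed earlier in Section \ref{secthoe}.

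First I would fix an arbitrary compact subset $K \subset M\setminus E$. Since the support of $\sigma$ intersected with $K$ is empty, the function $|\sigma|_h$ is bounded below by a positive constant $c_K>0$ on $K$. Then Lemma \ref{trick2} gives uniform constants such that
\begin{equation}
\frac{1}{C_K}\,\omega_0 \le \omega(t) \le C_K \,\omega_0 \qquad \text{on } K\times [0,\infty),
\end{equation}
and Lemma \ref{hoesigma} similarly gives $S\le C_K'$ and $|\nabla_{\mathbb{R}}^m \mathrm{Rm}(g)| \le C_{K,m}$ on $K$ for every $m$. Choose an open neighborhood $U$ of $K$ with $K\subset U \Subset M\setminus E$; the same reasoning gives uniform bounds on $U$.

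Next I would apply the local interior estimate (Theorem \ref{interior}), with the open set there taken to be $U$ and the compact subset $K$ as above. Its hypotheses are exactly the bounds just obtained (a two-sided metric bound against $\omega_0$ together with uniform bounds on $S$ and on all covariant derivatives of the curvature tensor). The conclusion is that for every $m\ge 1$ there is a constant $C_{K,m}$, depending only on $\omega_0$, $K$, $U$ and the constants from Lemmas \ref{trick2} and \ref{hoesigma}, such that $\|\omega(t)\|_{C^m(K,g_0)} \le C_{K,m}$ for all $t\ge 0$. This yields uniform $C^\infty_{\text{loc}}(M\setminus E)$ bounds for $\omega(t)$.

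Finally I would transfer these to $\varphi(t)$. From Lemma \ref{phiE}(i) and Lemma \ref{lemmat1}(i) we have $\ve \log|\sigma|_h^2 - C_\ve \le \varphi(t) \le C$ on $M\setminus E$, which bounds $\varphi$ uniformly in $C^0(K)$. Combined with the relation $\ddbar \varphi(t) = \omega(t) - \hat{\omega}_t$, whose right-hand side is now uniformly bounded in $C^\infty_{\text{loc}}(M\setminus E)$, the standard local Schauder (or $L^p$) estimates applied to the Poisson equation $\Delta_{g_0} \varphi = \operatorname{tr}_{\omega_0}(\omega - \hat{\omega}_t)$, together with a bootstrapping argument identical to the one used in the proof of Theorem \ref{interior}, give uniform $C^m(K,g_0)$ bounds for $\varphi(t)$ for every $m\ge 0$. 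No step here is technically hard, since all the real work has already been done in Section \ref{sect3rd}--\ref{secthoe} and in Lemmas \ref{trick2}--\ref{hoesigma}; the only subtlety is noting that the constants produced by Theorem \ref{interior} depend only on $K$ and $U$ and on uniform bounds that, thanks to $|\sigma|_h \ge c_K>0$ on $K$, are genuinely uniform in $t$.
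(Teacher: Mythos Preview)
Your proposal is correct and follows essentially the same route as the paper: apply Theorem \ref{interior} (using the metric bounds from Lemma \ref{trick2} and the curvature bounds from Lemma \ref{hoesigma}, which become uniform on compact subsets of $M\setminus E$ since $|\sigma|_h$ is bounded below there) to get $C^\infty_{\mathrm{loc}}$ estimates on $\omega$, and then use the $C^0$ bound on $\varphi$ from Lemmas \ref{lemmat1} and \ref{phiE} together with $\ddbar\varphi=\omega-\hat\omega_t$ to upgrade to $C^\infty_{\mathrm{loc}}$ estimates on $\varphi$. The paper's proof is just a two-sentence summary of exactly this argument.
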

\begin{proof}
Applying Theorem \ref{interior} gives the $C^{\infty}_{\textrm{loc}}(M \setminus E)$ bounds for $\omega$.  Since by Lemmas \ref{lemmat1} and \ref{phiE}, $\varphi$ is uniformly bounded (in $C^0$) on compact subsets of $M \setminus E$, the $C^{\infty}_{\textrm{loc}}(M \setminus E)$ bounds on $\varphi$ follow from those on $\omega$.  \qed
\end{proof}


\subsection{Convergence of the flow and uniqueness of the limit} \label{convunique}

We now complete the proof of Theorem \ref{tsuji}.  From part (ii) of Lemma \ref{lemmat1} we have $\dot{\varphi} \le C t e^{-t}$ for $t \ge t'$.  Hence for $t \ge t'$,
\begin{equation}
\ddt{} \left( \varphi + C e^{-t} (t+1) \right) \le 0.
\end{equation}
On the other hand, from Lemma \ref{phiE}, the quantity $\varphi + C e^{-t}(t+1)$ is uniformly bounded from below on compact subsets of $M \setminus E$.  Hence $\varphi(t)$ converges pointwise on $M \setminus E$ to a function $\varphi_{\infty}$.  Since we have $C^{\infty}_{\textrm{loc}}(M \setminus E)$ estimates for $\varphi(t)$ this implies, by a similar argument to that given in the proof of Theorem \ref{thmc1n}, that $\varphi$ converges to $\varphi_{\infty}$ in $C^{\infty}_{\textrm{loc}}(M \setminus E)$.  In particular $\varphi_{\infty}$ is smooth on $M \setminus E$.  Define $\omega_{\infty} = \hat{\omega}_{\infty} + \ddbar \varphi_{\infty}$.  Then $\omega_{\infty}$ is a smooth K\"ahler metric on $M \setminus E$.

Moreover, since $\varphi(t)$ converges to $\varphi_{\infty}$ we must have, for each $x \in M \setminus E$,  $\dot{\varphi} (x, t_i) \rightarrow 0$ for a sequence of times $t_i \rightarrow \infty$.  But since $\dot{\varphi}(t)$ converges in $C^{\infty}_{\textrm{loc}}(M \setminus E)$ as $t \rightarrow \infty$ we have by uniqueness of limits that $\dot{\varphi}(t)$ converges to zero in $C^{\infty}_{\textrm{loc}}(M \setminus E)$ as $t \rightarrow \infty$.  Taking the limit of (\ref{npcma2}) as $t \rightarrow \infty$ we obtain
\begin{equation} \label{limittsuji}
\log \frac{ \omega_{\infty}^n}{\Omega} - \varphi_{\infty} =0
\end{equation}
on $M \setminus E$ and applying $\partial \ov{\partial}$ to this equation gives  $\Ric(\omega_{\infty}) = - \omega_{\infty}$ on $M \setminus E$.  This completes the proof of Theorem \ref{tsuji}.

We have now proved the existence of a singular K\"ahler-Einstein metric on $M$.  We now prove a uniqueness result.  Let $\Omega$, $\hat{\omega}_{\infty}$, $\sigma$ and $h$ be as above.

\begin{theorem} \label{eu}  There exists a unique smooth K\"ahler metric $\omega_{\emph{KE}}$ on $M \setminus E$ satisfying
\begin{enumerate}
\item[(i)] $\displaystyle{\emph{Ric}(\omega_{\emph{KE}}) = - \omega_{\emph{KE}}}$ on  $M \setminus E$.
\item[(ii)] There exists a constant $C$ and for every $\ve>0$ a constant $C_{\ve}>0$ with
\begin{equation} \label{cond1}
 \frac{1}{C_{\ve}} | \sigma|_h^{2\ve} \Omega \le \omega_{\emph{KE}}^n \le C \Omega, \qquad \textrm{on } M \setminus E.
\end{equation}
\end{enumerate}
\end{theorem}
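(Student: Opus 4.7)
The statement combines existence and uniqueness; the existence half is essentially a corollary of Theorem \ref{tsuji}, while uniqueness is the substantive content.

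\paragraph{Existence.} I would take $\omega_{\textrm{KE}}$ to be the metric constructed as the $C^{\infty}_{\textrm{loc}}(M \setminus E)$ limit of the normalized K\"ahler-Ricci flow in the proof of Theorem \ref{tsuji}, so that $S$ in that theorem is the support of the divisor $E$ produced by Tsuji's application of Kodaira's Lemma. Condition (i) is immediate from Theorem \ref{tsuji}. For (ii), the upper bound $\omega_{\textrm{KE}}^n \le C\Omega$ passes to the limit from Lemma \ref{lemmat1}(iii), and the lower bound for $\ve \in (0,\delta]$ passes to the limit from Lemma \ref{phiE}(iii); for $\ve > \delta$, after normalizing $h$ so that $\sup_M |\sigma|_h \le 1$ we have $|\sigma|_h^{2\ve} \le |\sigma|_h^{2\delta}$, so the $\ve = \delta$ bound suffices.

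\paragraph{Uniqueness.} Given a second K\"ahler-Einstein metric $\omega'_{\textrm{KE}}$ on $M\setminus E$ satisfying (i) and (ii), I would compare potentials. Writing $\omega_{\textrm{KE}} = \hat\omega_\infty + \ddbar\varphi$ and $\omega'_{\textrm{KE}} = \hat\omega_\infty + \ddbar\varphi'$ and combining the K\"ahler-Einstein equation with $\ddbar\log \Omega = \hat\omega_\infty$, each potential satisfies a Monge-Amp\`ere equation $(\hat\omega_\infty + \ddbar\varphi)^n = e^{\varphi + c}\Omega$ on $M\setminus E$; by a constant shift I normalize $c = 0$ for both. The bounds in (ii) then give $\varphi, \varphi' \le C$ and $\varphi, \varphi' \ge 2\ve \log|\sigma|_h - C_\ve$ for each $\ve \in (0,\delta]$. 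The difference $\psi := \varphi' - \varphi$ satisfies
\begin{equation}
(\omega_{\textrm{KE}} + \ddbar\psi)^n = e^\psi (\omega_{\textrm{KE}})^n \qquad \text{on } M \setminus E,
\end{equation}
and the naive maximum principle would immediately give $\psi \equiv 0$: at an interior maximum $\ddbar\psi \le 0$ forces $e^\psi \le 1$, with the analogous argument at a minimum. The issue is that the extrema need not lie in $M\setminus E$. To force attainment I would introduce the perturbation $\psi_\eta := \psi + \eta\log|\sigma|_h^2$ for small $\eta > 0$. Since the upper bound on $\psi$ gives, with $\ve = \eta/2$, a pointwise bound $\psi \le C_\eta - \eta(-\log|\sigma|_h)$, we have $\psi_\eta \to -\infty$ at $E$, so $\psi_\eta$ achieves its supremum at some $p_\eta \in M\setminus E$. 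Using $\ddbar\log|\sigma|_h^2 = -R_h$ on $M \setminus E$, at $p_\eta$ we have $\ddbar\psi \le \eta R_h$, which after taking $n$-th wedge products and using the Monge-Amp\`ere equation yields
\begin{equation}
e^{\psi(p_\eta)} \le \det\nolimits_{\omega_{\textrm{KE}}}(\omega_{\textrm{KE}} + \eta R_h) \le e^{\eta\,\textrm{tr}_{\omega_{\textrm{KE}}} R_h(p_\eta)}.
\end{equation}
Combining this with the reverse comparison $\psi_\eta(p_\eta) \ge \psi_\eta(q_0)$ for a fixed $q_0 \in M\setminus E$ and letting $\eta \to 0$ should give $\sup \psi \le 0$; applying the same argument to $-\psi$ gives $\psi \ge 0$, hence $\psi \equiv 0$ and $\omega_{\textrm{KE}} = \omega'_{\textrm{KE}}$.

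\paragraph{Main obstacle.} The hard part will be controlling the error $\eta\, \textrm{tr}_{\omega_{\textrm{KE}}} R_h(p_\eta)$ as $\eta \to 0$, since the maximum point $p_\eta$ may drift toward $E$ and $\omega_{\textrm{KE}}$ may degenerate there. The resolution should rest on three ingredients working in concert: (a) Tsuji's bound $R_h \le \hat\omega_\infty/\delta$ coming from $\hat\omega_\infty - \delta R_h \ge c\omega_0$; (b) the precise polynomial degeneracy rate $\omega_{\textrm{KE}} \ge |\sigma|_h^{2\alpha'} \omega_0/C$ provided by Lemma \ref{trick2} for the flow limit (together with a parallel elliptic Aubin--Yau $C^2$ estimate to obtain the same bound for $\omega'_{\textrm{KE}}$ from the Monge-Amp\`ere equation with $\varphi'$ bounded above); and (c) the optimization inequality obtained by testing $\psi_\eta(p_\eta) \ge \psi_\eta(q_0)$ against the upper bound on $\psi(p_\eta)$, balancing the terms $\eta/|\sigma|_h^{2\alpha'}(p_\eta)$ against $\eta \log|\sigma|_h(p_\eta)$ at the critical value of $|\sigma|_h(p_\eta)$.
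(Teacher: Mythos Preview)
Your existence argument matches the paper's. The uniqueness argument, however, has a genuine gap that your ``main obstacle'' paragraph does not close.

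Write $s_\eta = |\sigma|_h(p_\eta)$. With only the polynomial lower bound $\omega_{\textrm{KE}} \ge C^{-1}|\sigma|_h^{2\alpha'}\omega_0$ from Lemma~\ref{trick2} (and this is all an Aubin--Yau $C^2$ estimate would give you for $\omega'_{\textrm{KE}}$ as well), your error term at the maximum point satisfies
\[
\eta\,\textrm{tr}_{\omega_{\textrm{KE}}} R_h(p_\eta) \ \le\ C\eta\, s_\eta^{-2\alpha'}.
\]
The ``balance'' you propose in (c) does not exist: the only constraint you obtain from $\psi_\eta(p_\eta) \ge \psi_\eta(q_0)$ is $C\eta s_\eta^{-2\alpha'} + 2\eta\log s_\eta \ge \psi(q_0) - O(\eta)$, which for $\psi(q_0)>0$ actually forces $s_\eta \to 0$. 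Since $s \mapsto Cs^{-2\alpha'} + 2\log s$ is unbounded above as $s \to 0$ (the polynomial term dominates the logarithm), your upper bound $\psi_\eta(p_\eta) \le \eta(C s_\eta^{-2\alpha'} + 2\log s_\eta)$ gives no control, and you cannot conclude $\sup\psi \le 0$. In short, a logarithmic barrier cannot compensate for a polynomial blow-up of the background metric.

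The paper avoids this by a different perturbation that never needs any metric lower bound on $\omega_{\textrm{KE}}$ or $\omega'_{\textrm{KE}}$ --- only the volume condition (ii). Using the notation $\psi = \log(\omega_{\textrm{KE}}^n/\Omega)$ and $\tilde\psi = \log(\tilde\omega_{\textrm{KE}}^n/\Omega)$ (so that $\omega_{\textrm{KE}} = \hat\omega_\infty + \ddbar\psi$, etc.), one considers the \emph{two}-parameter quantity
\[
H \ =\ \psi - (1-\delta)\tilde\psi - \delta\ve\log|\sigma|_h^2.
\]
The point of the coefficient $(1-\delta)$ is that it frees up $\delta\hat\omega_\infty$ from the background: one has
\[
\hat\omega_\infty + \ddbar\big((1-\delta)\tilde\psi + \delta\ve\log|\sigma|_h^2\big) \ =\ (1-\delta)\tilde\omega_{\textrm{KE}} + \delta(\hat\omega_\infty - \ve R_h),
\]
and $\hat\omega_\infty - \ve R_h$ is a fixed K\"ahler form on all of $M$ (this is exactly the content of Kodaira's Lemma). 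Thus the curvature perturbation is absorbed by a \emph{non-degenerating} form rather than by $\omega_{\textrm{KE}}$, and at the minimum of $H$ (which lies in $M\setminus E$ by the same $\ve' = \delta\ve/2$ trick you used) one obtains the clean bound $\psi - \tilde\psi \ge n\log(1-\delta)$ with no degenerate trace term. Choosing $\delta = \delta(\ve)$ small and letting $\ve \to 0$ finishes the argument. The missing idea in your proposal is precisely this $(1-\delta)$ weight on $\tilde\psi$.
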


Note that although it may appear that condition (ii) depends on the  choices of $\Omega$,  $\sigma$ and $h$, in fact it is easy to see it does  not.

\begin{proof}[Proof of Theorem \ref{eu}]   The existence part follows immediately from Theorem \ref{tsuji}, Lemma \ref{lemmat1} and Lemma \ref{phiE}, so it remains to prove uniqueness.   Suppose $\oke$ and $\toke$ are two solutions and define functions $\psi$ and $\tilde{\psi}$ on $M \setminus E$ by
\begin{equation}
\psi = \log \frac{\oke^n}{\Omega} \quad \textrm{and} \quad \tilde{\psi} = \log \frac{\toke^n}{\Omega},
\end{equation}
with $\Omega$ as in (\ref{vfcond1}).  Then we have
\begin{equation} \label{kepsi}
\oke = - \Ric(\oke) =  \hat{\omega}_{\infty} +\ddbar \psi, \quad   \toke= - \Ric(\toke) = \hat{\omega}_{\infty} + \ddbar \tilde{\psi}.
\end{equation}
Hence it suffices to show that $\psi = \tilde{\psi}$.  By symmetry it is enough to show $\psi \ge \tilde{\psi}$.

For any $\ve>0$ and $\delta>0$ sufficiently small, define
\begin{equation}
H = \psi - (1-\delta) \tilde{\psi} - \delta \ve \log |\sigma|^2_h.
\end{equation}
From the condition (\ref{cond1}),  $\tilde{\psi}$ is bounded from above and $\psi \ge \ve' \log |\sigma|^2_h -C_{\ve'}$ for any $\ve'>0$.  Taking $\ve' = \ve \delta/2$ we see that
\begin{equation}
H \ge -\frac{\ve \delta}{2} \log |\sigma|^2_h - C_{\ve'} - C,
\end{equation}
and hence
 $H$ is bounded from below by a constant depending on $\ve$ and $\delta$ and  tends to infinity on $E$.  Hence $H$ achieves a minimum at a point $x_0 \in M \setminus E$.

On the other hand, we have
\begin{equation}
\log \frac{\oke^n}{\toke^n} = \psi - \tilde{\psi},
\end{equation}
which using (\ref{kepsi}) we can rewrite as
\begin{equation}
\log \frac{ \left(\hat{\omega}_{\infty} + (1- \delta) \ddbar \tilde{\psi} - \delta \ve R_h + \ddbar H\right)^n}{\toke^n} = \psi - \tilde{\psi}.
\end{equation}
Since $\delta \hat{\omega}_{\infty} - \delta \ve R_h$ is K\"ahler for $\ve$ sufficiently small, we obtain
\begin{align}
\psi - \tilde{\psi} \ge \log \frac{ (1-\delta)^n \left(\toke + \ddbar \left(\frac{H}{1-\delta}\right) \right)^n}{\toke^n}.
\end{align}
Hence at the point $x_0$ at which $H$ achieves a minimum we have
\begin{equation}
\psi - \tilde{\psi} \ge n \log (1-\delta),
\end{equation}
\end{proof}
and so, using the inequality $\tilde{\psi} \ge \ve \log |\sigma|^2_h - C_{\ve}$,
\begin{equation}
H(x_0) \ge \delta \tilde{\psi}(x_0) + n \log (1-\delta) - \delta \ve \log |\sigma|^2_h(x_0) \ge - \delta C_{\ve} + n\log(1-\delta).
\end{equation}
For any $\ve>0$ we may choose $\delta= \delta(\ve)$ sufficiently small so that $\delta C_{\ve} < \ve/2$ and $n \log (1-\delta) > -\ve/2$, giving $H(x_0) \ge - \ve$ and hence $H \ge -\ve$ on $M \setminus E$.  It follows that on $M \setminus E$,
\begin{equation}
\psi \ge (1-\delta) \tilde{\psi} + \delta \ve \log |\sigma|^2_h - \ve.
\end{equation}
Letting $\ve \rightarrow 0$ (so that $\delta \rightarrow 0$ too) gives $\psi \ge \tilde{\psi}$ as required.  \qed

\pagebreak[3]
\subsection{Further estimates using pluripotential theory} \label{sectionpluri}

In this section we will show how results from pluripotential theory can be used to improve on the estimates given in the proof of Theorem \ref{tsuji}.

The following \emph{a priori} estimate, extending Yau's zeroth order estimate, was proved by Eyssidieux-Guedj-Zeriahi  \cite{EGZ2}.  A slightly weaker version of this result, which would also suffice for our purposes, was proved independently by Zhang \cite{Zha1}.

\begin{theorem} \label{EGZ}
Let $M$ be a compact K\"ahler manifold and $\omega$ a closed smooth semi-positive $(1,1)$-form with $\int_M \omega^n>0$.   Let $f$ be a smooth nonnegative function.   Fix $p>1$. Then if $\varphi$ is a smooth function with $\omega + \ddbar \varphi \ge 0$ solving the complex Monge-Amp\`ere equation
\begin{equation}
(\omega+ \ddbar \varphi)^n = f \omega^n,
\end{equation}
then there exists a constant $C$ depending only on $M, \omega$ and $\| f\|_{L^p(M, \omega)}$  such that
\begin{equation}
\emph{osc}_M \varphi \le C.
\end{equation}
\end{theorem}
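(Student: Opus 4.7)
The plan is to follow the pluripotential-theoretic strategy of Kolodziej \cite{Kol1}, adapted by Eyssidieux-Guedj-Zeriahi to the degenerate setting where $\omega$ is semi-positive but $[\omega]$ is only big. Since $\int_M \omega^n>0$, I would first fix once and for all an auxiliary K\"ahler form $\hat{\omega}$ on $M$ with $\hat{\omega} \ge \omega$ as $(1,1)$-forms; then every $\omega$-plurisubharmonic function is automatically $\hat{\omega}$-plurisubharmonic, and $\omega^n \le \hat{\omega}^n$ as measures. Normalize $\varphi$ by $\sup_M \varphi = 0$, so the problem reduces to proving a uniform lower bound $\inf_M \varphi \ge -C$.

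The key tool is the Monge--Amp\`ere capacity
\[
\mathrm{Cap}(E) = \sup \left\{ \int_E (\hat{\omega} + \ddbar u)^n \ : \ u \text{ is } \hat{\omega}\text{-psh},\ -1 \le u \le 0 \right\},
\]
defined for Borel $E \subseteq M$. By the comparison principle for complex Monge--Amp\`ere measures, for any $s>0$ and $t \in (0,1]$ one has the sublevel set estimate
\[
t^n \, \mathrm{Cap}(\{\varphi < -s-t\}) \ \le\  \int_{\{\varphi < -s\}} \omega_{\varphi}^n .
\]
Using the equation $\omega_{\varphi}^n = f\, \omega^n$ together with $\omega^n \le \hat{\omega}^n$ and H\"older's inequality, I would bound the right-hand side by
\[
\int_{\{\varphi<-s\}} \omega_{\varphi}^n \ \le\  \|f\|_{L^p(M,\omega)} \cdot V(\{\varphi<-s\})^{1/q},
\]
where $V$ denotes $\hat{\omega}^n$-volume and $1/p+1/q=1$.

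The crucial input is a volume-capacity comparison. The normalization $\sup_M \varphi = 0$ yields a uniform $L^1(\hat{\omega}^n)$ bound on $\varphi$ via standard compactness of $\hat{\omega}$-psh functions with bounded supremum; then a Skoda-Zeriahi exponential integrability estimate produces $V(\{\varphi<-s\}) \le C\,\mathrm{Cap}(\{\varphi<-s\})^{a}$ for some $a>0$. Combining the three inequalities above yields a self-improving Kolodziej-type estimate
\[
t^n \, \mathrm{Cap}(\{\varphi<-s-t\}) \ \le\  C \, \mathrm{Cap}(\{\varphi<-s\})^{1+\epsilon},
\]
with $\epsilon>0$ depending only on $p$. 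Applying the De Giorgi--Kolodziej iteration lemma then shows that $\mathrm{Cap}(\{\varphi<-s_0\})=0$ for some uniform $s_0$, whence $\inf_M \varphi \ge -s_0$ and the theorem follows.

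The principal obstacle, and the essential novelty beyond Yau's K\"ahler case, is the degeneracy of $\omega$: being only semi-positive, it does not define a Riemannian metric with a uniform Sobolev constant, so Yau's Moser iteration as used in Theorem \ref{yau} is unavailable. The remedy is to work with the auxiliary K\"ahler form $\hat{\omega}$ throughout the capacity arguments, and the delicate point is to ensure that every constant in the volume-capacity comparison depends only on $\|f\|_{L^p(M,\omega)}$ and the fixed data $(M,\omega,\hat{\omega})$, not on quantities attached to $\varphi$ itself. In the original arguments of \cite{EGZ2} and \cite{Zha1}, this is accomplished by a careful analysis of $\hat{\omega}$-psh envelopes and by exploiting bigness of $[\omega]$ (equivalently Kodaira's Lemma, Theorem \ref{algebraic}(iii)) to produce the required integrability.
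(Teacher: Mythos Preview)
Your sketch follows the standard Ko{\l}odziej--Eyssidieux--Guedj--Zeriahi pluripotential argument and is essentially correct as an outline. Note, however, that the paper does \emph{not} prove this theorem: immediately after the statement it reads ``We omit the proof of this theorem since it goes beyond the scope of these notes,'' and simply cites \cite{Kol1, EGZ2, Zha1, BEGZ}. So there is no paper proof to compare against; your proposal is precisely the approach of those references.

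One minor correction: the appeal to Kodaira's Lemma (Theorem~\ref{algebraic}(iii)) at the end is not needed for the $C^0$ estimate as stated. The capacity and volume--capacity comparison are carried out entirely with respect to the auxiliary K\"ahler form $\hat{\omega}$, and the only role of the hypothesis $\int_M \omega^n>0$ is to ensure the total mass is positive so that the iteration terminates. Bigness enters elsewhere in \cite{EGZ2} (for constructing the singular K\"ahler--Einstein metric itself), but not in this particular a priori oscillation bound.
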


The differences between this result and Theorem \ref{yau} are that here $\omega$ is only required to be semi-positive   and the estimate on $\varphi$ depends only on the $L^p$ bound of the right hand side of the equation.   We remark that we have not stated the result in the sharpest possible way.  The conditions that $\varphi$ and $f$ are smooth can be relaxed to $\varphi$ being bounded  with $\omega+\ddbar \varphi \ge 0$  and $f$ being in $L^p$.   We have ignored this to avoid technicalities such as defining the Monge-Amp\`ere operator in this more general setting.
We omit the proof of this theorem since it goes beyond the scope of these notes.  The theorem is a generalization of a seminal work of Ko{\l}odziej \cite{Kol1}.  For a further generalization, see \cite{BEGZ}.

We will apply Theorem \ref{EGZ} to show that the solution $\varphi=\varphi(t)$ of the parabolic complex Monge-Amp\`ere equation (\ref{npcma2}) is uniformly bounded, a result first established by Tian-Zhang \cite{TZha}.  Moreover, we can in addition obtain a bound on $\dot{\varphi}$ \cite{Zha2}.

\begin{proposition} \label{propzhang}
There exists a uniform $C$ such that under the assumptions of Theorem \ref{tsuji}, $\varphi$ solving (\ref{npcma2}) satisfies for $t \in [0,\infty)$,
\begin{equation}
\| \varphi \|_{C^0} \le C \quad \textrm{and} \quad \| \dot{\varphi} \|_{C^0} \le C.
\end{equation}
Hence there exists a uniform constant $C'>0$ such that for $t \in [0,\infty)$,
\begin{equation}
\frac{1}{C'} \Omega \le \omega^n \le C' \Omega.
\end{equation}
\end{proposition}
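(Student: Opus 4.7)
The upper bounds are already in place: $\varphi \le C$ from Lemma~\ref{lemmat1}(i) and $\dot\varphi \le Cte^{-t} \le C$ from Lemma~\ref{lemmat1}(ii); consequently $\omega^n / \Omega = e^{\dot\varphi + \varphi} \le C$. The real content is the two lower bounds, and the essentially new input beyond what has already been deployed in Section~\ref{secttsuji} is Theorem~\ref{EGZ}.

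\textbf{Step 1 (lower bound on $\varphi$).} Freeze $t$ and view (\ref{npcma2}) as the elliptic complex Monge--Amp\`ere equation
\begin{equation*}
(\hat\omega_t + \ddbar \varphi)^n = e^{\dot\varphi + \varphi}\, \Omega.
\end{equation*}
By the upper bounds from Lemma~\ref{lemmat1}, the right-hand side is bounded above by $C\,\Omega$. The natural reference form for applying Theorem~\ref{EGZ} uniformly in $t$ is the semi-positive $(1,1)$-form $\hat\omega_\infty = \frac{1}{m}\Phi^*\omega_{\textrm{FS}}$, which has $\int_M \hat\omega_\infty^n > 0$ precisely because $K_M$ is big. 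Writing $\hat\omega_t = \hat\omega_\infty + e^{-t}(\omega_0 - \hat\omega_\infty)$, the perturbation $e^{-t}(\omega_0 - \hat\omega_\infty)$ is a smooth $(1,1)$-form uniformly bounded (and vanishing as $t \to \infty$). One recasts the equation so that Theorem~\ref{EGZ} is applied with reference $\hat\omega_\infty$ and a right-hand side whose $L^p(\hat\omega_\infty^n)$ norm is controlled uniformly in $t$ using the upper bounds on $\varphi$ and $\dot\varphi$. This yields $\osc_M \varphi \le C$ with a constant independent of $t$. Combined with the upper bound, this gives $\|\varphi\|_{C^0(M)} \le C$.

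\textbf{Step 2 (lower bound on $\dot\varphi$).} With $\|\varphi\|_{C^0}$ now controlled, the lower bound on $\dot\varphi$ is equivalent to $\omega^n \ge c\,\Omega$. The starting point is the identity (\ref{dotphiphi}),
\begin{equation*}
\left(\ddt{} - \Delta\right)(\dot\varphi + \varphi + nt) = \tr_\omega \hat\omega_\infty \ge 0,
\end{equation*}
which by itself shows only that $\dot\varphi + \varphi + nt$ attains its minimum at $t=0$ and hence $\dot\varphi \ge -\varphi - nt - C$; the linear $nt$ term spoils uniformity. The plan is to combine this with an auxiliary test quantity designed to cancel the linear growth, for example one built from $e^t(\varphi - \varphi_*)$ or $(1-e^{-t})\dot\varphi$, where $\varphi_*$ is a potential for the limit singular K\"ahler--Einstein metric supplied by Theorem~\ref{tsuji}. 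The evolution equation (\ref{evolvephidot}) for $\dot\varphi$, together with $\tr_\omega\hat\omega_\infty \ge 0$, then yields a parabolic inequality to which the maximum principle (Proposition~\ref{pmp}) applies, producing $\dot\varphi \ge -C$.

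\textbf{Main obstacle.} The essential difficulty is making Step~1 uniform in $t$: the reference K\"ahler metric $\hat\omega_t$ degenerates to the merely semi-positive $\hat\omega_\infty$ as $t \to \infty$, so Yau's estimate (Theorem~\ref{yau}) applied to $\hat\omega_t$ would produce constants that potentially blow up. It is precisely the strength of Theorem~\ref{EGZ}---its tolerance of semi-positive reference forms, provided $\int_M \hat\omega_\infty^n > 0$---together with the bigness of $K_M$, that rescues uniformity. Step~2 is comparatively softer once Step~1 is available, but requires a careful choice of test quantity to eliminate the $nt$ term appearing naturally in (\ref{dotphiphi}).
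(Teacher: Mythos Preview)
Your Step~1 has a genuine gap, and Step~2 misses the actual argument entirely.

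\textbf{Step 1.} First, your ``recasting'' with reference form $\hat\omega_\infty$ is not available: $\hat\omega_t - \hat\omega_\infty = e^{-t}(\omega_0 - \hat\omega_\infty)$ is not $\ddbar$-exact unless $[\omega_0] = -c_1(M)$, which is not assumed, so you cannot write $\omega = \hat\omega_\infty + \ddbar\psi$. The paper simply applies Theorem~\ref{EGZ} with reference form $\hat\omega_t$ itself and $f = e^{\dot\varphi+\varphi}\,\Omega/\hat\omega_t^n$, which is bounded above by Lemma~\ref{lemmat1}; the constant in Theorem~\ref{EGZ} is uniform because the family $\hat\omega_t$ is bounded above by a fixed K\"ahler form and has total volume bounded below (by bigness of $K_M$). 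Second, and more seriously, you write ``Combined with the upper bound, this gives $\|\varphi\|_{C^0} \le C$'', but this is false as stated: an oscillation bound plus an \emph{upper} bound on $\varphi$ does not produce a \emph{lower} bound. You need a point where $\varphi$ is not too negative. The paper supplies this via $\int_M e^{\dot\varphi+\varphi}\,\Omega = \int_M \hat\omega_t^n \ge c > 0$, so at each time there is some $x$ with $e^{\dot\varphi(x)+\varphi(x)} \ge c/\textrm{Vol}$; since $\dot\varphi \le C$, this forces $\varphi(x) \ge -C'$.

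\textbf{Step 2.} Your proposed test quantities are vague, and invoking the singular K\"ahler--Einstein potential $\varphi_*$ (even if not circular) leads nowhere concrete. The paper's argument, due to Zhang, is far simpler and uses an ingredient you did not identify: the scalar curvature lower bound $R \ge -n - C_0 e^{-t}$ from Theorem~\ref{scalar}. Since $\frac{\partial}{\partial t}(\dot\varphi+\varphi) = \frac{\partial}{\partial t}\log(\omega^n/\Omega) = -R - n \le C_0 e^{-t}$, if ever $\dot\varphi(x_0,t_0) \le -4C_0$ (taking $C_0 \ge \|\varphi\|_{C^0}$), then integrating from $t_0$ to $t$ gives $(\dot\varphi+\varphi)(x_0,t) \le (\dot\varphi+\varphi)(x_0,t_0) + C_0$, hence $\dot\varphi(x_0,t) \le -C_0$ for all $t > t_0$. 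But then $\varphi(x_0,t) \to -\infty$, contradicting Step~1. No parabolic maximum principle or elaborate test quantity is needed.
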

\begin{proof} First observe that
\begin{equation}
(\hat{\omega}_t + \ddbar \varphi)^n = f \hat{\omega}_t^n, \quad \textrm{for} \quad f = e^{\dot{\varphi} + \varphi} \frac{\Omega}{\hat{\omega}_t^n}\ge 0.
\end{equation}
From the definition of $\hat{\omega}_t$ and Lemma \ref{lemmat1} we see that $f$ is uniformly bounded from above, and hence bounded in $L^p$ for any $p$.  Applying Theorem \ref{EGZ} we see that $\textrm{osc}_M \varphi \le C$ for some uniform constant.

For the bound on $\varphi$, it only remains to check that there exists a constant $C'$ such that for each time $t$ there exists $x \in M$ with $|\varphi(x)| \le C'$.
From Lemma \ref{lemmat1} we have an upper bound for $\varphi(x)$ for all $x \in M$.  For the lower bound, observe that
\begin{equation}
\int_M e^{\dot{\varphi} + \varphi} \Omega = \int_M (\hat{\omega}_t + \ddbar \varphi)^n = \int_M \hat{\omega}_t^n \ge c,
\end{equation}
for some uniform constant $c>0$.  It follows that at each time $t$ there exists $x \in M$ with $e^{\dot{\varphi}(x)+ \varphi(x)} \ge c/\int_M \Omega$.  Since $\dot{\varphi}$ is uniformly bounded from above by Lemma \ref{lemmat1} this gives $\varphi(x) \ge - C'$ for that $x$, as required.

For the bound on $\dot{\varphi}$ we use an argument due to Zhang \cite{Zha2}.   From (\ref{npcma2}) and Theorem \ref{scalar},
\begin{equation} \label{dotphiestimate}
\ddt{} (\dot{\varphi} + \varphi) = \ddt{} \left( \log \frac{\omega^n}{\Omega} \right)= -R - n \le C_0e^{-t}
\end{equation}
for a uniform constant $C_0$.  We may suppose that  $\| \varphi \|_{C^0} \le C_0$  for the same constant $C_0>0$.  We claim that $\dot{\varphi} > - 4C_0$.  Suppose not.  Then there exists a point  $(x_0, t_0)$ with $\dot{\varphi}(x_0,t_0) \le -4C_0$.  Using (\ref{dotphiestimate}) we have for any $t>t_0$,
\begin{equation}
(\dot{\varphi}+ \varphi)(x_0, t) - (\dot{\varphi} + \varphi)(x_0,t_0) \le  C_0 \int_{t_0}^t e^{-s}ds = C_0(e^{-t_0} - e^{-t}).
\end{equation}
Hence for $t>t_0$,
\begin{equation}
\dot{\varphi} (x_0,t)  \le (\dot{\varphi} + \varphi)(x_0,t_0) + C_0e^{-t_0} - \varphi(x_0,t) \le - C_0,
\end{equation}
using the fact that $\dot{\varphi}(x_0,t_0) \le - 4C_0$.  This is a contradiction since $\varphi(x_0,t)$ is uniformly bounded as $t \rightarrow \infty$.
 \qed
\end{proof}

An immediate consequence is:

\begin{corollary}
The singular K\"ahler-Einstein metric $\omega_{\emph{KE}}$ constructed in Theorem \ref{tsuji} satisfies
\begin{equation}
\frac{1}{C} \Omega \le \omega_{\emph{KE}}^n \le C \Omega \quad \textrm{on } M \setminus E,
\end{equation}
for some $C>0$.
\end{corollary}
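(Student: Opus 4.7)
The plan is straightforward: the corollary follows by taking the $t \to \infty$ limit of the uniform two-sided volume-form bound for the flow given in Proposition \ref{propzhang}, on the locus where we already know $C^\infty_{\text{loc}}$ convergence.

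More precisely, I would proceed as follows. First, recall from the proof of Theorem \ref{tsuji} that if $\varphi = \varphi(t)$ solves the parabolic Monge--Amp\`ere equation (\ref{npcma2}), then $\omega(t)^n = e^{\dot\varphi + \varphi} \Omega$, and that $\omega(t) \to \omega_{\text{KE}}$ in $C^\infty_{\text{loc}}(M \setminus E)$ as $t \to \infty$; in particular, the volume forms satisfy $\omega(t)^n \to \omega_{\text{KE}}^n$ pointwise (indeed in $C^\infty_{\text{loc}}$) on $M \setminus E$. Next, apply Proposition \ref{propzhang} to obtain a uniform constant $C>0$, independent of $t \in [0,\infty)$, such that
\begin{equation}
\tfrac{1}{C}\, \Omega \;\le\; \omega(t)^n \;\le\; C\, \Omega \qquad \text{on } M,
\end{equation}
which holds in particular on $M \setminus E$. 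Letting $t \to \infty$ at each point $x \in M \setminus E$ and using the $C^\infty_{\text{loc}}$ convergence gives
\begin{equation}
\tfrac{1}{C}\, \Omega \;\le\; \omega_{\text{KE}}^n \;\le\; C\, \Omega \qquad \text{on } M \setminus E,
\end{equation}
as required.

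There is essentially no obstacle here: all of the work has already been done in Proposition \ref{propzhang} (which itself relied on the Eyssidieux--Guedj--Zeriahi $L^\infty$ estimate, Theorem \ref{EGZ}, and Zhang's argument for the $C^0$ bound on $\dot\varphi$). The only mild point to verify is that the $C^\infty_{\text{loc}}(M \setminus E)$ convergence of $\omega(t)$ implies pointwise convergence of $\omega(t)^n$ on $M \setminus E$, which is immediate since the volume form is a continuous function of the metric. No new maximum principle argument, no new pluripotential input, and no choice of the defining section $\sigma$ are needed for this corollary -- it is purely a passage-to-the-limit statement.
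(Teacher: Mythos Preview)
Your proposal is correct and matches the paper's own reasoning: the paper presents this corollary with the phrase ``An immediate consequence is'' right after Proposition~\ref{propzhang}, giving no further proof. Your argument---passing the uniform two-sided volume bound $\tfrac{1}{C'}\Omega \le \omega(t)^n \le C'\Omega$ from Proposition~\ref{propzhang} to the limit using the $C^\infty_{\mathrm{loc}}(M\setminus E)$ convergence established in Section~\ref{convunique}---is exactly what the reader is meant to supply. (Equivalently, one could use the $C^0$ bound on $\varphi$ from Proposition~\ref{propzhang} together with the limiting equation $\omega_{\mathrm{KE}}^n = e^{\varphi_\infty}\Omega$ from~(\ref{limittsuji}); this amounts to the same thing.)
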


As another application of Proposition \ref{propzhang}, we use the estimate on $\varphi$ together with the parabolic Schwarz lemma to obtain a lower bound on the metric $\omega$.

\begin{lemma} \label{applyschwarz} Under the assumptions  of Theorem \ref{tsuji},
there exists a uniform constant $C$ such that
\begin{equation}
\omega \ge \frac{1}{C} \hat{\omega}_{\infty}, \quad \textrm{on} \quad M \times [0, \infty).
\end{equation}
\end{lemma}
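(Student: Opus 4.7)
The plan is to apply the parabolic Schwarz lemma (Theorem \ref{psl}). Recall that $\hat{\omega}_{\infty} = \frac{1}{m}\Phi^*\omega_{\textrm{FS}}$ for the holomorphic map $\Phi\colon M \to \mathbb{P}^N$ induced by sections of $K_M^m$. Write $u := \tr{\omega}{\hat{\omega}_{\infty}}$, so that $mu = \tr{\omega}{(\Phi^*\omega_{\textrm{FS}})}$. Since $\omega_{\textrm{FS}}$ has uniformly bounded bisectional curvature on $\mathbb{P}^N$, applying Theorem \ref{psl} with $f=\Phi$, $\omega_N=\omega_{\textrm{FS}}$, $\nu=1$ yields, on the open set $U := \{x \in M : u(x,\cdot) > 0\}$,
\begin{equation*}
\left(\ddt{} - \Delta\right)\log u \le C_1 u + 1,
\end{equation*}
for a uniform $C_1$ depending only on $m$ and $\omega_{\textrm{FS}}$.

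Next I would introduce the auxiliary quantity $Q := \log u - A\varphi$ for a large constant $A>0$ to be chosen. Since $\omega_0$ is strictly positive and $\hat{\omega}_{\infty}\ge 0$ is smooth on the compact manifold $M$, by a standard compactness argument there exists $c_0>0$ with $\omega_0 \ge c_0\hat{\omega}_{\infty}$, whence
\begin{equation*}
\hat{\omega}_t = e^{-t}\omega_0 + (1-e^{-t})\hat{\omega}_{\infty} \ge c'\hat{\omega}_{\infty},
\end{equation*}
with $c' := \min(c_0,1) > 0$, uniformly for all $t \ge 0$. Using $\Delta\varphi = n - \tr{\omega}{\hat{\omega}_t}$ together with the evolution $\ddt{\varphi} = \dot{\varphi}$ and the uniform bound $|\dot{\varphi}|\le C$ from Proposition \ref{propzhang}, one computes on $U\times[0,\infty)$
\begin{equation*}
\left(\ddt{} - \Delta\right)Q \le C_1 u + 1 - A\dot{\varphi} + An - A\,\tr{\omega}{\hat{\omega}_t} \le -u + C_2,
\end{equation*}
provided $A$ is chosen so that $Ac' \ge C_1 + 1$, where $C_2$ is uniform.

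Finally, fix any $T'>0$. The zero locus $Z := \{\hat{\omega}_{\infty}=0\}$ is independent of $t$, and $Q \to -\infty$ as $x \to Z$, so the supremum of $Q$ on $M\times[0,T']$ is attained at some interior point $(x_0,t_0)$ with $x_0 \notin Z$. If $t_0=0$ then $Q$ is bounded above by $\sup_M \log \tr{\omega_0}{\hat{\omega}_{\infty}}$; if $t_0>0$, the maximum principle combined with the inequality above forces $u(x_0,t_0)\le C_2$, and the uniform $C^0$ bound on $\varphi$ from Proposition \ref{propzhang} then gives $Q(x_0,t_0)\le C_3$. Either way, $Q$ is bounded above on $M\times[0,T']$ by a constant independent of $T'$, and using the bound on $\varphi$ once more we deduce $u = e^{Q+A\varphi} \le C_4$ on $U\times[0,\infty)$; the inequality $u \le C_4$ trivially extends across $Z$. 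Since $\hat{\omega}_{\infty}\ge 0$, the pointwise trace bound $\sum \lambda_i = \tr{\omega}{\hat{\omega}_{\infty}}\le C_4$ for the (nonnegative) eigenvalues $\lambda_i$ of $\hat{\omega}_{\infty}$ relative to $\omega$ forces each $\lambda_i \le C_4$, which is precisely the statement $\omega \ge \frac{1}{C_4}\hat{\omega}_{\infty}$.

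The main obstacle is the wrong-sign term $+C_1 u$ produced by the Schwarz lemma: it cannot be absorbed by the maximum principle directly. The device $-A\varphi$ manufactures the good term $-A\tr{\omega}{\hat{\omega}_t}$, but this is only useful because $\hat{\omega}_t$ dominates $\hat{\omega}_{\infty}$ uniformly in $t$ — which requires exploiting both the strict positivity of $\omega_0$ and the semi-positivity of $\hat{\omega}_{\infty}$. The secondary technical point, namely the possible vanishing of $\hat{\omega}_{\infty}$ along the base locus of $K_M^m$, is handled automatically because $\log u$ blows down to $-\infty$ there, so the maximum of $Q$ is forced away from $Z$.
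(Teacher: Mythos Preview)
Your proof is correct and follows essentially the same approach as the paper: apply the parabolic Schwarz lemma to $\Phi\colon M\to\mathbb{P}^N$, set $Q=\log u - A\varphi$, use $A\hat{\omega}_t\ge (C_1+1)\hat{\omega}_\infty$ to absorb the bad term, and conclude via the maximum principle together with the $C^0$ bounds on $\varphi$ and $\dot\varphi$ from Proposition~\ref{propzhang}. Your treatment is in fact slightly more careful than the paper's, as you explicitly address the degeneracy locus $Z$ of $\hat{\omega}_\infty$ and spell out the passage from the trace bound to the metric inequality.
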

\begin{proof}
Recall  that $\hat{\omega}_{\infty} = \frac{1}{m}\Phi^* \omega_{\textrm{FS}}$ where $\Phi: M \rightarrow \mathbb{P}^N$ is a holomorphic map  and $\omega_{\textrm{FS}}$ is the Fubini-Study metric on $\mathbb{P}^N$.  We can then directly apply Theorem \ref{psl} to obtain
\begin{equation}
\left( \ddt{} - \Delta \right) \log \tr{\omega}{\hat{\omega}_{\infty}} \le C' \tr{\omega}{\hat{\omega}_{\infty}}+1,
\end{equation}
for $C'$ an upper bound for the bisectional curvature of $\omega_{\textrm{FS}}$.
Define $Q = \log \tr{\omega}{\hat{\omega}_{\infty}} - A \varphi$ for $A$ to be determined later.  Compute, using Proposition \ref{propzhang},
\begin{align} \nonumber
\left( \ddt{} - \Delta \right) Q & \le C' \tr{\omega}{\hat{\omega}_{\infty}} - A \dot{\varphi} + An - A \tr{\omega}{\hat{\omega}_t} +1 \\
& \le - \tr{\omega}{\hat{\omega}_{\infty}} + C,
\end{align}
where we have chosen $A$ to be sufficiently large so that $A \hat{\omega}_{t} \ge (C'+1) \hat{\omega}_{\infty}$.
It follows from the maximum principle that $Q$ and hence $\tr{\omega}{\hat{\omega}_{\infty}}$ is uniformly bounded from above and this completes the proof of the lemma.  \qed
\end{proof}

Observe that Lemma \ref{applyschwarz} together with the volume upper bound from Lemma \ref{lemmat1} show that the metric $\omega(t)$ is uniformly bounded above and below  on compact subsets of $M \setminus S$, for $S$ the set of points where  $\hat{\omega}_{\infty}$ is degenerate.  Thus we can obtain an alternative proof of Theorem \ref{tsuji} which avoids the use of Lemma \ref{phiE} and Lemma \ref{trick2}.

Finally we mention that Zhang \cite{Zha2} also proved a uniform bound for the scalar curvature of the evolving metric in this setting.

\pagebreak

\section{The K\"ahler-Ricci flow on a  product elliptic surface} \label{sectpes}

In this section we investigate \emph{collapsing} along the K\"ahler-Ricci flow.  We study this behavior in the simple case of a product of two Riemann surfaces.

\subsection{Elliptic surfaces and the K\"ahler-Ricci flow}

Let $M$ now have complex dimension two.  An \emph{elliptic curve} $E$ is a compact Riemann surface with $c_1(E)=0$ (by the Gauss-Bonnet formula this is equivalent to having genus equal to 1).   We say that $M$ is an \emph{elliptic surface} if there exists a surjective holomorphic map $\pi: M \rightarrow S$ onto a Riemann surface $S$ such that the fiber $\pi^{-1}(s)$ is an elliptic curve for all but finitely many $s \in S$.
In particular, the product of an elliptic curve and any Riemann surface is an elliptic surface, which we will call a \emph{product elliptic surface}.

 In \cite{SoT1}, the K\"ahler-Ricci flow was studied on a general minimal  elliptic surface (see Section \ref{sectlast} for a definition of \emph{minimal}).  In this case there are finitely many singular fibers of the map $\pi$.   It was shown that the K\"ahler-Ricci flow converges in $C^{1+\beta}$ for any $\beta \in (0,1)$ at the level of potentials away from the singular fibers, and also converges on $M$ in the sense of currents, to a \emph{generalized K\"ahler-Einstein metric} on the base $S$.   A higher dimensional analogue was given in \cite{SoT2}.

Here we study the behavior of the K\"ahler-Ricci flow in the more elementary case of a product elliptic surface
$M= E\times S$, where $E$ is an elliptic curve and $S$ is a Riemann surface with $c_1(S)<0$ (genus greater than 1).  Because of the simpler structure of the manifold, we can obtain stronger estimates than in \cite{SoT1}.

  By the uniformization theorem for Riemann surfaces (or the results of Section \ref{sectn0}), $S$ and $E$ admit K\"ahler metrics of constant curvature which are unique up to scaling.  Hence we can define K\"ahler metrics $\KES$ on $S$ and $\KEE$ on $E$ by
\begin{equation}
\Ric(\KES) = - \KES, \quad \Ric(\KEE) = 0, \quad \int_E \KEE = 1.
\end{equation}
Denote by $\pi_S$ and $\pi_E$ the projection maps $\pi_S: M \rightarrow S$ and $\pi_E: M \rightarrow E$.

As in the case of the previous section we consider the normalized K\"ahler-Ricci flow
\begin{equation} \label{nkrf3}
\ddt{} \omega  = - \Ric(\omega) - \omega, \qquad \omega|_{t=0} = \omega_0,
\end{equation}

The first Chern class of $M$ is given by $c_1(M) = - [\pi^* \KES]$, which can be seen from the equation
\begin{equation} \label{ricse}
\Ric( \pi_S^* \KES +  \pi_E^*  \KEE) =  - \pi_S^* \KES.
\end{equation}
Since $\pi^* \KES$ is a nonnegative (1,1) form on $M$, it follows from Theorem \ref{longtime} that a solution to (\ref{nkrf3}) exists for all time for any initial K\"ahler metric $\omega_0$.

As a simple example, first consider the  case when the initial metric $\omega_0$ splits as a product.  Suppose
$\omega_0 =   \pi_E^* \omega_{E}^0 + \pi_S^* \omega^0_{S}$, where $\omega_{E}^0$ and $\omega^0_{S}$ are smooth metrics on $E$ and $S$ respectively.
Then the K\"ahler-Ricci flow splits into the K\"ahler-Ricci flows on $E$ and $S$, with $\omega(t) =  \pi_E^* \omega_{E,t} + \pi_S^* \omega_{S,t}$ where $\omega_{E,t}$ and $\omega_{S,t}$ solve the  K\"ahler-Ricci flow on $E$ and $S$ respectively.  Since $c_1(E)=0$ and $c_1(S)<0$ we can apply the results of
 Section \ref{sectn0}  to see that  $\omega_{E,t}$ converges in $C^{\infty}$ to $0$ (because of the normalization) as $t\rightarrow \infty$ and $\omega_{S,t}$ converges in $C^{\infty}$ to $\KES$. Hence the solution to the original normalized K\"ahler-Ricci flow converges in $C^{\infty}$ to $\pi_S^* \KES$.


We now turn back to the general case of a non-product metric.  For convenience, here and henceforth we will  drop the $\pi^*_S$ and $\pi^*_E$ and write $\KES$ and $\KEE$ for the $(1,1)$-forms pulled back to $M$.   We prove:

\pagebreak[3]
\begin{theorem} \label{pe} Let $\omega(t)$ be the solution of the normalized Kahler-Ricci flow (\ref{nkrf3}) on $M=E \times S$ with  initial Kahler metric $\omega_0$.
Then
\begin{enumerate}
\item[(i)] For any $\beta \in (0,1)$, $\omega(t)$ converges to $\omega_S$ in $C^\beta(M, g_0)$.
\item[(ii)] The curvature tensors of $\omega(t)$ and their derivatives are uniformly bounded along the flow.
\item[(iii)] For any fixed fiber $E=\pi_S^{-1}(s)$, we have
\begin{equation}
\| e^t \omega(t)|_{E} - \omega_{\emph{flat}} \|_{C^0(E)} \rightarrow 0 \quad \textrm{as} \quad t \rightarrow \infty,
\end{equation}
where $\omega_{\emph{flat}}$ is the K\"ahler Ricci-flat metric on $E$ with $\int_E \omega_{\emph{flat}} = \int_E \omega_0$.
\end{enumerate}
\end{theorem}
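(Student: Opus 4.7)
The plan is to recast the normalized K\"ahler--Ricci flow (\ref{nkrf3}) as a parabolic complex Monge--Amp\`ere equation that exploits the product structure. I would use the reference family $\hat{\omega}_t := e^{-t}\omega_0 + (1-e^{-t})\KES$ in the class $e^{-t}[\omega_0] + (1-e^{-t})[\KES]$ and the fixed volume form $\Omega := \KES \wedge \KEE$; since $\mathrm{Ric}(\KEE) = 0$ and $\mathrm{Ric}(\KES) = -\KES$, this $\Omega$ satisfies $\ddbar \log \Omega = \KES$, so writing $\omega(t) = \hat{\omega}_t + \ddbar \varphi$ reduces (\ref{nkrf3}) to
\[
\ddt{\varphi} = \log \frac{(\hat{\omega}_t + \ddbar \varphi)^2}{\Omega} - \varphi, \qquad \varphi|_{t=0} = 0.
\]
The arguments of Lemma \ref{lemmat1} apply verbatim and yield $\varphi \le C$ and $\dot{\varphi} \le C t e^{-t}$. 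Because $\KES^2 = 0$, one has $\hat{\omega}_t^2 = 2e^{-t}(1-e^{-t}) \omega_0 \wedge \KES + e^{-2t}\omega_0^2$, so the total volume collapses at rate $e^{-t}$ and $\varphi$ is expected to diverge like $-t$. The naturally bounded quantity is therefore $\tilde{\varphi} := \varphi + t$, which I would prove is uniformly bounded by running the maximum principle on
\[
\ddt{\tilde{\varphi}} = \log \frac{e^t(\hat{\omega}_t + \ddbar \varphi)^2}{\Omega} - \tilde{\varphi} + 1,
\]
whose source is, to leading order in $e^{-t}$, the smooth positive function $2\omega_0 \wedge \KES / \Omega$ on $M$ (this is where the product structure is essential).

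The key step is a uniform two-sided metric bound. For the base direction I would apply the parabolic Schwarz lemma (Theorem \ref{psl}) to $\pi_S \colon M \to S$: since $\KES$ has constant curvature $-1 < 0$ and $\pi_S^*\KES$ has rank $\le 1$, the corollary following Theorem \ref{psl} gives $\omega(t) \ge c\, \pi_S^* \KES$ as $(1,1)$-forms. For the upper bound I would control $\textrm{tr}_{\omega_0}\omega$ using Proposition \ref{propChat} and the maximum principle applied to $\log \textrm{tr}_{\omega_0}\omega - A\tilde{\varphi}$ in the style of Lemma \ref{lemmatr}; combined with the volume lower bound $\omega^2 \ge c e^{-t}\Omega$ coming from the $C^0$ bound on $\tilde{\varphi}$, this pins down the anisotropic comparison $C^{-1}(\KES + e^{-t}\KEE) \le \omega(t) \le C(\KES + e^{-t}\KEE)$. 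From this, the curvature and higher-derivative bounds in (ii) follow by the arguments of Sections \ref{sect3rd}--\ref{secthoe}, adapted to the anisotropic reference $\KES + e^{-t}\KEE$ in place of $g_0$. For (i), I would decompose $\omega(t) - \KES = e^{-t}(\omega_0 - \KES) + \ddbar\varphi$ and reduce $C^\beta(M, g_0)$-convergence to showing $\ddbar\varphi \to 0$ in $C^\beta$; by interpolation with the $C^\infty$ bounds, this reduces further to $C^0$-convergence of $\tilde{\varphi}$, which I would extract from a monotonicity argument in the spirit of (\ref{crucial}) together with the uniform bounds on $\tilde{\varphi}$.

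The main obstacle is part (iii): passing from the global $C^\beta$-convergence in (i) to the sharp fiberwise asymptotic $e^t \omega(t)|_E \to \oflat$ in $C^0(E)$. Setting $\rho(t) := e^t(\omega(t) - \KES)$, restriction to a fiber gives $e^t \omega(t)|_E = \rho(t)|_E$, and the cohomology class $[\rho(t)|_E] = [\omega_0|_E] = [\oflat]$ is independent of $t$. The plan is to substitute the ansatz $\omega(t) = \KES + e^{-t}\rho(t)$ into (\ref{nkrf3}) and extract the leading-order equation in $e^{-t}$: to leading order the Ricci form of $\omega(t)$ must equal $-\KES$ plus an $e^{-t}$-correction that integrates fiberwise to zero (the linearised Ricci-flatness on fibers), so $\rho(t)|_E$ satisfies a fiberwise parabolic equation whose unique stationary solution is the flat representative $\oflat$. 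The delicate point is justifying this expansion in $C^0(E)$ by controlling the base--fiber coupling using the full higher-order estimates from (ii); this parallels the fiberwise analysis for general minimal elliptic surfaces in \cite{SoT1}, considerably simplified by the product structure but still requiring the linearisation to be carried out carefully.
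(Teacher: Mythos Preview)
Your overall architecture is right, but two of the steps you sketch would not close as written, and in both places the paper takes a different route.

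\textbf{The metric upper bound.} Your plan is to bound $\tr{\omega_0}{\omega}$ by applying the maximum principle to $\log\tr{\omega_0}{\omega} - A\tilde\varphi$ in the style of Lemma~\ref{lemmatr}. This fails here: the Laplacian of $\tilde\varphi$ produces $-A\,\tr{\omega}{\hat\omega_t}$, but $\hat\omega_t = e^{-t}\omega_0 + (1-e^{-t})\KES$ degenerates in the fiber direction, so no fixed $A$ makes $A\hat\omega_t \ge (\hat C+1)\omega_0$. The bad term $\hat C\,\tr{\omega}{\omega_0}$ from Proposition~\ref{propChat} is therefore not absorbed. Your Schwarz bound $\tr{\omega}{\KES}\le C$ together with the volume estimate does give $g_{2\bar 2}\le Ce^{-t}$ in product coordinates, but gives no control on $g_{1\bar 1}$ by itself. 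The paper instead bounds $\tr{\tilde\omega_t}{\omega}$ directly (Lemma~\ref{mbell}), using that the curvature of $\tilde\omega_t=\KES+e^{-t}\KEE$ has the single nonzero component $\tilde R_{1\bar 1 1\bar 1}=-(g_S)_{1\bar 1}^2$; this makes the curvature term in the trace evolution exactly $(\tr{\omega}{\KES})(\tr{\tilde\omega_t}{\omega})$, which is then killed by $-A\,\tr{\omega}{\hat\omega_t}$ since $\hat\omega_t\ge c\KES$ uniformly in $t$. (An alternative fix in your spirit: also run the Schwarz lemma for $\pi_E$, using $C_N=0$ for $\KEE$, to get $\tr{\omega}{\KEE}\le Ce^t$; combined with your $\pi_S$ bound and the volume this yields $\tr{\tilde\omega_t}{\omega}\le C$.)

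\textbf{Part (iii).} Your linearization scheme --- substituting $\omega(t)=\KES+e^{-t}\rho(t)$ into the flow and extracting a fiberwise parabolic equation for $\rho(t)|_E$ --- is the wrong route and would be very hard to justify. The paper's argument (Lemma~\ref{fiberflat}) is short and falls directly out of the third-order estimate: from $S=|\nabla_{\tilde g_0}g|^2\le C$ one restricts to a fiber and rescales to get $|\nabla_{g_E}(e^tg|_E)|^2_{g_E}\le Ce^{-t}$. Since $e^t\omega|_E$ and $\oflat$ are both K\"ahler metrics on the elliptic curve $E$ in the same class $[\omega_0|_E]$, one writes $e^t\omega|_E=e^\sigma\oflat$; the gradient decay gives $|d(e^\sigma)|_{g_E}\to 0$, and the cohomological constraint forces $e^\sigma-1$ to vanish somewhere on $E$, so the mean value theorem yields $e^\sigma\to 1$ in $C^0$. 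No linearized flow analysis is needed.

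\textbf{The $C^0$ decay of $\tilde\varphi$.} Your ``monotonicity in the spirit of (\ref{crucial})'' is too vague; the analogous computation here does not produce a clean sign because of the extra $e^t$ in the Monge--Amp\`ere equation. The paper (Lemma~\ref{c0conv}) introduces the fiberwise-flat form $\oflat=\omega_0+\ddbar\rho$ and observes that $\Omega=2\,\omega_S\wedge\oflat$; applying the maximum principle to $e^t(\varphi-e^{-t}\rho)-At$ then gives $|\varphi|\le C(1+t)e^{-t}$, which feeds the interpolation for (i).

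Finally, for (ii): the Calabi estimate of Section~\ref{sect3rd} assumes uniform equivalence of $\omega$ with a \emph{fixed} reference, which you do not have. The paper runs the estimate with the fixed reference $\tilde g_0=\KES+\KEE$ but uses the anisotropic bound $\omega\approx\tilde\omega_t$ only to control $|\mathrm{Rm}(\tilde g_0)|_g$ and $|\nabla\mathrm{Rm}(\tilde g_0)|_g$ (equations (\ref{nablarm})--(\ref{nablarm2})); this is the content of Lemma~\ref{lemmaSe} and is the nontrivial ``adaptation'' you allude to.
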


\begin{remark} \emph{We conjecture that in (i), the convergence in $C^{\beta}(M)$ can be replaced by $C^{\infty}(M)$ convergence.  Such a result is contained in the work of Gross-Tosatti-Zhang \cite{GTZ} for the case of a family of Ricci-flat metrics.  It seems likely that their methods could be extended to cover this case too.  It would also be interesting to find a proof of $C^{\infty}$ convergence using only the maximum principle.} \end{remark}

Since the normalized K\"ahler-Ricci flow exists for all time we can compute, as in (\ref{ode1}) and (\ref{ode2}), the evolution of the K\"ahler class to be
\begin{equation}
[\omega(t)]  = e^{-t} [\omega_0] + (1-e^{-t}) [ \KES].
\end{equation}

Before proving Theorem \ref{pe} we will, as in the sections above, reduce (\ref{nkrf3}) to a parabolic complex Monge-Amp\`ere equation.   Define reference metrics $\hat{\omega}_t \in [\omega(t)]$ by
\begin{equation}
\hat{\omega}_t = e^{-t} \omega_0 + (1-e^{-t})   \KES, \qquad \textrm{for } t \in [0,\infty).
\end{equation}
Define a smooth volume form  $\Omega$ on $M$ by
\begin{equation}
\ddbar \log \Omega =  \KES \in - c_1(M), \quad \int_M \Omega = 2 \int_M \omega_0 \wedge \omega_S.
\end{equation}
In fact, from (\ref{ricse}) one can see that $\Omega$ is a constant multiple of $\omega_S \wedge \omega_E$.
We consider the parabolic complex Monge-Amp\`ere equation
\begin{equation} \label{npcma4}
\ddt{ \varphi} = \log \frac{ e^t (\hat{\omega}_t + \ddbar \varphi)^2}{\Omega} - \varphi, \qquad \hat{\omega}_t+ \ddbar \varphi>0 , \qquad \varphi|_{t=0} =0.
\end{equation}
As in earlier sections, a solution $\varphi = \varphi(t)$ of (\ref{npcma4}) exists for all time and  $\omega = \hat{\omega}_t + \ddbar \varphi$ solves the normalized K\"ahler-Ricci flow.  Note that we insert the factor of $e^t$ in the equation to ensure that $\varphi$ is uniformly bounded (see Lemma \ref{2phi} below) but of course it does not change the evolution of the metric along the flow.

\subsection{Estimates}

In this section we establish uniform estimates for the solution $\varphi= \varphi(t)$ of (\ref{npcma4}), which we know exists for all time.

\begin{lemma} \label{2phi} There exists $C>0$ such that on $M \times [0,\infty)$,
\begin{enumerate}
\item[(i)] $\displaystyle{| \varphi| \le C.}$
\item[(ii)] $\displaystyle{ | \dot{\varphi}| \le C.}$
\item[(iii)] $\displaystyle{\frac{1}{C} \hat{\omega}_t^2 \le \omega^2 \le  C \hat{\omega}_t^2.}$
\end{enumerate}
\end{lemma}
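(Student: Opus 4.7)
The plan is to establish (i) by a direct application of the parabolic maximum principle, and then to establish (ii) via two applications of the parabolic Schwarz lemma (Theorem \ref{psl}), one for each of the natural projections $\pi_S: M\to S$ and $\pi_E: M\to E$; statement (iii) will follow immediately from (i), (ii), and the parabolic complex Monge-Amp\`ere equation.

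For (i), at a spatial maximum of $\varphi$ at time $t_0>0$, equation (\ref{npcma4}) combined with $\ddbar\varphi\le 0$ and $\dot\varphi\ge 0$ forces $\varphi(x_0,t_0)\le \log(e^{t_0}\hat\omega_{t_0}^2/\Omega)$. A direct computation yields
$$
e^t\hat\omega_t^2 = e^{-t}\omega_0^2 + 2(1-e^{-t})\,\omega_0\wedge\omega_S,
$$
which is continuous on $M\times[0,\infty]$ and strictly positive (in any fibered coordinate system, $\omega_0\wedge\omega_S$ picks up the fiber-direction component of $\omega_0$ and the base-direction component of $\omega_S$, both strictly positive). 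The right-hand side is therefore uniformly bounded above; the lower bound is symmetric.

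For (ii), I first apply Theorem \ref{psl} to $\pi_S$ with target $\omega_S$. Since $\omega_S$ has bisectional curvature $-1$, this yields $(\ddt{}-\Delta)\log\tr{\omega}{\omega_S}\le -\tr{\omega}{\omega_S}+1$, and the maximum principle produces a uniform bound $\tr{\omega}{\omega_S}\le C_1$. A direct computation (using $\ddbar\log\Omega=\omega_S$) gives
$$
\left(\ddt{}-\Delta\right)\dot\varphi = 1 + e^{-t}\bigl(\tr{\omega}{\omega_S}-\tr{\omega}{\omega_0}\bigr) - \dot\varphi,
$$
so at a spatial maximum of $\dot\varphi$ we obtain $\dot\varphi\le 1+e^{-t}\tr{\omega}{\omega_S}\le 1+C_1$. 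For the lower bound I apply Theorem \ref{psl} a second time, now to $\pi_E$ with target the flat metric $\omega_E$; since the bisectional curvature of $\omega_E$ is zero, the lemma gives $(\ddt{}-\Delta)\log\tr{\omega}{\omega_E}\le 1$, and applying the maximum principle to $\log\tr{\omega}{\omega_E}-t$ yields $\tr{\omega}{\omega_E}\le C_2 e^t$. Since $\omega_E+\omega_S$ is a K\"ahler form on the compact manifold $M$, we have $\omega_0\le C_3(\omega_E+\omega_S)$, and hence $\tr{\omega}{\omega_0}\le C_3(\tr{\omega}{\omega_E}+\tr{\omega}{\omega_S})\le C_4 e^t$. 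At a spatial minimum of $\dot\varphi$, the evolution equation above then forces $\dot\varphi\ge 1-e^{-t}\tr{\omega}{\omega_0}\ge 1-C_4$.

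Part (iii) is then immediate from the identity $\omega^2 = e^{\dot\varphi+\varphi-t}\Omega$: combining (i) and (ii) shows $\omega^2$ is uniformly equivalent to $e^{-t}\Omega$, while the computation of $e^t\hat\omega_t^2$ in part (i) shows $\hat\omega_t^2$ is uniformly equivalent to $e^{-t}\Omega$ as well. The main obstacle is the lower bound of $\dot\varphi$ in (ii), which controls the \emph{rate} of collapse of the volume form; the product structure of $M=E\times S$ is essential there, since it supplies both the flat fiber metric $\omega_E$ needed for the second Schwarz-lemma application and the K\"ahler reference $\omega_E+\omega_S$ needed to dominate $\omega_0$.
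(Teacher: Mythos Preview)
Your proof is correct, and parts (i) and (iii) match the paper's argument essentially verbatim. For part (ii), however, you take a genuinely different route. The paper never invokes the Schwarz lemma here: for the upper bound it applies the maximum principle to the barrier $Q_1 = \dot\varphi - (C_0-1)\varphi$ (with $C_0$ chosen so that $C_0\hat\omega_t \ge \omega_S$), and for the lower bound it uses $Q_2 = \dot\varphi + 2\varphi$ together with the arithmetic--geometric mean inequality $(\hat\omega_t^2/\omega^2)^{1/2} \le \tfrac{1}{2}\tr{\omega}{\hat\omega}_t$ to close the estimate at a minimum. Your approach instead front-loads the geometric input: the Schwarz lemma applied to $\pi_S$ gives $\tr{\omega}{\omega_S} \le C$ outright (since $\omega_S$ has negative bisectional curvature), and applied to $\pi_E$ gives $\tr{\omega}{\omega_E} \le Ce^t$ (since $\omega_E$ is flat), after which the evolution equation for $\dot\varphi$ becomes trivially tractable at extrema without any auxiliary barrier. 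Your argument is arguably more transparent about \emph{why} the estimate holds---the product structure is visibly doing the work---and it yields the metric lower bound $\omega \ge C^{-1}\omega_S$ as a free byproduct before Lemma~\ref{mbell}. The paper's argument, on the other hand, is more self-contained at this stage (it needs only the PDE and elementary inequalities) and defers any Schwarz-type input to the later metric estimates.
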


\begin{proof}  For (i), first note that since $e^t \hat{\omega}_t^2 = e^{-t} \omega_0^2 + 2(1-e^{-t}) \omega_0 \wedge   \KES$ we have
\begin{equation} \label{comparevf}
\frac{1}{C} \Omega \le e^t \hat{\omega}_t^2 \le C \Omega.
\end{equation}
Hence if $\varphi$ achieves a maximum at $(x_0, t_0)$ with $t_0>0$ then at that point,
\begin{equation}
0 \le \ddt{}  \varphi   \le \log \frac{e^t \hat{\omega}_t^2}{\Omega} - \varphi  \le \log C - \varphi,
\end{equation}
giving $\varphi \le \log C$.  The lower bound of $\varphi$ follows similarly.

For (ii) observe that $\ddt{} \hat{\omega}_t =    \KES - \hat{\omega}_t$ and hence
\begin{equation}
\left( \ddt{} - \Delta \right) \dot{\varphi} = \tr{\omega}{(   \KES - \hat{\omega}_t)} + 1 - \dot{\varphi}.
\end{equation}
By definition of $\hat{\omega}_t$ there exists a uniform constant $C_0>0$ such that $C_0 \hat{\omega}_t \ge   \KES$.  For the upper bound of $\dot{\varphi}$, we apply the maximum principle to $Q_1 = \dot{\varphi} - (C_0-1) \varphi$. Compute
\begin{align} \nonumber
\left( \ddt{} - \Delta \right) Q_1 & = \tr{\omega} (   \KES - \hat{\omega}_t) + 1 - C_0 \dot{\varphi} + (C_0-1)\tr{\omega}{(\omega - \hat{\omega}_t)} \\
& \le 1 - C_0 \dot{\varphi} + 2(C_0-1),
\end{align}
and we see that $\dot{\varphi}$ is uniformly bounded from above at a point where $Q_1$ achieves a maximum.  Since $\varphi$ is bounded by (i) we obtain the required  upper bound of $\dot{\varphi}$.

For the lower bound of $\dot{\varphi}$, let $Q_2 = \dot{\varphi} + 2 \varphi$ and compute
\begin{align} \nonumber
\left( \ddt{} - \Delta \right) Q_2 & = \tr{\omega} (   \KES - \hat{\omega}_t) + 1 + \dot{\varphi} - 2 \tr{\omega}{(\omega - \hat{\omega}_t)} \\
& \ge \tr{\omega}{\hat{\omega}_t}  +  \dot{\varphi} -3.
\end{align}
Using (\ref{npcma4}), (\ref{comparevf}) and the arithmetic-geometric means inequality, we have at a point $(x_0, t_0)$ where $Q_2$ achieves a minimum,
\begin{equation}
e^{-(\dot{\varphi}+\varphi)/2} = \left( \frac{ \Omega}{e^t \omega^2} \right)^{1/2} \le C \left( \frac{\hat{\omega}_t^2}{\omega^2} \right)^{1/2} \le \frac{C}{2} \tr{\omega}{\hat{\omega}_t} \le C' - \dot{\varphi}.
\end{equation}
Hence $\dot{\varphi}$ is uniformly bounded from below at $(x_0, t_0)$, giving (ii).  Part (iii) follows from (i) and (ii).  \qed \end{proof}

Next we   estimate $\omega$ in terms of $\hat{\omega}_t$.    It is convenient to define another family of reference metrics $\tilde{\omega}_t$ whose curvature we can control more precisely.  Define $\tilde{\omega}_0 =   \KEE +   \KES$ and
\begin{equation} \label{tildeomega}
\tilde{\omega}_t = e^{-t} \tilde{\omega}_0 + (1-e^{-t})  \KES = \KES + e^{-t} \KEE.
\end{equation}
Observe that $\tilde{\omega}_t$ and $\hat{\omega}_t$ are uniformly equivalent.

\begin{lemma} \label{mbell} There exists $C>0$ such that on $M \times [0, \infty)$,
\begin{equation}
\frac{1}{C} \tilde{\omega}_t \le \omega \le C \tilde{\omega}_t
\end{equation}
\end{lemma}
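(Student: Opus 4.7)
The plan is to bound $\omega$ from above by $C\tilde\omega_t$ via a maximum principle argument on $Q = \log \text{tr}_{\tilde\omega_t}\omega - A\varphi$, adapted from the proof of Proposition \ref{propChat} to allow a time-dependent reference metric. The matching lower bound will then follow cheaply from the volume comparison in Lemma \ref{2phi}(iii).

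Two preliminary observations are key. First, since $\omega_0$ and $\omega_S + \omega_E$ are both K\"ahler metrics on the compact manifold $M$, they are uniformly equivalent, and this propagates to $c^{-1}\tilde\omega_t \le \hat\omega_t \le c\,\tilde\omega_t$ for some uniform $c>0$ (all three are convex combinations of $\omega_0, \omega_S, \omega_E$ with coefficients in $[0,1]$). Second, because $\tilde\omega_t = \omega_S + e^{-t}\omega_E$ is the product of a flat metric on $E$ and the negatively-curved $\omega_S$ on $S$, only the $S$--$S$ block of its curvature tensor is nonzero, and a direct computation shows that the bisectional curvature of $\tilde\omega_t$ takes values in $[-\hat C, 0]$ uniformly in $t$, for some $\hat C$ depending only on $\omega_S$.

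The heart of the argument is a time-dependent analog of Proposition \ref{propChat}. Writing $u = \text{tr}_{\tilde\omega_t}\omega = \tilde g^{\ov{j} i}g_{i\ov{j}}$ and using the identity $\partial_t \tilde g_{k\ov\ell} = (g_S - \tilde g)_{k\ov\ell}$, the extra term $\tilde g^{\ov{j}i}g_{i\ov j} = u$ coming from the time derivative of $\tilde g^{-1}$ cancels exactly the $-\nu u = -u$ produced by $\partial_t g_{i\ov j} = -R_{i\ov j} - g_{i\ov j}$. Running the calculation of Proposition \ref{propkeyeqn} with this correction, then the Cauchy-Schwarz step (\ref{claimcs}) and the lower bisectional curvature bound from the proof of Proposition \ref{propChat}, one arrives at
\begin{equation*}
\left(\ddt{} - \Delta\right)\log \text{tr}_{\tilde\omega_t}\omega \;\le\; \hat C\,\text{tr}_\omega\tilde\omega_t \;-\; \frac{1}{u}\, \tilde g^{\ov\ell i}\tilde g^{\ov j k}(g_S)_{k\ov\ell}g_{i\ov j} \;\le\; \hat C\,\text{tr}_\omega\tilde\omega_t,
\end{equation*}
where the last inequality uses that $g_S$ is semi-positive. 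Crucially, there is no $-\nu$ on the right-hand side: it has been absorbed by the time-evolution of $\tilde\omega_t$.

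It is then straightforward to close the argument. Using $\Delta\varphi = 2 - \text{tr}_\omega\hat\omega_t \le 2 - c\,\text{tr}_\omega\tilde\omega_t$, the uniform bound on $\dot\varphi$ from Lemma \ref{2phi}(ii), and choosing $A$ with $Ac > \hat C + 1$, one gets $(\partial_t - \Delta)Q \le -\text{tr}_\omega\tilde\omega_t + C'$. At a maximum of $Q$, $\text{tr}_\omega\tilde\omega_t \le C'$; combined with $\omega^2/\tilde\omega_t^2 \sim 1$ (from Lemma \ref{2phi}(iii) and the first preliminary), the eigenvalues $\lambda_1,\lambda_2$ of $\omega$ relative to $\tilde\omega_t$ are pinched into $[c_1, C_1]$ at the max, so $u \le 2C_1$ there. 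Since $\varphi$ is bounded (Lemma \ref{2phi}(i)), $Q$ is bounded globally, which gives $\omega \le C\tilde\omega_t$; the reverse inequality follows from the same volume comparison. The one delicate point, without which the strategy collapses, is the cancellation highlighted above: it works only because $\partial_t\tilde\omega_t = \omega_S - \tilde\omega_t$ has exactly the right form, and this is precisely why $\tilde\omega_t$ rather than $\hat\omega_t$ was introduced in (\ref{tildeomega}).
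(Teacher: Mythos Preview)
Your proof is correct and follows essentially the same route as the paper's: both apply the parabolic $C^2$ estimate with the time-dependent reference $\tilde\omega_t$, handle the extra $\partial_t\tilde g^{\ov j i}$ term using $\partial_t\tilde\omega_t=\omega_S-\tilde\omega_t$, run the maximum principle on $Q=\log\tr{\tilde\omega_t}{\omega}-A\varphi$, and close with the volume comparison from Lemma~\ref{2phi}(iii).

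Two minor remarks. First, where you invoke a uniform bisectional-curvature lower bound for $\tilde\omega_t$ to get $\hat C\,\tr{\omega}{\tilde\omega_t}$, the paper instead computes the curvature term exactly in product coordinates (only $\tilde R_{1\ov 1 1\ov 1}=-(g_S)_{1\ov 1}^2$ survives) and obtains the sharper bound $\tr{\omega}{\omega_S}$; either suffices here, but the explicit computation is reused in the next lemma. Second, your closing comment slightly misidentifies the reason for introducing $\tilde\omega_t$: since $\partial_t\hat\omega_t=\omega_S-\hat\omega_t$ as well, the cancellation you highlight would work equally for $\hat\omega_t$. What fails for $\hat\omega_t$ is precisely the curvature control you noted earlier --- as $t\to\infty$ the metric $\hat\omega_t$ degenerates and its bisectional curvature is not uniformly bounded below, whereas $\tilde\omega_t$ is a product metric with bisectional curvature in $[-1,0]$ for all $t$.
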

\begin{proof}
From part (iii) of Lemma \ref{2phi} it suffices to obtain an upper bound of the quantity  $\tr{\tilde{\omega}_t}{\omega}$ from above.
Compute using the argument of Proposition \ref{propkeyeqn},
\begin{align}
\left( \ddt{} - \Delta \right)  \tr{\tilde{\omega}_t}{\omega} & \le - \tr{\tilde{\omega}_t}{\omega} - g^{ \ov{j}i} \tilde{R}_{i \ov{j}}^{\ \ \,  \ov{\ell}k}  g_{k \ov{\ell}}
 \mbox{} - \tilde{g}_t^{\ov{j} i} g^{\ov{q} p} g^{\ov{\ell} k} \tilde{\nabla}_i g_{p \ov{\ell}} \tilde{\nabla}_{\ov{j}} g_{k \ov{q}} + \left( \ddt{} \tilde{g}_t^{ \ov{j}i} \right) g_{i \ov{j}}, \label{mpropkeyeqn}
\end{align}
where we are using $\tilde{R}_{i \ov{j}}^{\ \ \,  \ov{\ell}k}$ and $\tilde{\nabla} = \nabla_{\tilde{g}_t}$ to denote the curvature and covariant derivative  with respect to $\tilde{g}_t$.
Since $\ddt{} \tilde{\omega}_t = - \tilde{\omega}_t +  \KES \ge - \tilde{\omega}_t$, we have
\begin{equation} \label{that}
 \left( \ddt{} \tilde{g}_t^{ \ov{j}i} \right) g_{i \ov{j}} \le \tr{\tilde{\omega}_t}{\omega}.
\end{equation}
Hence, from the argument of Proposition \ref{propChat},
\begin{equation} \label{thisthat}
\left( \ddt{} - \Delta \right) \log \tr{\tilde{\omega}_t}{\omega} \le - \frac{1}{\tr{\tilde{\omega}_t}{\omega}} g^{\ov{j}i} \tilde{R}_{i \ov{j}}^{\ \ \,  \ov{\ell}k}  g_{k \ov{\ell}}.
\end{equation}
Next we claim that
\begin{align} \nonumber
- g^{ \ov{j}i}  \tilde{R}_{i \ov{j}}^{\ \ \,  \ov{\ell}k} g_{k \ov{\ell}} & = (\tr{\omega}{ \KES})  \frac{ 2\, \KEE \wedge \omega}{\tilde{\omega}_0^2} \\ & \le (\tr{\omega}{ \KES})  (\tr{\tilde{\omega}_0}{\omega}) \le (\tr{\omega}{ \KES} ) (\tr{\tilde{\omega}_t}{\omega}).  \label{this}
\end{align}
To see (\ref{this}), compute in a local holomorphic product coordinate system $(z^1, z^2)$ with $z^1$ a normal coordinate for $\KES|_S$ in the base $S$ direction and $z^2$ a normal coordinate for $\KEE|_E$ in the fiber $E$ direction.  In these coordinates $\tilde{g}_t$ is diagonal and $(\tilde{g}_t)_{1 \ov{1}} = (g_S)_{1 \ov{1}}$.  Since the curvature of $\KEE$ vanishes, we have from (\ref{ricse})
\begin{equation} \label{R1111}
\tilde{R}_{1 \ov{1} 1 \ov{1}} = - ( g_S)_{1 \ov{1}} ( g_S)_{1 \ov{1}},
\end{equation}
and $\tilde{R}_{i \ov{j} k \ov{\ell}}=0$ if $i,j, k$ and  $\ell$ are not all equal to 1.
Hence the only non-zero component of the curvature of $\tilde{\omega}_t$ appearing in (\ref{this}) is $\tilde{R}_{1 \ov{1}}^{\ \ \, 1 \ov{1}}=-1$.  This gives the first equality of (\ref{this}), and the next two inequalities follow from the definition of $\tilde{\omega}_0$ and $\tilde{\omega}_t$.

Combining (\ref{thisthat}), (\ref{this}) we have
\begin{equation}
\left( \ddt{} - \Delta \right) \log \tr{\tilde{\omega}_t}{\omega} \le \tr{\omega}{ \KES}.
\end{equation}
Now define
\begin{equation}
Q_3 = \log \tr{\tilde{\omega}_t}{\omega} - A \varphi,
\end{equation}
for $A=C_0+1$  where $C_0$ is the positive constant with $C_0 \hat{\omega}_t \ge \KES$
and compute
\begin{align} \nonumber
\left( \ddt{} - \Delta \right) Q_3 & \le \tr{\omega}{ \KES} - A \dot{\varphi} + A \tr{\omega}{(\omega - \hat{\omega}_t)} \\ \nonumber
& \le C - \tr{\omega}{\hat{\omega}_t} \\
& \le C - \frac{1}{C'} \tr{\tilde{\omega}_t}{\omega},
\end{align}
for some $C'>0$.  For the last line we have used the estimate (iii) of Lemma \ref{2phi} and the fact that $\tilde{\omega}_t$ and $\hat{\omega}_t$ are uniformly equivalent.  Since $\varphi$ is uniformly bounded from part (i) of Lemma \ref{2phi} we see that $Q_3$ is bounded from above by the maximum principle, completing the proof of the lemma. \qed
\end{proof}

Next we prove an estimate on the derivative of $\omega$ using an argument similar to that of Theorem \ref{theoremSbound}.



\begin{lemma}  \label{lemmaSe} There exists a uniform constant $C$ such that on $M \times [0,\infty)$,
\begin{equation}
S := | \nabla_{\tilde{g}_0} g|^2 \le C \quad \textrm{and} \quad | \nabla_{\tilde{g}_0} g|^2_{\tilde{g}_0} \le C,
\end{equation}
where $| \cdot |$, $| \cdot |_{\tilde{g}_0}$ denote the norms with respect to the metrics $g=g(t)$ and $\tilde{g}_0$ respectively.  Moreover, we have
\begin{align} \label{S2}
\left( \ddt{} - \Delta \right) S & \le - \frac{1}{2}| \emph{Rm}(g) |^2 + C'
\end{align}
\end{lemma}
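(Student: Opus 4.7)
The plan is to adapt the proof of Theorem \ref{theoremSbound} (the Phong-\v{S}e\v{s}um-Sturm third-order estimate), with the key twist that the evolving metric $g$ is no longer uniformly equivalent to a fixed background metric; by Lemma \ref{mbell} it is only equivalent to the collapsing family $\tilde{g}_t = g_S + e^{-t}g_E$. The rescuing structural feature is that, in local product coordinates adapted to $M = E \times S$, the curvature $\hat R$ of $\tilde g_0$ has only the single non-vanishing component $\hat R_{1\bar{1}1\bar{1}} = -(g_S)_{1\bar{1}}^2$ from (\ref{R1111}), depending only on the base variable $z^1$, so that every contraction of $\hat R$ against the evolving $g^{-1}$ sees only the bounded base block $g^{\bar{1}1}$ and avoids the blow-up $g^{\bar{2}2}\sim e^{t}$.

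\textbf{Step 1.} Apply Proposition \ref{propPSS1} with reference metric $\hat g = \tilde g_0$ and $\nu = 1$, giving
\[
\left(\ddt{} - \Delta\right)S = -|\bar\nabla\Psi|^2 - |\nabla\Psi|^2 + S - 2\,\textrm{Re}\left(g^{\bar{j}i}g^{\bar{q}p}g_{k\bar{\ell}}\nabla^{\bar{b}}\hat R_{i\bar{b}p}^{\ \ \ k}\,\overline{\Psi_{jq}^{\ \ell}}\right).
\]

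\textbf{Step 2.} Bound the inhomogeneous term. In product coordinates one checks directly that $\hat R_{i\bar{b}p}^{\ \ \ k}$ and, consequently, every term in $\nabla_a \hat R_{i\bar{b}p}^{\ \ \ k}$ vanishes unless $b=1$, so the contraction $\nabla^{\bar{b}}\hat R_{i\bar{b}p}^{\ \ \ k}$ only involves $g^{\bar{1}a}$. Since $|\hat R|_g^2$ is built from $(g^{\bar{1}1})^{k}(g_{1\bar{1}})^{\ell}$ with bounded factors, $|\hat R|_g \le C$; and the identity $\nabla \hat R = \tilde\nabla_0 \hat R + \Psi\ast \hat R$, with $|\tilde\nabla_0 \hat R|_g \le C$ (base-only), yields $|\nabla \hat R|_g \le C(1+\sqrt{S})$. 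Cauchy-Schwarz gives
\[
\left|2\,\textrm{Re}\left(g^{\bar{j}i}g^{\bar{q}p}g_{k\bar{\ell}}\nabla^{\bar{b}}\hat R_{i\bar{b}p}^{\ \ \ k}\,\overline{\Psi_{jq}^{\ \ell}}\right)\right| \le C\sqrt{S}(1+\sqrt{S}) \le C_2(1+S),
\]
hence $\left(\ddt{} - \Delta\right)S \le -|\bar\nabla\Psi|^2 - |\nabla\Psi|^2 + C_2 S + C_2$.

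\textbf{Step 3.} Revisit the computation (\ref{mpropkeyeqn}) but retain the derivative term discarded in the proof of Lemma \ref{mbell}. Because $g \sim \tilde g_t$ forces $\tilde g_t^{-1} \ge \frac{1}{C}g^{-1}$, the last term in (\ref{mpropkeyeqn}) satisfies
\[
-\tilde g_t^{\bar{j}i}g^{\bar{q}p}g^{\bar{\ell}k}\tilde\nabla_i g_{p\bar{\ell}}\tilde\nabla_{\bar{j}}g_{k\bar{q}} \le -\frac{1}{C} S,
\]
and the other terms on the right of (\ref{mpropkeyeqn}) are bounded by $C$ as in Lemma \ref{mbell}. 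Therefore $\left(\ddt{} - \Delta\right)\textrm{tr}_{\tilde g_t} g \le C - \frac{1}{C} S$.

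\textbf{Step 4.} Form $Q = S + A\,\textrm{tr}_{\tilde g_t} g$ with $A$ large enough that $A/C \ge 2C_2$. Steps 2 and 3 give $\left(\ddt{} - \Delta\right)Q \le -S + C'$, and the maximum principle (Proposition \ref{propheat}) bounds $Q$ from above, yielding $S \le C$. The refined inequality (\ref{S2}) follows from Step 2 once (\ref{rhatr}) is invoked: $|\bar\nabla\Psi|^2 = |\hat R - R|_g^2 \ge \frac{1}{2}|\textrm{Rm}(g)|^2 - |\hat R|_g^2 \ge \frac{1}{2}|\textrm{Rm}(g)|^2 - C$, and the $C_2 S$ term is now a constant. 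Finally, since $g \le C\tilde g_0$ implies $\tilde g_0^{-1} \le C g^{-1}$, we obtain $|\nabla_{\tilde g_0} g|^2_{\tilde g_0} \le C|\nabla_{\tilde g_0} g|_g^2 = CS \le C$.

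\textbf{Main obstacle.} The delicate step is Step 2: without the product structure of $\tilde g_0$, the collapse of $g$ in the fiber would a priori force $|\hat R|_g$ and $|\nabla \hat R|_g$ to blow up, and the PSS formula would no longer close. Carefully verifying in coordinates that $\hat R$, and inductively $\tilde\nabla_0 \hat R$, are supported on the base indices — so that only bounded entries of $g^{-1}$ appear — is the computational heart of the proof.
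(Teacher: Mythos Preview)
Your proof is correct and follows essentially the same route as the paper's. The paper proceeds identically: it first establishes the refined trace inequality $(\partial_t - \Delta)\,\tr{\tilde{\omega}_t}{\omega} \le C - \tfrac{1}{C'}S$ (your Step~3, using that $\nabla_{\tilde g_t}=\nabla_{\tilde g_0}$), applies Proposition~\ref{propPSS1} with $\hat g=\tilde g_0$, bounds the inhomogeneous term via the identity $\nabla^{\bar b}\hat R = \Psi * \hat R$ together with the explicit computation $|\mathrm{Rm}(\tilde g_0)|_g^2 \le C$ (your Step~2, their (\ref{nablarm})--(\ref{nablarm2})), and then runs the maximum principle on $S + A\,\tr{\tilde{\omega}_t}{\omega}$. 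The only cosmetic difference is that the paper uses $\tilde\nabla_0\hat R = 0$ (since $\omega_S$ has constant curvature $-1$ and $\omega_E$ is flat, the curvature of $\tilde g_0$ is parallel), so (\ref{nablarm}) has no zeroth-order piece and the bound reads $\le CS$ rather than your $\le C_2(1+S)$; this makes no difference to the argument.
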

for a uniform constant $C'$.
\begin{proof}
First we show that
\begin{equation} \label{goodtr}
\left( \ddt{} - \Delta \right)  \tr{\tilde{\omega}_t}{\omega} \le C - \frac{1}{C'}| \nabla_{\tilde{g}_0} g|^2,
\end{equation}
for uniform constants $C, C'$.
From (\ref{mpropkeyeqn}), (\ref{that}), (\ref{this}) and part (iii) of Lemma \ref{2phi},
\begin{align}\label{goodtr2}
\left( \ddt{} - \Delta \right)   \tr{\tilde{\omega}_t}{\omega} & \le C -  \tilde{g}_t^{\ov{j} i} g^{\ov{q} p} g^{\ov{\ell} k} \tilde{\nabla}_i g_{p \ov{\ell}} \tilde{\nabla}_{\ov{j}} g_{p \ov{q}}  \le C - \frac{1}{C'} S.
\end{align}
For the last inequality we are using the fact that $\nabla_{\tilde{g}_t} = \nabla_{\tilde{g}_0}$ which can be seen by choosing a coordinate system at a point in which  $\partial_i \tilde{g}_t=0$ for all $i$ and any $t \ge 0$.  This establishes (\ref{goodtr}).

Using the notation of Proposition \ref{propPSS1}, write $\Psi^k_{ij} = \Gamma^k_{ij} - \Gamma(\tilde{g}_0)^k_{ij}$ so that
$S=  |\Psi|^2$.  Then
\begin{align} \nonumber
\left( \ddt{} - \Delta \right) S & = - | \ov{\nabla} \Psi |^2 - | \nabla \Psi |^2 +  | \Psi |^2
 - 2 \textrm{Re} \left( \, g^{\ov{j} i} g^{\ov{q} p} g_{k \ov{\ell}} \nabla^{\ov{b}} R(\tilde{g}_0)_{i \ov{b} p}^{\ \ \ k} \ov{\Psi_{j q}^{\ell }} \right).
\end{align}
We have
\begin{equation}  \label{nablarm}
\nabla^{\ov{b}} R(\tilde{g}_0)_{i \ov{b} p}^{\ \ \ k} = - g^{\ov{b}a} \Psi^m_{ia} R(\tilde{g}_0)_{m \ov{b} p}^{\ \ \ \ \, k} - g^{\ov{b} a} \Psi^m_{pa} R(\tilde{g}_0)_{i \ov{b} m}^{\ \ \ \ k} + g^{\ov{b}a} \Psi^k_{ma} R(\tilde{g}_0)_{i \ov{b}p}^{ \ \ \ m}.
\end{equation}
Indeed, as in the proof of Lemma \ref{mbell}, this can be seen by choosing a local holomorphic product coordinate system $(z^1, z^2)$ centered at a point $x$ with $z^1$ normal for $\omega_S$ and $z^2$ normal for $\omega_E$.
Using the argument of (\ref{this}) and the result of Lemma \ref{mbell} we have
\begin{align} \nonumber
| \textrm{Rm}(\tilde{g}_0)|_g^2 & := g^{\ov{j} i} g^{\ov{\ell}k} g^{\ov{q}p} g_{a\ov{b}} R(\tilde{g}_0)_{i\ov{\ell} p}^{\ \ \ a} \ov{ R(\tilde{g}_0)_{j \ov{k} q}^{ \ \ \ \, \, b}} \\ \nonumber
& = (\tr{\omega}{\omega_S})^3 \frac{2\, \omega_E \wedge \omega}{\tilde{\omega}_0^2} \\ \label{nablarm2}
& \le (\tr{\omega}{\tilde{\omega}_t})^3 \tr{\tilde{\omega}_t}{\omega} \le C.
\end{align}
Combining (\ref{nablarm}) and (\ref{nablarm2}),
\begin{equation}
\left| 2 \textrm{Re} \left( \, g^{\ov{j} i} g^{\ov{q} p} g_{k \ov{\ell}} \nabla^{\ov{b}} R(\tilde{g}_0)_{i \ov{b} p}^{\ \ \ k} \ov{\Psi_{j q}^{\ell }} \right) \right| \le C S.
\end{equation}
Since $| \ov{\nabla} \Psi |^2 = | \textrm{Rm}(\tilde{g}_0) - \textrm{Rm}(g)|_g^2$, we compute
\begin{align} \nonumber
\left( \ddt{} - \Delta \right) S & \le  - | \ov{\nabla} \Psi |^2 - | \nabla \Psi |^2 +  CS \\ \label{Spre2}
& \le - \frac{1}{2}| \textrm{Rm}(g) |^2 + CS + C'
\end{align}
Then the upper bound on $S$ follows from (\ref{goodtr2}) and (\ref{Spre2})  by applying the maximum principle to $S + A \tr{\tilde{\omega}_t}{\omega}$ for sufficiently large $A$.
The inequality $| \nabla_{\tilde{g}_0} g|^2_{g_0} \le C$ follows from the fact that the metric $g(t)$ is bounded from above by $g_0$ (Lemma \ref{mbell}).
  The inequality (\ref{S2}) follows from (\ref{Spre2}). \qed
\end{proof}

We then easily obtain estimates for curvature and all covariant derivatives of curvature, establishing part (ii) of Theorem \ref{pe}.

\begin{lemma} \label{curvesti}
There exist uniform constants $C_m$ for $m=0, 1, 2, \ldots$ such that on $M \times [0,\infty)$,
\begin{equation}
 | \nabla^m_{\mathbb{R}} \emph{Rm}(g) |^2 \le C_m.
\end{equation}
\end{lemma}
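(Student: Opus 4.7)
The plan is to reduce this to the already-developed machinery of Section~\ref{secthoe}, using the key evolution inequality (\ref{S2}) for $S$ that was obtained in Lemma~\ref{lemmaSe} as a replacement for the standard third-order estimate of Theorem~\ref{theoremSbound}. In the setting of Section~\ref{secthoe}, a uniform metric equivalence with a fixed smooth K\"ahler metric is enough to deduce uniform bounds on curvature and all its derivatives; here the evolving metric is only comparable to $\tilde{\omega}_t$ (not to a fixed metric), but the evolution inequalities used in Theorem~\ref{theoremcurv1} and Theorem~\ref{theoremcurv2} are intrinsic to the evolving metric and so carry over without change.

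The first step is to bound $|\mathrm{Rm}(g)|^2$. Since we are in the normalized setting ($\nu=1$), Lemma~\ref{lemmaRi} gives
\begin{equation}
\left(\ddt{} - \Delta\right) |\mathrm{Rm}| \le \frac{C}{2} |\mathrm{Rm}|^2 - \frac{1}{2}|\mathrm{Rm}|,
\end{equation}
at points where $|\mathrm{Rm}|\neq 0$, while (\ref{S2}) of Lemma~\ref{lemmaSe} gives
\begin{equation}
\left(\ddt{} - \Delta\right) S \le - \tfrac{1}{2}|\mathrm{Rm}(g)|^2 + C'.
\end{equation}
Following the proof of Theorem~\ref{theoremcurv1} verbatim, I would set $Q = |\mathrm{Rm}| + A\, S$ with $A$ sufficiently large (depending only on the constant $C$ above), so that
\begin{equation}
\left(\ddt{} - \Delta\right) Q \le -|\mathrm{Rm}|^2 + C''.
\end{equation}
The uniform upper bound on $S$ from Lemma~\ref{lemmaSe} means $Q$ differs from $|\mathrm{Rm}|$ by a bounded quantity, and a standard maximum principle argument (applied on $M\times[0,T_0]$ for arbitrary $T_0$) yields $|\mathrm{Rm}(g)|^2 \le C_0$ on $M\times[0,\infty)$.

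For the higher derivatives $|\nabla_{\mathbb{R}}^m \mathrm{Rm}(g)|^2$, I would then invoke Theorem~\ref{theoremcurv2} directly: it takes as input only a uniform bound on $|\mathrm{Rm}|^2$ along a solution of (\ref{nu}), and produces uniform bounds $|\nabla_{\mathbb{R}}^m \mathrm{Rm}|^2 \le C_m$ by Shi-type interpolation and induction, using the schematic evolution equation (\ref{corH}) together with auxiliary test functions $|\nabla_{\mathbb{R}}^m \mathrm{Rm}|^2 + A_m |\nabla_{\mathbb{R}}^{m-1}\mathrm{Rm}|^2$. Since our flow satisfies (\ref{nu}) with $\nu=1$ and $|\mathrm{Rm}|^2\le C_0$ has just been established, Theorem~\ref{theoremcurv2} applies directly and gives the stated estimates.

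The only step requiring genuine care is the passage from $S\le C$ and (\ref{S2}) to the curvature bound: one must verify that the constant $C''$ obtained in the combined evolution inequality for $Q$ really is independent of time, which follows because both constants in (\ref{Spre2}) and in Lemma~\ref{lemmaSe} depend only on $g_0$, on $\tilde{g}_0$, and on the uniform metric equivalence constant from Lemma~\ref{mbell}. Once this is in hand, the rest is routine application of the general results already proved in Section~\ref{secthoe}, so no new obstacle should arise.
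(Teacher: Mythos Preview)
Your proposal is correct and follows essentially the same approach as the paper: the paper's proof is a single sentence stating that the result follows from Lemma~\ref{lemmaSe} together with the arguments of Theorems~\ref{theoremcurv1} and~\ref{theoremcurv2}, which is precisely what you have spelled out in detail. Your additional remarks about why the constants remain uniform (since the relevant evolution inequalities are intrinsic to $g(t)$ and the input from Lemma~\ref{lemmaSe} depends only on fixed data) are correct and make the one-line proof more transparent.
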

\begin{proof}
This follows from Lemma \ref{lemmaSe} and the arguments of Theorem \ref{theoremcurv1} and \ref{theoremcurv2}. \qed
\end{proof}

\subsection{Fiber collapsing and convergence}

In this subsection, we complete the proof of Theorem \ref{pe}.

First we define a closed $(1,1)$ form $\oflat$ on $M$ with the properties that $[\oflat]=[\omega_0]$ and for each $s \in S$,  $\oflat$ restricted to the fiber $\pi_S^{-1}(s)$ is a K\"ahler-Ricci flat metric.  To do this, fix $s \in S$ and define a function $\rho_s$ on $\pi_S^{-1}(s)$ by
\begin{equation}
\omega_0|_{\pi_S^{-1}(s)} + \ddbar \rho_s>0, \quad \textrm{Ric}\left( \omega_0|_{\pi_S^{-1}(s)} + \ddbar \rho_s\right) =0, \quad \int_{\pi_S^{-1}(s)} \rho_s \, \omega_0 =0.
\end{equation}
Since $\rho_s$ satisfies a partial differential equation with parameters depending smoothly on $s \in S$, it follows that $\rho_s$ varies smoothly with $s$ and hence defines a  smooth function on  $M$, which we will call $\rho$.  Now set $\oflat := \omega_0 + \ddbar \rho$.  This is a closed $(1,1)$ form with the desired properties.  Note that for each $s$ in $S$, $\oflat|_{\pi_S^{-1}(s)}$ is a metric, but $\oflat$  may not be positive definite as a $(1,1)$ form on $M$.

We make use of $\oflat$ to prove the following estimate on $\varphi$.

\begin{lemma}\label{c0conv} There exists $C>0$ such that on $M\times [0, \infty)$,
\begin{equation}
|\varphi |\leq C(1+t)e^{-t}.
\end{equation}
\end{lemma}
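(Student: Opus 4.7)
The plan is to switch the reference family in the parabolic Monge--Amp\`ere equation (\ref{npcma4}) from $\hat{\omega}_t$ to one built from $\oflat$. Define
\[
\omega_*^t := e^{-t}\oflat + (1-e^{-t})\omega_S \in [\omega(t)], \qquad \psi := \varphi - e^{-t}\rho,
\]
so that, using $\oflat - \omega_0 = \ddbar \rho$, one has $\omega = \omega_*^t + \ddbar\psi$, and a short computation reduces (\ref{npcma4}) to
\[
\frac{\partial \psi}{\partial t} + \psi = \log\frac{e^t\omega^2}{\Omega}.
\]

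The essential input is the identity $2\,\oflat \wedge \omega_S = \Omega$. Since K\"ahler Ricci-flat metrics on each fiber $\pi_S^{-1}(s)$ are unique in their cohomology class and $\int_{\pi_S^{-1}(s)}\omega_0 =: c_0$ is independent of $s$, we have $\oflat|_{\text{fiber}} = c_0\,\omega_E|_{\text{fiber}}$ and hence $\oflat \wedge \omega_S = c_0\,\omega_E \wedge \omega_S$; one then checks that $2c_0\,\omega_E \wedge \omega_S$ satisfies both defining properties of $\Omega$ (namely $\ddbar \log(2c_0\,\omega_E \wedge \omega_S) = \omega_S$, using $\textrm{Ric}(\omega_E) = 0$ and $\textrm{Ric}(\omega_S) = -\omega_S$, and $\int 2c_0\,\omega_E \wedge \omega_S = 2c_0 \int_S\omega_S = \int\Omega$), so uniqueness gives the identity. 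Combined with $\omega_S^2 = 0$, this yields $e^t(\omega_*^t)^2 = e^{-t}\oflat^2 + (1-e^{-t})\Omega$, so
\[
G_t := \log \frac{e^t(\omega_*^t)^2}{\Omega} = \log\Bigl(1 + e^{-t}\bigl(\tfrac{\oflat^2}{\Omega}-1\bigr)\Bigr)
\]
satisfies $|e^t G_t| \le C_0$ uniformly on $M \times [0,\infty)$.

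For the upper bound on $\psi$, I would apply the parabolic maximum principle to $V := e^t\psi - Ct$ on $M \times [0,T]$ for arbitrary $T>0$, with $C > C_0$. If $V$ attained its maximum at an interior time $t_0 > 0$ and point $x_0$, then spatial concavity of $\psi$ gives $\omega \le \omega_*^{t_0}$ at $x_0$ (in particular $\omega_*^{t_0}>0$ there), hence $\omega^2 \le (\omega_*^{t_0})^2$ and $\log(e^{t_0}\omega^2/\Omega) \le G_{t_0}(x_0)$; combined with $\partial_t V \ge 0$, which reads $e^{t_0}(\psi + \dot{\psi}) \ge C$, and the displayed PDE for $\psi$, this forces $C \le e^{t_0} G_{t_0}(x_0) \le C_0$, contradicting $C > C_0$. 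Hence the max occurs at $t=0$, where $V = -\rho \le \|\rho\|_{C^0}$, giving $\psi \le C'(1+t)e^{-t}$. The lower bound $\psi \ge -C'(1+t)e^{-t}$ follows by the symmetric argument applied to $-V$ (at a spatial minimum, $\omega \ge \omega_*^t$ and $\omega^2 \ge (\omega_*^t)^2$, so $\log(e^t\omega^2/\Omega) \ge G_t$). Since $|\varphi - \psi| = e^{-t}|\rho| \le C e^{-t}$, the claim follows.

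The main obstacle is spotting the correct reference family: with $\hat{\omega}_t$ in place of $\omega_*^t$, the quantity $e^t\hat{\omega}_t^2/\Omega$ is merely bounded and does not tend to $1$ pointwise, so the maximum-principle step fails. The product structure of $M = E \times S$ enters precisely through the identity $2\,\oflat \wedge \omega_S = \Omega$, which forces the error $G_t$ to decay like $e^{-t}$ and makes the whole scheme close cleanly.
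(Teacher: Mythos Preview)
Your proof is correct and follows essentially the same route as the paper: the paper introduces $Q=\varphi-e^{-t}\rho$ (your $\psi$), uses the identity $\Omega=2\,\omega_S\wedge\oflat$ to rewrite the Monge--Amp\`ere equation, and applies the maximum principle to $e^tQ-At$ (your $V$), obtaining the same contradiction from $e^t\log(1+Ce^{-t})\le C'$. One small caveat worth noting for the lower bound: the step ``$\omega\ge\omega_*^t\Rightarrow\omega^2\ge(\omega_*^t)^2$'' requires $\omega_*^t\ge 0$ at the point in question, which is not automatic since $\oflat$ need not be a metric on $M$; however, one easily checks from the product structure that $\omega_*^t>0$ for all $t\ge T_0$ for some fixed $T_0$, and on $[0,T_0]$ the bound is trivial, so the argument closes.
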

\begin{proof}  Since $\oflat$ is a constant multiple of $\omega_E$ when restricted to each fiber, we see from the definition of $\Omega$ that
\begin{equation}
\Omega =  2\omega_S \wedge \oflat.
\end{equation}
Let $Q= \varphi  - e^{-t} \rho.$ Then
\begin{equation}
 \ddt Q = \log \frac{ e^t  (e^{-t} \oflat+ (1-e^{-t} ) \omega_S + \ddbar Q )^2}{2\omega_S \wedge \oflat} - Q .
 \end{equation}
For a positive constant $A$, consider the quantity $Q_1 = e^t Q - At$.  At a point $(x_0,t_0)$ with $t_0>0$ where $Q_1$ achieves a maximum, we have
\begin{align} \nonumber
0 \le \ddt{} Q_1 & \le e^t \log \frac{ e^t  (e^{-t} \oflat+ (1-e^{-t} ) \omega_S)^2}{2\omega_S \wedge \oflat}  - A \\
& \le e^t \log (1+ C e^{-t}) -A \le C' - A,
\end{align}
 for uniform constants $C, C'$.  Choosing $A>C'$ gives a contradiction.  Hence $Q_1$ is bounded from above.  It follows that
 $\varphi \le C (1+t)e^{-t}$ for a uniform constant $C$.   The lower bound for $\varphi$ is similar.
 \qed
\end{proof}

\begin{lemma} \label{pesmooth} Fix $\beta \in (0,1)$.  We have
\begin{enumerate}
\item[(i)] $\varphi(t) \rightarrow 0$ in $C^{2+\beta}(M)$ as $t \rightarrow \infty$.
\item[(ii)] $\omega(t) \rightarrow \omega_S$ in $C^{\beta}(M)$ as $t\rightarrow \infty$.
\item[(iii)] $\displaystyle{\ddt\varphi \rightarrow 0}$ in $C^0(M)$ as $t \rightarrow \infty$.
\end{enumerate}
\end{lemma}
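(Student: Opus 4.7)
I will prove (i) and (ii) by interpolating $C^0$ decay against a uniform $C^{2+\gamma}(M,g_0)$ bound on $\varphi$, and deduce (iii) from a simple ODE identity combined with a pigeonhole-type argument.

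For (i) and (ii), I first use the $\tilde g_0$-norm part of Lemma \ref{lemmaSe}, namely $|\nabla_{\tilde g_0} g|_{\tilde g_0} \le C$, together with the uniform equivalence of $\tilde g_0$ and $g_0$, to obtain $\| g(t) \|_{C^1(M,g_0)} \le C$ and hence $\| g(t) \|_{C^\gamma(M,g_0)} \le C$ for every $\gamma \in (0,1)$. Since $\hat\omega_t$ has uniform-in-$t$ $C^\infty$ bounds on $M$, writing $\omega = \hat\omega_t + \ddbar\varphi$ produces a uniform $C^\gamma(g_0)$ bound on $\ddbar\varphi$. Feeding this into standard elliptic Schauder estimates applied to the fixed-background equation $\Delta_{g_0}\varphi = \tr{g_0}{(\omega - \hat\omega_t)}$, and invoking $\|\varphi\|_{C^0} \le C$ from Lemma \ref{c0conv}, yields a uniform bound $\|\varphi(t)\|_{C^{2+\gamma}(M,g_0)} \le C$ for any $\gamma \in (0,1)$. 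The Hölder interpolation inequality
\begin{equation*}
\|\varphi\|_{C^{2+\beta}} \le C\,\|\varphi\|_{C^0}^{(\gamma-\beta)/(2+\gamma)}\,\|\varphi\|_{C^{2+\gamma}}^{(2+\beta)/(2+\gamma)},
\end{equation*}
together with $\|\varphi(t)\|_{C^0} \le C(1+t)e^{-t} \to 0$ from Lemma \ref{c0conv}, then gives $\|\varphi(t)\|_{C^{2+\beta}(g_0)} \to 0$ for every $\beta \in (0,\gamma)$, which proves (i). Part (ii) follows from $\omega - \omega_S = e^{-t}(\omega_0 - \omega_S) + \ddbar\varphi$, since both summands tend to $0$ in $C^\beta(M,g_0)$.

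For (iii), I differentiate the parabolic Monge--Amp\`ere equation (\ref{npcma4}) once in time and use $\partial_t\omega = -\Ric(\omega) - \omega$, which yields $\tr{\omega}{(\partial_t\omega)} = -R - 2$ and hence
\begin{equation*}
\ddot\varphi + \dot\varphi = -R - 1.
\end{equation*}
By Lemmas \ref{2phi}(ii) and \ref{curvesti} both $R$ and $\dot\varphi$ are uniformly bounded on $M \times [0,\infty)$, so $|\ddot\varphi| \le C_1$ for some uniform $C_1$; in particular, $t \mapsto \dot\varphi(x,t)$ is $C_1$-Lipschitz uniformly in $x \in M$. If $\dot\varphi$ did not converge uniformly to $0$, one could find $\ve > 0$ and sequences $x_n \in M$, $t_n \to \infty$ with (after possibly passing to $-\varphi$) $\dot\varphi(x_n,t_n) \ge \ve$. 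The Lipschitz bound then forces $\dot\varphi(x_n,\cdot) \ge \ve/2$ on the interval $[t_n, t_n + \ve/(2C_1)]$, so
\begin{equation*}
\varphi\bigl(x_n, t_n + \ve/(2C_1)\bigr) - \varphi(x_n,t_n) \;=\; \int_{t_n}^{t_n+\ve/(2C_1)} \dot\varphi(x_n,s)\,ds \;\ge\; \frac{\ve^2}{4C_1},
\end{equation*}
contradicting $\|\varphi(t)\|_{C^0(M)} \le C(1+t)e^{-t} \to 0$ for large $n$.

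The delicate point is the uniform $C^{2+\gamma}(M,g_0)$ bound on $\varphi$: since $\omega(t)$ collapses in the fiber direction it is not uniformly equivalent to $g_0$, so the metric-equivalence technology of Section \ref{secthoe} does not directly apply. Working instead with the fixed Laplacian $\Delta_{g_0}$ sidesteps this, at the cost of needing the $C^1(g_0)$ control on $g(t)$ supplied by the $\tilde g_0$-norm bound in Lemma \ref{lemmaSe} rather than the $C^\infty$ bounds with respect to the (collapsing) metric $g(t)$ coming from Lemma \ref{curvesti}.
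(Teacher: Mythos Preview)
Your proof is correct and follows essentially the same approach as the paper: use the $\tilde g_0$-norm bound from Lemma~\ref{lemmaSe} to get a $C^1$ (hence $C^\gamma$) bound on $g(t)$ with respect to a fixed metric, apply Schauder estimates to Poisson's equation for $\varphi$ against that fixed background to obtain a uniform $C^{2+\gamma}$ bound, and then combine with the $C^0$ decay from Lemma~\ref{c0conv}; for (iii) both you and the paper bound $\ddot\varphi$ via the identity $\ddot\varphi = -R-1-\dot\varphi$ and run the same Lipschitz-plus-integration contradiction argument. The only cosmetic differences are that the paper works with $\tilde g_0$ rather than $g_0$ (they are uniformly equivalent) and leaves the interpolation step implicit, whereas you write it out explicitly.
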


\begin{proof}  From Lemma \ref{lemmaSe} the tensor $\nabla_{\tilde{g}_0} g$ is bounded with respect to the fixed metric $\tilde{g}_0$.  Moreover, $g \le C \tilde{g}_0$ for some   uniform $C$. It follows that $\Delta_{\tilde{g}_0} \varphi$ is bounded in $C^1(M, \tilde{g}_0)$.  Since $\varphi$ is bounded in $C^0$, we can apply the standard Schauder estimates for Poisson's equation \cite{GT}, to see that $\varphi$ is bounded in $C^{2+\alpha}$ for any $\alpha \in (0,1)$.  Choosing $\alpha>\beta$,
part (i) follows from this together with Lemma \ref{c0conv}.
Part (ii) follows from part (i) and the fact that $\hat{\omega}_t$ converges in $C^{\infty}$ to $\omega_S$ as $t \rightarrow \infty$.

For part (iii), suppose for a contradiction that there exist $\ve>0$ and a sequence $\{ (x_i, t_i)\}_{i\in \mathbb{N}} \subset M \times [0, \infty)$ with $t_i \rightarrow \infty$ and
\begin{equation}
\left| \dot{\varphi} \right| (x_i, t_i) > \ve.
\end{equation}
From Lemma \ref{2phi} and Lemma \ref{curvesti}, the quantity
\begin{equation}
\ddt{} \dot{\varphi}  = -R(\omega) - 1 - \dot\varphi
\end{equation}
is uniformly bounded in $C^0(M\times [0, \infty))$.   Hence there exists a uniform constant $\delta>0$ such that for each $i$,
\begin{equation}
\left| \dot{\varphi}\right|(x_i, t) \ge \frac{\ve}{2} \quad \textrm{for all } t \in [t_i, t_i + \delta].
\end{equation}
 Hence
 \begin{align} \nonumber
 \frac{\ve  \delta}{2} \leq  \int_{t_i}^{t_i+\delta} | \dot{\varphi}|(t, x_i) dt
 & = \left|\int_{t_i}^{t_i +\delta} \dot{\varphi}(x_i,t) dt \right| \\  \nonumber
 & = |\varphi(x_i, t_i + \delta) - \varphi(x_i, t_i)|  \\ & \le \sup_{x \in M} | \varphi(x, t_i+ \delta) - \varphi (x, t_i) |,
 \end{align}
a contradiction since $\varphi(t)$ converges uniformly to $0$
  in $C^0(M)$ as $t \rightarrow \infty$.
\qed
\end{proof}

Finally, we prove part (iii) of Theorem \ref{pe}.

\begin{lemma} \label{fiberflat}  Fix $s \in S$ and write $E = \pi^{-1}_S(s)$ for the fiber over $s$.  Write $\omega_{\emph{flat}} = \omega_{\emph{flat}}|_{E}$.  Then on $E$,
\begin{equation}
e^t \omega(t)|_{E} \rightarrow \omega_{\emph{flat}} \quad \textrm{as }  t \rightarrow \infty,
\end{equation}
where the convergence is uniform on $C^0(E)$.  Moreover, the convergence is uniform in $s \in S$.
\end{lemma}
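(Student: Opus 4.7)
My plan is to reduce the fiber convergence to the global statement that $\text{tr}_{\omega}(\pi_S^*\omega_S) \to 1$ uniformly on $M$ as $t \to \infty$, and then establish the latter using the parabolic Schwarz lemma combined with the $C^0$ convergence $\omega(t) \to \omega_S$ already proved.

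First, I would derive a key identity. In local product coordinates $(z^1, z^2)$ with $z^1$ pulled back from $S$ and $z^2$ along the fiber, equation (\ref{npcma4}) combined with $\Omega = 2\omega_S \wedge \omega_{\text{flat}}$ reads
\[
 e^{t}\det (g_{i\bar j}) \;=\; e^{\dot\varphi + \varphi}(g_S)_{1\bar 1}(g_{\text{flat}})_{2\bar 2},
\]
while a direct calculation gives $\text{tr}_\omega(\pi_S^*\omega_S) = (g_S)_{1\bar 1}\,g_{2\bar 2}/\det(g_{i\bar j})$. Combining these yields
\[
 \frac{e^{t}\,\omega(t)|_E}{\omega_{\text{flat}}|_E} \;=\; e^{\dot\varphi+\varphi}\cdot\text{tr}_\omega(\pi_S^*\omega_S)\qquad\text{on } M.
\]
Since $\dot\varphi+\varphi \to 0$ in $C^0(M)$ by Lemma \ref{pesmooth}(iii) and Lemma \ref{c0conv}, it suffices to show $u := \text{tr}_\omega(\pi_S^*\omega_S) \to 1$ uniformly on $M$.

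For the upper bound, I apply the parabolic Schwarz lemma (Theorem \ref{psl}) to $\pi_S:M\to S$. Since $\text{Ric}(\omega_S)=-\omega_S$ on the one-dimensional base, $\omega_S$ has bisectional curvature equal to $-1$, so Theorem \ref{psl} with $C_N=-1$ and $\nu=1$ yields $(\partial_t-\Delta)\log u \le 1 - u$, equivalently $(\partial_t - \Delta)u \le u(1-u)$. Applying the maximum principle to the barrier $\tilde u := u - 1 - Ce^{-t}$ with $C=(\sup_M u(0)-1)_+$: wherever $\tilde u > 0$ one has $u(u-1) > Ce^{-t}$, hence $(\partial_t-\Delta)\tilde u < 0$, which forces $\tilde u \le 0$ and so $\sup_M u(t) \le 1 + Ce^{-t}$. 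For the lower bound, the Cauchy–Schwarz inequality $|g_{1\bar 2}|^2\le g_{1\bar 1}\,g_{2\bar 2}$ gives
\[
 u \;=\; \frac{(g_S)_{1\bar 1}\,g_{2\bar 2}}{g_{1\bar 1}g_{2\bar 2}-|g_{1\bar 2}|^2} \;\ge\; \frac{(g_S)_{1\bar 1}}{g_{1\bar 1}},
\]
and by Lemma \ref{pesmooth}(ii) the right-hand side converges uniformly to $1$, since $g_{1\bar 1}(t) \to (g_S)_{1\bar 1}$ uniformly.

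Combining the two bounds gives $u \to 1$ uniformly on $M$, and the key identity then yields $e^t\omega(t)|_E \to \omega_{\text{flat}}|_E$ in $C^0(E)$, with convergence uniform in $s\in S$. The main obstacle is extracting exponential decay of $u-1$ rather than merely a finite upper bound from the Schwarz inequality: this is handled by the logistic-barrier maximum principle argument above, which exploits the nonlinearity $u(1-u)$ on the right-hand side of the evolution inequality.
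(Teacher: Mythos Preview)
Your proof is correct and takes a genuinely different route from the paper's argument. The paper works entirely on the fiber: it uses the third-order estimate $|\nabla_{\tilde g_0}g|^2\le C$ from Lemma~\ref{lemmaSe} to deduce that $|\nabla_{g_E}(e^tg|_E-g_{\textrm{flat}})|_{g_E}^2\le C'e^{-t}$, then writes $e^t\omega|_E=e^{\sigma}\omega_{\textrm{flat}}$ and combines the gradient decay with the cohomological normalization $\int_E(e^{\sigma}-1)\,\omega_E=0$ via the Mean Value Theorem to force $e^{\sigma}\to 1$ uniformly. Your approach instead exploits the global algebraic identity $e^t\omega|_E/\omega_{\textrm{flat}}|_E=e^{\dot\varphi+\varphi}\cdot\tr{\omega}{\pi_S^*\omega_S}$, reducing the problem to showing $u=\tr{\omega}{\omega_S}\to 1$; the upper bound comes from the parabolic Schwarz inequality $(\partial_t-\Delta)u\le u(1-u)$ and a clean logistic barrier, while the lower bound $u\ge (g_S)_{1\bar1}/g_{1\bar1}$ falls out of the $C^\beta$ convergence of $\omega(t)$ to $\omega_S$ already established in Lemma~\ref{pesmooth}. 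Both proofs ultimately rest on Lemma~\ref{lemmaSe} (yours indirectly through Lemma~\ref{pesmooth}(ii),(iii)), but your argument packages the fiber statement as a scalar identity on $M$ and extracts an explicit exponential rate $u\le 1+Ce^{-t}$ from the Schwarz lemma, whereas the paper obtains the exponential rate at the level of the fiber gradient; the paper's method is more self-contained on the fiber and would adapt more readily to non-product fibrations where such a clean global identity is unavailable.
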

\begin{proof}  We use here an argument similar to one found in \cite{To2}.  Applying Lemma \ref{lemmaSe} we have
\begin{equation}
| \nabla_{g_E}(g|_E) |^2_{g|_E}  \le | \nabla_{\tilde{g}_0} g |^2 \le C.
\end{equation}
From Lemma \ref{mbell}, we see that $g|_E$ is uniformly equivalent to $e^{-t} g_E$.  It follows that
\begin{equation}
| \nabla_{g_E} (e^t g|_E)|^2_{g_E} = e^{-t} | \nabla_{g_E} (g|_E) |^2_{e^{-t} g_E} \le C e^{-t} | \nabla_{g_E} (g|_E) |^2_{g|_E} \le  C' e^{-t}.
\end{equation}
Since $\gflat$ is a constant multiple of $g_E$ we see that
\begin{equation}
| \nabla_{g_E} (e^t g|_E - g_{\textrm{flat}})|^2_{g_E} \le C' e^{-t}.
\end{equation}
Moreover, $[e^t \omega|_E] = [\oflat]$.  It is now straightforward to complete the proof of the lemma.  Indeed, any two K\"ahler metrics on the Riemann surface $E$ are conformally equivalent and hence we can write $e^t\omega|_E = e^\sigma \oflat$ for a smooth function $\sigma = \sigma(x,t)$ on $E \times [0, \infty)$.  We have
\begin{equation}
| d (e^\sigma -1) |^2_{g_E} \rightarrow 0 \quad \textrm{as } t \rightarrow \infty, \quad \textrm{and} \quad \int_E (e^\sigma-1) \omega_E =0.
\end{equation}
From the second condition, for each time $t$ there exists $y(t) \in E$ with $\sigma(y(t),t)=0$ and hence by the Mean Value Theorem for manifolds,
\begin{equation}
|e^{\sigma(x,t)}-1| = |(e^{\sigma(x,t)} -1) - (e^{\sigma(y(t),t)} -1)| \rightarrow 0 \quad \textrm{as } t \rightarrow \infty,
\end{equation}
uniformly in $x \in E$. This says precisely that $e^t \omega(t)|_E \rightarrow \oflat$ uniformly as $t\rightarrow \infty$.  Moreover, none of our constants depend on the choice of $s \in S$.  This completes the proof of the lemma.
\qed
\end{proof}

Combining Lemma \ref{curvesti} with Lemmas \ref{pesmooth} and \ref{fiberflat}
 completes the proof of Theorem \ref{pe}.

\pagebreak
\section{Finite time singularities} \label{sectfinite}

In this section, we describe some behaviors of the K\"ahler-Ricci flow in the case of a finite time singularity.  The complete behavior of the flow is far from understood, and is the subject of current research.  In Section \ref{basicestimates}, we prove some basic estimates, most of which hold under fairly weak hypotheses.  Next, in Section \ref{sectscalarcurvature}, we describe a result of \cite{Zha3} on the behavior of the scalar curvature and discuss some speculations.  In Sections \ref{sectexamples} and \ref{sectfinitecollapsing} we describe, without proof, some recent results \cite{SSW, SW2} and illustrate with an example.

\subsection{Basic estimates} \label{basicestimates}

We now consider the K\"ahler-Ricci flow
 \begin{equation} \label{krffinite}
\ddt{} \omega  = - \Ric(\omega), \qquad \omega|_{t=0} = \omega_0,
\end{equation}
in the case when $T<\infty$.  The cohomology class $[\omega_0] - T c_1(M)$ is a limit of K\"ahler classes but is itself no longer K\"ahler.  The behavior of the K\"ahler-Ricci flow as $t$ tends towards the singular time $T$ will depend crucially on properties of this cohomology class.

We first observe  that since $T<\infty$ we immediately have from Corollary \ref{volform}  the estimate
\begin{equation} \label{vfb2}
\omega^n \le C \Omega,
\end{equation}
for a uniform constant $C$.

As in Section \ref{maximal} we reduce (\ref{krf}) to a parabolic complex Monge-Amp\`ere equation.
Choose a closed (1,1) form $\hat{\omega}_T$ in the cohomology  class $[\omega_0] - T c_1(M)$.
Given this we can define a family of reference forms $\hat{\omega}_t$ by
\begin{equation}
\hat{\omega}_t = \frac{1}{T} ((T-t) \omega_0 + t \hat{\omega}_T) \in [\omega_0] - t c_1(M).
\end{equation}
Observe that $\hat{\omega}_t$ is \emph{not} necessarily a metric, since $\hat{\omega}_T$ may have negative eigenvalues.
Write $\chi = \frac{1}{T} ( \hat{\omega}_T - \omega_0) = \ddt{} \hat{\omega}_t \in - c_1(M)$ and define $\Omega$ to be the volume form with
\begin{equation}
\ddbar \log \Omega = \chi \in - c_1(M), \qquad \int_M \Omega = \int_M \omega_0^n.
\end{equation}

We then consider the parabolic complex Monge-Amp\`ere equation
\begin{equation} \label{pcma4}
\ddt{ \varphi} = \log \frac{ (\hat{\omega}_t + \ddbar \varphi)^n}{\Omega}, \qquad \hat{\omega}_t+ \ddbar \varphi>0 , \qquad \varphi|_{t=0} =0.
\end{equation}
From (\ref{vfb2}) we immediately have:

\begin{lemma} \label{dphi2} For a uniform constant $C$ we have on $M \times [0,T)$,
\begin{equation}
\dot{\varphi} \le C.
\end{equation}
\end{lemma}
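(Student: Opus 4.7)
The plan is to read off the bound directly from the parabolic Monge-Amp\`ere equation (\ref{pcma4}) combined with the volume form estimate (\ref{vfb2}), without any maximum principle argument. This is essentially a one-line observation once the set-up from Section~\ref{basicestimates} is in place.

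First I would recall that along the flow $\omega(t) = \hat{\omega}_t + \ddbar \varphi$, so equation (\ref{pcma4}) can be rewritten as
\begin{equation}
\dot{\varphi} = \log \frac{\omega^n}{\Omega}.
\end{equation}
This identity is just the definition of the parabolic complex Monge-Amp\`ere equation.

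Next, since $T<\infty$, Corollary~\ref{volform}(i) applied with $\nu=0$ yields a uniform constant $C_0 > 0$ such that $\omega^n(t) \le e^{C_0 T} \omega_0^n$ on $M \times [0,T)$, which is the content of estimate~(\ref{vfb2}) after absorbing the smooth bounded ratio $\omega_0^n/\Omega$ into the constant. Hence $\omega^n / \Omega \le C$ on $M \times [0, T)$ for a uniform $C$.

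Taking logarithms in the identity for $\dot{\varphi}$ then gives $\dot{\varphi} \le \log C$, which is the claimed bound. There is no real obstacle here; the only (minor) point to be careful about is tracking that the constant from Corollary~\ref{volform} is uniform on $[0,T)$ because $T$ is finite, and that the comparison between $\omega_0^n$ and $\Omega$ is bounded above by smoothness and compactness of $M$.
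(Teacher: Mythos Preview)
Your proof is correct and is exactly the paper's approach: the paper simply states that Lemma~\ref{dphi2} follows immediately from the volume bound~(\ref{vfb2}), and you have spelled out the one line $\dot{\varphi} = \log(\omega^n/\Omega) \le \log C$ that makes this explicit.
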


If we assume that $\hat{\omega}_T \ge 0$ then the next result shows that the potential $\varphi$ is bounded  \cite{TZha} (see also \cite{SW2}).  Note that since $[\omega_0] -T c_1(M)$ is on the boundary of the K\"ahler cone, one would expect in many cases that this class contains a nonnegative representative $\hat{\omega}_T$.

\begin{proposition} Assume that $\hat{\omega}_T$ is nonnegative.  Then for a uniform constant $C$ we have on $M \times [0,T)$,
\begin{equation}
| \varphi| \le C.
\end{equation}
\end{proposition}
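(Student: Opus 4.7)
The upper bound on $\varphi$ is essentially free: Lemma \ref{dphi2} gives $\dot\varphi \le C$, and integrating in time yields $\varphi(x,t) \le \varphi(x,0) + Ct \le CT$, since $\varphi(0)=0$ and $T<\infty$. So the whole content of the proposition is the lower bound, and that is where the assumption $\hat\omega_T \ge 0$ enters.

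For the lower bound, my plan is to revisit the argument behind Lemma \ref{lemmaphidot} (itself due to Tian--Zhang) and push it to the endpoint $T' = T$ rather than restricting to some $T' < T$. Concretely, I would consider the test quantity
\begin{equation}
Q \, = \, (T-t)\dot\varphi \, + \, \varphi \, + \, nt
\end{equation}
and compute $(\partial_t - \Delta)Q$ directly. Using $\ddt{}\hat\omega_t = \chi$ (so $\ddt{}\dot\varphi = \Delta\dot\varphi + \tr{\omega}{\chi}$) together with the identity $\Delta\varphi = n - \tr{\omega}{\hat\omega_t}$, the same calculation as in (\ref{evolveQ1}) becomes
\begin{equation}
\left(\ddt{} - \Delta\right)Q \, = \, (T-t)\,\tr{\omega}{\chi} \, + \, \tr{\omega}{\hat\omega_t} \, = \, \tr{\omega}{\bigl((T-t)\chi + \hat\omega_t\bigr)} \, = \, \tr{\omega}{\hat\omega_T},
\end{equation}
where in the last step I use $(T-t)\chi + \hat\omega_t = \omega_0 + T\chi = \hat\omega_T$ from the definition $\chi = (\hat\omega_T - \omega_0)/T$. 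In Lemma \ref{lemmaphidot} one had to stop at $T' < T$ precisely because only then was the analogous $\hat\omega_{T'}$ a genuine K\"ahler form; here the hypothesis $\hat\omega_T \ge 0$ supplies exactly what is needed to conclude $(\partial_t - \Delta)Q \ge 0$ all the way up to the singular time.

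Then the minimum-principle version of Proposition \ref{propheat}, applied on $M \times [0, T'')$ for arbitrary $T'' < T$, gives $Q \ge \inf_M Q(\cdot,0) = T \inf_M \log(\omega_0^n/\Omega)$, a uniform constant independent of $t$. Rearranging,
\begin{equation}
\varphi \, \ge \, -(T-t)\dot\varphi \, - \, nt \, + \, T \inf_M \log(\omega_0^n/\Omega),
\end{equation}
and invoking the upper bound $\dot\varphi \le C$ from Lemma \ref{dphi2} one more time, together with $T-t \le T < \infty$, yields $\varphi \ge -C_2$ uniformly on $M \times [0,T)$. I do not anticipate any serious obstacle: the entire argument rests on the single clean observation that the $\hat\omega_{T'}$ appearing in the Tian--Zhang calculation can be replaced by $\hat\omega_T$ at no cost once the latter is nonnegative, and the bound on $\dot\varphi$ needed at the final step is precisely the one-sided bound already furnished by Lemma \ref{dphi2}.
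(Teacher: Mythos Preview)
Your proof is correct, but it takes a different route from the paper's. You adapt the Tian--Zhang quantity $Q = (T-t)\dot\varphi + \varphi + nt$ from Lemma \ref{lemmaphidot}, pushing $T'$ all the way to the singular time $T$; the key identity $(T-t)\chi + \hat\omega_t = \hat\omega_T$ then makes $(\partial_t - \Delta)Q = \tr{\omega}{\hat\omega_T} \ge 0$, and the upper bound on $\dot\varphi$ finishes. The paper instead uses $\hat\omega_T \ge 0$ to get the pointwise volume inequality $\hat\omega_t^n \ge c_0(T-t)^n\Omega$, then applies the minimum principle to $\psi = \varphi + n(T-t)(\log(T-t)-1) - (\log c_0 - 1)t$: at any spatial minimum $\ddbar\varphi \ge 0$, so $(\hat\omega_t + \ddbar\varphi)^n \ge \hat\omega_t^n$, and one computes $\partial_t\psi \ge 1$ there. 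Your argument has the conceptual advantage of showing exactly how the hypothesis $\hat\omega_T \ge 0$ slots into the existing Tian--Zhang machinery; the paper's argument is slightly more elementary in that it avoids the Laplacian term altogether, using only that the complex Hessian of $\varphi$ is nonnegative at a spatial minimum.
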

\begin{proof}
The upper bound of $\varphi$ follows from Lemma \ref{dphi2}.  Alternatively,  use the same argument as in the upper bound of $\varphi$ in Lemma \ref{lemmaphibound}.  For the lower bound, observe that
\begin{equation} \label{otn}
\hat{\omega}_t^n = \frac{1}{T^n} \left( (T-t) \omega_0 + t \hat{\omega}_T\right)^n \ge \frac{1}{T^n} (T-t)^n \omega_0^n \ge c_0 (T-t)^n \Omega,
\end{equation}
for some uniform constant $c_0 >0$.  Here we are using the fact that $\hat{\omega}_T$ is nonnegative. Define
\begin{equation}
\psi = \varphi + n(T-t) ( \log (T-t)-1) - (\log c_0 -1)t,
\end{equation}
and compute
\begin{equation}
\ddt{\psi} = \log \frac{ (\hat{\omega}_t + \ddbar \varphi)^n}{\Omega} - n \log (T-t) - (\log c_0 -1).
\end{equation}
At a point where $\psi$ achieves a minimum in space we have $\ddbar \psi = \ddbar \varphi \ge 0$ and hence from (\ref{otn}),
\begin{equation}
\ddt{\psi} \ge \log (c_0(T-t)^n) - n \log (T-t) - (\log c_0 -1) =1.
\end{equation}
It follows from the minimum principle that $\psi$ cannot achieve a minimum after time $t=0$, and so $\psi$ is uniformly bounded from below.  Hence $\varphi$ is bounded from below.  \qed
\end{proof}

If $\hat{\omega}_T$ is the pull-back of a K\"ahler metric from another manifold via a holomorphic map (so in particular $\hat{\omega}_T \ge 0$), we have by the parabolic Schwarz lemma (Theorem \ref{psl}) a lower bound for $\omega(t)$:

\begin{lemma} \label{lemmapsa} Suppose there exists a holomorphic map $f: M \rightarrow N$ to a compact K\"ahler manifold $N$ and let $\omega_N$ be a K\"ahler metric on $N$.  We assume that
\begin{equation}
[\omega_0] - T c_1(M) = [f^* \omega_N].
\end{equation}
 Then on $M \times [0,T)$,
\begin{equation}
\omega \ge \frac{1}{C} f^* \omega_N,
\end{equation}
for a uniform constant $C$.
\end{lemma}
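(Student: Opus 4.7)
The plan is to combine the parabolic Schwarz Lemma (Theorem \ref{psl}) with the auxiliary heat-type identity used in the proof of Lemma \ref{lemmaphidot}, so that the curvature term produced by the Schwarz estimate is exactly cancelled. To set things up, I would exploit the freedom in the reference form by choosing $\hat{\omega}_{T} := f^{*}\omega_{N}$ itself, which lies in $[\omega_{0}] - Tc_{1}(M)$ by hypothesis; the resulting family is $\hat{\omega}_{t} = \frac{T-t}{T}\omega_{0} + \frac{t}{T} f^{*}\omega_{N}$ and the associated $\varphi$ solves (\ref{pcma4}) for this choice.

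With $\nu = 0$, Theorem \ref{psl} gives, at every point where $\tr{\omega}{(f^{*}\omega_{N})} > 0$,
\[
\left(\ddt{} - \Delta\right)\log \tr{\omega}{(f^{*}\omega_{N})} \;\le\; C_{N}\,\tr{\omega}{(f^{*}\omega_{N})},
\]
where $C_{N}$ is an upper bound for the bisectional curvature of $\omega_{N}$. On the other hand, the same computation as (\ref{evolveQ1}) in the proof of Lemma \ref{lemmaphidot}, specialized to the present $\hat{\omega}_{T}$, yields
\[
\left(\ddt{} - \Delta\right)\bigl((T-t)\dot{\varphi} + \varphi + nt\bigr) \;=\; \tr{\omega}{\hat{\omega}_{T}} \;=\; \tr{\omega}{(f^{*}\omega_{N})}.
\]

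I would then consider the combination $Q := \log \tr{\omega}{(f^{*}\omega_{N})} - C_{N}\bigl((T-t)\dot{\varphi} + \varphi + nt\bigr)$, which satisfies $(\partial_{t} - \Delta)Q \le 0$ wherever $Q$ is finite. Working on an arbitrary compact subinterval $[0,T'] \subset [0,T)$, any maximum of $Q$ must lie in the open set $\{f^{*}\omega_{N} \neq 0\}$ (elsewhere $Q = -\infty$), so the standard maximum principle argument yields $\sup_{M \times [0,T']} Q \le \sup_{M} Q|_{t=0}$, and the initial value is bounded in terms of the smooth data $\omega_{0}$, $f^{*}\omega_{N}$ and $\Omega$.

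To finish, I would bound $(T-t)\dot{\varphi} + \varphi + nt$ uniformly from above on $[0,T)$ using Lemma \ref{dphi2} (which gives $\dot{\varphi} \le C$ and, by integration from $\varphi|_{t=0} = 0$, also $\varphi \le CT$), obtaining $\tr{\omega}{(f^{*}\omega_{N})} \le C'$. The pointwise conclusion $\omega \ge \tfrac{1}{C'} f^{*}\omega_{N}$ then follows by diagonalizing $f^{*}\omega_{N}$ in an $\omega$-orthonormal frame: its nonnegative eigenvalues $\mu_{i}$ satisfy $\sum_{i}\mu_{i} \le C'$, and hence each $\mu_{i}$ is at most $C'$. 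The main subtlety, rather than a deep obstacle, is the possible degeneration of $f^{*}\omega_{N}$ along the critical locus of $f$; this is handled automatically since $Q$ is identically $-\infty$ there and plays no role in the maximum principle step.
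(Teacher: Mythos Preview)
Your argument is correct and is a genuinely different, cleaner route than the paper's. The paper applies the maximum principle to the quantity
\[
Q \;=\; \log \tr{\omega}{(f^{*}\omega_{N})} \;-\; A\varphi \;-\; An(T-t)\bigl(\log(T-t)-1\bigr),
\]
for a large constant $A$. This introduces a term $-A\log\frac{\omega^{n}}{(T-t)^{n}\Omega}$ in the evolution inequality, which the paper controls by combining the arithmetic--geometric mean inequality $\tr{\omega}{\hat{\omega}_{t}} \ge c\bigl((T-t)^{n}\Omega/\omega^{n}\bigr)^{1/n}$ with the elementary fact that $\mu \mapsto A\log\mu - c\mu^{1/n}$ is bounded above. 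Your choice of auxiliary function, $(T-t)\dot{\varphi} + \varphi + nt$, instead produces an exact cancellation of the Schwarz curvature term via the identity $(\partial_{t}-\Delta)\bigl((T-t)\dot{\varphi}+\varphi+nt\bigr)=\tr{\omega}{\hat{\omega}_{T}}$ (the same computation as (\ref{evolveQ1})), and then only the elementary upper bounds $\dot{\varphi}\le C$ and $\varphi\le CT$ from Lemma~\ref{dphi2} are needed to close the argument. One small point worth making explicit: in your final step you implicitly use $C_{N}\ge 0$; this is harmless since $C_{N}$ is any upper bound for the bisectional curvature and may always be taken nonnegative, but it should be stated.
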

\begin{proof}  The method is similar to that of Lemma \ref{applyschwarz}.
We take $\hat{\omega}_T = f^* \omega_N \ge 0$.  Define $u = \tr{\omega}{f^* \omega_N}$.  We apply the maximum principle to the quantity
\begin{equation}
Q = \log u - A\varphi - An (T-t) (\log(T-t)-1),
\end{equation}
for $A$ to be determined later, and where we assume without loss of generality that $u>0$.
Compute using (\ref{psi})
\begin{align} \nonumber
\left( \ddt{} - \Delta \right) Q & \le C_0 u - A\dot{\varphi} + An \log(T-t) + A \tr{\omega}{(\omega- \hat{\omega}_t)} \\
& =  \tr{\omega}{(C_0 f^* \omega_N - (A-1) \hat{\omega}_t)} - A \log \frac{\omega^n}{\Omega (T-t)^n} - \tr{\omega}{\hat{\omega}_t} + An.
\end{align}
Now choose $A$ sufficiently large so that $(A-1) \hat{\omega}_t - C_0 f^*\omega_N \ge f^* \omega_N$ for all $t \in [0,T]$.  By the geometric-arithmetic means inequality, there exists a constant $c>0$ such that
\begin{equation}
\tr{\omega}{\hat{\omega}_t} \ge \frac{(T-t)}{T} \tr{\omega}{\omega_0} \ge c \left( \frac{(T-t)^n \Omega}{\omega^n} \right)^{1/n}.
\end{equation}
Then, arguing as in the proof of Lemma \ref{lemmatr},
\begin{align} \nonumber
\left( \ddt{} - \Delta \right) Q & \le
 - u + A \log \frac{(T-t)^n\Omega }{\omega^n} - c\left( \frac{(T-t)^n \Omega}{\omega^n} \right)^{1/n}  + An \le - u + C,
\end{align}
for a uniform constant $C$, since the map $\mu \mapsto A \log \mu - c \mu^{1/n}$ is uniformly bounded from above for $\mu>0$.
Hence at a maximum point of $Q$ we see that $u$ is bounded from above by $C$.  Since $\varphi$ and $(T-t) \log (T-t)$ are uniformly bounded this shows that $Q$ is uniformly bounded from above.  Hence $u$ is uniformly bounded from above.  \qed
\end{proof}

A natural question is:  when is the limiting class $[\omega_0] -T c_1(M)$ represented by the pull-back of a K\"ahler metric from another manifold via a holomorphic map?
 It turns out that this always occurs if the initial data is appropriately `algebraic'.

\begin{proposition}  Assume there exists a line bundle $L$ on $M$ such that $k [\omega_0] = c_1(L)$ for some positive integer $k$.  Then there exists a holomorphic map $f: M \rightarrow \mathbb{P}^N$ to some projective space $\mathbb{P}^N$ and
\begin{equation}
[\omega_0 ] - T c_1(M) = [f^* \omega],
\end{equation}
for some K\"ahler metric $\omega$ on $\mathbb{P}^N$.
\end{proposition}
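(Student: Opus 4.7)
The plan is to identify the limiting cohomology class $[\omega_0] - Tc_1(M)$ as a positive rational multiple of $c_1(L')$ for some semi-ample line bundle $L'$ on $M$, and then take $f$ to be the map into projective space associated to a basis of sections of $L'^m$ for $m$ sufficiently large.

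The first step is to show that $T$ is rational. Since $k[\omega_0] = c_1(L)$ and $\omega_0$ is K\"ahler, the line bundle $L$ is ample by the Kodaira Embedding Theorem (so in particular $M$ is projective). For $t \in \mathbb{R}$ the real first Chern class $c_1(L) + t\, c_1(K_M) = k\bigl([\omega_0] - (t/k)c_1(M)\bigr)$ is nef precisely when $t/k \le T$, so
$$kT = \sup\{ t \in \mathbb{R} \ | \ L + tK_M \textrm{ is nef}\}.$$
Because $T$ is finite, $K_M$ cannot be nef (otherwise $[\omega_0] - tc_1(M)$ would be K\"ahler for all $t \ge 0$ by part (i) of Theorem \ref{algebraic}), and the Rationality Theorem of Kawamata then applies and forces $kT \in \mathbb{Q}$. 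Write $T = p/q$ for positive integers $p,q$.

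Next I would set $L' := qL + pk K_M$, a line bundle whose first Chern class is
$$c_1(L') = qk[\omega_0] - pk\, c_1(M) = qk\bigl([\omega_0] - T c_1(M)\bigr),$$
which is nef since it is a limit of the K\"ahler classes $qk\bigl([\omega_0] - t c_1(M)\bigr)$ as $t\nearrow T$. To apply Kawamata's Base Point Free Theorem (part (ii) of Theorem \ref{algebraic}), I would verify that $L' - K_M$ is nef and big. Its first Chern class equals
$$c_1(L' - K_M) = qk[\omega_0] - (pk-1)c_1(M) = qk\Bigl([\omega_0] - \tfrac{pk-1}{qk}c_1(M)\Bigr),$$
and because $\tfrac{pk-1}{qk} \in [0,T)$, this is a positive integer multiple of a K\"ahler class, hence ample by Kodaira, and in particular nef and big. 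The Base Point Free Theorem then yields that $L'$ is semi-ample, so $L'^m$ is globally generated for some $m \ge 1$.

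Finally, choosing a basis $\underline{s} = (s_0, \dots, s_N)$ of $H^0(M, L'^m)$ produces a holomorphic map $f = \iota_{\underline{s}} : M \to \mathbb{P}^N$ with $[f^*\ofs] = c_1(L'^m) = mqk\bigl([\omega_0] - T c_1(M)\bigr)$, and setting $\omega = \frac{1}{mqk}\ofs$, a K\"ahler metric on $\mathbb{P}^N$, gives $[\omega_0] - T c_1(M) = [f^*\omega]$ as required. The main obstacle is the invocation of the Rationality Theorem, which is a nontrivial input from the minimal model program; once $T$ is known to be rational, the remainder of the argument is a straightforward bookkeeping exercise with the algebro-geometric facts already collected in Theorem \ref{algebraic}.
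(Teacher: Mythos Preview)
Your proof is correct and follows essentially the same route as the paper's own (sketched) argument: rationality of $T$ via the Kawamata--Shokurov Rationality Theorem, nefness of the limiting class, and then semi-ampleness via the Base Point Free Theorem. In fact you supply details the paper omits --- explicitly constructing the line bundle $L' = qL + pkK_M$, checking that $K_M$ is not nef so the Rationality Theorem applies, and verifying the hypothesis that $L' - K_M$ is nef and big --- so your write-up is a fleshed-out version of the paper's sketch rather than a different approach.
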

\begin{proof}
We give a sketch of the proof.
Note that by the assumption on $L$, the manifold $M$ is a smooth projective variety.  From the Rationality Theorem of Kawamata and Shokurov \cite{KMM, KMori}, $T$ is rational.
The class $[\omega_0]-T c_1(M)$ is nef since it is the limit of K\"ahler classes.  From the Base Point Free Theorem (part (ii) of Theorem \ref{algebraic}), $[\omega_0] - Tc_1(M)$ is semi-ample, and the result follows. \qed
\end{proof}

If we make a further assumption on the map $f$ then we can  obtain $C^{\infty}$ estimates for the evolving metric away from a subvariety.

\begin{theorem} \label{awayfromsubvariety}
Suppose there exists a holomorphic map $f: M \rightarrow N$ to a compact K\"ahler manifold $N$ which is a biholomorphism outside a  subvariety $E \subset M$.   Let $\omega_N$ be a K\"ahler metric on $N$.  We assume that
\begin{equation}
[\omega_0] - T c_1(M) = [f^* \omega_N].
\end{equation}
 Then on any compact subset $K$ of $M \setminus E$ there exists a constant $c_K>0$ such that
 \begin{equation} \label{CK}
 \omega  \ge c_K \omega_0, \quad \textrm{on} \quad K \times [0,T).
 \end{equation}
Moreover we have uniform $C^{\infty}_{\emph{loc}}$ estimates for $\omega(t)$ on $M \setminus E$.
\end{theorem}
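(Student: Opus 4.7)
The plan is as follows. First I would observe that the lower bound (\ref{CK}) is essentially immediate from Lemma~\ref{lemmapsa}: that lemma already gives $\omega(t) \ge C^{-1} f^*\omega_N$ on all of $M \times [0,T)$, where the constant $C$ is uniform. Since $f$ restricts to a biholomorphism from $M \setminus E$ onto its image, $f^*\omega_N$ is a smooth K\"ahler form on $M \setminus E$. On any compact subset $K \subset M \setminus E$, the two smooth K\"ahler metrics $f^*\omega_N$ and $\omega_0$ are uniformly equivalent, so there exists $a_K > 0$ with $f^*\omega_N \ge a_K \omega_0$ on $K$. Combining these gives (\ref{CK}) with $c_K = a_K/C$.

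Next I would upgrade (\ref{CK}) to a two-sided metric equivalence $c_K \omega_0 \le \omega(t) \le C_K \omega_0$ on $K$. The ingredient is the volume form upper bound $\omega^n \le C\,\Omega$ of (\ref{vfb2}); since $\Omega$ and $\omega_0^n$ are smooth positive volume forms, we have $\omega^n \le C'_K \omega_0^n$ on $K$. At each point of $K$, diagonalizing $\omega(t)$ with respect to $\omega_0$ with eigenvalues $\lambda_1, \ldots, \lambda_n$, the already established lower bound gives $\lambda_i \ge c_K$, while the volume bound gives $\prod_i \lambda_i \le C'_K$. Hence each $\lambda_i \le C'_K / c_K^{n-1}$, so $\omega(t) \le C_K \omega_0$ on $K$, exactly as in the proof of Corollary~\ref{cmetricbound}.

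Finally, to obtain the $C^\infty_{\text{loc}}$ estimates, I would take an arbitrary compact $K \subset M \setminus E$, choose an open $U$ with $K \subset U$ and $\overline{U} \subset M \setminus E$, and apply the preceding argument on $\overline{U}$ (which is also compact in $M \setminus E$) to get a uniform bound $C_0^{-1} \omega_0 \le \omega \le C_0 \omega_0$ on $U \times [0,T)$. This puts us exactly in the hypotheses of the final (local) theorem of Section~\ref{secthoe}, which produces uniform $\|\omega(t)\|_{C^m(K, g_0)}$ bounds for every $m$ and every compact $K \subset U$. Since $K$ was arbitrary, this is precisely the $C^\infty_{\text{loc}}$ estimate on $M \setminus E$.

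In short, there is no genuine obstacle: the lower bound is read off from the parabolic Schwarz lemma, the upper bound is a formal consequence of the lower bound together with the volume estimate via arithmetic--geometric means, and the higher-order regularity is inherited directly from the local interior estimates already recorded at the end of Section~\ref{secthoe}. The only minor point requiring care is the nesting $K \Subset U \Subset M \setminus E$ so that the local interior estimates apply on a neighborhood of the chosen compact set.
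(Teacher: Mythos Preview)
Your proposal is correct and matches the paper's proof in its two main steps: the lower bound (\ref{CK}) comes directly from Lemma~\ref{lemmapsa} together with the observation that $f^*\omega_N$ is genuinely K\"ahler on $M\setminus E$, and the two-sided metric bound on compact subsets then follows from the volume upper bound (\ref{vfb2}) via the eigenvalue argument you describe.

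The one place you deviate is in the justification of the higher-order estimates. The paper does not invoke the local interior estimates theorem at the end of Section~\ref{secthoe} (whose proof is omitted there); instead it says the higher-order estimates follow ``from the same arguments as in Lemmas~\ref{hoesigma} and~\ref{hoesigma2}'', i.e.\ by adapting the global weighted maximum-principle computations from Section~\ref{secttsuji}, with a section vanishing along $E$ playing the role of $\sigma$. Your route is cleaner as a black-box citation and avoids reproducing those computations, but it leans on a result the paper states without proof; the paper's route is more self-contained within the machinery actually developed in the notes. Either way the argument goes through, and your nesting $K\Subset U\Subset M\setminus E$ is exactly the care needed to apply the local theorem.
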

\begin{proof}  The inequality (\ref{CK}) is immediate from Lemma \ref{lemmapsa} and the fact that $f^*\omega_N$ is a K\"ahler metric on $M \setminus E$.  From the volume form bound (\ref{vfb2}), we immediately obtain uniform upper and lower bounds for $\omega$ on compact subsets of $M \setminus E$.  The higher order estimates follow from the same arguments as in Lemmas \ref{hoesigma} and \ref{hoesigma2}. \qed
\end{proof}

We will see in Section \ref{sectexamples} that the situation of Theorem \ref{awayfromsubvariety} arises in the case of blowing down an exceptional divisor.

\subsection{Behavior of the scalar curvature} \label{sectscalarcurvature}

In this section we give prove the following result of Zhang \cite{Zha3} on the behavior of the scalar curvature.    Given the estimates we have developed so far, we can give quite a short proof.  Recall that we have a lower bound of the scalar curvature from Theorem \ref{scalar}.

\begin{theorem} \label{zz}
Let $\omega=\omega(t)$ be a solution of the K\"ahler-Ricci flow (\ref{krffinite}) on the maximal time interval $[0,T)$.  If $T< \infty$ then
\begin{equation}
\limsup_{t \rightarrow T} \left(\sup_M R(g(t)) \right) = \infty. \label{Rinfty}
\end{equation}
\end{theorem}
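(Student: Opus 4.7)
The strategy is proof by contradiction. Assume to the contrary that $\sup_M R(g(t)) \le C_1$ for all $t \in [0, T)$. Combined with the universal lower bound of Theorem \ref{scalar} (taking $\nu=0$), we then have $|R| \le C_2$ uniformly on $M \times [0, T)$. The overall plan is to use this bound to derive uniform $C^{\infty}(M, g_0)$ bounds for $\omega(t)$ on the half-open interval $[0, T)$; the metrics $\omega(t)$ would then converge smoothly as $t \to T$ to a K\"ahler metric $\omega(T)$, and invoking Theorem \ref{hamilton} (short-time existence) with initial metric $\omega(T)$ would produce a smooth solution of the K\"ahler-Ricci flow on $[0, T + \epsilon)$ for some $\epsilon > 0$, contradicting the maximality of $T$.

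The first step is to control the volume form and the potential. Recall from Section \ref{sectpma} that $\varphi$ satisfies $\dot\varphi = \log(\omega^n/\Omega)$. Since $\partial_t \log \det g = g^{\bar j i} \partial_t g_{i \bar j} = -R$, this gives the pointwise identity $\partial_t \dot\varphi = -R$. Integrating over the finite interval $[0, t] \subset [0, T)$ yields $|\dot\varphi(x, t)| \le |\dot\varphi(x, 0)| + C_2 T \le C_3$, and hence the volume form satisfies $C_4^{-1}\,\Omega \le \omega^n \le C_4\,\Omega$. A further integration in time gives $|\varphi(x, t)| \le C_3 T$, a uniform $L^\infty$ bound.

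The second step is to show that $\omega(t)$ is uniformly equivalent to $\omega_0$. Apply the parabolic Schwarz lemma (Theorem \ref{psl}) with $f: M \to M$ the identity and target K\"ahler metric $\omega_0$, yielding
\[
\left( \ddt{} - \Delta \right) \log \tr{\omega}{\omega_0} \le C_N \tr{\omega}{\omega_0},
\]
where $C_N$ is an upper bound for the bisectional curvature of $\omega_0$. Run the maximum principle on the quantity $\log \tr{\omega}{\omega_0} - A \varphi$ for $A > 0$ sufficiently large, exactly as in Lemma \ref{applyschwarz}; using the bounds on $\dot\varphi$ and $\varphi$ from the previous step, this yields a uniform upper bound for $\tr{\omega}{\omega_0}$, i.e.\ a lower bound $\omega \ge c\,\omega_0$. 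Combined with the volume form bound from the first step and the elementary pointwise inequality $\tr{\omega_0}{\omega} \le C_n (\tr{\omega}{\omega_0})^{n-1}\cdot \omega^n/\omega_0^n$ (the companion to (\ref{gam}) obtained by reversing the roles of $\omega$ and $\omega_0$ in the argument of Corollary \ref{cmetricbound}), this also produces a uniform upper bound for $\tr{\omega_0}{\omega}$. Hence $C^{-1} \omega_0 \le \omega \le C \omega_0$ on $M \times [0, T)$. Corollary \ref{choe} then upgrades this to uniform $C^{\infty}(M, g_0)$ bounds on $\omega(t)$, enabling the extension argument described above.

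The main obstacle lies in the parabolic Schwarz step above: writing out $A \Delta \varphi = A n - A \tr{\omega}{\hat\omega_t}$ in the computation of $(\partial_t - \Delta)(\log\tr{\omega}{\omega_0} - A\varphi)$ introduces the factor $-A\tr{\omega}{\hat\omega_t}$, and one wishes to choose $A$ so that $A\hat\omega_t \ge (C_N + 1)\omega_0$, allowing the $C_N \tr{\omega}{\omega_0}$ term to be absorbed. But the reference form $\hat\omega_t$ need not remain positive as $t \to T$ since $[\omega_0]-T c_1(M)$ lies on the boundary of the K\"ahler cone. Circumventing this requires either selecting an alternative smooth family of reference forms lying strictly inside the K\"ahler cone on $[0, T-\epsilon]$ and letting $\epsilon \to 0$ with uniform control of constants, or invoking the pluripotential theoretic estimate of Theorem \ref{EGZ} to obtain the $L^\infty$ bound on $\varphi$ independently of the positivity of $\hat\omega_t$, and then running the Schwarz step with any convenient (possibly non-smoothly varying) K\"ahler reference.
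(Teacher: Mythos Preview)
Your overall strategy and first step (bounding $|\dot\varphi|$ and $|\varphi|$ by integrating $\partial_t\dot\varphi=-R$ in time) are correct and identical to the paper's. The gap is exactly where you flag it: in the second-order step you pair $\log\tr{\omega}{\omega_0}$ with $-A\varphi$, and since $\Delta\varphi = n - \tr{\omega}{\hat\omega_t}$ this produces $-A\,\tr{\omega}{\hat\omega_t}$, which is useless once $\hat\omega_t$ loses positivity at $t=T$. Neither of your proposed fixes resolves this. Working on $[0,T-\epsilon]$ gives constants that blow up as $\epsilon\to 0$, because any smooth family of reference forms in $[\omega(t)]$ must degenerate at $t=T$ (otherwise $[\omega_0]-Tc_1(M)$ would be K\"ahler). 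Invoking Theorem~\ref{EGZ} is redundant here: you already have $|\varphi|\le C$ from integrating $\dot\varphi$, and a $C^0$ bound on $\varphi$ does nothing to repair the bad term in the evolution inequality.

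The missing idea is to replace the barrier $-A\varphi$ by the quantity
\[
H \;=\; t\dot\varphi - \varphi - nt,
\]
which is uniformly bounded (since $\dot\varphi,\varphi$ are) and satisfies the clean identity
\[
\left(\ddt{} - \Delta\right) H \;=\; \tr{\omega}{(t\chi - \hat\omega_t)} \;=\; -\tr{\omega}{\omega_0},
\]
because $\hat\omega_t=\omega_0+t\chi$. The point is that the time-weighted combination $t\dot\varphi-\varphi$ cancels $\hat\omega_t$ in favor of the \emph{fixed} positive form $\omega_0$. Now set $Q=\log\tr{\omega}{\omega_0}+AH$ (with your Schwarz lemma) or $Q=\log\tr{\omega_0}{\omega}+AH$ (with Proposition~\ref{propChat}); either way $(\partial_t-\Delta)Q\le -\tr{\omega}{\omega_0}<0$ for $A$ large, and the maximum principle gives the metric equivalence directly, with no reference to $\hat\omega_t$. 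The remainder of your argument (Corollary~\ref{choe}, smooth extension to $t=T$, contradiction with maximality) then goes through.
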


In the case of the general Ricci flow with a singularity at time $T<\infty$ it is known that $\sup_M |\textrm{Ric}(g(t))| \rightarrow \infty$ as $t \rightarrow T$  \cite{Se}.

\begin{proof}[Proof of Theorem \ref{zz}]  We will assume that (\ref{Rinfty}) does not hold and obtain a contradiction.
Since we know from Theorem \ref{scalar} that the scalar curvature has a uniform lower bound, we may assume that $\| R(t) \|_{C^0(M)}$ is uniformly bounded for $t \in [0,T)$.  Let $\varphi$ solve the parabolic complex Monge-Amp\`ere equation (\ref{pcma4}). First note that
\begin{equation}
\left| \ddt{} \log \left( \frac{\omega^n}{\Omega} \right) \right| = | R| \le C.
\end{equation}
Integrating in time we see that  $|\dot{\varphi}| = |\log \frac{\omega^n}{\Omega} |$ is uniformly bounded.  Integrating in time again, we obtain a uniform bound for $\varphi$.  Define $H = t \dot{\varphi} - \varphi -nt$, which is a bounded quantity.  Then using (\ref{evolvephit}) we obtain (cf. (\ref{crucial})),
\begin{align}  \label{trick}
\left( \ddt{} - \Delta \right) H & = t\, \tr{\omega}{\chi} -n + \tr{\omega}{(\omega- \hat{\omega}_t)}  = \tr{\omega}{(t \chi - \hat{\omega}_t)}
= - \tr{\omega}{\omega_0}.
\end{align}
Apply Proposition \ref{propChat} to see that
\begin{align} \label{psapp}
\left( \ddt{} - \Delta \right) \tr{\omega_0}{\omega} \le C_0 \tr{\omega}{\omega_0},
\end{align}
for a uniform constant $C_0$ depending only on  $\omega_0$.  Define
$Q = \log \tr{\omega_0}{\omega} + A H$ for $A=C_0+1$.  Combining (\ref{trick}) and (\ref{psapp}), compute
\begin{align}
\left( \ddt{} - \Delta \right) Q \le - \tr{\omega}{\omega_0}<0,
\end{align}
and hence by the maximum principle $Q$ is bounded from above by its value at time $t=0$.  It follows that $\tr{\omega_0}{\omega}$ is uniformly bounded from above.  Since we have a lower bound for $\dot{\varphi} = \log \frac{\omega^n}{\Omega}$, we see that for a uniform constant $C$,
\begin{equation}
\frac{1}{C} \omega_0 \le \omega \le C \omega_0, \quad \textrm{on} \quad M \times [0,T).
\end{equation}
Applying Corollary \ref{choe}, we obtain uniform estimates for $\omega(t)$ and all of its derivatives.  Hence $\omega(t)$ converges to a smooth K\"ahler metric $\omega(T)$ which is contained in $[\omega_0]-T c_1(M)$.  Thus $[\omega_0] - T c_1(M)$ is a K\"ahler class,  contradicting the definition of $T$.  \qed
\end{proof}

We remark that Theorem \ref{zz} can  be proved just as easily using the parabolic Schwarz lemma instead of Proposition \ref{propChat}.  Indeed one can replace $Q$ with $Q=\log \tr{\omega}{\omega_0} + AH$ and apply the Schwarz lemma with the holomorphic map $f$ being the identity map and $\omega_N = \omega_0$.  This was the method in \cite{Zha3}.  Also, one can find in \cite{Zha3} a different way of obtaining a contradiction, one which avoids the higher order estimates.

We finish this section by mentioning a couple of `folklore conjectures':

\begin{conjecture} \label{conjweak}
Let $\omega=\omega(t)$ be a solution of the K\"ahler-Ricci flow (\ref{krf}) on the maximal time interval $[0,T)$.  If $T< \infty$ then
\begin{equation}
R \le \frac{C}{T-t},
\end{equation}
for some uniform constant $C$.
\end{conjecture}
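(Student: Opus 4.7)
The natural approach is to seek an auxiliary function $F$ such that $Q = (T-t)R + F$ satisfies a favorable parabolic inequality suitable for the maximum principle. Combining the identity $\Ric(\omega) = -\chi - \ddbar \dot{\varphi}$ (obtained by differentiating $\dot{\varphi} = \log(\omega^n/\Omega)$ and using $\ddbar \log \Omega = \chi$) with the elementary algebraic fact $(T-t)\chi = \hat\omega_T - \hat\omega_t$, one obtains after taking traces the clean identity
\begin{equation*}
(T-t)R = \tr{\omega}{\hat\omega_t} - \tr{\omega}{\hat\omega_T} - (T-t)\Delta\dot{\varphi}.
\end{equation*}
When $\hat\omega_T \ge 0$, which is the case of primary interest (e.g.\ whenever $[\omega_0] - Tc_1(M)$ is semi-ample, as under the hypotheses of Theorem \ref{awayfromsubvariety}), this already reduces the conjecture to bounding $\tr{\omega}{\hat\omega_t}$ and $-(T-t)\Delta\dot{\varphi}$ from above.

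First I would compute the heat operator $\ddt{} - \Delta$ applied to the candidate auxiliary functions $\dot{\varphi}$, $\varphi$, $\tr{\omega}{\hat\omega_t}$, and $(T-t)\dot{\varphi} - \varphi$, using the basic identities $\ddt{\dot\varphi} = -R$, $(\ddt{} - \Delta)\dot{\varphi} = \tr{\omega}{\chi}$, and the time-dependent parabolic analogue of Proposition \ref{propkeyeqn} applied to the reference metric $\hat\omega_t$. The strategy is then to choose $F$ as a linear combination of these so that the $-R$ arising from $\ddt{}[(T-t)R]$ is cancelled by a matching $-R$ coming from $\ddt F$, while the curvature term $(T-t)|\Ric(\omega)|^2$ produced by $(\ddt{} - \Delta)R = |\Ric(\omega)|^2$ (Theorem \ref{scalar}) is absorbed by good terms from $-\Delta F$.

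The hard part is precisely this last absorption. Since $|\Ric(\omega)|^2 \ge R^2/n$ with equality only when $\Ric(\omega)$ is proportional to $\omega$, the obstruction is the traceless part of the Ricci tensor, which is not controlled by any scalar quantity one can plug in for $F$: no manipulation involving $\dot\varphi$, $\varphi$ and $\tr{\omega}{\hat\omega_s}$ alone can produce a $(T-t)|\Ric|^2$ term for cancellation. This is why Conjecture \ref{conjweak} has resisted a direct maximum principle argument and remains open in full generality. Plausible routes around the obstacle include (i) bringing in a Perelman-style differential Harnack inequality or reduced-volume monotonicity formula adapted to the K\"ahler-Ricci flow; (ii) exploiting \emph{a priori} metric bounds in special geometric situations (as in Theorem \ref{awayfromsubvariety}, where the flow remains controlled away from an exceptional locus, allowing the remaining $-(T-t)\Delta\dot{\varphi}$ estimate to be pursued via a localized maximum principle on a quantity of the form $(T-t)\dot\varphi - \varphi - nt$, whose evolution was already exploited in the proof of Theorem \ref{zz}); or (iii) restricting to complex dimension two, where the Hodge index theorem (Theorem \ref{index}) and the classification of algebraic surfaces exploited in \cite{SW2} supply additional structure.
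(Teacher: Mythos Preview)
The paper does not prove this statement: it is explicitly labeled a \emph{conjecture}, and the surrounding text only records what is known (dimension one by Hamilton and Chow, the Fano case $[\omega_0]=c_1(M)>0$ by Perelman via his monotonicity formulas, and the weaker bound $R\le C/(T-t)^2$ due to Zhang). Your proposal ultimately reaches the same conclusion---that the problem is open and that the obstruction to a direct maximum-principle argument is the $(T-t)|\Ric(\omega)|^2$ term in the evolution of $(T-t)R$---so in that sense your discussion is consistent with the paper.

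That said, your opening ``reduction'' is circular. The identity
\[
(T-t)R \;=\; \tr{\omega}{\hat\omega_t} - \tr{\omega}{\hat\omega_T} - (T-t)\,\Delta\dot\varphi
\]
is correct, but since $\Delta\dot\varphi = -R - \tr{\omega}{\chi}$, the term $-(T-t)\Delta\dot\varphi$ equals $(T-t)R + \tr{\omega}{\hat\omega_T} - \tr{\omega}{\hat\omega_t}$; substituting this back collapses the identity to $0=0$. So claiming that the conjecture ``reduces'' to bounding $\tr{\omega}{\hat\omega_t}$ and $-(T-t)\Delta\dot\varphi$ from above is not a genuine simplification: the second of these two quantities already contains $(T-t)R$ as its principal part. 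Your later analysis, where you compute $(\ddt{}-\Delta)R = |\Ric(\omega)|^2$ and observe that no scalar auxiliary built from $\varphi$, $\dot\varphi$, $\tr{\omega}{\hat\omega_s}$ can absorb the traceless-Ricci contribution, is the honest identification of the difficulty and matches the reason the paper leaves this as a conjecture. The paper's only additional content here is the list of known special cases, which you might cite rather than the more speculative routes (i)--(iii).
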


This conjecture has been established in dimension 1 by Hamilton and Chow \cite{Ch0, Hs} and by Perelman in higher dimensions if $[\omega_0] =  c_1(M)>0$  \cite{P4} (see also \cite{SeT}).  Perelman's  result makes use of the functionals he introduced in \cite{P1}.  In \cite{Zha3}, it was shown in a quite general setting, that $R \le C/(T-t)^2$.

A stronger version of Conjecture \ref{conjweak} is:

\begin{conjecture} \label{conjstrong}
Let $\omega=\omega(t)$ be a solution of the K\"ahler-Ricci flow (\ref{krf}) on the maximal time interval $[0,T)$.  If $T< \infty$ then
\begin{equation}
|\emph{Rm}| \le \frac{C}{T-t},
\end{equation}
for some uniform constant $C$.
\end{conjecture}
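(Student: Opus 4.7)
The plan is to combine a Type I scalar curvature bound with a blow-up / compactness argument in the spirit of Perelman's work on the Ricci flow, passing from scalar curvature control to full Riemann curvature control. The first step is to establish the weaker Conjecture \ref{conjweak}, namely $R \le C/(T-t)$. A natural route is to exploit the evolution equations from Theorem \ref{scalar} together with the parabolic Monge-Amp\`ere equation (\ref{pcma4}) and the reference forms $\hat{\omega}_t$ constructed in Section \ref{basicestimates}, adapting Perelman's reduced volume or $\mathcal{W}$-entropy monotonicity. The volume form bound $\omega^n \le C\Omega$ from Corollary \ref{volform} will be essential here, as will the uniform bound on $\dot{\varphi}$ from Lemma \ref{dphi2}.

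With the scalar bound in hand, I would upgrade to a full curvature bound by a blow-up argument. Pick a sequence $(x_i, t_i) \in M \times [0,T)$ with $t_i \to T$ that realizes the maximum  $Q_i := \sup_{M\times[0,t_i]} |\textrm{Rm}|(x_i,t_i)$, and suppose for contradiction that $(T-t_i)Q_i \to \infty$. Rescale by $\tilde{g}_i(s) = Q_i\, g(t_i + s/Q_i)$ around $x_i$. By the point selection, one has uniform curvature bounds on large parabolic neighborhoods of $(x_i,0)$ in the rescaled flows, and Hamilton's compactness theorem should produce a complete limit K\"ahler-Ricci flow $(\tilde{M}_\infty, \tilde{g}_\infty(s))$ on $(-\infty, 0]$ with $|\textrm{Rm}_{\tilde{g}_\infty}|(p_\infty, 0) = 1$. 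The scalar curvature bound forces $R_{\tilde{g}_\infty} \equiv 0$, since $R_{\tilde{g}_i}(\cdot, 0) \le C/((T-t_i)Q_i) \to 0$. Combined with the structure theory of ancient K\"ahler-Ricci solutions (in particular that the scalar curvature is nonnegative on such limits via Theorem \ref{scalar}), one would conclude $\tilde{g}_\infty$ is a nonflat Ricci-flat K\"ahler manifold, and then derive a contradiction from additional constraints inherited from the finiteness of $T$ and the cohomological data $[\omega_0]-Tc_1(M)$.

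The hard part will be the non-collapsing ingredient needed to run Hamilton's compactness theorem, and  identifying the limit precisely enough to obtain a contradiction. Without an a priori local volume non-collapsing estimate, the rescaled flows might collapse, producing an orbifold or lower-dimensional limit rather than a smooth one; Perelman's $\kappa$-non-collapsing gives control on scales up to $\sqrt{T}$ but one must check that this descends uniformly to the scale $Q_i^{-1/2}$. A second, related obstacle is that near points where the limiting class $[\omega_0]-Tc_1(M)$ degenerates — for example on the exceptional set $E$ of Theorem \ref{awayfromsubvariety} — collapsing is expected to occur (as it does in Section \ref{sectpes}), and the conjecture is in fact known to be false in some collapsing regimes. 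This suggests that Conjecture \ref{conjstrong} as stated may require additional hypotheses (e.g.\ on the non-degeneracy of the limit class, or a semi-ample assumption), and that the most promising approach is to first prove Conjecture \ref{conjweak} unconditionally and then attack \ref{conjstrong} in the non-collapsed setting before tackling the general case.
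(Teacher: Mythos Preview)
The statement you are attempting to prove is labelled \textbf{Conjecture} in the paper, and the paper does not claim or provide a proof of it. Immediately after stating it, the authors remark only that this would mean all finite-time singularities are of Type I and relate it to the Hamilton--Tian conjecture; the preceding Conjecture \ref{conjweak} on the scalar curvature is likewise left open (the paper notes only Perelman's result in the Fano case $[\omega_0]=c_1(M)>0$ and Zhang's weaker bound $R\le C/(T-t)^2$). So there is no ``paper's own proof'' to compare against: both \ref{conjweak} and \ref{conjstrong} are presented as open problems.

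Your outline is the natural blow-up strategy, but note that its very first step --- establishing $R\le C/(T-t)$ in general --- is precisely Conjecture \ref{conjweak}, which is itself open; the tools you list (Corollary \ref{volform}, Lemma \ref{dphi2}, the evolution in Theorem \ref{scalar}) are not known to yield this bound outside the Fano setting. Also, your remark that ``the conjecture is in fact known to be false in some collapsing regimes'' is not supported by the paper: the collapsing in Section \ref{sectpes} occurs at $T=\infty$, not at a finite singular time, and the finite-time collapsing of Section \ref{sectfinitecollapsing} is not asserted to violate the Type I bound. The genuine obstacles you identify (non-collapsing at the correct scale, and ruling out a nonflat Ricci-flat limit) are real, but they are reasons the conjecture remains open rather than gaps in a proof that could be closed with the methods of these notes.
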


 Another way of saying this is that all finite time singularities along  the K\"ahler-Ricci flow are of \emph{Type I}.  This is related to a conjecture of Hamilton and Tian that the (appropriately normalized) K\"ahler-Ricci flow on a manifold with positive first Chern class converges to a K\"ahler-Ricci soliton, with a possibly different complex structure in the limit.

\subsection{Contracting exceptional curves} \label{sectexamples}

In this section we briefly describe, without proof, the example of \emph{blowing-down} \emph{exceptional curves} on a K\"ahler surface in finite time.  We begin by defining what is meant by a \emph{blowing-down} and \emph{blowing-up} (see for example \cite{GH}).

First, we define the \emph{blow-up} of the origin in $\mathbb{C}^2$.  Let $z^1, z^2$ be coordinates on $\mathbb{C}^2$, and let $U$ be a open neighborhood of the origin.  Define
\begin{equation}
\tilde{U} = \{ (z, \ell ) \in U \times \mathbb{P}^1 \ | \ z \in \ell \},
\end{equation}
where we are considering $\ell$ as a line in $\mathbb{C}^2$ through the origin.  One can check that $\tilde{U}$ is a 2-dimensional complex submanifold of $U \times \mathbb{P}^1$.   There is a holomorphic map $\pi: \tilde{U} \rightarrow U$ given by $(z, \ell) \mapsto z$ which maps $\tilde{U} \setminus \pi^{-1}(0)$ biholomorphically onto $U \setminus \{0 \}$.     The set
 $\pi^{-1}(0)$ is a $1$-dimensional submanifold of $\tilde{U}$, isomorphic to $\mathbb{P}^1$.

 Given a point $p$ in a K\"ahler surface $N$ we can use local coordinates to construct the \emph{blow up} $\pi: M \rightarrow N$ of $p$, by replacing a neighborhood $U$ of $p$ with the blow up $\tilde{U}$ as above.  Thus $M$ is a K\"ahler surface  and $\pi$ a holomorphic map extending the local map given above.  Up to isomorphism, this construction is independent of choice of coordinates.  The curve $E=\pi^{-1}(p)$ is called the \emph{exceptional  curve}.  Since $\pi(E)=p$, the map $\pi$ contracts or \emph{blows down} the curve $E$.   Moreover, $\pi$ is an isomorphism from $M \setminus E$ to $N \setminus \{p \}$.  From the above we see that $E$ is a smooth curve which is isomorphic to $\mathbb{P}^1$.  Moreover, the reader can check that it satisfies $E \cdot E =-1$.

 Conversely, given a curve $E$ on a surface $M$ with these properties we can define a map blowing down $E$.  More precisely, we define an irreducible curve $E$ in $M$ to be a \emph{$(-1)$-curve} if it is smooth, isomorphic to $\mathbb{P}^1$ and has $E \cdot E = -1$.  If $M$ admits a $(-1)$-curve $E$ then there exists a holomorphic map $\pi: M \rightarrow N$ to a smooth K\"ahler surface $N$ and a point $p \in Y$ such that $\pi$ is precisely the blow down of $E$ to $p$, as constructed above.  Note that if $E$ is a $(-1)$ curve then by the Adjunction Formula, $K_E \cdot E =-1$.

The main result of \cite{SW2} says that, under appropriate hypotheses on the initial K\"ahler class, the K\"ahler-Ricci flow will blow down $(-1)$-curves on $M$ and then continue on the new manifold.  To make this more precise, we need a definition.

\begin{definition} \label{defcan}
We say that the solution $g(t)$ of the K\"ahler-Ricci flow (\ref{krffinite}) on a compact K\"ahler surface $M$ performs a  {\bf  canonical surgical contraction} if the following holds.  There exist distinct $(-1)$ curves $E_1, \ldots, E_k$ of $M$, a compact K\"ahler surface $N$ and a blow-down map $\pi: M \rightarrow N$ with  $\pi(E_i) = y_i \in N$ and
$\pi|_{M \setminus \bigcup_{i=1}^k E_i}$  a biholomorphism onto $N \setminus \{ y_1, \ldots, y_k \}$ such that:
\begin{enumerate}
\item[(i)]  As $t \rightarrow T^-$,   the metrics $g(t)$ converge to a smooth K\"ahler metric $g_T$ on $M \setminus \bigcup_{i=1}^k E_i$ smoothly on compact subsets of $M \setminus \bigcup_{i=1}^k E_i.$

\item[(ii)] $(M, g(t))$ converges to a unique compact metric space $(\hat N, d_T)$ in the Gromov-Hausdorff sense as $t\rightarrow T^-$. In particular, $(\hat N, d_T)$ is homeomorphic to the K\"ahler surface $N$.

\item[(iii)] There exists a unique maximal smooth solution $g(t)$ of the K\"ahler-Ricci flow on $N$ for $t\in (T, T_N)$, with $T< T_N \le \infty$, such that $g(t)$ converges to $(\pi^{-1})^*g_N$ as $t  \rightarrow T^+$ smoothly on compact subsets of $N \setminus \{ y_1, \ldots, y_k\}$.

\item[(iv)] $(N, g(t))$ converges to $(N, d_T)$ in the Gromov-Hausdorff sense as $t\rightarrow T^+$.

\end{enumerate}
\end{definition}

The following theorem is proved in \cite{SW2}.  It essentially says that whenever the evolution of the K\"ahler classes along the K\"ahler-Ricci flow indicate that a blow down should occur at the singular time $T< \infty$, then the K\"ahler-Ricci flow carries out  a canonical surgical contraction at time $T$.

\begin{theorem} \label{thmblowup} Let $g(t)$ be a smooth solution of the K\"ahler-Ricci flow (\ref{krffinite}) on a K\"ahler surface $M$ for $t$ in $[0,T)$ and  assume $T<\infty$. Suppose there exists a blow-down map $\pi:  M \rightarrow N$ contracting disjoint $(-1)$ curves $E_1, \ldots, E_k$ on $M$ with $\pi(E_i) = y_i \in N$, for a smooth compact K\"ahler surface $(N, \omega_N)$   such that the limiting K\"ahler class satisfies
\begin{equation} \label{assumption1}
[\omega_0] - T c_1(M) = [\pi^*\omega_N].
\end{equation}
Then the K\"ahler-Ricci flow $g(t)$ performs a canonical surgical contraction with respect to the data $E_1, \ldots, E_k$, $N$ and $\pi$.
\end{theorem}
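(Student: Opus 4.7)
The plan is to analyze the flow in three stages: convergence as $t\to T^-$ on $M\setminus\bigcup E_i$, Gromov--Hausdorff collapse of the $E_i$ at time $T$, and continuation of the flow past $T$ on the target surface $N$. Reduce (\ref{krffinite}) to the parabolic complex Monge--Amp\`ere equation (\ref{pcma4}) with reference path $\hat\omega_t = \frac{1}{T}((T-t)\omega_0 + t\,\pi^*\omega_N)$, so that $\hat\omega_T = \pi^*\omega_N \ge 0$ and a uniform $C^0$ bound for $\varphi$ is available. Lemma~\ref{lemmapsa} then gives $\omega(t) \ge \tfrac1C \pi^*\omega_N$, and combined with the volume-form bound (\ref{vfb2}) and the $C^0$ bound on $\varphi$ one obtains, via Theorem~\ref{awayfromsubvariety} together with the arguments of Lemmas~\ref{hoesigma} and \ref{hoesigma2}, uniform $C^\infty_{\mathrm{loc}}$ estimates for $\omega(t)$ on $M\setminus\bigcup E_i$.

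First I would show that $\varphi(t)$ converges in $C^0(M)$ to some bounded $\varphi_T$ as $t\to T^-$: because $\dot\varphi$ is bounded above by Lemma~\ref{dphi2}, the function $\varphi - At$ is monotone decreasing for large $A$, and a lower bound using that $\hat\omega_T \ge 0$ yields equicontinuity in $t$. Combined with the smooth $C^\infty_{\mathrm{loc}}$ convergence on $M\setminus\bigcup E_i$, the limit $g_T := \hat\omega_T + \ddbar\varphi_T$ is a smooth K\"ahler metric on $M\setminus\bigcup E_i$, establishing part (i) of Definition~\ref{defcan}. For part (ii), the essential analytic step is a diameter bound $\mathrm{diam}_{g(t)}(E_i) \to 0$ as $t \to T^-$. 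Here I would use the fact that $E_i \cong \mathbb{P}^1$ together with the cohomological identity $\int_{E_i}\omega(t) = [\omega_0]\cdot E_i - t\,c_1(M)\cdot E_i$, which tends to $[\pi^*\omega_N]\cdot E_i = 0$ by (\ref{assumption1}) since $\pi$ contracts $E_i$. Combining this volume decay on $E_i$ with the upper bound $\omega(t) \le C\omega_0$ on a fixed neighborhood of $E_i$ (obtained via the argument of Lemma~\ref{trick2} applied with $E=\bigcup E_i$, using Kodaira's Lemma for the class $[\pi^*\omega_N]$, which is nef and big on $M$) yields the required diameter bound. Pushing $g_T$ forward by $\pi$ produces a length metric $d_T$ on $N$ that agrees with $(\pi^{-1})^*g_T$ away from $\{y_1,\ldots,y_k\}$, and the diameter bound gives Gromov--Hausdorff convergence $(M,g(t)) \to (N,d_T)$.

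To continue the flow past $T$ on $N$, I would regard $(\pi^{-1})^*\varphi_T$ as a bounded $\omega_N$-plurisubharmonic initial potential. Fix a smooth volume form $\Omega_N$ on $N$ with $\ddbar\log\Omega_N \in -c_1(N)$, and for small $\delta>0$ solve the Monge--Amp\`ere flow on $N$ with $C^0$ initial data $\psi_\delta := (\pi^{-1})^*\varphi_T * \rho_\delta$ obtained by a smooth regularization. The Eyssidieux--Guedj--Zeriahi estimate (Theorem~\ref{EGZ}) provides $\delta$-independent $C^0$ bounds on the potential, and away from $\{y_i\}$ the standard higher-order estimates of Section~\ref{sectgen} give $C^\infty_{\mathrm{loc}}$ estimates on $N\setminus\{y_i\}$. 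Taking $\delta\to 0$, stability of the Monge--Amp\`ere equation yields a weak solution on $[T,T_N)$ smooth on $N\setminus\{y_i\}$ and agreeing with $(\pi^{-1})^*g_T$ at $t=T$. To promote smoothness across $\{y_i\}$ for $t>T$, one applies the parabolic Schwarz lemma (Theorem~\ref{psl}) with respect to a Fubini--Study metric on $N$ and the Calabi-type estimate (Theorem~\ref{theoremSbound}) to get interior $C^2$ bounds, then bootstraps via Theorem~\ref{interior}; together with Zhang's bound on the scalar curvature and Perelman-type no local collapsing, the flow is smooth on all of $N$ for $t\in(T,T_N)$. Uniqueness of the continuation on $N$ follows from the standard maximum-principle uniqueness for the Monge--Amp\`ere flow with bounded potentials. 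The Gromov--Hausdorff convergence $(N,g(t)) \to (N,d_T)$ as $t\to T^+$ in part (iv) is then obtained from the $C^\infty_{\mathrm{loc}}$ convergence on $N\setminus\{y_i\}$ combined with a diameter estimate on a shrinking neighborhood of each $y_i$, using the upper volume bound and an argument symmetric to that of $t\to T^-$.

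The main obstacles I foresee are: (a) the diameter bound $\mathrm{diam}_{g(t)}(E_i)\to 0$, since the Kodaira's-Lemma metric upper bound degenerates near $E_i$ and one must combine it carefully with the intersection-theoretic volume decay to rule out thin cusp-like geometries; and (b) establishing that the continued flow on $N$ becomes instantaneously smooth at $\{y_1,\ldots,y_k\}$ for $t>T$, which requires both $L^\infty$ Monge--Amp\`ere estimates robust under singular initial data and a localized Schwarz lemma/Calabi estimate that does not see the singular starting geometry. Everything else --- the $C^0$ potential bound, the $C^\infty_{\mathrm{loc}}$ convergence, the uniqueness of the continuation, and the two Gromov--Hausdorff statements --- reduces fairly directly to the estimates developed in Sections~\ref{sectgen}--\ref{secttsuji} and Section~\ref{basicestimates} once the diameter collapse and the instantaneous smoothing are in hand.
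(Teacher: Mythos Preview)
The paper does not actually prove Theorem~\ref{thmblowup} in these notes. After the statement it observes only that part~(i) of Definition~\ref{defcan} follows directly from Theorem~\ref{awayfromsubvariety}, and then refers the reader to \cite{SW2} (with continuation techniques adapted from \cite{SoT3}) for parts~(ii)--(iv). So your outline for part~(i) matches exactly what the paper does, and for the rest you are not so much diverging from the paper's proof as attempting to fill in what the paper deliberately omits.

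That said, your sketch for part~(ii) contains a genuine gap. You claim an upper bound $\omega(t)\le C\omega_0$ on a fixed neighborhood of each $E_i$, citing ``the argument of Lemma~\ref{trick2}''. But Lemma~\ref{trick2} yields only $\tr{\omega_0}{\,\omega}\le C|\sigma|_h^{-2\alpha}$, which \emph{blows up} along the exceptional set --- indeed you acknowledge this yourself in obstacle~(a), so the proposal is internally inconsistent on this point. Moreover, even if one had a uniform $C^0$ upper bound on the metric near $E_i$, combining it with $\int_{E_i}\omega(t)\to 0$ does not by itself force $\mathrm{diam}_{g(t)}(E_i)\to 0$: on a complex curve the restricted K\"ahler form is a conformal factor, and a small $L^1$ norm of that factor together with a pointwise upper bound still permits long thin geometries. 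In \cite{SW2} the diameter collapse is obtained by a more delicate argument exploiting the local model of the blow-up and specific estimates (not available from Section~\ref{secttsuji} or Section~\ref{basicestimates} alone) on how $\omega(t)$ degenerates in the normal versus tangential directions to $E_i$; your outline does not supply a substitute for this.

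Your continuation strategy for part~(iii) --- regularize the singular initial potential, run the flow, use Theorem~\ref{EGZ} for uniform $C^0$ bounds, then pass to a limit and bootstrap --- is in the right spirit and close to what \cite{SoT3} does. But the instantaneous smoothing at the points $y_i$ is the heart of the matter, and your appeal to ``Zhang's bound on the scalar curvature and Perelman-type no local collapsing'' is misplaced: those results presuppose a smooth flow, which is exactly what you are trying to establish. The actual mechanism in \cite{SW2, SoT3} is a maximum-principle argument on $N$ showing that the potential becomes smooth for $t>T$ because the reference class on $N$ is genuinely K\"ahler (unlike on $M$), so the parabolic equation is uniformly parabolic there; this requires controlling the approximating flows uniformly in the regularization parameter near $y_i$, not just away from it.
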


Note that from Theorem \ref{awayfromsubvariety}, we have $C^{\infty}_{\textrm{loc}}$ estimates for $g(t)$ on $M \setminus \bigcup_{i=1}^k E_i$, and thus part (i) in the definition of canonical surgical contraction follows immediately.  For the other parts, estimates are needed for $g(t)$ near the subvariety $E$.  To continue the flow on the new manifold, some techniques are adapted from \cite{SoT3}.  We refer the reader to \cite{SW2} for the details.  In fact, the same result is shown to hold in \cite{SW2} for blowing up points in higher dimensions, and in \cite{SW3} the results are extended to the case of an exceptional divisor $E$ with normal bundle $\mathcal{O}(-k)$, which blows down to an orbifold point. See also \cite{LT} for a different approach to the study of blow-downs.

In Section \ref{sectkahlersurfaces}, we will show how Theorem \ref{thmblowup} can be applied quite generally for the K\"ahler-Ricci flow on a K\"ahler surface.

\subsection{Collapsing in finite time} \label{sectfinitecollapsing}

In this section, we briefly describe, again without proof, another example of a finite time singularity.

Let $M$ be a projective bundle over a smooth projective variety $B$.  That is, $M = \mathbb{P}(E)$, where $\pi: E \rightarrow B$ is a holomorphic vector bundle which we can take to have rank $r$.  Write $\pi$ also for the map $\pi: M \rightarrow B$.  Of course, the simplest example of this would be a product $B \times \mathbb{P}^{r-1}$.  We consider the K\"ahler-Ricci flow (\ref{krffinite}) on $M$.  The flow will always develop a singularity in finite time.  This is because
\begin{equation}
\int_F (c_1(M))^{r-1} >0,
\end{equation}
for any fiber $F$, whereas if $T=\infty$ then $\frac{1}{t}[\omega_0] - c_1(M)>0$ for all $t>0$.  The point is that the fibers $F \cong \mathbb{P}^{r-1}$ must shrink to zero in finite time along the K\"ahler-Ricci flow.

In \cite{SSW}, it is shown that:

\begin{theorem} \label{theoremSSW}
Assume that
\begin{equation}
[\omega_0] - T c_1(M) = [\pi^* \omega_B],
\end{equation}
for some K\"ahler metric $\omega_B$ on $B$.  Then there exists a sequence of times $t_i \rightarrow T$ and a distance function $d_B$ on $B$, which is uniformly equivalent to the distance function induced by $\omega_B$, such that $(M, \omega(t_i))$ converges to $(B, d_B)$ in the Gromov-Hausdorff sense.
\end{theorem}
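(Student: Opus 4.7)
The plan is to study the parabolic complex Monge--Amp\`ere equation (\ref{pcma4}) with reference form $\hat\omega_T = \pi^*\omega_B$ and show that the evolving metrics collapse the fibers while maintaining uniform control on horizontal distances. First, I would collect the basic estimates that are essentially immediate from Section \ref{basicestimates}: since $\hat\omega_T = \pi^*\omega_B \ge 0$, the proposition following Lemma \ref{dphi2} yields a uniform $C^0$ bound $|\varphi| \le C$ and $|\dot\varphi| \le C$, and Corollary \ref{volform} gives $\omega(t)^n \le C\Omega$. Lemma \ref{lemmapsa} (the parabolic Schwarz lemma) provides the crucial lower bound
\begin{equation*}
\omega(t) \ge \frac{1}{C}\pi^*\omega_B \qquad \text{on } M\times[0,T).
\end{equation*}

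Next I would extract the collapsing behavior on fibers. For any fiber $F=\pi^{-1}(p) \cong \mathbb{P}^{r-1}$, the class $[\omega(t)|_F]=\frac{T-t}{T}[\omega_0|_F]$ tends to zero as $t\to T$ (since $\pi^*\omega_B$ vanishes on $F$), so in particular the fiber volumes satisfy $\int_F\omega(t)^{r-1}\to 0$. Combined with the horizontal lower bound and the global volume bound $\omega^n\le C\Omega$, this forces the metric to degenerate tangentially to the fibers. To upgrade this to a diameter estimate I would restrict $\omega(t)$ to a fiber and, using that the induced class is a vanishing multiple of $[\omega_0|_F]$, compare it to the Fubini--Study metric on $F$ via the bounded oscillation of the fiberwise Monge--Amp\`ere potential (a Yau--type $C^0$ estimate as in Theorem \ref{yau}, applied fiberwise); this should give $\mathrm{diam}_{\omega(t)}(F_p)\to 0$ uniformly in $p\in B$ along a subsequence $t_i\to T$.

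With fiber diameters going to zero, I would introduce the pseudo-distance $d_B^t(p,q):=d_{\omega(t)}(F_p,F_q)$ on $B$. The lower bound $\omega\ge c\,\pi^*\omega_B$ immediately yields $d_B^t(p,q)\ge c\,d_{\omega_B}(p,q)$. For the upper bound, I would construct, for each curve $\gamma$ in $B$ joining $p$ to $q$, a horizontal lift $\tilde\gamma$ in $M$ and estimate its length in $\omega(t)$ using a bound of the form $\omega(t)(v,\bar v)\le C\,\pi^*\omega_B(v,\bar v)$ for vectors $v$ that are horizontal with respect to a fixed reference connection on $E$. This bound should be obtained by applying Proposition \ref{propChat} with the degenerate reference form $\pi^*\omega_B$ replaced by a collapsing reference metric of the form $\pi^*\omega_B+e^{-\lambda t}\omega_F$, where $\omega_F$ is a fiberwise Fubini--Study-type form and $\lambda>0$ is chosen so that this reference metric is uniformly equivalent to $\omega(t)$ in a quantitative sense. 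This gives $d_B^t(p,q)\le C\,d_{\omega_B}(p,q)$. By Arzel\`a--Ascoli, along a subsequence $t_i\to T$ the functions $d_B^{t_i}$ converge uniformly on $B\times B$ to a distance $d_B$ uniformly equivalent to $d_{\omega_B}$. Finally, Gromov--Hausdorff convergence $(M,\omega(t_i))\to(B,d_B)$ follows from the standard fact that if $\pi_i:(M,\omega(t_i))\to(B,d_B^{t_i})$ has fibers of diameter tending to zero and $d_B^{t_i}\to d_B$ uniformly, then $d_{\mathrm{GH}}((M,\omega(t_i)),(B,d_B))\to 0$.

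The main obstacle will be the horizontal upper bound $d_B^t\le C\,d_{\omega_B}$. The lower bound is immediate from the Schwarz lemma, but the upper bound requires a genuine second-order estimate comparing $\omega(t)$ to a collapsing reference metric, which is subtle because $\omega(t)$ is \emph{not} uniformly equivalent to any fixed K\"ahler metric on $M$. The projective bundle structure must be used in an essential way, in particular the existence of a natural family of fiberwise metrics evolving in concert with the shrinking fiber classes. A secondary difficulty is making the fiber-diameter estimate uniform in $p\in B$; the fiberwise Monge--Amp\`ere equation has right-hand side controlled by $\dot\varphi+\varphi$ and the relative volume form, so uniformity should follow from the global $C^0$ bounds on $\varphi$ and $\dot\varphi$ together with the smooth dependence of the fiberwise data on $p\in B$.
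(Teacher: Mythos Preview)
The paper does not actually prove this theorem in the notes; it states it as a result from \cite{SSW}, records the two key estimates proved there, namely
\[
\textrm{(i)}\quad \omega(t)\le C\,\omega_0 \qquad\textrm{and}\qquad \textrm{(ii)}\quad \textrm{diam}_{\omega(t)}F\le C(T-t)^{1/3},
\]
and asserts that Gromov--Hausdorff convergence follows easily from these. So the substantive comparison is between your outline and the strategy of \cite{SSW} as summarized here.

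Your route differs in a significant way from the paper's sketch, and the difference matters. The central input in \cite{SSW} is the \emph{global} metric upper bound $\omega(t)\le C\omega_0$ against the fixed initial metric. From that, the horizontal distance upper bound $d_B^t\le C\,d_{\omega_B}$ is immediate (lift any curve in $B$ and use $\omega\le C\omega_0$), and the fiber--diameter estimate (ii) is then obtained by a separate argument exploiting this bound and the shrinking fiber volume. You instead try to avoid (i) and go straight for a horizontal bound via a collapsing reference metric $\pi^*\omega_B+e^{-\lambda t}\omega_F$; this is the right qualitative picture, but producing a second--order estimate of the form $\tr{\tilde\omega_t}{\omega}\le C$ for such a reference is exactly where the work in \cite{SSW} lies, and your proposal does not indicate how the bisectional curvature of the collapsing reference (which blows up in fiber directions as $t\to T$) is to be handled in Proposition~\ref{propChat}. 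That is the main gap.

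There are also two concrete errors in the preliminary estimates. First, $|\dot\varphi|\le C$ is false in this collapsing situation: since $\omega^n\to 0$ on $M$ (the total volume tends to zero), one has $\dot\varphi=\log(\omega^n/\Omega)\to -\infty$. Only the upper bound $\dot\varphi\le C$ from Lemma~\ref{dphi2} is available. Second, your fiber--diameter argument does not work as stated: restricting $\omega(t)$ to a fiber $F$ gives a metric in the class $\frac{T-t}{T}[\omega_0|_F]$, but there is no fiberwise complex Monge--Amp\`ere equation governing $(\omega(t)|_F)^{r-1}$, so Theorem~\ref{yau} cannot be applied fiberwise to control oscillations. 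Knowing the fiber volume shrinks does not by itself bound the fiber diameter; this is precisely why \cite{SSW} proves (i) first and then derives (ii).
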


Note that from Lemma \ref{lemmapsa} we immediately have $\omega(t) \ge \frac{1}{C} \pi^* \omega_B$ for some uniform $C>0$.  The key estimates proved in \cite{SSW} are:
\begin{enumerate}
\item[(i)] $\displaystyle{\omega(t) \le C \omega_0}$.
\item[(ii)] $\displaystyle{ \textrm{diam}_{\omega(t)} F \le C(T-t)^{1/3}}$, for every fiber $F$.
\end{enumerate}

Thus we see that the metrics are uniformly bounded from above along the flow and the fibers collapse.  Given (i) and (ii) it is fairly straightforward to establish Theorem \ref{theoremSSW}.  We refer the reader to \cite{SSW} for the details.

The following conjectures are made in \cite{SSW}:

\begin{conjecture}
With the assumptions above:
\begin{enumerate}
\item[(a)] There exists unique distance function $d_B$ on $B$ such that $(M, \omega(t))$ converges in the Gromov-Hausdorff sense to $(B, d_B)$, without taking subsequences.
\item[(b)] The estimate (ii) above can be strengthened to $\displaystyle{ \textrm{diam}_{\omega(t)} F \le C(T-t)^{1/2}}$, for every fiber $F$.
\item[(c)]  Theorem \ref{theoremSSW} (and parts (a) and (b) of this conjecture) should hold more generally for a bundle $\pi: M \rightarrow B$ over a K\"ahler base $B$ with  fibers $\pi^{-1}(b)$ being Fano manifolds admitting metrics of nonnegative bisectional curvature.
\end{enumerate}
\end{conjecture}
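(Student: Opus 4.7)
The plan is to handle the three parts in order of their mutual dependence, establishing (b) first, then using the resulting fiberwise control to prove (a), and finally observing that the only place the fiber geometry enters the argument is through the existence of a canonical K\"ahler-Einstein metric on each fiber, which Mok's classification provides under the hypothesis in (c).

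For part (b), the heuristic is that $[\omega(t)]|_F = [\omega_0]|_F - t\,c_1(M)|_F$ restricted to a fiber $F \cong \mathbb{P}^{r-1}$ vanishes to first order as $t \to T$, so one expects $\omega(t)|_F \sim (T-t)\,\omega_{\textrm{FS}}$ in an appropriate sense. To make this rigorous, first I would construct a smooth closed nonnegative $(1,1)$-form $\omega_F$ on $M$ whose restriction to each fiber is a positive multiple of the Fubini-Study metric on that $\mathbb{P}^{r-1}$, using a Hermitian metric on the tautological line bundle of $\mathbb{P}(E)$. Setting $\hat\omega_t := \pi^*\omega_B + (T-t)\omega_F$, the target is the sharp two-sided estimate
\[ \frac{1}{C}\hat\omega_t \le \omega(t) \le C\hat\omega_t \qquad \text{on } M \times [0,T). \]
The lower bound in horizontal directions already follows from Lemma \ref{lemmapsa}, so the real content is an upper bound of $\textrm{tr}_{\hat\omega_t}\omega$. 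I would apply the maximum principle to a quantity of the form $Q = \log\bigl((T-t)\,\textrm{tr}_{\hat\omega_t}\omega\bigr) - A\varphi + B(T-t)$, where the computation of $(\partial_t - \Delta)Q$ follows Proposition \ref{propkeyeqn} but with the $t$-dependence of $\hat\omega_t$ contributing a crucial negative term $-\textrm{tr}_\omega\omega_F$; the bisectional curvature of $\hat\omega_t$ in the fiber directions is bounded above uniformly in $t$ precisely because $\omega_F$ is a rescaled Fubini-Study form on each fiber. The upper bound on $\omega(t)$ in the fiber directions then yields $\textrm{diam}_{\omega(t)} F \le C(T-t)^{1/2}$ by integrating along a fixed path in $F$.

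For part (a), the key additional input is uniqueness of the horizontal limit. Given the two-sided bound from (b), the fiber-averaged form $\bar\omega(t) := \pi_*(\omega(t)\wedge\omega_F^{\,r-1})/\pi_*(\omega_F^{\,r-1})$ on $B$ is uniformly bounded in $C^0$ between $c\,\omega_B$ and $C\,\omega_B$, and the interior higher-order estimates from Section \ref{secthoe} applied in a local trivialization $U\times\mathbb{P}^{r-1}$ give uniform $C^\infty_{\textrm{loc}}$ bounds on $\bar\omega(t)$. The evolution of $\bar\omega(t)$ is a degenerate parabolic Monge-Amp\`ere-type equation on $B$ of the same form as those analyzed for Iitaka fibrations in \cite{SoT1, SoT2}, and a standard monotonicity argument using the bounded Mabuchi-type energy on $B$ yields a unique smooth limit $\omega_\infty$ on $B$ along the full flow. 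The distance function induced by $\omega_\infty$ is then the unique Gromov-Hausdorff limit $d_B$, promoting the subsequential convergence of the original Theorem \ref{theoremSSW} to full convergence.

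For part (c), the construction of $\omega_F$ in the first step used only that each fiber admits a distinguished K\"ahler-Einstein metric depending smoothly on the base point. By Mok's solution of the generalized Frankel conjecture, a compact Fano manifold with a metric of nonnegative holomorphic bisectional curvature is biholomorphic to a product of irreducible compact Hermitian symmetric spaces, each of which carries a canonical homogeneous K\"ahler-Einstein metric; gluing these fiberwise via the bundle structure produces a smooth nonnegative $(1,1)$-form $\omega_F$ on $M$ with bounded bisectional curvature in fiber directions, and the arguments of (b) and (a) then apply verbatim. The main obstacle, and the essential new ingredient beyond \cite{SSW}, is the sharp upper bound $\omega(t) \le C\hat\omega_t$: the estimate $\omega(t) \le C\omega_0$ in \cite{SSW} sees only the worst direction and cannot distinguish the collapsing fiber directions from the horizontal ones, so the full benefit of the specific curvature of $\omega_F$ must be extracted carefully in the Schwarz-type computation, and controlling the cross terms between horizontal and vertical derivatives of the evolving metric is where the technical difficulty is concentrated.
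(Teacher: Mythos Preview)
The statement you are attempting to prove is presented in the paper explicitly as a \emph{conjecture}, introduced by the sentence ``The following conjectures are made in [SSW].'' The paper offers no proof and no proof sketch; it simply records these as open problems. So there is no ``paper's own proof'' to compare against, and any argument you give would be new mathematics going beyond both these lecture notes and the cited article.

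That said, your outline contains a concrete error that is worth flagging, since it is exactly the obstruction that keeps (b) a conjecture. You assert that ``the bisectional curvature of $\hat\omega_t$ in the fiber directions is bounded above uniformly in $t$ precisely because $\omega_F$ is a rescaled Fubini-Study form on each fiber.'' This is false in the direction you need. With $\hat\omega_t = \pi^*\omega_B + (T-t)\omega_F$, the fiber metric is being scaled by $(T-t)$, and under $g \mapsto \lambda g$ the bisectional curvature scales like $\lambda^{-1}$. Hence the curvature of $\hat\omega_t$ in purely fiber directions is of order $(T-t)^{-1}$, not $O(1)$. When you run the Schwarz-type computation of Proposition~\ref{propChat} with this $t$-dependent reference metric, the bad curvature term produces a contribution of size $(T-t)^{-1}\,\textrm{tr}_\omega\hat\omega_t$, and your proposed barrier $Q$ does not absorb it. This is precisely why \cite{SSW} obtains only the exponent $1/3$ rather than $1/2$: the sharp two-sided bound $C^{-1}\hat\omega_t \le \omega(t) \le C\hat\omega_t$ is not known, and obtaining it is the whole content of conjecture (b).

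Your plan for (a) also rests on an unsupported step: the fiber-average $\bar\omega(t)$ does not satisfy any autonomous parabolic equation on $B$, so invoking a ``Mabuchi-type energy on $B$'' and the analysis of \cite{SoT1,SoT2} is not justified --- those papers treat the normalized flow in infinite time on Calabi--Yau fibrations, a structurally different situation from a finite-time Fano-fiber collapse. Without the sharp metric equivalence from (b), there is at present no mechanism to upgrade the subsequential Gromov--Hausdorff convergence of Theorem~\ref{theoremSSW} to full convergence.
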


We end this section by describing an example which illustrates both the case of contracting an exceptional curve and the case of collapsing the fibers of a projective bundle.  Let $M$ be the blow up of $\mathbb{P}^2$ at one point $p\in \mathbb{P}^2$.  Let $f: M \rightarrow \mathbb{P}^2$ be the map blowing down the exceptional curve $E$ to the point $p$.  To see the bundle structure on $M$, note that the blow-up of $\mathbb{C}^2$ at the origin  can be identified with $M \setminus f^{-1}(H)$ for $H$ a hyperplane in $\mathbb{P}^2$.   We have a map $\pi$ from the blow up of $\mathbb{C}^2$, which is $ \{ (z, \ell) \in \mathbb{C}^2 \times \mathbb{P}^1 \ | \ z \in \ell \}$,  to $\mathbb{P}^1$ given by projection onto the second factor.  This extends to a holomorphic bundle map $\pi: M \rightarrow \mathbb{P}^1$ which has $\mathbb{P}^1$ fibers.  We refer the reader to \cite{C2, SW1} for more details.

Writing $\omega_1$ and $\omega_2$ for the Fubini-Study metrics on $\mathbb{P}^1$ and $\mathbb{P}^2$ respectively, we see that
every K\"ahler class $\alpha$ on $M$ can be written as a linear combination $\alpha = \beta [\pi^* \omega_1] + \gamma [f^* \omega_2]$ for $\beta, \gamma>0$.  The boundary of the K\"ahler cone is spanned by the two rays $\mathbb{R}^{\ge 0} [\pi^* \omega_1]$ and $\mathbb{R}^{\ge 0} [f^* \omega_2]$.  The first Chern class of $M$  is given by
\begin{equation}
c_1(M) =  [\pi^* \omega_1] + 2 [f^* \omega_2]>0.
\end{equation}
Hence if the initial K\"ahler metric $\omega_0$ is in the cohomology class $\alpha_0 = \beta_0 [\pi^* \omega_1] + \gamma_0 [f^* \omega_2]$ then the solution $\omega(t)$ of the K\"ahler-Ricci flow (\ref{krffinite}) has cohomology class
\begin{equation}
[\omega(t)] = \beta(t) [\pi^* \omega_1] + \gamma(t) [f^* \omega_2], \quad \textrm{with} \quad \beta (t) = \beta_0 - t, \ \gamma(t) = \gamma_0 -2t.
\end{equation}
There are three different behaviors of the K\"ahler-Ricci flow according to whether the cohomology class $[\omega(t)]$ hits the boundary of the K\"ahler cone at a point on $\mathbb{R}^{>0}[\pi^* \omega_1]$, at a point on $\mathbb{R}^{>0}[f^* \omega_2]$ or at zero.  Namely:
\begin{enumerate}
\item[(i)]  If $\gamma_0 > 2 \beta_0$ then a singularity occurs at time $T = \beta_0$ and
\begin{equation}
[\omega_0] - T c_1(M) = \gamma(T) [f^* \omega_2], \quad \textrm{with } \gamma(T) = \gamma_0-2\beta_0>0.
\end{equation}
Thus we are in the case of Theorem \ref{thmblowup} and the K\"ahler-Ricci flow performs a canonical surgical contraction at time $T$.
\item[(ii)]  If $\gamma_0< 2\beta_0$ then a singularity occurs at time $T = \gamma_0/2$ and
\begin{equation}
[\omega_0]- Tc_1(M) = \beta(T) [\pi^* \omega_1], \quad \textrm{with } \beta(T) = \beta_0 - \gamma_0/2 >0.
\end{equation}
Thus we are in the case of Theorem \ref{theoremSSW} and the K\"ahler-Ricci flow will collapse the $\mathbb{P}^1$ fibers and converge in the Gromov-Hausdroff sense, after passing to a subsequence, to a metric on the base $\mathbb{P}^1$.
\item[(iii)]  If $\gamma_0=2\beta_0$ then the cohomology class changes by a rescaling.  It was shown by Perelman \cite{P4, SeT} that $(M, \omega(t))$ converges in the Gromov-Hausdorff sense to a point.
\end{enumerate}

The behavior of the K\"ahler-Ricci flow on this manifold $M$, and higher dimensional analogues, was analyzed in detail by Feldman-Ilmanen-Knopf \cite{FIK}.  They constructed self-similar solutions of the K\"ahler-Ricci flow through such singularities (see also \cite{Cao3}) and carried out a careful study of their properties.  Moreover, they posed a number of conjectures, some of which were established in \cite{SW1}.

Indeed if we make the assumption that the initial metric $\omega_0$ is invariant under a maximal compact subgroup of the automorphism group of $M$, then stronger results than those given in Theorems \ref{thmblowup} and \ref{theoremSSW} were obtained in \cite{SW1}.  In particular,  in the situation of case (ii), it was shown in \cite{SW1} that $(M, \omega(t))$ converges in the Gromov-Hausdorff sense (without taking subsequences) to a multiple of the Fubini-Study metric on $\mathbb{P}^1$ (see also \cite{Fo}).

One can see from the above some general  principles for what we expect with the K\"ahler-Ricci flow.  Namely, the behavior of the flow ought to be able to be read from the behavior of the cohomology classes $[\omega(t)]$ as $t$ tends to the singular time $T$.  If the limiting class $[\omega_0] - T c_1(M) = [\pi^* \omega_N]$ for some $\pi: M \rightarrow N$ with $\omega_N$ K\"ahler on $N$, then we expect geometric convergence of $(M, \omega(t))$ to $(N, \omega_N)$ in some appropriate sense.  This philosophy was discussed by Feldman-Ilmanen-Knopf \cite{FIK}.

\pagebreak

\section{The K\"ahler-Ricci flow and the analytic minimal model program} \label{sectlast}

In this section, we begin by discussing, rather informally, some of the basic ideas behind the minimal model program (MMP) with scaling.  Next we discuss the program of Song-Tian relating this to the  K\"ahler-Ricci flow.  Finally, we describe the case of K\"ahler surfaces.

\subsection{Introduction to the minimal model program with scaling}

In this section, we give a brief introduction of Mori's minimal model program (MMP) in birational geometry.  For more extensive references on this subject, see \cite{CKL, Deb, KMM, KMori}, for example.  We also refer the reader to  \cite{Si2} for a different analytic approach to some of these questions.

We begin with a definition.  Let $X$ and $Y$ be projective varieties.  A \emph{rational map} from $X$ to $Y$ is given by a holomorphic map $f: X \setminus V \rightarrow Y$, where $V$ is a subvariety of $X$.  We identify two such maps if they agree on $X - W$ for some subvariety $W$.  Thus a rational map is really an equivalence class of pairs $(f_U, U)$ where $U$ is the complement of a variety in $X$ (i.e. a Zariski open subset of $X$) and $f_U: U \rightarrow Y$ is a holomorphic map.

We say that a rational map $f$ from $X$ to $Y$ is \emph{birational} if there exists a rational map from $Y$ to $X$ such that $f \circ g$ is the identity as  a rational map.  If a birational map from $X$ to $Y$ exists then we say that $X$ and $Y$ are \emph{birationally equivalent} (or \emph{birational} or \emph{in the same birational class}).

Although birational varieties agree only on a dense open subset, they share many properties (see e.g. \cite{GH, Ha}).  The minimal model program is concerned with finding a `good' representative of a variety within its birational class.  A `good' variety $X$ is one satisfying either:
\begin{enumerate}
\item[(i)]  $K_X$ is nef; \ or
\item[(ii)]  There exists a holomorphic map $\pi: X \rightarrow Y$ to a lower dimensional variety $Y$ such that the generic fiber $X_y= \pi^{-1}(y)$ is a manifold  with $K_{X_y}<0$.\end{enumerate}

In the first case, we say that $X$ is a \emph{minimal model} and in the second case we say that $X$ is a \emph{Mori fiber space} (or \emph{Fano fiber space}).  Roughly speaking,  since $K_X$ nef can be thought of as a  `nonpositivity' condition on $c_1(X) = [\Ric(\omega)]$, (i) implies that $X$ is `nonpositively curved' in some weak sense.  Condition (ii) says rather that $X$ has a `large part' which is `positively curved'.  The two cases (i) and (ii) are mutually exclusive.

The basic idea of the MMP is to find a finite sequence of birational maps $f_1, \ldots, f_k$ and varieties $X_1, \ldots, X_k$,
\begin{equation}
\begin{diagram}\label{diag}
\node{X=X_0} \arrow[2]{e,t,..}{f_1} \node[2]{X_1} \arrow[2]{e,t,..}{f_2} \node[2]{X_2} \arrow[2]{e,t,..}{f_2}  \node[2]{\ldots} \arrow[2]{e,t,..}{f_k} \node[2]{X_k}
\end{diagram}
\end{equation}
so that $X_k$ is our `good' variety: either of type (i) or type (ii).   Recall that $K_X$ nef means that $K_X \cdot C \ge 0$ for all curves $C$.    Thus we want to find maps $f_i$ which `remove' curves $C$ with $K_X \cdot C <0$, in order to make the canonical bundle `closer' to being nef.

If the complex dimension is 1 or 2, then we can carry this out in the category of smooth varieties.
In the case of complex dimension 1, no birational maps are needed and case (i) corresponds to $c_1(X) <0$ or $c_1(X)=0$ while case (ii) corresponds to $X= \mathbb{P}^1$.  Note that in case (i), $X$ admits a metric of negative or zero curvature, while in case (ii) $X$ has a metric of positive curvature.

In complex dimension two, by the Enriques-Kodaira classification  (see \cite{BHPV}), we can obtain our `good' variety $X$  via a finite sequence of blow downs (see Section \ref{sectkahlersurfaces}).

Unfortunately, in dimensions three and higher, it is not possible to find such a sequence of birational maps if we wish to stay within the category of smooth varieties.  Thus to carry out the minimal model program, it is necessary to consider varieties with  singularities.  This leads to all kind of complications, which go well beyond the scope of these notes.
  For the purposes of this discussion, we will restrict ourselves to smooth varieties except where it is absolutely impossible to avoid mentioning singularities.

We need some further definitions.  Let $X$ be a smooth projective variety.
As we have discussed in Section \ref{sectnotion}, there is a natural pairing  between divisors and curves.  A \emph{1-cycle} $C$ on $X$ is a formal finite sum $C = \sum_i a_i C_i$, for $a_i \in \mathbb{Z}$ and $C_i$ irreducible curves.  We say that 1-cycles $C$ and $C'$ are \emph{numerically equivalent} if $ D \cdot C = D \cdot C'$ for all divisors $D$, and in this case we write $C\sim C'$.  We denote by $N_1(X)_{\mathbb{Z}}$ the space of $1$-cycles modulo numerical equivalence.  Write
\begin{equation}
N_1(X)_{\mathbb{Q}} = N_1(X)_{\mathbb{Z}} \otimes_{\mathbb{Z}} \mathbb{Q} \quad \textrm{and} \quad N_1(X)_{\mathbb{R}} = N_1(X)_{\mathbb{Z}} \otimes_{\mathbb{Z}} \mathbb{R}.
\end{equation}
Similarly, we say that divisors $D$ and $D'$ are \emph{numerically equivalent} if $D \cdot C= D' \cdot C$ for all curves $C$.  Write $N^1(X)_{\mathbb{Z}}$ for the set of divisors modulo numerical equivalence.  Define $N^1(X)_{\mathbb{Q}}$, $N^1(X)_{\mathbb{R}}$ similarly.  One can check that $N_1(X)_{\mathbb{R}}$ and $N^1(X)_{\mathbb{R}}$ are vector spaces of the same (finite) dimension.  In the obvious way, we can talk about $1$-cycles with coefficients in $\mathbb{Q}$ or $\mathbb{R}$ (and correspondingly, $\mathbb{Q}$- or $\mathbb{R}$-divisors) and we can talk about numerical equivalence of such objects.

Within the vector space $N_1(X)_{\mathbb{R}}$ is a cone $NE(X)$ which we will now describe.
We say that an element of $N_1(X)_{\mathbb{R}}$ is \emph{effective} if it is numerically equivalent to a $1$-cycle of the form
 $C = \sum_i a_i C_i$ with $a_i \in \mathbb{R}^{\ge 0}$ and $C_i$ irreducible curves.  Write $NE(X)$ for the cone of effective elements of $N_1(X)_{\mathbb{R}}$, and write $\overline{NE(X)}$ for its closure in the vector space $N_1(X)_{\mathbb{R}}$.  The importance of $\ov{NE(X)}$ can be seen immediately from the following theorem, known as \emph{Kleiman's criterion}:

 \begin{theorem} \label{kleiman}
 A divisor $D$ is ample if and only if $D \cdot w >0$ for all nonzero $w \in \overline{NE(X)}$.
 \end{theorem}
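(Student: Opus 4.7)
The plan is to prove the two implications separately. The forward direction---that ampleness implies strict positivity on $\overline{NE(X)}\setminus\{0\}$---is a compactness argument combined with the openness of the ample cone. The reverse direction---that such strict positivity implies ampleness---is the substantive half and reduces, via the Nakai--Moishezon criterion, to the positivity of self-intersection numbers on subvarieties.

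For the forward direction, assume $D$ is ample. By Kodaira embedding some multiple $mD$ is very ample, pulled back from the hyperplane class of projective space, so $mD\cdot C=\deg\iota(C)>0$ for every irreducible curve $C\subset X$. By $\mathbb{R}$-linearity and continuity, $D\cdot w\ge 0$ for all $w\in\overline{NE(X)}$. To upgrade to strict positivity, suppose $D\cdot w_0=0$ for some nonzero $w_0\in\overline{NE(X)}$. Because $N^1(X)_{\mathbb{R}}$ and $N_1(X)_{\mathbb{R}}$ are finite-dimensional and paired non-degenerately under intersection (by the very definition of numerical equivalence), there is a divisor class $H$ with $H\cdot w_0<0$. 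The ample cone is open---an elementary consequence of Kodaira's theorem---so $D+tH$ is ample for all sufficiently small $t>0$, and the weak inequality applied to $D+tH$ forces $(D+tH)\cdot w_0\ge 0$; but this equals $t\,H\cdot w_0<0$, a contradiction.

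For the reverse direction, assume $D\cdot w>0$ for every nonzero $w\in\overline{NE(X)}$. Fix a norm on $N_1(X)_{\mathbb{R}}$ and let $K=\{w\in\overline{NE(X)}:\|w\|=1\}$; the continuous function $w\mapsto D\cdot w$ attains a positive minimum on the compact set $K$, so by continuity an open neighborhood $U$ of $D$ in $N^1(X)_{\mathbb{R}}$ satisfies the same strict positivity condition on $\overline{NE(X)}\setminus\{0\}$. Since ampleness is preserved under positive $\mathbb{R}$-linear combinations and rational points are dense in $U$, it suffices to prove the implication for rational---and, after clearing denominators, integral---divisor classes $D$. For such $D$ the Nakai--Moishezon criterion asserts that $D$ is ample if and only if $D^{\dim V}\cdot V>0$ for every irreducible subvariety $V\subseteq X$ of positive dimension. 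The case $\dim V=1$ is given by hypothesis. The higher-dimensional case is handled by an induction making use of asymptotic Riemann--Roch
\[
\chi(V,\mathcal{O}_V(mD))=\frac{(D|_V)^{\dim V}}{(\dim V)!}\,m^{\dim V}+O(m^{\dim V-1}),
\]
Serre vanishing, and the observation that a failure of $(D|_V)^{\dim V}>0$ would produce, by decomposing large multiples of $D|_V$ via their sections, an effective $1$-cycle on $V$ against which $D$ has non-positive degree---contradicting the hypothesis.

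The main obstacle is the Nakai--Moishezon criterion itself in dimensions above two: the surface case is available as Theorem \ref{nakai}, but the higher-dimensional algebraic version demands asymptotic Riemann--Roch and vanishing theorems, machinery which lies outside the scope of these lecture notes. Once Nakai--Moishezon is granted, the argument above assembles cleanly; the forward direction and the reduction to integral classes in the converse are both elementary consequences of the openness of the ample cone (itself a short corollary of Kodaira's embedding theorem and the definitions collected in Section \ref{sectnotion}).
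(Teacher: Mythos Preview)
The paper does not prove this theorem. Kleiman's criterion is stated in Section~\ref{sectlast} as a known result from algebraic geometry, immediately after the definition of $\overline{NE(X)}$, and the text moves on directly to the MMP with scaling without giving any argument. So there is no proof in the paper to compare your proposal against.

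As for your proposal itself: the forward direction is correct and standard. In the reverse direction, your reduction to integral classes via openness and compactness is clean, and you are right that the real content is the Nakai--Moishezon criterion in arbitrary dimension. However, your inductive sketch for the top self-intersection $(D|_V)^{\dim V}>0$ is not really an argument: the phrase ``would produce\ldots\ an effective $1$-cycle on $V$ against which $D$ has non-positive degree'' names the contradiction you want but does not explain how to manufacture such a cycle from the failure of the self-intersection inequality. Kleiman's actual proof proceeds by a more delicate induction: one first observes that the hypothesis is inherited by every subvariety (via proper pushforward of $1$-cycles), so by induction $D|_V$ is ample on all proper subvarieties $V\subsetneq X$, and then one uses asymptotic Riemann--Roch together with this inductive ampleness to force $D^{\dim X}>0$. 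You correctly flag that this machinery is outside the scope of the notes, which is presumably why the paper simply quotes the result.
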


 We can now begin to describe the \emph{MMP with scaling} of \cite{BCHM}.   This is an algorithm for finding a specific sequence of birational maps $f_1, \ldots f_k$.
   First, choose an ample divisor $H$ on $X$.  Then define
 \begin{equation} \label{Tdefn2}
 T = \sup \{ t >0 \ | \ H +tK_X >0 \}.
 \end{equation}
 If $T= \infty$, then we have nothing to show since $K_X$ is already nef and we are in case (i).  Indeed, if $C$ is any curve in $X$ then
 \begin{equation}
 K_X \cdot C = \frac{1}{t} (H+tK_X)\cdot C  -\frac{1}{t} H\cdot C \ge - \frac{1}{t} H \cdot C \rightarrow 0 \quad \textrm{as} \quad t \rightarrow \infty.
 \end{equation}
 We can assume then that $T<\infty$.  We can apply the Rationality Theorem of Kawamata and Shokurov \cite{KMM, KMori} to see that $T$ is rational, and hence $H+TK_X$ defines a $\mathbb{Q}$-line bundle.

 Next we apply the Base Point Free Theorem (part (ii) of Theorem \ref{algebraic}) to $L=H+T K_X$ to see that for sufficiently large $m\in \mathbb{Z}^{\geq 0}$,  $L^m$ is globally generated and $H^0(X, L^m)$ defines a holomorphic map $\pi: X \rightarrow \mathbb{P}^N$ such that $L^m = \pi^* \mathcal{O}(1)$. We write $Y$ for the image of $\pi$.  This variety $Y$ is uniquely determined for $m$ sufficiently large.  The next step is to establish properties of this map $\pi$.

 Define a subcone $NE(\pi)$ of $\overline{NE(X)}$ by
\begin{equation}
NE(\pi) = \{ w \in \ov{NE(X)} \ | \ L \cdot w =0 \},
\end{equation}
which is nonempty by  Theorem \ref{kleiman}.  We now make the following:

\bigskip
\noindent
{\bf Simplifying assumption:} \ $NE(\pi)$ is an \emph{extremal ray} of $\overline{NE(X)}$.
\bigskip

A \emph{ray} $R$  of $\overline{NE(X)}$ is a subcone of the form $R=\{ \lambda w \ | \ \lambda \in [0, \infty) \}$ for some $w \in \overline{NE(X)}$.  We say that a subcone $C$ in $\ov{NE(X)}$ is \emph{extremal} if $a, b \in \overline{NE(X)}$, $a+b \in C$ implies that $a, b \in C$.  In general,  $NE(\pi)$ is an extremal subcone but not necessarily a ray.  However, it is expected that it will be an extremal ray for generic choice of initial ample divisor $H$ (see the discussion in \cite{SoT3}).

\begin{remark} \emph{
In the case that $NE(\pi)$ is not an extremal ray, one can still continue the MMP with scaling by applying Mori's Cone Theorem \cite{KM} to find such an extremal ray contained in $NE(\pi)$. }
 \end{remark}

The extremal ray $R=NE(\pi)$ has the additional property of being \emph{$K_X$-negative}.  We say that a ray is \emph{$K_X$-negative} if $K_X \cdot w <0$ for all nonzero $w$ in the ray.   Clearly this is true in this case since $0=L \cdot w = H \cdot w + T K_X \cdot w$ and therefore $K_X \cdot w = - T^{-1} H \cdot w <0$ if $w$ is a nonzero element of $R$.  Thus from the point of view of the minimal model program, $R$ contains  `bad' curves (those with negative intersection with $K_X$) which  we want to remove.

Moreover,  the map $\pi$ contracts all curves whose class lies in the extremal ray $R = NE(\pi)$.  The union of these curves is called the \emph{locus} of $R$.
  In fact, the locus of $R= NE(\pi)$ is exactly the set of points where the map $\pi: X \rightarrow Y$ is not an isomorphism.  Moreover, $R$  is a subvariety of $X$ \cite{Deb, KM}.
 There are  three cases:

 \bigskip
 \noindent
 {\bf Case 1.}  \  The locus of $R$ is equal to $X$.  In this case $\pi$ is a fiber contraction and $X$ is a Mori fiber space.

 \bigskip
 \noindent
 {\bf Case 2.} \ The locus of $R$ is an irreducible divisor $D$.  In this case $\pi$ is called a \emph{divisorial contraction}.

 \bigskip
 \noindent
 {\bf Case 3.} \ The locus of $R$ has codimension at least 2.  In this case, $\pi$ is called a \emph{small contraction}.

\bigskip

The process of the MMP with scaling is then as follows:
if we are in case 1, we stop, since $X$ is already of type (ii).  In case 2 we have a map $\pi: X \rightarrow Y \subset \mathbb{P}^N$ to a subvariety $Y$.  Let $H_Y$ on $Y$ be restriction of $\mathcal{O}(1)|_{Y}$.  We can then repeat the process of the minimal model program with scaling with $(Y, H_Y)$ instead of $(X, H)$.

The serious difficulties occur in case 3.  Here the image $Y$ of $\pi$ will have very bad singularities and it will not be possible to continue this process on $Y$.  Instead we have to work on a new space given by a procedure known as a \emph{flip}.
Let $\pi: X \rightarrow Y$ be a small contraction as in case 3.  The \emph{flip} of $\pi: X \rightarrow Y$ is a variety $X^+$ together with a  holomorphic birational map $\pi^+: X^+ \rightarrow Y$ satisfying the following conditions:
\begin{enumerate}
\item[(a)]   The \emph{exceptional locus} of $\pi^+$  (that is, the set of points in $X^+$ on which $\pi^+$ is not an isomorphism)  has codimension strictly larger than 1.
\item[(b)]  If $C$ is a curve contracted by $\pi^+$ then $K_{X^+} \cdot C >0$.
\end{enumerate}
Thus we have a diagram
 \begin{equation}
\begin{diagram}\label{diag15}
\node{X} \arrow{se,b,}{\pi}  \arrow[2]{e,t,..}{(\pi^+)^{-1}\circ\pi }     \node[2]{X^+} \arrow{sw,r}{\pi^+} \\
\node[2]{Y}
\end{diagram}
\end{equation}
The composition $(\pi^+)^{-1} \circ \pi$ is a birational map from $X$ to $X^+$, and is also sometimes called a \emph{flip}.  In going from $X$ to $Y$ we have contracted curves $C$ with $K_X \cdot C <0$.  The point of (b) in the definition above is that in going from $Y$ to $X^+$ we do not wish to `gain' any curves $C$ of negative intersection with the canonical bundle.  The process of the flip replaces curves $C$ on $X$ with $K_X \cdot C<0$ with curves $C'$ on $X^+$ with $K_{X^+} \cdot C' >0$.  This fits into the strategy of trying to make the canonical bundle `more nef'.

Given a small contraction $\pi:X \rightarrow Y$, the question of whether there actually exists a flip $\pi^+: X^+ \rightarrow Y$ is a difficult one.  It has  been established for the MMP with scaling \cite{BCHM, HM}.  Returning now to the MMP with scaling:  if we are in case 3 we replace $X$ by its flip $X^+$ and we denote by $L^+$ the strict transform of $\mathcal{O}(1)|_Y$ via  $\pi^+$ (see for example \cite{Ha}).  We can now repeat the process with $(X^+, H^+)$ instead of $(X, H)$.

We have described now the basic process of the MMP with scaling.  Start with $(X,H)$ and find $\pi: X \rightarrow Y$ contracting the extremal ray $R$ on which $H + T K_X$ is zero.  In case 1, we stop.  In case 2 we carry out a divisorial contraction and restart the process.  In case 3, we replace $X$ by its flip $X^+$ and again restart the process.  A question is now:  does this process terminate in finitely many steps?  It was proved in  \cite{BCHM, HM} that the answer to this is yes, at least in the case of varieties of general type.  If we have not already obtained a Mori fiber space, then the final variety $X_k$ contains no curves $C$ with $K_X \cdot C<0$, and we are done.

We conclude this section with an example of a   flip (see  \cite{Deb, SY}). Let $X_{m, n} = \mathbb{P} ( \mathcal{O}_{\mathbb{P}^n}  \oplus \mathcal{O}_{\mathbb{P}^n} (-1)^{\oplus (m+1)})$ be the $\mathbb{P}^{m+1}$ bundle over $\mathbb{P}^n$. Let $Y_{m,n}$ be the projective cone over $\mathbb{P}^m\times \mathbb{P}^n $ in $\mathbb{P}^{(m+1)(n+1)}$ by the Segre embedding $$[Z_0, ..., Z_m]\times[W_0, ..., W_n]\rightarrow [Z_0W_0, ..., Z_iW_j, ..., Z_mW_n]\in \mathbb{P}^{(m+1)(n+1)-1} .$$ Note that $Y_{m,n}=Y_{n,m}$. Then there exists a holomorphic map $\Phi_{m,n}: X_{m,n} \rightarrow Y_{m,n}$ for $m\geq 1$ contracting the zero section of $X_{m,n}$ of codimension $m+1$ to the cone singularity of $Y_{m,n}$. The following diagram gives a flip from $X_{m, n}$ to $X_{n, m}$ for $1\leq m<n$,
\begin{equation}
\begin{diagram}\label{diagflip}
\node{X_{m,n}} \arrow{se,b,}{\Phi_{m,n}}  \arrow[2]{e,t,..}{ \tilde\Phi }     \node[2]{X_{n,m}} \arrow{sw,r}{\Phi_{n,m}} \\
\node[2]{Y_{m,n}}
\end{diagram}.
\end{equation}














\subsection{The K\"ahler-Ricci flow and the MMP with scaling}

Let $X$ be a smooth projective variety with an ample divisor $H$.
We now relate the MMP with scaling to the (unnormalized) K\"ahler-Ricci flow
\begin{equation}\label{krfmmp}
\ddt{} \omega = - \Ric(\omega), \qquad \omega|_{t=0} = \omega_0,
\end{equation}

We assume that the initial metric $\omega_0$ lies in the cohomology class $c_1([H])$ associated to the divisor $H$.
As we have seen from Section \ref{sectpma}, a smooth solution $\omega(t)$ of the K\"ahler-Ricci flow exists precisely on the time interval $[0,T)$, with $T$ defined by
(\ref{Tdefn2}).  In general, we expect that as $t \rightarrow T$, the K\"ahler-Ricci flow carries out a `surgery', which is equivalent to the algebraic procedure of contracting an extremal ray, as discussed above.

  The following is a (rather sketchy) conjectural picture for the behavior of the K\"ahler-Ricci flow, as proposed by Song and Tian in \cite{SoT1, SoT3, T2}.

\bigskip
\noindent
\emph{Step 1.} \  We start with a metric $\omega_0$ in the class of a divisor $H$ on a variety $X$.
We then consider the solution $\omega(t)$ of the K\"ahler-Ricci flow (\ref{krfmmp}) on $X$ starting at $\omega_0$.  The flow exists on $[0,T)$ with $T = \sup \{ t >0 \ | \ H+ tK_X >0 \}$.

\bigskip
\noindent
\emph{Step 2.} \  If $T= \infty$, then $K_X$ is nef and the K\"ahler-Ricci flow exists for all time. The flow $\omega(t)$ should converge, after an appropriate normalization, to a canonical `generalized K\"ahler-Einstein metric' on $X$ as $t \rightarrow \infty$.

\bigskip
\noindent
\emph{Step 3.} \  If $T<\infty$, the K\"ahler-Ricci flow deforms $X$ to $(Y, g_Y)$ with a possibly singular metric $g_Y$ as $t\rightarrow T$.

\begin{enumerate}

\item[(a)] If $\dim X = \dim Y$ and $Y$ differs from $X$ by a subvariety of codimension $1$, then we return to Step 1, replacing $(X, g_0)$ by  $(Y, g_Y)$.

\item[(b)] If $\dim X = \dim Y$ and $Y$ differs from $X$ by a subvariety of codimension greater than $1$, we are in the case of a small contraction.  $Y$ will be singular.  By considering an appropriate notion of weak K\"ahler-Ricci flow on $Y$, starting at $g_Y$, the flow should immediately resolve  the singularities of $Y$ and replace $Y$ by its flip  $X^+$ (see \cite{SY}). Then we return to Step 1 with $X^+$.

\item[(c)] If $0< \dim Y < \dim X$, then we return to Step 1 with $(Y, g_Y$).

\item[(d)] If $\dim Y=0$, $X$ should have $c_1(X)>0$.  Moreover, after appropriate normalization, the solution $(X,\omega(t))$ of the K\"ahler-Ricci flow should deform to $(X', \omega')$ where $X'$ is possibly a different manifold and  $\omega'$ is either a K\"ahler-Einstein metric  or a K\"ahler-Ricci soliton (i.e. $\Ric(\omega') = \omega' + \mathcal{L}_V(\omega')$ for a holomorphic vector field $V$).  See the discussion after Conjecture \ref{conjstrong}.

\end{enumerate}

Namely, the K\"ahler-Ricci flow should construct the sequence of manifolds $X_1, \ldots, X_k$ of the MMP with scaling, with $X_k$ either nef (as in Step 2) or a Mori fiber space (as in Step 3, part (c) or (d)).  If we have a Mori fiber space, then we can continue the flow on the lower dimensional manifold $Y$, which would correspond to a lower dimensional MMP with scaling.  At the very last step, we expect the K\"ahler-Ricci flow to converge, after an appropriate normalization, to a canonical metric.


In \cite{SoT3}, Song-Tian constructed weak solutions for the K\"ahler-Ricci flow through the finite time singularities if the flips exist a priori. Such a weak solution is smooth outside the singularities of $X$ and the exceptional locus of the contractions and flips, and it is a nonnegative closed $(1,1)$-current with locally bounded potentials.  Furthermore, the weak solution of the K\"ahler-Ricci flow is unique.

In Step 2, when $T=\infty$, one can say more about the limiting behavior of the K\"ahler-Ricci flow. The abundance conjecture in birational geometry predicts that $K_X$ is semi-ample whenever it is nef. Assuming this holds,  the pluricanonical system $H^0(X, K_X^m)$ for sufficiently large $m$  induces a holomorphic map $\phi: X \rightarrow \Xcan$. $\Xcan$ is called the canonical model of $X$ and it is uniquely determined by the canonical ring of $X$.
 If we assume that $X$ is nonsingular and $K_X$ is semi-ample, then normalized solution $g(t)/t$ always converges weakly in the sense of distributions.  Moreover:

\begin{itemize}
\item If $\kod(X)=\dim X$, then $\Xcan$ is birationally equivalent to $X$ and the limit of $g(t)/t$ is the unique singular K\"ahler-Einstein metric on $\Xcan$ \cite{TZha, Ts1}.   If $X$ is a singular minimal model, we expect the K\"ahler-Ricci flow to converge to the singular K\"ahler-Einstein metric of Guedj-Eyssidieux-Zeriahi \cite{EGZ2}.
\item If $0<\kod(X)<\dim X$, then $X$ admits a Calabi-Yau fibration over $\Xcan$, and the limit of $g(t)/t$ is the unique generalized K\"ahler-Einstein metric (possibly singular) $g_{\textrm{can}}$ on $\Xcan$ defined by $\textrm{Ric}(g_{\textrm{can}}) = -g_{\textrm{can}} + g_{\textrm{WP}}$ away from a subvariety of $X_{\textrm{can}}$, where $g_{\textrm{WP}}$ is the Weil-Petersson metric  induced from the Calabi-Yau fibration of $X$ over $X_{\textrm{can}}$ \cite{SoT1, SoT2}.
\item If $\kod(X)=0$, then $X$ itself is a Calabi-Yau manifold and so the limit of $g(t)$ is the unique Ricci flat K\"ahler metric in its initial K\"ahler class  \cite{Cao, Y2} .
\end{itemize}

A deeper question to ask is whether such a weak solution is indeed a geometric solution of the K\"ahler-Ricci flow in the Gromov-Hausdorff topology.  One would like to show that the K\"ahler-Ricci flow performs geometric surgeries in Gromov-Hausdorff topology at each singular time and replaces the previous projective variety by a `better' model. Such a model is again a projective variety and the geometric surgeries coincide with the algebraic surgeries such as contractions and flips. If this picture holds, the K\"ahler-Ricci flow gives a continuous path from $X$ to its canonical model $\Xcan$ coupled with a canonical metric in the moduli space of Gromov-Hausdorff.
We can further ask: how does the curvature behave near the (finite) singular time?  Is  the singularity is always of Type I (see Conjecture \ref{conjstrong})?  Will the flow give a complete or compact shrinking soliton  after rescaling (cf. \cite{Cao3, FIK})?

\subsection{The K\"ahler-Ricci flow on K\"ahler surfaces} \label{sectkahlersurfaces}

In this section, we describe the behavior of the K\"ahler-Ricci flow on K\"ahler surfaces, and how it relates to the MMP.  For the purpose of this section, $X$ will be a K\"ahler surface.

We begin by discussing the minimal model program for surfaces.  It turns out to be relatively straightforward.  We obtain a sequence of smooth manifolds $X_1, \ldots, X_k$ and holomorphic  maps $f_1, \ldots, f_k$,
\begin{equation}
\begin{diagram}\label{diag2}
\node{X=X_0} \arrow[2]{e,t,..}{f_1} \node[2]{X_1} \arrow[2]{e,t,..}{f_2} \node[2]{X_2} \arrow[2]{e,t,..}{f_3}  \node[2]{\ldots} \arrow[2]{e,t,..}{f_k} \node[2]{X_k}
\end{diagram}
\end{equation}
with $X_k$ `minimal' in the sense described below.  Moreover, each of the maps $f_i$ is a blow down of a curve to a point.

 We say that a K\"ahler surface $X$ is a \emph{minimal surface} if it contains no $(-1)$-curve.  By the Adjunction Formula, a surface with $K_X$ nef is minimal.   On the other hand, a minimal surface may not have $K_X$ nef (an example is $\mathbb{P}^2$) and hence this definition of \emph{minimal surface} differs from the notion of `minimal model' discussed above.

The minimal model program for surfaces is simply as follows:  given a surface $X$, contract all the $(-1)$-curves to arrive at a minimal surface.  The Kodaira-Enriques classification can then be used to deduce that one either obtains a minimal surface with $K_X$ nef, or a minimal Mori fiber space.  A minimal Mori fiber space is either $\mathbb{P}^2$ or a \emph{ruled surface}, i.e. a $\mathbb{P}^1$ bundle over a Riemann surface (in the literature, sometimes a broader definition for ruled surface is used).    Dropping the minimality condition, Mori fiber spaces in dimension two
  are precisely those surfaces  birational to a ruled surface. Note that since $\mathbb{P}^2$ is birational to $\mathbb{P}^1 \times \mathbb{P}^1$, every  surface birational to $\mathbb{P}^2$ is birational to a ruled surface.

We wish to see whether the K\"ahler-Ricci flow on a K\"ahler surface will carry out this `minimal model program'.  The K\"ahler-Ricci flow should carry out the algebraic procedure of contracting $(-1)$-curves.    Recall that in Section \ref{sectexamples} we defined the notion of \emph{canonical surgical contraction}
for the K\"ahler-Ricci flow.

Starting at any K\"ahler surface $X$, we will use Theorem \ref{thmblowup}  to show that the K\"ahler-Ricci flow will always carry out a finite sequence of canonical surgical contractions until it either arrives at a minimal surface or the flow collapses the manifold.

\begin{theorem} \label{kahsurcon}
Let $(X, \omega_0)$ be a K\"ahler  surface with a smooth K\"ahler metric  $\omega_0$.  Then there exists a unique maximal K\"ahler-Ricci flow $\omega(t)$ on $X_0, X_1, \ldots, X_k$ with canonical surgical contractions  starting at $(X, \omega_0)$. Moreover, each canonical surgical contraction  corresponds to a blow-down $\pi: X_i \rightarrow X_{i+1}$ of a finite number of disjoint $(-1)$ curves on $X_i$.  In addition we have:
\begin{enumerate}
\item[(a)]  Either $T_k < \infty$ and the flow $\omega(t)$ collapses $X_k$, in the sense that
$$\emph{Vol}_{\omega(t)} X_k \rightarrow 0, \quad \textrm{as} \ \ t \rightarrow T_k^-.$$
Then $X_k$ is birational to  a ruled surface.

\item[(b)]  Or $T_k = \infty$ and $X_k$ has no $(-1)$ curves.
\end{enumerate}
\end{theorem}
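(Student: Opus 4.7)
The plan is to run the K\"ahler-Ricci flow on $X_0 = X$ up to its maximal time $T_0$ given by Theorem \ref{longtime}, and at each finite-time singularity either perform a canonical surgical contraction (via Theorem \ref{thmblowup}) or conclude collapse. The induction is controlled by the Picard number, which strictly decreases under each contraction. So first I would observe that if $T_0 = \infty$ then $K_{X_0}$ is nef, and then by the adjunction formula (Theorem \ref{thmadj}) any $(-1)$-curve $E$ would satisfy $K_{X_0} \cdot E = -1 < 0$, a contradiction; thus $X_0$ already satisfies case (b). If $T_0 < \infty$, set $\alpha_{T_0} = [\omega_0] - T_0\, c_1(X_0)$, which is nef on the boundary of the K\"ahler cone, and split into the two cases $\alpha_{T_0}^2 > 0$ and $\alpha_{T_0}^2 = 0$.

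In the case $\alpha_{T_0}^2 > 0$, the Nakai--Moishezon criterion (Theorem \ref{nakai}) forces the existence of irreducible curves $C$ with $\alpha_{T_0} \cdot C = 0$. For any such $C$, the Hodge Index Theorem (Theorem \ref{index}) applied to the negative-definite orthogonal complement $\alpha_{T_0}^\perp$ gives $C^2 < 0$; combined with $[\omega_0] \cdot C = T_0 c_1(X_0) \cdot C$ this forces $K_{X_0} \cdot C < 0$, and Theorem \ref{thmadj} then pins down $C^2 = -1$, $K_{X_0} \cdot C = -1$, $C \cong \mathbb{P}^1$, so $C$ is a $(-1)$-curve. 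A two-dimensional Hodge Index computation on $\mathrm{span}(E_i, E_j)$ shows distinct such curves are disjoint (the Gram matrix $\begin{pmatrix}-1 & m \\ m & -1\end{pmatrix}$ is negative definite only if $m = 0$), and there are finitely many $E_1,\dots,E_m$. By Castelnuovo's contraction theorem there is a blow-down $\pi: X_0 \to X_1$ onto a smooth K\"ahler surface. Since $\alpha_{T_0} \perp E_i$ for all $i$, we have $\alpha_{T_0} = \pi^* \beta_1$ for some $\beta_1 \in H^{1,1}(X_1, \mathbb{R})$. I would then verify the hypothesis of Theorem \ref{thmblowup} by showing $\beta_1$ is K\"ahler via Nakai--Moishezon on $X_1$: $\beta_1^2 = \alpha_{T_0}^2 > 0$; for any irreducible curve $D \subset X_1$, the projection formula gives $\beta_1 \cdot D = \alpha_{T_0} \cdot \tilde D$, and since $\tilde D$ is not among the $E_i$ we get $\beta_1 \cdot D > 0$; positivity against a reference K\"ahler class follows from the Hodge Index (two nef classes with positive self-intersection pair positively). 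Theorem \ref{thmblowup} then yields the canonical surgical contraction, and the flow continues on $X_1$.

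In the case $\alpha_{T_0}^2 = 0$, the volume $\mathrm{Vol}_{\omega(t)}X_0 = [\omega(t)]^2 \to 0$ as $t \to T_0^-$, so the flow collapses and the iteration stops; it remains to show $X_0$ is birational to a ruled surface. I would argue by contradiction: suppose $K_{X_0}$ is pseudo-effective. Since $\alpha_t^2 > 0$ for $t < T_0$ and vanishes at $T_0$, one has $\alpha_{T_0} \cdot c_1(K_{X_0}) = \tfrac{1}{2} \frac{d}{dt}\alpha_t^2\big|_{t=T_0} \le 0$, and pseudo-effectivity of $K_{X_0}$ combined with nefness of $\alpha_{T_0}$ forces equality $\alpha_{T_0} \cdot c_1(K_{X_0}) = 0$. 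The two equations $\alpha_{T_0}^2 = 0$ and $\alpha_{T_0} \cdot c_1(K_{X_0}) = 0$ then yield $[\omega_0]^2 = T_0^2 c_1(K_{X_0})^2$, so $c_1(K_{X_0})^2 > 0$, and substituting back gives $[\omega_0] \cdot c_1(K_{X_0}) = -T_0 c_1(K_{X_0})^2 < 0$, contradicting $[\omega_0]$ K\"ahler and $K_{X_0}$ pseudo-effective. Hence $\kappa(X_0) = -\infty$, and by the Enriques--Kodaira classification $X_0$ is birational to $\mathbb{P}^2$ or to a ruled surface; in either case birational to a ruled surface.

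Iterating the above terminates after at most $\rho(X_0) - 1$ steps since each surgical contraction strictly decreases the Picard number, producing the sequence $X_0, \ldots, X_k$ with $X_k$ of type (a) or (b). Uniqueness at each stage follows from uniqueness of the smooth flow (Theorem \ref{longtime}), the cohomological characterization of the contracted curves, and the unique continuation given by Theorem \ref{thmblowup}. The main obstacle I anticipate is the bookkeeping around the hypothesis of Theorem \ref{thmblowup}, specifically confirming via Nakai--Moishezon that the descended class $\beta_j$ on $X_{j+1}$ is K\"ahler at every inductive step; the $\alpha_{T_j}^2 = 0$ dichotomy must be handled delicately, since it mixes the analytic collapse with the algebro-geometric classification. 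Once those technical verifications are in place, the remainder is an orchestration of Theorems \ref{longtime}, \ref{thmblowup}, \ref{thmadj}, \ref{index}, and \ref{nakai}.
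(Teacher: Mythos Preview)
Your proposal is correct and follows essentially the same route as the paper: run the flow to the first singular time, split on whether $\alpha_T^2>0$ or $=0$, in the big case use Nakai--Moishezon, the Hodge Index Theorem, and adjunction to identify the null locus as finitely many disjoint $(-1)$-curves, blow them down, verify via Nakai--Moishezon that the descended class is K\"ahler, invoke Theorem~\ref{thmblowup}, and iterate; in the collapsing case use the surface classification to conclude $\kappa=-\infty$. The only cosmetic differences are that the paper packages the collapsing case as a separate Proposition~\ref{volumecollapse} (arguing via effectivity of $mK_X$ and deducing $[\omega_0]+TK_X=0$, hence $X$ Fano, rather than your derivative/pseudo-effective argument), and that for $\beta_1\cdot\gamma>0$ the paper perturbs to $\tilde\gamma=\gamma+c\beta$ instead of invoking the Hodge Index Theorem directly as you do; both variants are equivalent in content.
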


\begin{proof}
Let $T_1$ be the first singular time.  If $T_1 = \infty$ then  $K_X$ is nef and hence $X$ has no $(-1)$-curves, giving case (b).

Assume then that $T_1<\infty$.   The limiting class at time $T_1$ is given by $\alpha = [\omega_0] - T_1 c_1(X)$.
Suppose that
\begin{equation}
\alpha^2 = \lim_{t \rightarrow T_1}( [\omega_0] - t c_1(X))^2 =  \lim_{t\rightarrow T_1} \textrm{Vol}_{g(t)} X >0,
 \end{equation}
so that we are not in case (a).   Thus the class $\alpha$ is nef and big.  On the other hand, $\alpha$ cannot be a K\"ahler class by Theorem \ref{longtime}.

We further notice that $\alpha+ \ve [\omega_0]$ is K\"ahler for all $\ve>0$ by Theorem \ref{longtime}. Then
\begin{equation}
\alpha\cdot (\alpha + \ve [\omega_0] ) = \alpha^2 + \ve \alpha \cdot [\omega_0] >0
\end{equation}
 if we choose $\ve>0$ sufficiently small.

We now apply the Nakai-Moishezon criterion for K\"ahler surfaces (Theorem \ref{nakai}) to see that there must exist an irreducible curve $C$ on $X$ such that $\alpha \cdot C=0$.
Let $\mathcal{E}$ be the space of all irreducible curves $E$ on $X$ with $\alpha \cdot E=0$.  Then $\mathcal{E}$ is non-empty and every  $E$ in  $\mathcal{E}$ has $E^2<0$ by the Hodge Index Theorem (Theorem \ref{index}).  Moreover, if $E \in \mathcal{E}$,
$$E\cdot K_X = \frac{1}{T_1} E\cdot (\alpha - [\omega_0]) = - \frac{1}{T_1} E\cdot [\omega_0]<0$$ since $[\omega_0]$ is K\"ahler. It then follows from the Adjunction formula (Theorem \ref{thmadj}) that $E$ must be a $(-1)$ curve.

We claim that if $E_1$ and $E_2$ are distinct elements of $\mathcal{E}$ then they must be disjoint.  Indeed, since $E_1$, $E_2$ are irreducible and distinct we have $E_1\cdot E_2 \ge 0$.
Moreover,
 $(E_1  + E_2 ) \cdot \alpha =0$ and applying the Hodge Index Theorem again, we see that $0> (E_1 + E_2)^2 = -2 + 2 E_1 \cdot E_2$, so that the only possibility is $E_1 \cdot E_2=0$, proving the claim.  It follows that
 $\mathcal{E}$ consists of finitely many disjoint $(-1)$ curves $ E_1, ..., E_k$.

Let $\pi: X\rightarrow Y$ be the blow-down map contracting $E_1, ..., E_k$ on $X$. Then $Y$ is again a smooth K\"ahler surface. Since $H^{1,1}(X, \mathbb{R})$ is generated by $H^{1,1}(Y, \mathbb{R})$ and the $c_1([E_i])$ for $i=1, ..., k$ (see for example Theorem I.9.1 in \cite{BHPV}), there exists $\beta\in H^{1,1}(X, \mathbb{R})$ and $a_i \in \mathbb{R}$ such that
\begin{equation}
\alpha = \pi^* \beta + \sum_{i=1}^k a_i c_1([E_i]).
\end{equation}
Since $\pi^* \beta\cdot E_i=0$ for each $i=1, ..., k$, we have  $\alpha \cdot E_i= a_i =0$ for all $i$ and hence $\alpha = \pi^*\beta. $

We claim that $\beta$ is a K\"ahler class on $Y$.  First,  for any curve $C$ on $Y$, we have
$ \beta \cdot C = \alpha \cdot \pi^* C>0$.  Moreover,  $\beta^2 = \alpha^2>0.$

By the Nakai-Moishezon criterion, it remains to show that $\beta \cdot \gamma>0$ for $\gamma$ some fixed K\"ahler class on $Y$.  Now $\beta \cdot \gamma = \alpha \cdot \pi^* \gamma = \lim_{t \rightarrow T^-} [\omega(t)] \cdot \pi^* \gamma \ge 0$.  Then put $\tilde{\gamma} = \gamma + c\beta$ for $c>0$.  If $c$ is sufficiently small then $\tilde{\gamma}$ is K\"ahler and since $\beta^2>0$ we have $\beta \cdot \tilde{\gamma} >0$, as required.

We now apply Theorem \ref{thmblowup} to see that the K\"ahler-Ricci flow performs a canonical surgical contraction.
We  repeat the above procedure until either the volume tends to $0$ or the flow exists for all time. This proves that either $T_k<\infty$ and $\textrm{Vol}_{g(t)} X_k \rightarrow 0$ as $t\rightarrow T_k^-$ or $T_k=\infty$ and $X_k$ has no $(-1)$ curves.

Finally, in the case (a), the theorem follows from Proposition \ref{volumecollapse} below.
\qed
\end{proof}

We make use of Enriques-Kodaira classification for complex surfaces (see \cite{BHPV}) to prove:

\begin{proposition}  \label{volumecollapse} Let $(X, \omega_0)$ be a K\"ahler  surface with a smooth K\"ahler metric  $\omega_0$.  Let $T$ be the first singular time of the K\"ahler-Ricci flow (\ref{krfmmp}). If $T<\infty$ and $\emph{Vol}_{g(t)} X \rightarrow 0$, as $t\rightarrow T$. Then $X$ is birational to  a ruled surface.  Moreover:

\begin{enumerate}

\item[(a)] Either there exists $C>0$ such that
\begin{equation}
C^{-1} \leq \frac{ \emph{Vol}_{g(t)} X}{ (T-t)^2 } \leq C,
\end{equation}
 and $X$ is a Fano surface (in particular, is birational to $\mathbb{P}^2$) and $\omega_0 \in T c_1(X)$.

\item[(b)] Or there exists $C>0$ such that
\begin{equation}
C^{-1} \leq \frac{ \emph{Vol}_{g(t)} X}{ T-t } \leq C.
\end{equation}
 If $X$ is Fano then $\omega_0$ is not in a multiple of $c_1(X)$.

\end{enumerate}

\end{proposition}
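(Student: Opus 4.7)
The plan is to extract everything from the limiting cohomology class
\[ \alpha := [\omega_0] - T c_1(X) \in H^{1,1}(X,\mathbb{R}), \]
which is nef as a limit of the K\"ahler classes $[\omega(t)]$ for $t < T$, together with the Hodge Index Theorem. Since $\text{Vol}_{g(t)} X = \tfrac{1}{2}[\omega(t)]^2$ and the volume collapses to zero, we have $\alpha^2 = 0$. Writing $[\omega(t)] = \alpha + (T-t)c_1(X)$ and squaring gives the key identity
\begin{equation}\label{volid}
\text{Vol}_{g(t)} X = (T-t)\,\alpha \cdot c_1(X) + \tfrac{1}{2}(T-t)^2\, c_1(X)^2,
\end{equation}
which will control the blow-down rate once we understand $\alpha \cdot c_1(X)$ and the dichotomy $\alpha = 0$ versus $\alpha \ne 0$.

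Next I would prove the numerical trichotomy: either $\alpha = 0$, or $\alpha \ne 0$ and $\alpha \cdot c_1(X) > 0$ strictly. Since $\alpha$ is nef and $[\omega_0]$ is K\"ahler, $\alpha \cdot [\omega_0] \ge 0$ (for instance, because $\alpha + \varepsilon[\omega_0]$ is K\"ahler for every $\varepsilon > 0$); using $\alpha \cdot [\omega_0] = T\,\alpha \cdot c_1(X)$, this yields $\alpha \cdot c_1(X) \ge 0$. If equality held, Theorem~\ref{index} applied with the K\"ahler class $[\omega_0]$ (satisfying $[\omega_0]^2 > 0$) would force $\alpha^2 < 0$ or $\alpha = 0$; since $\alpha^2 = 0$, only $\alpha = 0$ is possible.

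From here the case analysis is short. In case (a), $\alpha = 0$ gives $[\omega_0] = T\, c_1(X)$, hence $c_1(X) > 0$ and $X$ is Fano, and (\ref{volid}) reduces to $\text{Vol}_{g(t)} X = \tfrac{1}{2}(T-t)^2 c_1(X)^2$ with $c_1(X)^2 > 0$. In case (b), $\alpha \cdot c_1(X) > 0$, and $\text{Vol}_{g(t)} X / (T-t)$ is a linear function of $t$ whose values at $t = 0$ and $t = T$ are the positive numbers $[\omega_0]^2/(2T)$ and $\alpha \cdot c_1(X)$ respectively, giving the two-sided bound. For the supplementary Fano claim in (b), if $[\omega_0] = s\, c_1(X)$ for some $s$, then $\alpha = (s-T) c_1(X)$; combining $\alpha^2 = 0$ with $c_1(X)^2 > 0$ forces $s = T$, hence $\alpha = 0$, contradicting case (b).

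The hardest step, and the only one needing input from outside the excerpt, is showing $X$ is birational to a ruled surface. In case (a), $X$ is a Fano surface, hence rational, so this is immediate. In case (b), the identity $K_X \cdot \alpha = -c_1(X) \cdot \alpha < 0$ with $\alpha$ a nonzero nef class rules out pseudo-effectivity of $K_X$, since pseudo-effective classes on a K\"ahler surface pair nonnegatively with nef classes. I would then invoke the Enriques-Kodaira classification: a compact K\"ahler surface whose canonical bundle is not pseudo-effective has $\kod(X) = -\infty$ and is either rational or (a blow-up of) a $\mathbb{P}^1$-bundle over a curve of positive genus, each of which is birational to a ruled surface. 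Relating non-pseudo-effectivity of $K_X$ to the classification is the one non-routine ingredient in an otherwise elementary proof.
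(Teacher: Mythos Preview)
Your proof is correct and follows essentially the same approach as the paper: both hinge on the Hodge Index Theorem applied to the nef limiting class $\alpha = [\omega_0] - Tc_1(X)$ with $\alpha^2 = 0$, the quadratic expansion of the volume in $(T-t)$, and the Enriques--Kodaira classification to conclude birationality to a ruled surface. The only cosmetic differences are that the paper establishes $\kod(X) = -\infty$ first by a single contradiction argument (assume some $mK_X$ effective, pair with $\alpha$, force $\alpha = 0$ via Hodge index, hence Fano) before splitting into cases, whereas you split first and argue separately; also note that in the paper's normalization $\mathrm{Vol}_{g(t)} X = [\omega(t)]^2$ without your factor of $\tfrac{1}{2}$.
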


\begin{proof} We first show that $X$ is birational to a ruled surface. Suppose for a contradiction that $\kod(X)\geq 0$.  Then some multiple of $K_X$ has a global holomorphic section and hence is effective. 
 In particular, $([\omega_0] + T K_X) \cdot K_X \ge 0$, since $[\omega_0] + T K_X$ is a limit of K\"ahler classes.   Then
\begin{align} \nonumber
0 = ([\omega_0]+ TK_X)^2 & =  T ([\omega_0] + TK_X) \cdot K_X + ([\omega_0] + TK_X) \cdot [\omega_0] \\ &\ge  ([\omega_0] + TK_X) \cdot [\omega_0]  \ge 0,
\end{align}
which implies that $([\omega_0] + T K_X) \cdot [\omega_0] =0$.  Using the Index Theorem and the fact that $[\omega_0]^2>0$ and $([\omega_0]+T K_X)^2 = 0$ we have $[\omega_0] + TK_X =0$.  But this implies that $X$ is Fano, contradicting the assumption $\kod(X) \ge 0$.
Thus we have shown that $X$ must have $\kod(X)=-\infty$.   By the Enriques-Kodaira classification for complex surfaces which are K\"ahler (see chapter VI of \cite{BHPV}),  $X$ is birational to a ruled surface.

Since $\textnormal{Vol}_{g(t)} X = ([\omega_0] + t K_X)^2$ is a quadratic polynomial in $t$ which is positive for $t \in [0,T)$ and tends to zero as $t$ tends to $T$, we have
\begin{equation} \label{quadratic}
\textnormal{Vol}_{g(t)} X = [\omega_0]^2 + 2t [\omega_0] \cdot K_X + t^2 K_X^2 = C_1 (T-t) + C_2(T-t)^2,
\end{equation}
for constants $C_1 \ge 0$ and $C_2$.  First assume $C_1=0$.   Then $C_2 >0$ and we are in case (a).   From (\ref{quadratic}) we obtain
\begin{equation} \label{quadcons}
K_X^2 = C_2>0, \quad [\omega_0]^2 = K_X^2 T^2, \quad [\omega_0] \cdot K_X = - K_X^2 T <0.
\end{equation}
In particular,  $([\omega_0] + T K_X) \cdot [\omega_0]=0$ and hence by the Index Theorem, $[\omega_0] + T K_X =0$.  Thus  $X$ is Fano and $\omega_0 \in T c_1(X)$.  Note that by the classification of surfaces,
$X$ is either $\mathbb{P}^2$, $\mathbb{P}^1 \times \mathbb{P}^1$ or $\mathbb{P}^2$ blown-up at $k$ points for $1\leq k\leq 8$.

Finally, if $C_1 >0$ then we are in case (b).   If $[\omega_0]$ is a multiple of $c_1(X)$  then the volume $\textrm{Vol}_{g(t)} X$ tends to zero of order $(T-t)^2$, a contradiction.\qed
\end{proof}


We now discuss  the long time behavior of the K\"ahler-Ricci flow when we are in case (b) of Theorem \ref{kahsurcon}.  There are  three different behaviors of the K\"ahler-Ricci flow as $t \rightarrow \infty$ depending on whether
$X$ has Kodaira dimension equal to 0, 1 or 2:
\begin{itemize}
\item if $\kod(X)=0$, then the minimal model of $X$ is a Calabi-Yau surface with $c_1(X)=0$.  The flow $g(t)$ converges smoothly to a Ricci-flat K\"ahler metric as $t\rightarrow \infty$, as shown in Section \ref{sectpes}.
\item If $\kod(X)=1$, then
 $\frac{1}{t} g(t)$ converges in the sense of currents   to the pullback of the unique generalized K\"ahler-Einstein metric on the canonical model     of $X$ as $t\rightarrow \infty$ \cite{SoT1}.  A simple example of this is given in Section \ref{sectpes} in the case of a product elliptic surface.
\item If $\kod(X)=2$, $\frac{1}{t} g(t)$ converges in the sense of currents (and smoothly outside a subvariety) to the pullback of the unique smooth orbifold K\"ahler-Einstein metric on the canonical model of $X$ as $t\rightarrow \infty$ \cite{Kob,TZha,Ts2}.   In the case that $c_1(X)<0$, we showed in Section \ref{sectpes} that $\frac{1}{t} g(t)$ converges smoothly to a smooth K\"ahler-Einstein metric.
\end{itemize}

In fact, in the case when $T_k=\infty$, the scalar curvature of $\frac{1}{t} g(t)$ is uniformly bounded as $t\rightarrow \infty$ \cite{SoT4,Zha2}. Furthermore, if we assume that $X_k$ is a minimal surface of general type, and it admits only irreducible $(-2)$-curves, then $(X_k, \frac{1}{t} g(t))$ converges in Gromov-Hausdorff sense to its canonical model with the unique smooth orbifold K\"ahler-Einstein metric \cite{SW3}.


\small

\bigskip
\bigskip

$^{*}$ Department of Mathematics \\
Rutgers University, Piscataway, NJ 08854\\

$^{\dagger}$ Department of Mathematics \\
Northwestern University, Evanston, IL 60208 \\

\end{document}